\documentclass{amsart}


\usepackage{amssymb}
\usepackage[hidelinks]{hyperref}
\usepackage{todonotes}


\theoremstyle{plain}
\newtheorem{theorem}{Theorem}[section]
\newtheorem{lemma}[theorem]{Lemma}
\newtheorem{corollary}[theorem]{Corollary}
\newtheorem{proposition}[theorem]{Proposition}

\theoremstyle{definition}
\newtheorem{definition}[theorem]{Definition}

\newtheorem{question}[theorem]{Question}

\theoremstyle{remark}
\newtheorem*{remark}{Remark}



\newcommand{\bP}{\mathbb{P}}

\newcommand{\bQ}{\mathbb{Q}}
\newcommand{\Q}{\bQ}

\newcommand{\cA}{\mathcal{A}}
\newcommand{\cB}{\mathcal{B}}

\newcommand{\cD}{\mathcal{D}}

\newcommand{\cF}{\mathcal{F}}
\newcommand{\cG}{\mathcal{G}}

\newcommand{\cI}{\mathcal{I}}
\newcommand{\I}{\cI}
\newcommand{\cJ}{\mathcal{J}}
\newcommand{\J}{\cJ}
\newcommand{\cK}{\mathcal{K}}

\newcommand{\cM}{\mathcal{M}}

\newcommand{\cP}{\mathcal{P}}

\newcommand{\cT}{\mathcal{T}}

\newcommand{\cU}{\mathcal{U}}
\newcommand{\cW}{\mathcal{W}}

\newcommand{\continuum}{\mathfrak{c}} 
 
\newcommand{\pnumber}{\mathfrak{p}} 
\newcommand{\cc}{\mathfrak{c}}

\newcommand{\uu}{\mathfrak{u}}
\newcommand{\pp}{\mathfrak{p}}
\DeclareMathOperator{\add}{add}
\DeclareMathOperator{\cov}{cov}
\DeclareMathOperator{\adds}{\add^*}
\DeclareMathOperator{\wsp}{\mathfrak{fip}}


\DeclareMathOperator{\FinBW}{FinBW}
\DeclareMathOperator{\Kat}{K}
\DeclareMathOperator{\KB}{KB}
\DeclareMathOperator{\point}{P}

\newcommand{\fin}{\mathrm{Fin}}
\newcommand{\Exh}{\mathrm{Exh}} 
\newcommand{\Fin}{\mathrm{Fin}}
\newcommand{\ED}{\mathcal{ED}} 
\newcommand{\conv}{\mathrm{conv}} 
 
\newcommand{\nwd}{\mathrm{nwd}} 
\newcommand{\lacunary}{\mathcal{L}} 
\newcommand{\random}{\mathcal{R}}

\newcommand{\poset}{\bP} 


\begin{document}


\title{Characterizing existence of certain ultrafilters}


\address{Institute of Mathematics\\ Faculty of Mathematics, Physics and Informatics\\ University of Gda\'{n}sk\\ ul.~Wita Stwosza 57\\ 80-308 Gda\'{n}sk\\ Poland}

\author[R.~Filip\'{o}w]{Rafa\l{} Filip\'{o}w}
\email{Rafal.Filipow@ug.edu.pl}
\urladdr{http://mat.ug.edu.pl/~rfilipow}

\author[K.~Kowitz]{Krzysztof Kowitz}
\email{Krzysztof.Kowitz@phdstud.ug.edu.pl}

\author[A.~Kwela]{Adam Kwela}
\email{Adam.Kwela@ug.edu.pl}
\urladdr{http://kwela.strony.ug.edu.pl/}


\date{\today}


\subjclass[2010]{%
Primary: 
03E17
; 
Secondary: 
03E05, 
03E35, 
03E50, 
03E75
}


\keywords{%
ultrafilter, I-ultrafilter, 
P-point, Q-point, Ramsey ultrafilter, selective ultrafilter, Ramsey ultrafilter,  
ideal, summable ideal, Borel ideal, 
Katetov order,
cardinal invariant of the continuum,
cardinal characteristic of the continuum}


\begin{abstract}
Following Baumgartner [J. Symb.~Log.~60 (1995), no.~2], for an ideal $\I$ on $\omega$, we say that an ultrafilter $\cU$ on $\omega$  is 
an \emph{$\I$-ultrafilter} if for every function $f:\omega\to\omega$ there is $A\in \cU$ with $f[A]\in \I$.
		 
If there is an $\I$-ultrafilter which is not a $\J$-ultrafilter, then $\I$ is not below $\J$ in the Kat\v{e}tov order $\leq_{\Kat}$ (i.e.~for every function $f:\omega\to\omega$ there is $A\in \cI$ with $f^{-1}[A]\notin \J$). On the other hand, in general $\I\not\leq_{\Kat}\J$ does not imply that existence of an $\I$-ultrafilter which is not a $\J$-ultrafilter is consistent.
 
We provide some sufficient conditions on ideals to obtain the equivalence: $\I\not\leq_{\Kat}\J$ if and only if it is consistent that there exists an $\I$-ultrafilter which is not a $\J$-ultrafilter. In some cases when the Kat\v{e}tov order is not enough for the above equivalence, we provide other conditions for which a similar equivalence holds. We are mainly interested in the cases when the family of all $\I$-ultrafilters or $\J$-ultrafilters coincides with some known family of ultrafilters: P-points, Q-points or selective ultrafilters (a.k.a.~Ramsey ultrafilters). In particular, our results provide a characterization of Borel ideals $\I$ which can be used to characterize P-points as $\I$-ultrafilters. 

Moreover, we introduce a cardinal invariant which is used to obtain a sufficient condition for the existence 
of an $\I$-ultrafilter which is not a $\J$-ultrafilter. Finally, we prove some new results concerning existence of certain ultrafilters under various set-theoretic assumptions.
\end{abstract}


\maketitle




\section{Introduction}

All notions and notations used in the introduction are defined in Section~\ref{sec:preliminaries}.

If there is an $\I$-ultrafilter which is not a $\J$-ultrafilter, then $\I\not\leq_{\Kat}\J$. On the other hand, in general $\I\not\leq_{\Kat}\J$ does not imply that existence of an $\I$-ultrafilter which is not a $\J$-ultrafilter is consistent. For instance,  $\fin^2\not\leq_{\Kat}\conv$,  but $\fin^2$-ultrafilters and $\conv$-ultrafilters coincide. 
We provide some sufficient conditions on ideals to obtain equivalence: $\I\not\leq_{\Kat}\J$ if and only if it is consistent that there exists an $\I$-ultrafilter which is not a $\J$-ultrafilter. In some cases when the Kat\v{e}tov order is not enough for the above equivalence, we provide other conditions for which a similar equivalence holds.  
We are mainly interested in the cases when the family of all $\I$-ultrafilters or $\J$-ultrafilters coincides with some known family of ultrafilters: P-points, Q-points or selective ultrafilters.

In a series of papers \cite{MR2798896,MR2194039,MR2512901,MR2752957}, Fla\v{s}kov\'{a} constructed $\I$-ultrafilters which are not $\J$-ultrafilters for various pairs of ideals $\I$ and $\J$ under various set-theoretic assumptions. A closer analysis of her constructions allows us to define a combinatorial ``core'' of her proofs. Namely, we define a new cardinal characteristic 
and show how to use it to prove existence of $\I$-ultrafilters which are not $\J$-ultrafilters. We prove some new results of this type.

It is known that some well known families of ultrafilters can be characterized as $\I$-ultrafilters with the aid of definable (usually Borel) ideals $\I$. For instance, an ultrafilter  is a P-point if and only if it is a $\fin^2$-ultrafilter. 
The ideal $\fin^2$ is Borel, in fact it is $F_{\sigma\delta\sigma}$ and it is known that P-points cannot be characterized as $\I$-ultrafilters for any $F_\sigma$ ideal $\I$ (see \cite[Theorem~3.1]{MR2512901}).
Our results provide a characterization of Borel ideals $\I$ which can be used to characterize P-points as $\I$-ultrafilters.


The paper is organized in the following way.
In Section~\ref{sec:preliminaries}, we review the notions and notations used in the rest of the paper. We also provide some basic facts needed in the sequel, and we prove those of them that we could not  find in the literature.  
In Section~\ref{sec:cardinal-nad-order} we introduce a cardinal invariant and an order which provide us with the main tools that we use in the rest of the paper. Moreover, we show how those tools can be used to construct specific $\I$-ultrafilters and characterize existence of certain classes of $\I$-ultrafilters.
In Section~\ref{sec:lower-bound} we find a lower bound for the above mentioned invariant in terms of some other cardinal characteristics known from the literature.
Moreover, we discover a class of ideals for which the following equivalence holds: $\I\not\leq_{\Kat}\J$ if and only if it is consistent that there exists an $\I$-ultrafilter which is not a $\J$-ultrafilter. 
In Section~\ref{sec:game} we introduce a game that is designed to provide one more lower bound for our cardinal invariant in the case when $\I$ contains a tall summable ideal. This enables us to relax the set-theoretic assumptions in some of our results. 
The remaining sections of the paper are devoted to three well know classes of ultrafilters: P-points, Q-points and selective ultrafilters.
Among others, we characterize existence of P-points (Q-points, selective ultrafilters, resp.) which are not $\J$-ultrafilters. 
It turns out, that in the case of Q-points this characterization can be expressed solely in terms of the Kat\v{e}tov order. On the other hand, in the case of P-points (selective ultrafilters, resp.), the Kat\v{e}tov order is not enough to obtain such a characterization. However, we managed to show that extendability to $P^+$-ideals (selective ideals, resp.) are enough in these cases. The characterization mentioned in the previous paragraph is stated in Section 6. Finally, in our last Section~\ref{sec:By-products} we present some by-products of our studies: we show that every $G_{\delta\sigma}$ ideal is extendable to an $F_{\sigma}$ ideal and that neither the ideals $\lacunary$ (generated by lacunary sets) nor the ideal $\random$ (generated by homogeneous sets in some fixed instance of the random graph on $\omega$) is $\leq_{\Kat}$-homogeneous.


\section{Preliminaries}
\label{sec:preliminaries}

All the notions and notations used in the paper, but not defined in this section seem to be quite standard and can be found for instance in \cite{MR1940513,MR1321597,MR2768685}.

By $\omega$ we denote the set of all natural numbers.
We identify  a natural number $n$ with the set $\{0, 1,\dots , n-1\}$ (for instance, $n\setminus k$ means the set $\{i\in\omega: k\leq i<n\}$).


\subsection{Ideals}

A  family $\I\subseteq\cP(X)$ is called an \emph{ideal on $X$} if it satisfies the following conditions:
\begin{enumerate}
	\item if $A,B\in \I$, then $A\cup B\in\I$,
	\item if $A\subseteq B$ and $B\in\I$, then $A\in\I$,
	\item $\I$ contains all finite subsets of $X$,
	\item $X\notin\I$.
\end{enumerate}
A  family $\cF\subseteq\cP(X)$ is called a \emph{filter on $X$} if it satisfies the following conditions:
\begin{enumerate}
	\item if $A,B\in \cF$, then $A\cap B\in\cF$,
	\item if $A\subseteq B$ and $A\in\cF$, then $B\in\cF$,
	\item $\cF$ contains all cofinite subsets of $X$,
	\item $\emptyset\notin\cF$.
\end{enumerate}
If $\cA\subseteq\cP(X)$ then we write $\cA^*=\{B\subseteq X: X\setminus B\in\cA\}$. It is easy to see that if $\I$ is an ideal then $\I^*$ is a filter (called the \emph{dual filter of $\I$}). Conversely, if $\cF$ is a filter then $\cF^*$ is an ideal (called the \emph{dual ideal of $\cF$}). 

We say that an ideal $\I$ on $X$ is generated by a family $\cA\subseteq\cP(X)$ whenever $\I=\{A\subseteq X:\exists F\in[\cA]^{<\omega}(A\subseteq\bigcup F)\}$. For an ideal $\I$ on $X$, 
we write $\I^+=\{A\subseteq X: A\notin\I\}$ and call it the \emph{coideal of $\I$}.
The ideal of all finite subsets of an infinite set $X$
is denoted by $\fin(X)$ (we write $\fin$ instead of $\fin(\omega)$ for short).
For an ideal $\I$ on $X$ and $A\subseteq X$ we define $\I\restriction A = \{B\cap A: B\in \I\}$. It is easy to see that $\I\restriction A$ is an ideal on $A$ if and only if $A\notin\I$.
An ideal $\I$ on $X$ is
\emph{tall} if for every infinite $A\subseteq X$ there is an infinite $B\in\I$ such that $B\subseteq A$ (some authors use the name \emph{dense ideal} in this case).
An ideal $\I$ on $X$ is a \emph{P-ideal} (\emph{weak P-ideal}, resp.) if for any countable family $\cA\subseteq\I$ there is $B\in \I^*$ ($B\in \I^{+}$, resp.) such that $A\cap B$ is finite  for every $A\in \cA$. 
An ideal $\I$ on $X$ is a \emph{$P^+$-ideal} (\emph{selective ideal}, resp.) if for any decreasing sequence $\{A_n:n\in\omega\}\subseteq  \I^+$ there is $A \in \I^+$, $A\subseteq A_0$ such that
$A\cap (A_n\setminus A_{n+1})$ is finite  
($|A\cap (A_n\setminus A_{n+1})|\leq 1$, resp.)
for every $n$.

For $A\subseteq \omega\times \omega$ and $n\in \omega$,
we write 
$A_{(n)}=\{k\in \omega: (n,k)\in A\}$
i.e.~$A_{(n)}$ is 
the vertical 
section of $A$ at the point $n$.

In the sequel, we will use the following ideals.
\begin{itemize}
\item
	$\fin^2 = \{A\subseteq\omega\times\omega:\forall^\infty n \, (|A_{(n)}|<\omega)\}$.
	\item $\ED=\{A\subseteq\omega\times\omega:\exists m  \, \forall^\infty n \, (|A_{(n)}|<m)\}$.
	\item $\ED_{\fin}=\{A\subseteq\Delta:\exists m  \, \forall n \, (|A_{(n)}|<m)\}$, where $\Delta = \{(n,k)\in\omega^2: n\geq k\}$.
\item The \emph{summable ideals}: 
$\I_g = \{A\subseteq\omega: \sum_{n\in A}g(n)<\infty\}$, where $g:\omega\to[0,\infty)$ satisfies $\sum_{n\in \omega}g(n)=\infty$.
\item $\nwd = \{A\subseteq \Q\cap [0,1]: \text{$A$ is nowhere dense}\}$.
\item $\conv = \{A\subseteq \Q\cap [0,1]: \text{$A$ has at most finitely many limit points}\}$.
\item $\random$ is the ideal generated by homogeneous sets in some fixed instance of the random graph on $\omega$ (see e.g.~\cite[p.29]{alcantara-phd-thesis} for details).
\item $\cW$ is the ideal of all subsets of $\omega$ which does not contain arbitrary long finite arithmetic progression.
\end{itemize}

Let $\I,\J$ be ideals on $X$ and $Y$ respectively.
We write $\J\approx \I$ if there is a bijection $f:X\to Y$ such that $f^{-1}[B]\in \I \iff B\in \J$, and say that $\I$ and $\J$ are \emph{isomorphic}.

One can think about all of the above ideals as ideals on $\omega$ by identifying the domain set of the ideal (for instance, $\omega\times\omega$ or $\Q\cap[0,1]$) with $\omega$ via a fixed bijection.

If $\I\not\approx\fin$ and $\I \not\approx \fin\oplus\cP(\omega)=\{A\subseteq \omega\times 2:(\omega\times \{0\})\cap A\text{ is finite}\}$, then  we define 
$\adds(\I)  = \min\{|\cA|:\cA\subseteq \I \land 
\neg(\exists B\in \I\, \forall A\in \cA\,(|A\setminus B|<\omega)\}$.
It is easy to see that $\adds(\I) = \omega$  for non P-ideals and $\adds(\I)\geq \omega_1$ for P-ideals. 


\subsection{Borel ideals}

An ideal on $\omega$ is called \emph{$F_\sigma$} ($\mathbf{\Sigma}^0_\alpha$,  Borel, analytic and so on, resp.) if it is an $F_\sigma$ ($\mathbf{\Sigma}^0_\alpha$, Borel, analytic and so on, resp.) subset of $\cP(\omega)$ with a topology induced from the Cantor space $\{0,1\}^\omega$ by identifying subsets of $\omega$ with their characteristic functions.

A map $\phi: \cP(\omega)\to [0,+\infty]$ is a \emph{submeasure on $\omega$} if for every $A,B\subseteq \omega$ we have: $\phi(\emptyset)=0$ and $\phi(A)\leq \phi(A\cup B) \leq \phi(A)+\phi(B)$. A submeasure $\phi$ is \emph{lower semicontinuous} (in short: lsc) if $\phi(A)=\lim_{n\to\infty}\phi(A\cap n)$ for all $A\subseteq \omega$. For any lsc submeasure $\phi$, we define the following families: $\Fin(\phi)=\{A\subseteq \omega: \phi(A)<\infty\}$ and $\Exh(\phi)=\{A\subseteq \omega: \lim_{n\to \infty}\phi(A\setminus n)=0\}$.

\begin{theorem}[\cite{MR1124539} and \cite{MR1708146}]
\label{lscsm}
\ 
\begin{enumerate}
\item If $\I$ is an $F_{\sigma}$-ideal, then there is a lsc submeasure $\phi$ such that $\I=\Fin(\phi)$.
\item If $\I$ is an analytic P-ideal, then there is a lsc submeasure $\phi$ such that $\I=\Exh(\phi)$.
\item If  $\I$ is an $F_{\sigma}$ P-ideal, then there is a lsc submeasure $\phi$ such that $\I=\Exh(\phi)=\Fin(\phi)$.
\end{enumerate}
\end{theorem}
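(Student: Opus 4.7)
The statement bundles three classical facts: part (1) is Mazur's theorem and parts (2)--(3) are Solecki's. Since the paper cites these as external references, my plan is to sketch the two essentially different constructions behind them.

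For part (1), I would write the $F_\sigma$ ideal as $\I = \bigcup_n K_n$ with each $K_n$ closed in $2^\omega$. The first step is to replace each $K_n$ by its closure under taking subsets and finite unions; both operations preserve closedness in the compact space $2^\omega$, because the hereditary closure of $K_n$ is the projection of the compact set $\{(A,B) : A \subseteq B\} \cap (2^\omega \times K_n)$ and the union-closure is the continuous image of $K_n \times K_n$. After arranging $K_n \subseteq K_{n+1}$, I would define
\[
\phi(A) = \inf\{n \in \omega : A \cap m \in K_n \text{ for every } m \in \omega\},
\]
with the convention $\inf\emptyset = \infty$. Monotonicity then follows from the heredity of each $K_n$, subadditivity from their union-closure (if $\phi(A),\phi(B) \leq n$ then $(A \cup B) \cap m \in K_n$ for every $m$), and lower semicontinuity from the observation that ``$\phi(A) \leq n$'' depends only on the finite truncations $A \cap m$. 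Finally I would verify $\Fin(\phi) = \I$: heredity gives that $A \in \I$ implies $\phi(A) < \infty$, and closedness of $K_n$ forces the converse (if $A \cap m \in K_n$ for all $m$ then $A \in K_n$).

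Part (2) is Solecki's theorem and is the main obstacle. My approach would be to exploit both the $\mathbf{\Sigma}^1_1$ structure of $\I$ and the P-ideal property. Given an analytic P-ideal, one picks a Souslin scheme witnessing analyticity and uses the P-property to diagonalize across the branches of that scheme, extracting a sequence of hereditary compact families whose associated ``level'' functional is a lsc submeasure $\phi$ with $A \in \I$ iff $\lim_{m \to \infty} \phi(A \setminus m) = 0$. The P-property is precisely what is needed to force the tail-vanishing direction: it lets one pass, in a uniform way, from an $\omega$-sequence of elements of $\I$ to a single ``diagonal'' almost-upper-bound in $\I$, which in turn lets one relate the submeasure of tails of $A$ to the location of $A$ in the filtration defining $\I$.

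Part (3) would then follow by combining (1) and (2). Applying Mazur's construction to an $F_\sigma$ P-ideal $\I$ produces $\phi_1$ with $\I = \Fin(\phi_1)$, and Solecki's construction produces $\phi_2$ with $\I = \Exh(\phi_2)$. Setting $\phi = \phi_1 + \phi_2$, the resulting lsc submeasure satisfies $\Fin(\phi) = \Fin(\phi_1) \cap \Fin(\phi_2) = \I$, while $\Exh(\phi) \supseteq \Exh(\phi_2) = \I$; since $\Exh(\phi) \subseteq \Fin(\phi)$ always holds, one obtains $\Exh(\phi) = \Fin(\phi) = \I$. Among the three parts, (2) is by far the deepest --- (1) and (3) reduce to clean closure-and-combination arguments, while (2) requires the full descriptive-combinatorial machinery of Solecki's Souslin-scheme diagonalization.
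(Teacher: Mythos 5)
The paper gives no proof of this theorem; it is quoted with citations to Mazur's and Solecki's papers, so there is no in-paper argument to compare your sketch against. Evaluating it on its own terms: part (2) is only a statement of intent (which is defensible, since the paper also defers to the literature there), and part (1) needs a small repair --- closing $K_n$ under \emph{pairwise} unions is a continuous image of $K_n\times K_n$ and hence compact, but iterating to get closure under \emph{all} finite unions produces an increasing union of compacta that need not be compact. The standard fix is to pass to a shifted filtration $L_n$ in which $L_{n+1}$ absorbs pairwise unions of $L_n$ (yielding $\phi(A\cup B)\leq\max(\phi(A),\phi(B))+1$, which is still subadditive), or to let $L_n$ consist of unions of at most $n$ members of a single fixed hereditary compact generating family.

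Part (3), however, contains a genuine gap. You set $\phi=\phi_1+\phi_2$ and claim $\Exh(\phi)\supseteq\Exh(\phi_2)$; the inclusion actually goes the other way. For nonnegative submeasures, $\phi_1(A\setminus m)+\phi_2(A\setminus m)\to 0$ forces \emph{both} summands to tend to $0$, so $\Exh(\phi_1+\phi_2)=\Exh(\phi_1)\cap\Exh(\phi_2)\subseteq\Exh(\phi_2)$. Since Mazur's $\phi_1$ is $\omega\cup\{\infty\}$-valued, $\Exh(\phi_1)$ is in general a proper (often trivial) sub-ideal of $\Fin(\phi_1)=\I$, and then $\Exh(\phi)$ is a proper sub-ideal of $\I$, so $\Exh(\phi)=\Fin(\phi)$ fails. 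Thus part (3) does not follow by a formal combination of (1) and (2); one needs Solecki's separate argument, which constructs a single submeasure using the $F_\sigma$ hypothesis to control the $\Exh$- and $\Fin$-classes simultaneously rather than by summing two independently obtained submeasures.
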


It is not difficult to see that each summable ideal $\I_g$ is an $F_\sigma$ P-ideal, and it is tall iff $\lim_{n\to \infty}g(n)=0$.

\begin{proposition}[{\cite[Proposition~4.5]{MR2861027}}]
	\label{prop:tall-analytic-P-ideal-contains-summable-ideal}
	Each tall analytic P-ideal contains a tall summable ideal. In particular, an ideal contains a tall summable ideal if and only if it contains a tall analytic P-ideal.
\end{proposition}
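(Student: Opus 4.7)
The plan is to apply Solecki's representation theorem (Theorem~\ref{lscsm}(2)) to $\I$ and then read off a summable ideal directly from the submeasure values on singletons. Concretely, let $\I$ be a tall analytic P-ideal and fix a lsc submeasure $\phi$ with $\I=\Exh(\phi)$. Define $g:\omega\to[0,+\infty]$ by $g(n)=\phi(\{n\})$. Since $\{n\}\in\I\subseteq\Fin(\phi)$, we have $g(n)<\infty$ for every $n$.

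Next, I would verify the three properties needed for $\I_g$ to be a tall summable ideal contained in $\I$. First, $\sum_{n\in\omega}g(n)=\infty$: otherwise, for every $A\subseteq\omega$, subadditivity of $\phi$ gives $\phi(A\setminus n)\leq\sum_{k\geq n, k\in A}g(k)\to 0$, whence $\omega\in\Exh(\phi)=\I$, contradicting that $\omega\notin\I$. Second, $g(n)\to 0$: if not, fix $\varepsilon>0$ and an infinite $A$ with $g(n)\geq\varepsilon$ for $n\in A$; by tallness, there is an infinite $B\subseteq A$ with $B\in\I$, but then $\phi(B\setminus n)\geq\phi(\{k\})\geq\varepsilon$ for any $k\in B\setminus n$, contradicting $B\in\Exh(\phi)$. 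Third, $\I_g\subseteq\I$: if $\sum_{n\in A}g(n)<\infty$, then subadditivity gives $\phi(A\setminus n)\leq\sum_{k\in A, k\geq n}g(k)$, which tends to $0$ as a tail of a convergent series, so $A\in\Exh(\phi)$.

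Thus $\I_g$ is a summable ideal with $\lim_{n\to\infty}g(n)=0$, hence tall (as remarked in the excerpt after Theorem~\ref{lscsm}), and contained in $\I$.

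For the ``in particular'' part, the backward implication is immediate from the main claim. For the forward implication, observe that every summable ideal $\I_g$ is itself an analytic (in fact $F_\sigma$) P-ideal, so if $\I$ contains a tall summable ideal it automatically contains a tall analytic P-ideal. The main obstacle I anticipate is simply the verification that $g(n)\to 0$ from tallness of $\I$; the rest follows by routine manipulation of lsc submeasures.
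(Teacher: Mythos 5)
The paper itself gives no proof of this proposition — it cites \cite[Proposition~4.5]{MR2861027} — so there is no in-text argument to compare against. Your approach (apply Solecki's representation $\I=\Exh(\phi)$, set $g(n)=\phi(\{n\})$, and verify divergence of $\sum_n g(n)$, convergence $g(n)\to 0$ via tallness, and $\I_g\subseteq\I$ via $\sigma$-subadditivity of an lsc submeasure) is the standard one, and those three verifications are all correct, as is the "in particular" part.

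One small but real gap: the justification ``$\{n\}\in\I\subseteq\Fin(\phi)$'' for $g(n)<\infty$ does not follow merely from $\I=\Exh(\phi)$. An lsc submeasure with $\Exh(\phi)=\I$ can perfectly well assign $\phi(\{n\})=\infty$: lower semicontinuity only requires $\phi(A)=\lim_m\phi(A\cap m)$, which is consistent with singletons having infinite measure, and in that case $\{n\}$ is still in $\Exh(\phi)$ (since $\phi(\{n\}\setminus m)=0$ for $m>n$) but not in $\Fin(\phi)$. So the inclusion $\I\subseteq\Fin(\phi)$ is precisely what you would need, not something you get automatically from $\I=\Exh(\phi)$. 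The fix is routine and worth stating: replace $\phi$ by $\min(\phi,1)$, which is again an lsc submeasure, satisfies $\Exh(\min(\phi,1))=\Exh(\phi)$, and is bounded, hence finite on every singleton. With this normalization made explicit, the argument is complete.
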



\subsection{Kat\v{e}tov order}

By $Y^X$ we denote the family of all functions from $X$ into $Y$.
A function $f\in Y^X$ is \emph{finite-to-one} if the inverse image $f^{-1}[\{y\}]$  is finite for every $y\in Y$.

Let $\I,\J$ be ideals on $X$ and $Y$ respectively.
	For $\cF\subseteq Y^X$, we write 
$\J\leq_{\cF}\I$ if there is a function $f\in\cF$ such that $f^{-1}[B]\in \I$ for each $B\in \J$. In particular, we write
	\begin{enumerate}
		\item 
$\J\leq_{\Kat}\I$ if $\cF=Y^X$ ($\leq_{\Kat}$ is called the \emph{Kat\v{e}tov order}),

		\item 
$\J\leq_{\KB}\I$ if $\cF$ is the family of all finite-to-one functions from $X$ to $Y$ ($\leq_{\KB}$ is called the \emph{Kat\v{e}tov-Blass order}),

		\item 
$\J\leq_{\point}\I$ if $\cF$ is the family of all one-to-one functions from $X$ to $Y$,
		\item 
$\J\sqsubseteq\I$ if $\cF$ is the family of all bijections from $X$ to $Y$ (in this case we say that $\I$ contains an isomorphic copy of $\J$).
\end{enumerate}

In the sequel, we will be particularly interested in three families of functions: 
\begin{itemize}
	\item 
$\omega^\omega$ -- the family  of all functions $f:\omega\to\omega$
\item $\fin-1$ -- the family of all finite-to-one functions $f:\omega\to\omega$,
\item $1-1$ -- the family of all one-to-one functions  $f:\omega\to\omega$. 
\end{itemize}
Moreover, as we can think about ideals on countable sets as ideals on $\omega$, we will sometimes write $\I\leq_{\omega^\omega}\J$, $\I\leq_{\fin-1}\J$ or $\I\leq_{1-1}\J$ even if $\I$ and $\J$ are defined on other countable sets than $\omega$.

\begin{proposition}\ 
\label{prop:ideals-not-below-abov-other-ideals-in-Katetov-order}
\label{prop:selective-ideal-not-K-above-ED}
\label{prop:P-ideal-not-K-below-FINxFIN}
\label{prop:P-ideal-not-K-above-FINxFIN}
\label{prop:P-plus-ideal-not-K-above-FINxFIN}
\label{prop:ED-FIN-not-K-below-FINxFIN}
\label{prop:P-ideal-not-K-below-ED-FIN}
\begin{enumerate}
	\item 
$\ED_{\fin}\not\leq_{\Kat}\fin^2$.\label{prop:ED-FIN-not-K-below-FINxFIN:item}

	\item 
$\fin^2\not\leq_{\Kat}\I$ for each P-ideal $\I$.\label{prop:P-ideal-not-K-above-FINxFIN:item}

\item 
$\fin^2\not\leq_{\Kat} \I$ for each $P^+$-ideal $\I$.\label{prop:P-plus-ideal-not-K-above-FINxFIN:item}

\item 
$\ED\not\leq_{\Kat} \I$ for each selective ideal $\I$.\label{prop:selective-ideal-not-K-above-ED:item}

\item 
	$\I\not\leq_{\Kat}\fin^2$ for each tall P-ideal $\I$.\label{prop:P-ideal-not-K-below-FINxFIN:item}

\item 
$\I\not\leq_{\Kat}\ED_\fin$ for each tall P-ideal $\I$.\label{prop:P-ideal-not-K-below-ED-FIN:item}

\end{enumerate}
\end{proposition}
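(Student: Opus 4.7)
The plan is to prove all six items by the same template: given a putative Kat\v{e}tov witness $f$ for $\I\leq_{\Kat}\J$, I will exhibit a set $B\in\I$ with $f^{-1}[B]\notin\J$. In every case the construction of $B$ splits into an ``easy'' sub-case, where a single-point or thin $B$ suffices because some specific fibre of $f$ turns out to be $\J$-positive, and a ``structural'' sub-case where the standing hypothesis on $\I$ or $\J$ (P-ideal, $P^+$-ideal, selectivity, tallness) is used to diagonalize many partial witnesses into one global $B$.

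For items (2), (3), (4), where $\I\in\{\fin^2,\ED\}$ has its rows/points among its elements and $\J$ carries a diagonalization property, I would partition $\mathrm{dom}(\J)$ via $f$-preimages of the natural rows $R_n=\{n\}\times\omega$, setting $C_n=f^{-1}[R_n]$ (and, for (4), refine further to $C_{(n,k)}=f^{-1}[\{(n,k)\}]$). If some piece is $\J$-positive, the corresponding row (respectively singleton) lies in $\I$ and already works as $B$. Otherwise every piece is in $\J$, and I apply the relevant diagonalization: P-ideality of $\J$ for (2) to obtain $B'\in\J$ with each $C_n\setminus B'$ finite, then $A=\mathrm{dom}(\J)\setminus B'\in\J^*$; the $P^+$-diagonalization of the decreasing $\J^+$-sequence $D_k=\bigcup_{n\ge k}C_n$ for (3) to obtain $A\in\J^+$ with each $A\cap C_n$ finite; and selectivity of $\J$ or of $\J\!\restriction\!R_n$ for (4) to obtain $A\in\J^+$ with $|A\cap C_{(n,k)}|\le 1$ or $|A\cap R_n|\le 1$. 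In each case $B:=f[A]$ has finite columns (respectively, columns of size $\le 1$), hence $B\in\I$, while $f^{-1}[B]\supseteq A\notin\J$.

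For items (5) and (6) the roles reverse: $\J\in\{\fin^2,\ED_\fin\}$ is the target and $\I$ is a tall P-ideal. Here I would examine the \emph{images} $A_n=f[\{n\}\times\omega]$ (for (6), $A_n=f[\{n\}\times(n+1)]$) inside $\mathrm{dom}(\I)$. If some column map $f_n=f\!\restriction\!(\{n\}\times\omega)$ is not finite-to-one, a point with infinite fibre together with tallness of $\I$ yields a thin $B\in\I$. In the ``spread-out'' sub-case of (5), each $A_n$ is infinite; the P-ideality of $\I$ applied to those $A_n$ already in $\I$, combined with tallness to first extract infinite $\I$-subsets $C_n\subseteq A_n$ of the remaining indices before feeding them to the P-ideal pseudo-union, gives $B\in\I$ with $B\cap A_n$ infinite for infinitely many $n$. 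In the ``spread-out'' sub-case of (6), after a pigeonhole dispatches the situation when $|A_n|$ stays bounded on infinitely many $n$, one has $|A_n|\to\infty$; invoking Proposition~\ref{prop:tall-analytic-P-ideal-contains-summable-ideal} I pass to a tall summable subideal $\I_g\subseteq\I$ with $g(n)\to 0$ and build $B=\bigsqcup_k B_k$ inductively with $B_k\subseteq A_{n_k}$ disjoint from the previously chosen $B_j$'s, $|B_k|=k$, and $g(y)<2^{-k}/k$ on $B_k$ (possible because $\{y:g(y)\ge\varepsilon\}$ is finite and $|A_{n_k}|\to\infty$); then $B\in\I_g\subseteq\I$ while the column of $f^{-1}[B]$ at $n_k$ has size at least $k$, so $f^{-1}[B]\notin\ED_\fin$.

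For (1) no P- or tallness hypothesis is available on either side, so the argument is a direct combinatorial construction exploiting the fact that the columns of $\Delta$ are finite. If infinitely many $f_n\colon\omega\to\Delta$ are not finite-to-one, witnesses $p_n\in\Delta$ with $f_n^{-1}(p_n)$ infinite either cluster at a single value (take $B=\{p_n\}$) or form an infinite subset of $\Delta$, which, by the finiteness of columns of $\Delta$, can be thinned to a subsequence in pairwise distinct columns, yielding $B\in\ED_\fin$ with $f^{-1}[B]$ having infinitely many infinite columns. In the remaining case all $f_n$ are finite-to-one and each $A_n=f_n(\omega)$ is infinite in $\Delta$, hence meets infinitely many columns; a countable greedy recursion then chooses $q_{k,\ell}\in A_{n_\ell}$ for $k,\ell\in\omega$ in pairwise distinct columns of $\Delta$ (always possible since only finitely many columns have been used at each stage), producing $B\in\ED_\fin$ with $|B\cap A_{n_\ell}|=\infty$ for every $\ell$, and hence $f^{-1}[B]\notin\fin^2$. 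The hardest part throughout is precisely this ``spread-out'' case, where many partial witnesses must be fused into a single set lying in the source ideal; it is resolved in (5)--(6) by Proposition~\ref{prop:tall-analytic-P-ideal-contains-summable-ideal} and the summable approximation of $g\to 0$, and in (1) by the rigidity imposed by the finite columns of $\Delta$.
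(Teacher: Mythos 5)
Your template works cleanly for items (1)--(5). For (1)--(3) you supply direct arguments where the paper simply cites the literature; for (4) your diagonalization over the preimages $C_n=f^{-1}[\{n\}\times\omega]$ is exactly the paper's proof (your ``$|A\cap C_{(n,k)}|\le 1$ or $|A\cap R_n|\le 1$'' should read $|A\cap C_n|\le 1$, but the intent is clear). Your item (5) is in fact a little leaner than the paper's: once tallness and P-ideality produce $B\in\I$ with $C_n\setminus B$ finite for all $n$, the column $(f^{-1}[B])_{(n)}$ is already infinite for every $n$ with $A_n$ infinite, which immediately contradicts $f^{-1}[B]\in\fin^2$; the paper's Case 2 goes on to build a further set $A=\{c_n\}$, which is unnecessary.

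Item (6), however, contains a genuine gap. You invoke Proposition~\ref{prop:tall-analytic-P-ideal-contains-summable-ideal} to pass to a tall summable subideal $\I_g\subseteq\I$ with $g(n)\to 0$, but that proposition requires $\I$ to be \emph{analytic}, whereas the statement of (6) is for \emph{arbitrary} tall P-ideals. This is not a cosmetic hypothesis: the dual ideal $\cU^*$ of any P-point is a tall P-ideal which is not analytic (it does not even have the Baire property), and in fact showing that $\cU^*\not\leq_{\Kat}\ED_\fin$ for such $\cU$ is precisely what item (6) is needed for in the Q-point results of the paper. The underlying difficulty you are running into is structural: in $\Delta$ the columns $\{n\}\times(n+1)$ are finite, so the images $A_n$ are finite and neither tallness nor P-ideality can be applied to them directly; the summable-subideal trick was your workaround, and it is exactly the step that fails without analyticity. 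The paper sidesteps this by working with the \emph{rows} of $\Delta$, which are infinite: after the preliminary observation that rows with infinite $f$-image occur cofinally, it extracts nested infinite sets $E_n$ and increasing heights $k_n$ with $f\restriction E_n\times\{k_n\}$ one-to-one and $f[E_n\times\{k_n\}]\in\I$ (tallness applied to an infinite row-image), and then a single P-ideal fusion of the sets $f[E_n\times\{k_n\}]$ yields $B\in\I$ whose preimage contains $\bigcup_n (E_n\setminus F_n)\times\{k_n\}$ with unbounded vertical sections. To repair your (6) you would need a row-based (or otherwise analyticity-free) argument of this kind.
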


\begin{proof}
(\ref{prop:ED-FIN-not-K-below-FINxFIN:item})
See \cite{MR3696069}.
(\ref{prop:P-ideal-not-K-above-FINxFIN:item}) See \cite[Observation~2.3]{MR3600759}
(\ref{prop:P-plus-ideal-not-K-above-FINxFIN:item}) See \cite[Corollary 6.7]{MR3868039}.

(\ref{prop:selective-ideal-not-K-above-ED:item})
Take any $f:\omega\to\omega\times\omega$
and define
$A_n = f^{-1}[(\omega\setminus n )\times \omega]$ for each $n$.
If $A_n\in\I$ for some $n$, then $f^{-1}[n\times\omega]\notin\I$ and the proof is finished.
Suppose that $A_n\notin\I$ for each $n$.
Since $\I$ is selective, there is $A\notin\I$ such that $|A\cap (A_n\setminus A_{n+1})|\leq 1$ for each $n$.
Then $B=f[A]\in \ED$ and $f^{-1}[B]\notin \I$, so the proof is finished.

(\ref{prop:P-ideal-not-K-below-FINxFIN:item})
	Suppose  that $\I\leq_{\Kat}\fin^2$.
	Let $f:\omega\times\omega\to\omega$ be such that 
	$f^{-1}[B]\in \fin^2$ for each $B\in \I$.
	We have 2 cases.
	
	Case (1).
	The set $f[\{n\}\times \omega]$ is finite for infinitely many $n$.
	Let $C =\{n\in \omega: f[\{n\}\times \omega] \text{\ is finite}\}$. 
	Then for each $n\in C$ there is $k_n\in f[\{n\}\times \omega]$ such that 
	$(f^{-1}[\{k_n\}])_{(n)}$ is infinite.
	Let $A=\{k_0,k_1,\dots\}$.
	Since  $f^{-1}[A]\notin \fin^2$, we have $A\notin\I$. In particular, $A$ has to be infinite.
	Since $\I$ is tall, there is an infinite set $B\in \I$ with $B\subseteq A$.
	But then $f^{-1}[B]\notin\fin^2$, a contradiction.
	
	Case (2).
	The set $f[\{n\}\times \omega]$ is infinite for all but finitely many $n$.
	Let $C =\{n\in \omega: f[\{n\}\times \omega] \text{\ is infinite}\}$. 
	Since $\I$ is tall, for each $n\in C$ there is an infinite set $B_n\in \I$, with $B_n\subseteq f[\{n\}\times \omega]$. Observe that the set $(f^{-1}[B_n])_{(n)}$ is infinite for each $n\in C$.
	
	Since $\I$ is a P-ideal, there is $B\in \I$ such that $B_n\setminus B$ is finite for each $n$.
	
	As $f^{-1}[B]\in \fin^2$, the set $(f^{-1}[B])_{(n)}$ is finite for all but finitely many $n$, say for all $n\in D$. 
	
	All in all, the set $(f^{-1}[B_n\setminus B])_{(n)} = (f^{-1}[B_n])_{(n)}\setminus (f^{-1}[B])_{(n)}$ is infinite for all $n\in C\cap D$.
	
	Since $B_n\setminus B$ is finite, for each $n\in C\cap D$ there is $c_n\in B_n\setminus B$ such that $(f^{-1}[\{c_n\}])_{(n)}$ is infinite.
	
	Let $A=\{c_n: n\in C\cap D\}$.
	Note that $C\cap D$ is infinite. Thus, $f^{-1}[A]\notin \fin^2$ and consequently $A\notin\I$. In particular, $A$ is infinite.
	Since $\I$ is tall, there is an infinite set $A'\in \I$ with $A'\subseteq A$.
	But then $f^{-1}[A']\notin\fin^2$, a contradiction.

(\ref{prop:P-ideal-not-K-below-ED-FIN:item})
Fix any $f:\Delta\to\omega$, where $\Delta=\{(i,j)\in\omega^2:\ j\leq i\}$ (i.e., $\ED_\fin=\ED\restriction \Delta$). Suppose to the contrary that $f$ witnesses $\I\leq_{\Kat}\ED_\fin$. 

Observe that for each $n\in\omega$ and an infinite $A\subseteq\omega$ one can find $k>n$ such that $f[(A\times\{k\})\cap\Delta]$ is infinite. Indeed, otherwise we can inductively pick sets $D_i\in[\omega]^\omega$, for $i>n$, such that for each $i>n$ we have:
\begin{itemize}
	\item $D_{i+1}\subseteq D_i$;
	\item $D_i\times \{i\}\subseteq\Delta$;
	\item $f[D_i \times \{i\}]=\{x_i\}$ for some $x_i\in\omega$.
\end{itemize}
Then either $\{x_i:\ i>n\}\in\fin\subseteq\I$ and $f^{-1}[\{x_i:\ i>n\}]\supseteq \bigcup_{i>n}D_i\times \{i\} \notin\ED_\fin$ or using tallness of $\I$ we could find an infinite $C\subseteq \{x_i:\ i>n\}$, $C\in\I$ such that 
$$f^{-1}[C]\supseteq \bigcup\{D_i \times \{i\} :\ x_i\in C\}\notin\ED_\fin.$$

We will inductively define sequences $\{k_n:n\in\omega\}\subseteq\omega$ and $\{E_n:n\in\omega\}\subseteq[\omega]^\omega$ such that for each $n\in\omega$:
\begin{itemize}
	\item $E_{n+1}\subseteq E_n$;
	\item $k_{n+1}>k_n$;
	\item $E_n\times\{k_n\}\subseteq\Delta$;
	\item $f\restriction E_n\times\{k_n\}$ is one-to-one;
	\item $f[E_n\times\{k_n\}]\in\I$.
\end{itemize}
Start by choosing any $k_0\in\omega$ such that $f[(\omega\times\{k_0\})\cap\Delta]$ is infinite (such $k_0$ exists by the above observation) and any infinite $E'_0\subseteq\omega$ such that $E'_0\times\{k_0\}\subseteq\Delta$ and $f\restriction E'_0\times\{k_0\}$ is one-to-one. As $\I$ is tall, there is $E_0\subseteq E'_0$ such that $f[E_0\times\{k_0\}]\in\I$. At step $n+1$, find any $k_{n+1}>k_n$ such that $f[E_n\times\{k_{n+1}\}\cap\Delta]$ is infinite (such $k_{n+1}$ exists by the observation from the second paragraph of this proof) and pick an infinite $E'_{n+1}\subseteq E_n$ such that $E'_{n+1}\times\{k_{n+1}\}\subseteq\Delta$ and $f\restriction E'_{n+1}\times\{k_{n+1}\}$ is one-to-one. Again, using tallness of $\I$ find $E_{n+1}\subseteq E'_{n+1}$ such that $f[E_{n+1}\times\{k_{n+1}\}]\in\I$. 

Once the induction is completed, as $\I$ is a P-ideal, there is $B\in\I$ such that $f[E_n\times\{k_n\}]\setminus B$ is finite for all $n\in\omega$. Then $F_n=E_n\setminus\{i\in E_n:\ f(i,k_n)\in B\}\in\fin$ as $f\restriction E_n\times\{k_n\}$ is one-to-one. Thus 
$$f^{-1}[B]\supseteq\bigcup_{n\in\omega}\left((E_n\setminus F_n)\times\{k_n\}\right)\notin\ED_\fin$$
as for each $m\in\omega$ we have 
$$\left|(\{l\}\times\omega)\cap\bigcup_{n\in\omega}\left((E_n\setminus F_n)\times\{k_n\}\right)\right|\geq m,$$
where $l=\min(E_m\setminus\bigcup_{j\leq m}F_j)$.
\end{proof}


\subsection{Ultrafilters and \textbf{$\I$}-ultrafilters}
\label{subsec:ultrafilters}

Recall that an ultrafilter $\cU$ on $\omega$  is 
\begin{enumerate}
	\item 
a \emph{P-point} if $\cU^*$ is a P-ideal (equivalently, if $\cU^*$ is a $P^+$-ideal),
	
	\item 
a \emph{selective ultrafilter} (a.k.a.~\emph{Ramsey ultrafilter}) if $\cU^*$ is a selective ideal,

	\item 
a \emph{Q-point} if for each partition $\{A_n:n\in\omega\}$ of $\omega$ into finite sets there is $A\in \cU$ such that $|A_n\cap A|=1$ for each $n$.
\end{enumerate}
It is known that an ultrafilter is selective if and only if it is both a P-point and a Q-point (see e.g.~\cite{MR2768685}).

Following Baumgartner, Brendle and Fla\v{s}kov\'{a}~\cite{MR1335140,MR3600759}, for an ideal $\I$ on $\omega$, we say that an ultrafilter $\cU$ on $\omega$  is 
\begin{enumerate}
\item an \emph{$\I$-ultrafilter} if $\I\not\leq_{\Kat} \cU^*$ i.e.~there is no function $f:\omega\to\omega$ such that $f^{-1}[B]\in \cU^*$ for each $B\in \I$,

\item 
a \emph{weak $\I$-ultrafilter} if $\I\not\leq_{\KB} \cU^*$ i.e.~there is no finite-to-one function $f:\omega\to\omega$ such that $f^{-1}[B]\in \cU^*$ for each $B\in \I$,

\item 
an \emph{$\I$-point} if $\I\not\leq_{\point} \cU^*$ i.e.~there is no one-to-one function $f:\omega\to\omega$ such that $f^{-1}[A]\in\cU^\star$ for all $A\in\I$.
If $\I$ is tall, then, using \cite[Lemma 3.3]{MR3034318}, one can see that  in the definition of $\I$-point one can consider only bijections instead of one-to-one functions functions i.e.~$\I\not\leq_{\point} \cU^* \iff \I\not\sqsubseteq \cU^*$.
\end{enumerate}

It was already mentioned in the Introduction that if there is an $\I$-ultrafilter which is not a $\J$-ultrafilter, then necessary $\I\not\leq_K\J$. Similarly, for instance, if there is an $\I$-ultrafilter (weak $\I$-ultrafilter or $\I$-point, resp.) which is not a $\J$-point, then $\I\not\leq_K\J$ ($\I\not\leq_{KB}\J$ or $\I\not\leq_{P}\J$, resp.). 

\begin{theorem}[{see e.g.~\cite[Observation~2.1]{MR3600759}}] \ 
\label{thm:known-ultrafilters-characterized-as-I-ultrafilters}
	\begin{enumerate}
		\item $\cU$ is a P-point 
		$\iff$ $\cU$ is a $\fin^2$-ultrafilter
		$\iff$ $\cU$ is a weak $\fin^2$-ultrafilter
		$\iff$ $\cU$ is a $\fin^2$-point
		$\iff$ $\cU$ is a $\conv$-ultrafilter
		$\iff$ $\cU$ is a weak $\conv$-ultrafilter
		$\iff$ $\cU$ is a $\conv$-point.
		\label{thm:known-ultrafilters-characterized-as-I-ultrafilters:P-point}

		\item $\cU$ is a Q-point 
$\iff$ $\cU$ is a weak $\ED_{\fin}$-ultrafilter
$\iff$ $\cU$ is a $\ED_{\fin}$-point.\label{thm:known-ultrafilters-characterized-as-I-ultrafilters:Q-point}

\item $\cU$ is a selective ultrafilter 
$\iff$ $\cU$ is an $\ED$-ultrafilter
$\iff$ $\cU$ is a weak $\ED$-ultrafilter
$\iff$ $\cU$ is an $\ED$-point
$\iff$ $\cU$ is a $\mathcal{R}$-ultrafilter
$\iff$ $\cU$ is a weak $\mathcal{R}$-ultrafilter
$\iff$ $\cU$ is a $\mathcal{R}$-point.\label{thm:known-ultrafilters-characterized-as-I-ultrafilters:Ramsey-point}

	\end{enumerate}
\end{theorem}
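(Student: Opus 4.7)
The plan is to handle all three items in parallel by exploiting that $\leq_\point\subseteq\leq_\KB\subseteq\leq_\Kat$ gives the chain ``$\I$-ultrafilter $\Rightarrow$ weak $\I$-ultrafilter $\Rightarrow$ $\I$-point'' for free. To close each cycle it remains only to show that $\I$-point already forces membership in the target class and to verify that the two ideals appearing in each part induce the same class.

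For part (1), ``P-point $\Rightarrow$ $\fin^2$-ultrafilter'' is Proposition~\ref{prop:P-ideal-not-K-above-FINxFIN}(\ref{prop:P-ideal-not-K-above-FINxFIN:item}) applied to $\cU^*$. For the converse ``$\fin^2$-point $\Rightarrow$ P-point'' I would contrapose: given an increasing $\{X_n\}\subseteq\cU^*$ (arranged to exhaust $\omega$) with no $C\in\cU$ such that $|C\cap X_n|<\omega$ for all $n$, the injection $f:\omega\to\omega\times\omega$ defined by $f(k)=(n,k)$ for $k\in X_n\setminus X_{n-1}$ works: for $B\in\fin^2$ with $B_{(n)}$ finite for $n\geq N$, the residue $Y=f^{-1}[B]\setminus X_{N-1}$ meets each $X_m$ in a finite union of finite sets, hence $Y\notin\cU$ by the choice of $\{X_n\}$, and therefore $f^{-1}[B]\subseteq X_{N-1}\cup Y\in\cU^*$, witnessing $\fin^2\leq_\point\cU^*$. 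The $\conv$-equivalences reduce to the $\fin^2$-ones via a $\leq_\point$-equivalence of the two ideals: countably many convergent sequences with distinct limits in $\Q\cap[0,1]$ embed $\fin^2$ into $\conv$, and any $B\in\conv$ decomposes into finitely many convergent sequences plus a finite set, providing the converse embedding.

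For part (2), ``Q-point $\Rightarrow$ weak $\ED_\fin$-ultrafilter'' uses that a finite-to-one $f:\omega\to\Delta$ witnessing $\ED_\fin\leq_\KB\cU^*$ partitions $\omega$ into finite fibers by preimaging the finite vertical sections of $\Delta$; a Q-point selector $A\in\cU$ then meets each fiber in at most one point, so $|f[A]_{(n)}|\leq 1$ for all $n$, giving $f[A]\in\ED_\fin$ and $A\subseteq f^{-1}[f[A]]\in\cU^*$, a contradiction. Conversely, ``$\ED_\fin$-point $\Rightarrow$ Q-point'' is proved by contraposition: a partition $\{F_n\}$ of $\omega$ witnessing non-Q-point is embedded injectively into $\Delta$ by placing $F_n$ bijectively into a sufficiently tall vertical section, with the non-Q-point assumption ensuring that preimages of $\ED_\fin$-members lie in $\cU^*$. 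For part (3), the forward direction is Proposition~\ref{prop:selective-ideal-not-K-above-ED}(\ref{prop:selective-ideal-not-K-above-ED:item}), while the converse follows from parts (1) and (2) once one notes that selective $=$ P-point $+$ Q-point and that the witnesses from (1) and (2) compose, via the identity inclusions $\ED\subseteq\fin^2$ and $\ED_\fin\subseteq\ED$, into a witness of $\ED\leq_\point\cU^*$. The $\random$-equivalences follow from a Ramsey-type argument applied to the $2$-coloring of $[\omega]^2$ induced by a given $f:\omega\to\omega$ together with the random-graph edge relation, combined with a $\leq_\point$-embedding of $\ED$ into $\random$ via a bipartite configuration inside the random graph.

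The main obstacle is constructing the witnessing injections in the reverse implications: each requires that $f$ be one-to-one while simultaneously sending every ideal member to a set in $\cU^*$. The key combinatorial input is that the failure of the target property (P-point, Q-point, selectivity) supplies a partition of $\omega$ precisely matching the structure needed for the preimage verification; the translations between $\fin^2$/$\conv$ and $\ED$/$\random$ are then routine once canonical embeddings are fixed.
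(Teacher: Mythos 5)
The paper does not prove this theorem itself: it cites it as Observation~2.1 of \cite{MR3600759} and uses it as a black box, so there is no in-paper proof to compare against. Assessing your argument on its own merits, the $\fin^2$ and $\ED_\fin$ parts (and their combination for $\ED$) are essentially right, but the $\conv$ and $\random$ equivalences contain genuine gaps.

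For $\conv$: your proposed $\leq_\point$-equivalence of $\fin^2$ and $\conv$ cannot exist, because the paper itself records that $\fin^2\not\leq_{\Kat}\conv$ (and $\leq_\point$ refines $\leq_{\Kat}$). Only the one-sided reduction $\conv\leq_\point\fin^2$ is available (via the fact that $\conv\restriction B\approx\fin^2$ for some $B\notin\conv$); this yields $\fin^2$-point $\Rightarrow$ $\conv$-point, but not the reverse arrow you need to close the cycle. The missing implication, P-point $\Rightarrow$ $\conv$-ultrafilter, is not obtained by a Kat\v{e}tov reduction at all; one argues directly: given $f:\omega\to\Q\cap[0,1]$, pick by compactness a nested sequence of closed intervals $I_n$ of length $2^{-n}$ with $f^{-1}[I_n]\in\cU$, use the P-point property to get $A\in\cU$ with $A\subseteq^* f^{-1}[I_n]$ for every $n$, and observe that $f[A]$ then has at most one limit point (the unique point of $\bigcap_n I_n$), hence $f[A]\in\conv$.

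For $\random$ the situation is parallel: the paper records $\ED\not\leq_{\Kat}\random$, so there is no ``$\leq_\point$-embedding of $\ED$ into $\random$'' as your sketch suggests. The direction $\random$-point $\Rightarrow$ selective uses universality of the random graph rather than a reduction between $\ED$ and $\random$: if $\cU$ is not Ramsey, then some graph $G$ on $\omega$ has no $\cU$-homogeneous set; embedding $G$ isomorphically into the random graph by an injection $e$, and noting that preimages under $e$ of $\random$-sets are finite unions of $G$-homogeneous sets, one checks that $e$ witnesses $\random\leq_\point\cU^*$, so $\cU$ is not a $\random$-point. Your Ramsey-type argument in the other direction (selective $\Rightarrow$ $\random$-ultrafilter) is the right idea, though it needs the preliminary reduction (by selectivity) to $f$ being one-to-one or constant on a set in $\cU$ before applying the two-colouring.

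Finally, a smaller point: in your proof that an $\ED_\fin$-point is a Q-point, after building the injection $f$ you obtain that $f^{-1}[B]$ meets each block $F_n$ in fewer than $m$ points, for $B\in\ED_\fin$ with bound $m$. To conclude $f^{-1}[B]\in\cU^*$, one still needs to split $f^{-1}[B]$ into $m-1$ pieces each meeting every $F_n$ at most once, invoke that $\cU$ is an ultrafilter to put one of them in $\cU$, and derive a contradiction with the non-Q-point witness. That step is implicit in your write-up but should be made explicit, since the non-Q-point hypothesis is stated only for selectors of size one per block.
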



\subsection{Homogeneous ideals}

An ideal $\I$ is called \emph{homogeneous} if $\I\restriction A \approx \I$ for each $A\in \I^+$.
Similarly, an ideal $\I$ is called \emph{$\leq_{\cF}$-homogeneous} if $\I\restriction A \approx_{\cF} \I$ (i.e.~$\I\restriction A \leq_{\cF} \I$ and $\I \leq_{\cF} \I\restriction A$) for each $A\in \I^+$.
Homogeneous ideals were defined and examined in \cite{MR3594409}, whereas $\leq_{\Kat}$-homogeneous ideals were defined and examined in \cite{MR2777744,alcantara-phd-thesis} where the authors used the name \emph{$\Kat$-uniform} ideals in this case.

Obviously, homogeneity implies $\leq_{\point}$-homogeneity which implies $\leq_{\KB}$-homogeneity, and the latter implies 
$\leq_{\Kat}$-homogeneity in turn.
Moreover, it is easy to see that if $\cF$ contains the identity function, then $\I$ is $\leq_{\cF}$-homogeneous if and only if
$\I\restriction A \leq_{\cF} \I$ for each $A\in \I^+$.

\begin{proposition}\ 
\label{prop:homogeneous-idelas}
The ideals $\ED_{\fin}$, $\fin^2$  and $\cW$ are homogeneous.
\end{proposition}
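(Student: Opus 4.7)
My plan is to verify homogeneity case by case: for each of the three ideals $\I \in \{\ED_{\fin}, \fin^2, \cW\}$ and each $A \in \I^+$, I would produce a bijection $\phi$ from the domain of $\I$ onto $A$ witnessing $\I\restriction A \approx \I$. In every case the common recipe is to use the $\I$-positivity of $A$ to locate a ``core'' subset of $A$ which already has the shape of the ambient domain of $\I$, and then to absorb the leftover into the bijection without breaking the ideal structure.

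For $\fin^2$, the set $N := \{n \in \omega : A_{(n)} \text{ is infinite}\}$ is infinite, since $A \in (\fin^2)^+$. Enumerating $N = \{n_k : k \in \omega\}$ and fixing bijections $g_k : \omega \to A_{(n_k)}$, the formula $(k, j) \mapsto (n_k, g_k(j))$ is already a bijection from $\omega \times \omega$ onto the ``infinite-column part'' $B_0 = \bigcup_k \{n_k\} \times A_{(n_k)}$, sending the $k$-th column of the domain onto the $n_k$-th column of $A$. The leftover $A \setminus B_0$ consists only of finite columns and therefore lies in $\fin^2$; I would absorb it by perturbing the above bijection on at most one element per column (if $A \setminus B_0$ is infinite) or by a finite modification otherwise. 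Since each column gets altered by at most one element, the equivalence ``almost all columns of $B$ are finite in $A$'' $\Longleftrightarrow$ ``almost all columns of $\phi^{-1}[B]$ are finite in $\omega \times \omega$'' is immediate.

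For $\ED_{\fin}$, unboundedness of $|A_{(n)}|$ gives, for every $m$, infinitely many $A$-columns of size at least $m$. The plan is to pair $\Delta$-columns with $A$-columns of comparable sizes by a back-and-forth: at each stage one matches one or more $\Delta$-columns with one or more $A$-columns so that the total sizes agree exactly, keeping the number of $\Delta$-columns grouped with a single $A$-column and the number of $A$-columns grouped with a single $\Delta$-column both uniformly bounded. Once such a pairing is achieved, a uniform bound on column-intersections of $B \subseteq A$ transfers, up to a multiplicative constant, to a uniform bound on column-intersections of $\phi^{-1}[B] \subseteq \Delta$, and vice versa, which is exactly the equivalence $\phi^{-1}[B] \in \ED_{\fin} \iff B \in \ED_{\fin}\restriction A$. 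The only delicate step is the bookkeeping of the back-and-forth so that the two bounds stay uniform simultaneously.

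For $\cW$, this is the subtlest case because APs are an arithmetic notion not preserved by arbitrary bijections. Using $A \in \cW^+$ and the fact that this property is stable under removing any set in $\cW$ (in particular, any finite set, so arbitrarily long APs persist in $A$ after arbitrarily large initial deletions), I would build $\phi : \omega \to A$ by a back-and-forth in which, at stage $k$, one picks an unused AP of length $k$ in $\omega$ and an unused AP of length $k$ in $A$ and matches them in order. Van der Waerden's theorem --- already implicit in the fact that $\cW$ is an ideal --- ensures that APs of every finite length remain available at each finite stage on both sides. The main obstacle is showing that the ``contains arbitrarily long APs'' property is reflected by $\phi$ in both directions, not just on the matched blocks but also on APs of $B \subseteq A$ that cross several matched blocks; this again reduces to a van-der-Waerden-style pigeonhole applied to the colouring of a long AP by which matched block each of its elements belongs to, and this pigeonhole is the heart of the argument.
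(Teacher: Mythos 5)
The paper does not actually give a proof of this proposition: it simply cites \cite{MR3594409} for all three ideals. So your attempt is necessarily a different route, and the right question is whether your sketched constructions close the gaps you yourself flag.

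The $\fin^2$ case is fine. Mapping the $k$-th column of $\omega\times\omega$ onto the $k$-th infinite column of $A$ together with (at most) one point of the leftover $A\setminus B_0$, which has only finite columns, produces a bijection $\phi$ under which $(\phi^{-1}[B])_{(k)}$ is infinite iff $B_{(n_k)}$ is infinite; since $B_{(m)}$ is automatically finite for $m\notin N$, the equivalence $B\in\fin^2\restriction A\iff\phi^{-1}[B]\in\fin^2$ follows. This is a complete, if informal, argument.

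For $\ED_{\fin}$ the ``sufficiency'' part of your reasoning is correct and worth stating precisely: if $\phi$ is a bijection such that each $\Delta$-column meets at most $c$ many $A$-columns and each $A$-column meets at most $c$ many $\Delta$-columns, then bounded column-intersections transfer up to a factor $c$ in both directions, hence $\phi$ witnesses $\ED_{\fin}\restriction A\approx\ED_{\fin}$. The genuine gap is the existence of such a $\phi$. You propose to pair whole columns ``so that the total sizes agree exactly, keeping both grouping multiplicities bounded,'' and call the bookkeeping the only delicate step. But that bookkeeping \emph{is} the theorem. Take $A$ with a single column of size $2^k$ for each $k\ge 1$ and nothing else (or additionally infinitely many singleton columns): a naive greedy back-and-forth over the columns, filling each $A$-block by consecutive $\Delta$-columns, produces unbounded multiplicities, and several other natural pairings fail too. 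A working construction does exist --- e.g.\ split each $A$-column into at most three pieces so that the resulting multiset of piece-sizes is exactly $\{1,2,3,\dots\}$, ensuring at each stage that the current smallest uncovered size is among the pieces assigned to the next large enough $A$-block --- but this is a different mechanism than ``matching one or more columns so that totals agree exactly,'' it requires a non-obvious invariant, and you do not supply it. So this case has a real hole, not just bookkeeping.

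For $\cW$ the gap is more serious. You propose a back-and-forth matching, order-preservingly, one AP of length $k$ in $\omega$ with one AP of length $k$ in $A$ for each $k$, and then to show that a long AP $I\subseteq B\subseteq A$ yields a long AP in $\phi^{-1}[B]$ by colouring $I$ according to which matched block each point lies in and applying van der Waerden. The problem is that van der Waerden only gives a long monochromatic sub-AP when the number of colours is bounded independently of $|I|$, and nothing in your back-and-forth controls how many matched blocks an AP of $A$ can cross: the blocks are freely chosen APs scattered through $A$, and a long AP of $B$ can easily meet each block in at most one point, giving $|I|$ colours and no useful conclusion. To make this work one would need to arrange the matched blocks so that any long AP of $A$ meets only boundedly many of them (or meets a single block in a long sub-AP), and you give no mechanism to guarantee this. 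This is the heart of the $\cW$ argument, and it is missing.

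In short: the $\fin^2$ case is essentially correct; the $\ED_{\fin}$ and $\cW$ cases correctly isolate the sufficient conditions (bounded column-interaction; a pigeonhole onto a single block) but do not establish that the proposed constructions satisfy them, and in the $\cW$ case the pigeonhole as stated does not apply because the number of colours is not bounded.
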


\begin{proof}
See \cite[Example 2.4]{MR3594409}, \cite[Remark below Proposition 2.9]{MR3594409} and \cite[Example 2.6]{MR3594409}, respectively.
\end{proof}


\subsection{$P^+(\I)$-ideal}

Let $\I$ be an ideal on $X$.
We write $B\subseteq^{\I}A$  when $B\setminus A\in \I$ (if $\I=\fin$, we write $B\subseteq^* A$).
A set $B\in \I^+$ is an \emph{$\I^+$-pseudointersection} of a family $\cA$ if $B\subseteq^\I A$  for each $A\in \cA$ (if $\I=\fin$, we just say \emph{pseudointersection}).
An ideal $\I$ is a \emph{$P^+(\I)$-ideal} if for each decreasing sequence $A_0\supseteq A_1\supseteq \ldots$ of sets from $\I^+$ there is an $\I^+$-pseudointersection of the family $\{A_n:n\in \omega\}$.
Obviously, if $\I$ is a $P^+$-ideal, then it is $P^+(\I)$-ideal as well.	

\begin{proposition}\ 
	\label{prop:examples-of-P-plus-ideals}
	\begin{enumerate}
		\item Every $F_\sigma$ ideal is a $P^+$-ideal.\label{prop:examples-of-P-plus-ideals:F-sigma}
		\item $\fin^2$ is a $P^+(\fin^2)$-ideal, but it is not a $P^+$-ideal.\label{prop:examples-of-P-plus-ideals:FINxFIN} 		
	\end{enumerate}
\end{proposition}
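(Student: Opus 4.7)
For part (1), the plan is to invoke Mazur's representation theorem (Theorem~\ref{lscsm}(1)): every $F_\sigma$ ideal $\I$ can be written as $\Fin(\phi)$ for some lower semicontinuous submeasure $\phi$. Given a decreasing sequence $A_0\supseteq A_1\supseteq\cdots$ in $\I^+$, each $A_n$ satisfies $\phi(A_n)=\infty$, so lower semicontinuity (applied to initial segments) yields finite sets $F_n\subseteq A_n$ with $\phi(F_n)\geq n$. Setting $A=\bigcup_{n}F_n$ gives a subset of $A_0$ with $\phi(A)=\infty$, hence $A\in\I^+$. The key observation is that for $n>m$ one has $F_n\subseteq A_n\subseteq A_{m+1}$, so $A\cap(A_m\setminus A_{m+1})\subseteq\bigcup_{n\leq m}F_n$ is finite, which is exactly the $P^+$-condition.

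For part (2), the failure of $P^+$ is witnessed by the canonical example $A_n=(\omega\setminus n)\times\omega$. This is a decreasing sequence in $(\fin^2)^+$ with $A_n\setminus A_{n+1}=\{n\}\times\omega$. Any candidate $B\in(\fin^2)^+$ has infinitely many infinite vertical sections $B_{(n)}$, and for each such $n$ the set $B\cap(A_n\setminus A_{n+1})=\{n\}\times B_{(n)}$ is infinite, contradicting the $P^+$-requirement. Hence $\fin^2$ is not a $P^+$-ideal.

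To show $\fin^2$ is a $P^+(\fin^2)$-ideal, I would take a decreasing sequence $A_0\supseteq A_1\supseteq\cdots$ in $(\fin^2)^+$ and set $S_n=\{i\in\omega:(A_n)_{(i)}\text{ is infinite}\}$. Each $S_n$ is infinite and, because the $A_n$ decrease, $S_{n+1}\subseteq S_n$; so I can inductively pick $i_0<i_1<\cdots$ with $i_n\in S_n$. Define $B=\bigcup_{n}\{i_n\}\times(A_n)_{(i_n)}$. The column of $B$ at position $i_n$ equals $(A_n)_{(i_n)}$, which is infinite, so $B$ has infinitely many infinite columns and thus $B\in(\fin^2)^+$. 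For each fixed $m$, the columns of $B\setminus A_m$ at indices $i_n$ with $n\geq m$ vanish (since $A_n\subseteq A_m$ forces $(A_n)_{(i_n)}\setminus(A_m)_{(i_n)}=\emptyset$), so only the finitely many columns at $i_0,\dots,i_{m-1}$ can survive, giving $B\setminus A_m\in\fin^2$ as required.

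The main obstacle is setting up the diagonal in the $P^+(\fin^2)$ argument: the pseudointersection $B$ must simultaneously witness membership in $(\fin^2)^+$ and reduce modulo $\fin^2$ to a subset of every $A_m$, and the crucial ingredient is the nestedness $S_{n+1}\subseteq S_n$, which allows a single diagonal thread $\{i_n\}$ to be picked from all of them at once. Everything else is routine once the right sequence of columns has been chosen.
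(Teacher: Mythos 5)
Your proof is correct, and for part (2) it is essentially the same diagonalization as the paper's: pick one infinite column from each $A_n$ at strictly increasing indices and take the union. For part (1) the paper simply cites a lemma from the literature, whereas you supply the standard direct argument via Mazur's theorem (using lower semicontinuity to extract finite pieces $F_n\subseteq A_n$ with $\phi(F_n)\geq n$); your write-out is a correct rendering of the cited proof rather than a genuinely different route.
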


\begin{proof}
	(\ref{prop:examples-of-P-plus-ideals:F-sigma}) See \cite[Lemma 3.2.4]{alcantara-phd-thesis}.
	
	(\ref{prop:examples-of-P-plus-ideals:FINxFIN})
	$\fin^2$ is not a $P^+$-ideal, because the sequence $A_n=(\omega\setminus n)\times\omega$ does not have a $(\fin)^+$-pseudointersection.
	Below, we show that $\fin^2$ is a $P^+(\fin^2)$-ideal.
	
	Let $A_n\in (\fin^2)^+$ be a decreasing sequence. 
	Then we can pick $k_0<k_1<\dots$ such that $B_n = (A_n)_{(k_n)}$ is infinite for each $n$.
	Then $$B=\bigcup_{n\in\omega}\{k_n\}\times B_n\in (\fin^2)^+$$
	and $B \setminus A_n \in \fin^2$ for each $n$. 
\end{proof}


\subsection{Pseudointersection numbers}

	A \emph{nonempty} family $\cA \subseteq\cP(\omega)$ has \emph{$\I^+$-FIP} if $\bigcap F\in \I^+$ for each finite $F\subseteq \cA$ (if $\I=\fin$, we say \emph{SFIP} instead of $\fin^+$-FIP).
	The \emph{pseudointersection number} of an ideal $\I$ on $\omega$ is defined by:
\begin{equation*}
	\begin{split}
\pnumber(\I) =  \min (\{|\cA|: 
\text{ $\cA$ has $\I^+$-FIP,} & \text{ but $\cA$ does not have }
\\& \text{an $\I^+$-pseudointersection}\} \cup\{\continuum^+\}).
\end{split}
\end{equation*}
The following easy proposition summarizes a few basic properties of $\pnumber(\I)$. 
\begin{proposition}\ 
\label{prop:pnumber}
\begin{enumerate}
	\item 
$\pnumber(\fin) = \pnumber$.
\item $\pnumber(\I) = \continuum^+$ for each maximal ideal $\I$.\label{prop:pnumber:maximal}

\item 
$\pnumber(\I)\geq \omega_1$ $\iff$ 
$\I$ is a $P^+(\I)$-ideal.\label{prop:pnumber:for-P-plus-ideals}
\end{enumerate}
\end{proposition}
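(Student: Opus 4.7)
The plan is to treat each of the three items by unwinding the definitions; none of them should require significant technical work, and the main task is to organize the verifications cleanly.

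For item (1), I would simply note that, by the classical definition, $\pnumber$ is the least cardinality of a family $\cA\subseteq[\omega]^\omega$ with the strong finite intersection property that admits no infinite pseudointersection. When $\I=\fin$, a family has $\fin^+$-FIP exactly when every finite subfamily has infinite intersection (i.e.~SFIP), and a $\fin^+$-pseudointersection of $\cA$ is precisely an infinite $B$ with $B\subseteq^* A$ for all $A\in\cA$. So the two definitions agree verbatim.

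For item (2), the key observation is that if $\I$ is maximal then $\I^+$ coincides with the dual filter $\I^*$: for every $A\subseteq\omega$ either $A\in\I$ or $\omega\setminus A\in\I$, so $A\notin\I$ iff $A\in\I^*$. I would then argue that the infimum in the definition of $\pnumber(\I)$ is taken over the empty set. Indeed, take any nonempty family $\cA$ with $\I^+$-FIP (in particular $\cA\subseteq\I^+=\I^*$) and fix any $B\in\I^+$; for every $A\in\cA$ we have $\omega\setminus A\in\I$, hence $B\setminus A\subseteq\omega\setminus A\in\I$, so $B$ is an $\I^+$-pseudointersection of $\cA$. Therefore no witnessing family exists and $\pnumber(\I)=\continuum^+$ by the convention built into the definition.

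For item (3), both directions reduce to passing between a countable family with $\I^+$-FIP and a decreasing sequence in $\I^+$. For the forward implication, given $A_0\supseteq A_1\supseteq\cdots$ in $\I^+$, the family $\{A_n:n\in\omega\}$ has $\I^+$-FIP because every finite intersection equals one of the $A_n$'s; since $\pnumber(\I)\geq\omega_1$, it has an $\I^+$-pseudointersection, which is exactly what $P^+(\I)$ requires. For the reverse, given any countable family $\{A_n:n\in\omega\}$ with $\I^+$-FIP, set $B_n=A_0\cap\cdots\cap A_n$; then $B_0\supseteq B_1\supseteq\cdots$ lies in $\I^+$, so by $P^+(\I)$ there exists $B\in\I^+$ with $B\setminus B_n\in\I$ for all $n$, and then $B\setminus A_n\subseteq B\setminus B_n\in\I$, so $B$ is an $\I^+$-pseudointersection of the original family.

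There is no real obstacle here; the only point that might momentarily trip a reader is the identification $\I^+=\I^*$ in (2), but this is a standard consequence of maximality. Everything else is direct manipulation of the definitions.
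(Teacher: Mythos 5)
The paper states this proposition without proof, labeling it ``easy,'' so there is no argument of the paper's to compare against; your task is simply to supply the verification, and your proof does this correctly. All three items check out: (1) the definitions of $\pnumber(\fin)$ and $\pnumber$ coincide (and both sets being minimized over are nonempty, e.g.\ by an ultrafilter base, so the $\cup\{\continuum^+\}$ safety net never triggers); (2) the identification $\I^+=\I^*$ for maximal $\I$ makes any $B\in\I^+$ an $\I^+$-pseudointersection of any family with $\I^+$-FIP, so the set in the minimum is empty and $\pnumber(\I)=\continuum^+$; (3) the translation between countable $\I^+$-FIP families and $\subseteq$-decreasing sequences via $B_n=A_0\cap\cdots\cap A_n$ is exactly right, and the finite-family case is trivial since $\bigcap\cA$ itself works.
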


\begin{theorem}[{\cite{MR698462}, see also \cite[Proposition~8.19]{MR2777744} or \cite[p.~897]{MR3395353}}]
\label{thm:pnumber-for-FINxFIN}
	$\pnumber(\fin^2) = \omega_1$.
\end{theorem}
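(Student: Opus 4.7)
I split the claim into the two inequalities. The lower bound $\pnumber(\fin^2)\geq\omega_1$ is immediate from earlier material: Proposition~\ref{prop:examples-of-P-plus-ideals}(\ref{prop:examples-of-P-plus-ideals:FINxFIN}) tells us that $\fin^2$ is a $P^+(\fin^2)$-ideal, and then Proposition~\ref{prop:pnumber}(\ref{prop:pnumber:for-P-plus-ideals}) converts this into $\pnumber(\fin^2)\geq\omega_1$.

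For the matching upper bound I would exhibit a witnessing family of size $\omega_1$. The plan is to build, by recursion on $\alpha<\omega_1$, a $\subseteq^{\fin^2}$-decreasing chain $\langle A_\alpha:\alpha<\omega_1\rangle$ in $(\fin^2)^+$, which then automatically has the $(\fin^2)^+$-FIP. At each countable limit stage the $P^+(\fin^2)$-property of $\fin^2$ supplies a $(\fin^2)^+$-set below all previously chosen $A_\beta$, and this becomes $A_\alpha$ after an additional thinning. At successor stages $A_{\alpha+1}$ is chosen to obstruct a named candidate for a pseudointersection of the entire chain.

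The combinatorial engine is the equivalence $B\subseteq^{\fin^2}C \iff B_{(n)}\subseteq^* C_{(n)}$ for all but finitely many $n$, so any hypothetical $(\fin^2)^+$-pseudointersection is pinned down, modulo $\fin^2$, by its column-profile $(B_{(n)})_{n\in\omega}$, a countable object. The main obstacle is that there are $\continuum$-many such profiles while only $\omega_1$ construction steps are available; I would overcome this by a bookkeeping device that assigns each relevant profile to a stage at which it is ruined by a suitable thinning of a single column, in the spirit of the construction in \cite{MR698462}. A second delicate point is that the thinnings must preserve membership in $(\fin^2)^+$ throughout: this is ensured by reserving at each step a cofinite set of columns on which $A_\alpha$ is guaranteed to remain infinite, so that the only columns affected by the diagonalization are finitely many at a time.
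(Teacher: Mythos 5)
First, a structural observation: the paper does not actually prove Theorem~\ref{thm:pnumber-for-FINxFIN}. It is quoted from the literature with a citation, so there is no in-paper argument to compare against; your attempt is a blind reconstruction of a cited result.

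Your lower bound $\pnumber(\fin^2)\geq\omega_1$ is correct and uses exactly the tools the paper sets up (Proposition~\ref{prop:examples-of-P-plus-ideals}(\ref{prop:examples-of-P-plus-ideals:FINxFIN}) plus Proposition~\ref{prop:pnumber}(\ref{prop:pnumber:for-P-plus-ideals})). The upper bound, however, has a fatal flaw at the very mechanism you propose. You suggest that at successor stages a candidate profile is ``ruined by a suitable thinning of a single column,'' and more generally that ``the only columns affected by the diagonalization are finitely many at a time.'' But if $A_{\alpha+1}$ differs from $A_\alpha$ only on finitely many columns, then $A_\alpha\setminus A_{\alpha+1}$ has at most finitely many nonempty columns, hence lies in $\fin^2$, hence $A_{\alpha+1}=^{\fin^2}A_\alpha$. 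Consequently $B\subseteq^{\fin^2}A_{\alpha+1}$ holds if and only if $B\subseteq^{\fin^2}A_\alpha$, and the thinning accomplishes literally nothing: it cannot exclude any candidate pseudointersection. To move $A_{\alpha+1}$ to a genuinely smaller $\fin^2$-class one must shrink infinitely many columns by infinite amounts (or shrink the support $\{n:(A_\alpha)_{(n)}\text{ infinite}\}$), and your ``cofinite set of reserved columns'' safeguard is incompatible with that.

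Even setting this aside, the cardinality bookkeeping you acknowledge as ``the main obstacle'' is not resolved but merely deferred to ``the spirit of the construction in \cite{MR698462}.'' A naive diagonalization of length $\omega_1$ against $\continuum$-many column profiles can only succeed under CH, whereas the theorem is a ZFC statement (its content under $\pnumber=\continuum>\omega_1$ is precisely what makes it interesting, and precisely where your scheme breaks). The genuine ZFC proof must encode a specific $\omega_1$-combinatorial object (along the lines of a Hausdorff gap or a comparable size-$\omega_1$ witness) directly into the family $\cA$, rather than enumerating and killing candidates one at a time. As written, your upper bound argument is a gap, not a proof.
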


The following theorem seems to be known (see~\cite{Brendle-slides} or \cite{Flaskova-poster}), however, we were unable to find a written proof of it anywhere, so we decided to include it for completeness.
\begin{theorem}
\label{thm:p-number-for-F-sigma}
	$\pnumber\leq \pnumber(\I)$ for each $F_\sigma$ ideal $\I$.
\end{theorem}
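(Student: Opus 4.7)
The plan is to use Mazur's representation (part (1) of Theorem~\ref{lscsm}) to write $\I = \Fin(\phi)$ for a lower semicontinuous submeasure $\phi$, and then reduce an $\I^+$-FIP family on $\omega$ to an ordinary SFIP family on the countable set $[\omega]^{<\omega}$, where we can apply $\pnumber$.

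Let $\cA = \{A_\alpha : \alpha < \kappa\}$ be a family with $\I^+$-FIP and $\kappa < \pnumber$; we show it has an $\I^+$-pseudointersection, which gives $\pnumber(\I) \geq \pnumber$. For each $\alpha < \kappa$ and $n \in \omega$, set
\[
X_\alpha^n = \{D \in [\omega]^{<\omega} : D \subseteq A_\alpha \text{ and } \phi(D) \geq n\}.
\]
First I would verify that the family $\{X_\alpha^n : \alpha < \kappa,\, n \in \omega\} \subseteq \cP([\omega]^{<\omega})$ has SFIP. Given a finite list $(\alpha_1,n_1),\ldots,(\alpha_k,n_k)$, let $A = A_{\alpha_1} \cap \cdots \cap A_{\alpha_k}$ and $n = \max n_i$. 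By $\I^+$-FIP, $A \in \I^+$, so $\phi(A) = \infty$; by lower semicontinuity there is $m$ with $\phi(A \cap m) \geq n$. The finite set $D_0 = A \cap m$ then lies in $\bigcap_i X_{\alpha_i}^{n_i}$, and by monotonicity of $\phi$ every finite superset $D \supseteq D_0$ with $D \subseteq A$ also lies in this intersection; since $A$ is infinite there are infinitely many such $D$, proving SFIP.

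Since the index set has size $\kappa \cdot \omega = \kappa < \pnumber$, there is an infinite $Y \subseteq [\omega]^{<\omega}$ with $Y \subseteq^* X_\alpha^n$ for every $\alpha$ and $n$. Enumerate $Y = \{D_i : i \in \omega\}$ and set
\[
B = \bigcup_{i \in \omega} D_i.
\]
To see $B \in \I^+$, fix any $n$: the relation $Y \subseteq^* X_0^n$ (or $X_\alpha^n$ for any single $\alpha$) shows some $D_i$ satisfies $\phi(D_i) \geq n$, hence $\phi(B) \geq \phi(D_i) \geq n$; so $\phi(B) = \infty$ and $B \notin \Fin(\phi) = \I$. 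To see $B \subseteq^\I A_\alpha$, note $Y \subseteq^* X_\alpha^1$ implies $D_i \subseteq A_\alpha$ for all but finitely many $i$, so $B \setminus A_\alpha$ is a finite union of finite sets, hence finite, hence in $\I$.

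The main subtlety lies in step two: I must arrange that the auxiliary family on $[\omega]^{<\omega}$ has \emph{infinite} intersections, not merely nonempty ones, since the definition of $\pnumber$ requires SFIP. This is handled by the monotonicity-and-superset argument above, which crucially uses that $\bigcap F \in \I^+$ is infinite (as $\I \supseteq \fin$). Everything else is a straightforward unwinding of definitions once the reduction is in place.
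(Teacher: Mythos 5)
Your proof is correct, and it takes a genuinely different route from the paper's. The paper applies Bell's theorem: it constructs the $\sigma$-centered poset $\poset=[\omega]^{<\omega}\times[\cA]^{<\omega}$, with dense sets forcing $\phi(B)$ unbounded and forcing each $A\in\cA$ into the constraint, and then invokes $\mathfrak{m}(\sigma\text{-centered})\geq\pnumber$ to get a sufficiently generic filter. You instead reduce directly to the combinatorial definition of $\pnumber$: the auxiliary sets $X_\alpha^n\subseteq[\omega]^{<\omega}$ encode ``finite approximations to $B$ contained in $A_\alpha$ with $\phi$-mass $\geq n$,'' and a genuine pseudointersection $Y$ of this family (which exists because $|\cA|\cdot\omega<\pnumber$) yields $B=\bigcup Y$ with $\phi(B)=\infty$ and $B\setminus A_\alpha$ finite. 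Both proofs in fact produce the stronger conclusion $B\subseteq^* A_\alpha$ rather than merely $B\subseteq^\I A_\alpha$. Your approach avoids forcing-theoretic machinery entirely and is somewhat more self-contained; the paper's approach is arguably more modular (Bell's theorem is cited as a black box) and fits a common template for such arguments. You correctly identified the one genuinely delicate point, namely arranging that the finite intersections of the $X_\alpha^n$ are infinite rather than merely nonempty: the observation that any finite superset $D$ of $D_0$ inside $\bigcap_i A_{\alpha_i}$ still lies in $\bigcap_i X_{\alpha_i}^{n_i}$, together with the infinitude of $\bigcap_i A_{\alpha_i}$, handles this cleanly.
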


\begin{proof} 
 Let $\phi$ be a lsc submeasure such that $\I=\fin(\phi)$ (see Theorem~\ref{lscsm}). 
 Fix $\kappa<\pnumber$ and any $\cA\in  [\I^+]^{\kappa}$ having $\I^+$-FIP. We will find a set $B\in \I^+$ such that $B\setminus A\in \I$ for each $A\in \cA$. 
 
 We define a poset $(\poset,\leq)$ as follows:
$\poset=[\omega]^{<\omega} \times [\cA]^{<\omega}$ and $(s,F)\leq (t,G)\iff$
$s\supseteq t$, 
$F\supseteq G$ 
and
$s\setminus t\subseteq \bigcap G$. 
Now we define dense subsets of the poset:
\begin{enumerate}
\item $D_A=\{(s,F)\in \poset: A\in F\}$ for $A\in \cA$,
\item $E_n=\{(s,F)\in \poset: \phi(s)>n\}$ for every $n\in \omega$.
\end{enumerate}
Let $\cD = \{D_A: A\in\cA\} \cup \{E_n:n\in \omega\}$.
Since 
$\poset$ is $\sigma$-centered and  
$|\cD| <\pnumber$, using Bell's Theorem (\cite{MR643555}, see also~\cite[Theorem~7.12]{MR2768685} or \cite[Theorem~1.4.22]{MR1350295}), there exists a filter $\cG\subseteq \poset$ such that $\cG\cap D\neq \emptyset$ for each $D\in \cD$.

We define $B=\bigcup\{s\in [\omega]^{<\omega}: \text{$(s,F)\in \cG$ for some finite $F\subseteq  \cA$}\}$. Then $B\in \I^+$, because for each $n$ there is $(s,F)\in \cG\cap E_n$, so $\phi(B)\geq \phi(s)> n$.

Finally, $B\setminus A$ is finite for each $A\in \cA$. Indeed, if $A\in \cA$, then there is $(s,F)\in \cG\cap D_A$ and we will show that $B\setminus A\subseteq s$.

For $k\in B\setminus A$, we find $(s',F')\in \cG$ such that $k\in s'$. Since $\cG$ is a filter, there exists $(s'',F'')\in \cG$ such that $(s'',F'')\leq (s,F)$ and $(s'',F'')\leq (s',F')$.
Then $s''\setminus s \subseteq \bigcap F \subseteq A$ and $k\notin A$, so $k\notin s''\setminus s$.
Taking into account that $k\in s''$,  we obtain $k\in s$.
\end{proof}


\section{The cardinal characteristics and the order associated with $\I$-ultrafilters}
\label{sec:cardinal-nad-order}

In this section we develop tools which will be used in our further considerations -- we introduce two cardinal invariants associated to pairs of ideals and families of functions and show how they can be used in our studies.


\subsection{The cardinal $\wsp$}

\begin{definition}
	For ideals $\I,\J\subseteq\cP(\omega)$ and a family $\cF\subseteq\omega^\omega$ we define
	\begin{equation*}
		\begin{split}
			\wsp(\I,\J,\cF) = & \min (\{|\cA| :  \text{$\cA$ has $\J^+$-FIP and }
			\\& 
			\neg\left(\forall f\in \cF\,\exists B\in \I\,(\cA \cup \{f^{-1}[B]\} \text{ has $\J^+$-FIP})\right)\} \cup\{\continuum^+\}).
		\end{split}
	\end{equation*}
\end{definition}

The following theorem provides us a basic tool for constructing $\I$-ultrafilters which are not   $\J$-ultrafilters. 

\begin{theorem}
	\label{thm:wsp-implies-existence-of-I-ultrafilters}
	If $\wsp(\I,\J,\cF)\geq |\cF|$, then there exists an ultrafilter $\cU$ such that $\I\not\leq_{\cF}\cU^*$ and $\J\subseteq \cU^*$ . 
	
	In particular, 	if $\cF=\omega^\omega$ ($\cF$ is the family of all finite-to-one functions or $\cF$ is the family of all one-to-one functions, resp.), we obtain an $\I$-ultrafilter (weak $\I$-ultrafilter or $\I$-point, resp.) which is not a $\J$-point.
\end{theorem}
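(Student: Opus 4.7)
The plan is a transfinite recursion of length $\continuum$ that builds $\cU$ as the filter generated by a chain of $\J^+$-FIP families. Fix enumerations $\cP(\omega)=\{X_\alpha:\alpha<\continuum\}$ and $\cF=\{f_\alpha:\alpha<|\cF|\}$, and recursively construct $\subseteq$-increasing families $\cA_\alpha$ satisfying $\J^+$-FIP and $|\cA_\alpha|\leq|\alpha|+\aleph_0$. Start with $\cA_0=\emptyset$, take unions at limits, and at each successor stage $\alpha$ do two things: first, adjoin one of $X_\alpha$ or $\omega\setminus X_\alpha$ while preserving $\J^+$-FIP; then, if $\alpha<|\cF|$, adjoin $f_\alpha^{-1}[B_\alpha]$ for some $B_\alpha\in\I$. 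Finally let $\cU$ be the filter generated by $\cA=\bigcup_{\alpha<\continuum}\cA_\alpha$.

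The splitting step is the standard ideal argument: if neither $\cA_\alpha\cup\{X_\alpha\}$ nor $\cA_\alpha\cup\{\omega\setminus X_\alpha\}$ had $\J^+$-FIP, then finite $F_1,F_2\subseteq\cA_\alpha$ would satisfy $\bigcap F_1\cap X_\alpha\in\J$ and $\bigcap F_2\cap(\omega\setminus X_\alpha)\in\J$, whence $\bigcap(F_1\cup F_2)\in\J$, contradicting $\J^+$-FIP of $\cA_\alpha$. The second step is where the hypothesis enters: since the current family still has size $<|\cF|$ (we add at most two elements per stage, and $|\cF|$ is uncountable in the intended applications), the assumption $\wsp(\I,\J,\cF)\geq|\cF|$ directly yields the required $B_\alpha\in\I$.

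It remains to verify four things about $\cU$. (i) $\cU$ is a proper filter because $\cA$ has $\J^+$-FIP, hence in particular FIP. (ii) $\cU$ is an ultrafilter because every $X\subseteq\omega$ equals some $X_\alpha$, and one of $X_\alpha$, $\omega\setminus X_\alpha$ was placed in $\cA\subseteq\cU$. (iii) $\J\subseteq\cU^*$ because any $Y\in\cU$ contains $\bigcap F\in\J^+$ for some finite $F\subseteq\cA$, so $Y\in\J^+$, giving $\cU\cap\J=\emptyset$. (iv) $\I\not\leq_{\cF}\cU^*$ because for each $f=f_\alpha\in\cF$ we arranged $f_\alpha^{-1}[B_\alpha]\in\cU$ with $B_\alpha\in\I$, which blocks $f$ from witnessing $\I\leq_{\cF}\cU^*$. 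The ``in particular'' clauses then follow by specializing $\cF$ to $\omega^\omega$, the finite-to-one maps, or the one-to-one maps.

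The whole proof is powered by the hypothesis, and there is no real obstacle—only two small pieces of bookkeeping. The first is the splitting lemma above (which is routine once one remembers that $\J$ is closed under unions). The second is ensuring the intermediate families stay of size $<|\cF|$, which is precisely why the bound $|\cA_\alpha|\leq|\alpha|+\aleph_0$ is tracked through the recursion; this suffices for all intended applications where $|\cF|=\continuum$.
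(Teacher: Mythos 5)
Your proof is correct, and the core idea (iteratively adjoin $f_\alpha^{-1}[B_\alpha]$ with $B_\alpha\in\I$, using the hypothesis on $\wsp$ to keep $\J^+$-FIP) is exactly the paper's. The only stylistic difference is how the ultrafilter is obtained at the end: you interleave a splitting step (deciding each $X_\alpha$) over a recursion of length $\continuum$ and then take the generated filter, which automatically avoids $\J$ because the construction maintains $\J^+$-FIP; the paper instead builds only $\langle A_\alpha:\alpha<|\cF|\rangle$, notices that $\J^*\cup\{A_\alpha:\alpha<|\cF|\}$ has SFIP, and invokes the ultrafilter lemma. Both routes deliver $\J\subseteq\cU^*$, yours via the $\J^+$-FIP of the generating family and the paper's by explicitly planting $\J^*$ in the seed. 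One minor piece of bookkeeping to tighten: the stated bound $|\cA_\alpha|\leq|\alpha|+\aleph_0$ is not strict enough on its face when $|\cF|=\aleph_0$ and $\alpha$ is finite (you need $|\cA_\alpha|<\aleph_0$ there, which does hold since only finitely many sets have been added, but the displayed inequality only says $\leq\aleph_0$); phrasing it as $|\cA_\alpha|<\max(|\alpha|^+,\aleph_0)$ or just noting finiteness for finite $\alpha$ removes the ambiguity.
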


\begin{proof}
	Let $\cF = \{f_\alpha:\alpha<|\cF|\}$.
	We will construct a sequence $\{ A_\alpha:\alpha<|\cF|\} \subseteq\cP(\omega)$ such that for each $\alpha$ we have:
	\begin{enumerate}
		\item $\{A_\beta:\beta<\alpha\}$ has $\J^+$-FIP,
		\item $f_\alpha[A_\alpha]\in\I$.
	\end{enumerate}
	
	Suppose $A_\beta$ has been constructed for $\beta<\alpha$.
	We have 2 cases.
	
	Case 1. If there is $F\in [\alpha]^{<\omega}$ with $f_\alpha[\bigcap\{A_\beta:\beta\in F\}]\in \I$, we put $A_\alpha=\bigcap\{A_\beta:\beta\in F\}$.
	
	Case 2. Suppose that $f_\alpha[\bigcap\{A_\beta:\beta\in F\}]\notin \I$ for each $F\in [\alpha]^{<\omega}$.
	Since $|\alpha|<|\cF|\leq \wsp(\I,\J,\cF)$ and $\cA=\{A_\beta:\beta<\alpha\}$ has $\J^+$-FIP, there is $B\in \I$ such that $\cA\cup \{f_\alpha^{-1}[B]\}$ has $\J^+$-FIP.
	Then we put $A_\alpha = f_\alpha^{-1}[B]$.
	
	The construction of $\langle A_\alpha:\alpha<|\cF|\rangle$  is finished.
	
	Since $\{A_\alpha:\alpha<|\cF|\}$ has $\J^+$-FIP, the family  
	$\J^*\cup \{A_\alpha:\alpha<|\cF|\}$ has $\J^+$-FIP as well (in particular it has SFIP).
	Thus, there is an ultrafilter $\cU$ such that $\J^*\cup \{A_\alpha:\alpha<|\cF|\} \subseteq \cU$.
	It is easy to see that $\cU$ is the required ultrafilter.
\end{proof}


\subsection{The new order and the cardinal $\uu$}

For an ideal  $\I$ on $\omega$
and $\cA\subseteq\cP(\omega)$ we write
$$\I(\cA)=\left\{B\subseteq\omega:\exists F\in[\cA]^{<\omega} (B\setminus\bigcup F\in\I)\right\}.$$
Notice that either $\I(\cA) = \cP(\omega)$ or $\I(\cA)$  is the ideal generated by $\I\cup \cA$. The latter case holds exactly when $\omega\notin\I(\cA)$ (or, equivalently, when the family $\{\omega\setminus A:A\in \cA\}$ has $\I^+$-FIP). 

\begin{definition}
Let $\I$ and $\J$ be ideals on $\omega$, $\cF,\cG\subseteq\omega^\omega$  and $\lambda\leq\cc$.
\begin{enumerate}
	\item 
We write $\I\preceq^\lambda_{\cF,\cG}\J$ if for each ideal $\J'$ with $\J\leq_{\cG}\J'$ there exists $\cA \in [\cP(\omega)]^{<\lambda}$
such that $\omega\notin\J'(\cA)$ and $\I\leq_\cF\J'(\cA)$.
\item 	We define $$\uu(\I,\cF,\J,\cG)=\min\left(\left\{\lambda\leq\cc:\ \I\preceq^\lambda_{\cF,\cG} \J\right\}\cup\left\{\cc^+\right\}\right).$$
\end{enumerate}
\end{definition}

The following theorem provides us a basic tool for proving when the existence of $\I$-ultrafilters which are not   $\J$-ultrafilters is equivalent to $\I\nleq_{\Kat}\J$.

\begin{theorem}
\label{thm:unumber-iff-I-ultra-not-J-ultra}	
	If $\cG$ contains the identity function, then 
\begin{equation*}
	\begin{split}
		\uu(\I,\cF,\J,\cG)> |\cF| \iff & \text{ there exists an ultrafilter $\cU$ such that} 
		\\& \text{ $\I\not\leq_{\cF}\cU^*$ and $\J\leq_{\cG}\cU^*$.}
	\end{split}
\end{equation*}
\end{theorem}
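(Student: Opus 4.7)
The plan is to prove both implications separately. The key observation is that $\uu(\I,\cF,\J,\cG)>|\cF|$ is equivalent to the failure of $\I\preceq^{|\cF|}_{\cF,\cG}\J$, which yields an ideal $\J_0$ extending $\J$ (with respect to $\leq_\cG$) that resists being made $\cF$-above $\I$ by adding fewer than $|\cF|$ sets. Such a $\J_0$ is the seed from which we build the desired ultrafilter.

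For the $(\Leftarrow)$ direction, I would fix $g\in\cG$ witnessing $\J\leq_\cG\J_0$ and enumerate $\cF=\{f_\alpha:\alpha<|\cF|\}$. Then I carry out a transfinite construction of sets $\{A_\alpha:\alpha<|\cF|\}$ so that the partial family $\cA_\alpha=\{A_\beta:\beta<\alpha\}$ satisfies $\omega\notin\J_0(\cA_\alpha)$ throughout. At stage $\alpha$, $|\cA_\alpha|<|\cF|$ and the ideal $\J_0(\cA_\alpha)$ is proper, so by the choice of $\J_0$ one has $\I\not\leq_\cF\J_0(\cA_\alpha)$; applied to $f_\alpha\in\cF$, this produces $B_\alpha\in\I$ with $f_\alpha^{-1}[B_\alpha]\notin\J_0(\cA_\alpha)$. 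The crucial choice is $A_\alpha:=f_\alpha^{-1}[\omega\setminus B_\alpha]=\omega\setminus f_\alpha^{-1}[B_\alpha]$: this is the ``ideal-side'' complement of the set I ultimately want in $\cU$, and the properness of $\J_0(\cA_{\alpha+1})$ is equivalent to $\omega\setminus A_\alpha\notin\J_0(\cA_\alpha)$, which is exactly the property we secured. Limit stages cause no trouble because $\J_0(\cA)$ has finite support. After the construction, I extend $\J_0(\cA)$ to a maximal ideal $\cU^*$ by Zorn; then for each $\alpha$, $A_\alpha\in\cU^*$ gives $f_\alpha^{-1}[B_\alpha]\in\cU$, which shows $f_\alpha$ is not a witness of $\I\leq_\cF\cU^*$, so $\I\not\leq_\cF\cU^*$, while $\J\leq_\cG\J_0\subseteq\cU^*$ is preserved by the same $g$.

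For the $(\Rightarrow)$ direction, given an ultrafilter $\cU$ with $\I\not\leq_\cF\cU^*$ and $\J\leq_\cG\cU^*$, I simply take $\J':=\cU^*$ as the witness that $\I\preceq^\lambda_{\cF,\cG}\J$ fails for every $\lambda$. Indeed, if $\cA\subseteq\cP(\omega)$ satisfies $\omega\notin\cU^*(\cA)$, then every $A\in\cA$ must already lie in $\cU^*$ (otherwise $A\in\cU$ and $\omega=A\cup(\omega\setminus A)\in\cU^*(\cA)$ since both pieces are in the generated ideal, contradicting properness). Consequently $\cU^*(\cA)=\cU^*$, and $\I\leq_\cF\cU^*(\cA)$ would contradict the hypothesis. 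Thus no $\cA$ of any cardinality works, giving $\uu=\cc^+>|\cF|$; the assumption that $\cG$ contains the identity is what ensures $\J\leq_\cG\cU^*$ qualifies this $\J'$ as a legitimate choice in the definition (and, symmetrically in the other direction, that the $\J_0$ produced by the failure of $\preceq^{|\cF|}$ actually extends $\J$ when we pass to larger ideals like $\cU^*$).

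The main obstacle is neither the inductive construction nor the limit step, but the precise bookkeeping at successor stages: one must resist the temptation to put $f_\alpha^{-1}[B_\alpha]$ itself into the growing ideal. The right move is to add its complement, because what we want to kill is the hypothesis ``$f_\alpha^{-1}[B]\in\cU^*$ for all $B\in\I$,'' and this is killed by placing some $f_\alpha^{-1}[B_\alpha]$ into the dual filter $\cU$. The equivalence of ``adding $A_\alpha$ to $\J_0(\cA_\alpha)$ keeps it proper'' with ``$\omega\setminus A_\alpha\notin\J_0(\cA_\alpha)$'' then ties together exactly with the property that the failure of $\I\leq_\cF\J_0(\cA_\alpha)$ provides.
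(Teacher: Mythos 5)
Your proof is essentially correct and follows the same approach as the paper: the transfinite induction adding the sets $A_\alpha=\omega\setminus f_\alpha^{-1}[B_\alpha]$ (equivalently, the paper's $A_\alpha=\omega\setminus B$ with $B\in(\J')^+$ and $f_\alpha[B]\in\I$) while preserving properness of $\J'(\cA_\alpha)$, and for the converse taking $\J'=\cU^*$ and observing that $\omega\notin\cU^*(\cA)$ forces $\cA\subseteq\cU^*$ so that $\cU^*(\cA)=\cU^*$. The only blemish is that you have swapped the labels of the two implications: the transfinite construction you call $(\Leftarrow)$ is actually $(\Rightarrow)$ (from $\uu>|\cF|$ to existence of the ultrafilter), and vice versa; also, the identity function in $\cG$ is not used in the way you suggest at the end, but this remark is inessential to your argument.
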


\begin{proof}
$(\Leftarrow)$ 
If $\cU$ is an ultrafilter 
such that $\I\not\leq_{\cF}\cU^*$ and $\J\leq_{\cG}\cU^*$, then 
$\J' = \cU^*$ is a maximal ideal $\leq_{\cG}$-above $\J$ which witnesses $\I\not\preceq^{\lambda}_{\cF,\cG}\J$ for every $\lambda\leq \continuum$. Hence $\uu(\I,\cF,\J,\cG)=\continuum^+ > |\cF|$.

$(\Rightarrow)$
If $\uu(\I,\cF,\J,\cG) >  |\cF|$, then 
we have $\I\not\preceq^{|\cF|}_{\cF,\cG}\J$. Thus, there is an ideal $\J'$ such that $\J\leq_{\cG}\J'$
and
$\I\not\leq_\cF\J'(\cA)$
for every $\cA \in [\cP(\omega)]^{< |\cF|}$
with $\omega\notin\J'(\cA)$ .

	Let $\cF = \{f_\alpha:\alpha<|\cF|\}$.
We will construct a sequence $\langle A_\alpha:\alpha<|\cF|\rangle$ of subsets of $\omega$ such that for each $\alpha$ we have:
\begin{enumerate}
	\item $\{\omega\setminus A_\beta:\beta<\alpha\}$ has $(\J')^+$-FIP,
	\item $f_\alpha[\omega\setminus A_\alpha]\in\I$.
\end{enumerate}

Suppose $A_\beta$ has been constructed for $\beta<\alpha$.
Let $\cA=\{A_\beta:\beta<\alpha\}$.
Since  $\{\omega\setminus A_\beta:\beta<\alpha\}$  has $(\J')^+$-FIP, 
$\omega\notin\J'(\cA)$.
Since $|\cA|\leq |\alpha|<|\cF|$, 
$\I\not\leq_\cF\J'(\cA)$.
Thus, there is $B\in (\J'(\cA))^+$ such that $f_\alpha[B]\in\I$.
Then we put $A_\alpha = \omega\setminus  B$, and the construction of $\langle A_\alpha:\alpha<|\cF|\rangle$  is finished.

Since $\{\omega\setminus  A_\alpha:\alpha<|\cF|\}$ has $(\J')^+$-FIP, the family  
$(\J')^*\cup \{\omega\setminus A_\alpha:\alpha<|\cF|\}$ has $(\J')^+$-FIP as well (in particular it has SFIP).
Thus, there is an ultrafilter $\cU$ such that $(\J')^*\cup \{\omega\setminus A_\alpha:\alpha<|\cF|\} \subseteq \cU$.
It is easy to see that $\cU$ is the required ultrafilter.
\end{proof}

\begin{corollary}\label{cor:new-order-iff-existence-of-ultrafilters}
Let $\I$ and $\J$ be ideals on $\omega$.
\begin{enumerate}
\item $\I\not\preceq_{\omega^\omega,\omega^\omega}^{\continuum}\J$
	($\I\not\preceq_{\omega^\omega,\fin-1}^{\continuum}\J$ or $\I\not\preceq^{\continuum}_{\omega^\omega,1-1}\J$, resp.)
	 $\iff$ there exists  an $\I$-ultrafilter which is not a $\J$-ultrafilter (weak $\J$-ultrafilter or $\J$-point, resp.).\label{cor:new-order-iff-existence-of-ultrafilters:Katetov}

	\item $\I\not\preceq_{\fin-1,\fin-1}^{\continuum}\J$ ($\I\not\preceq_{\fin-1,1-1}^{\continuum}\J$, resp.)
	 $\iff$ 
	 there exists a weak $\I$-ultrafilter which is not a weak $\J$-ultrafilter ($\J$-point, resp.).

	\item $\I\not\preceq_{1-1,1-1}^{\continuum}\J$
 $\iff$ there exists an $\I$-point which is not a $\J$-point.

\end{enumerate}
\end{corollary}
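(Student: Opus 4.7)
The plan is to derive each of the three biconditionals as an immediate specialization of Theorem \ref{thm:unumber-iff-I-ultra-not-J-ultra}. First, I would set up the dictionary between the parameters $(\cF,\cG)$ of that theorem and the notions appearing in the corollary. For $\cF\in\{\omega^\omega,\fin-1,1-1\}$, the clause $\I\not\leq_{\cF}\cU^*$ is, by the definitions in Section \ref{subsec:ultrafilters}, exactly the statement that $\cU$ is an $\I$-ultrafilter, a weak $\I$-ultrafilter, or an $\I$-point, respectively. Dually, for $\cG\in\{\omega^\omega,\fin-1,1-1\}$, the clause $\J\leq_{\cG}\cU^*$ says that $\cU$ fails to be a $\J$-ultrafilter, a weak $\J$-ultrafilter, or a $\J$-point, respectively. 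So each of the three items in the corollary is of the form ``there exists $\cU$ with $\I\not\leq_{\cF}\cU^*$ and $\J\leq_{\cG}\cU^*$'' for an appropriate choice of $\cF,\cG$.

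Second, I would check the very mild hypotheses needed to invoke Theorem \ref{thm:unumber-iff-I-ultra-not-J-ultra}: each of the families $\omega^\omega$, $\fin-1$ and $1-1$ contains the identity function, and each has cardinality exactly $\continuum$. Hence the theorem reduces every one of the three biconditionals to the equivalence
\[
\uu(\I,\cF,\J,\cG) > \continuum \iff \text{the corresponding ultrafilter exists.}
\]

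Third, I would translate the cardinal bound into the order $\preceq^{\continuum}_{\cF,\cG}$. From the definition $\uu(\I,\cF,\J,\cG)=\min(\{\lambda\leq\continuum:\I\preceq^\lambda_{\cF,\cG}\J\}\cup\{\continuum^+\})$, the inequality $\uu > \continuum$ holds precisely when no $\lambda\leq\continuum$ witnesses $\I\preceq^\lambda_{\cF,\cG}\J$. Here I would invoke the (routine) monotonicity remark that $\I\preceq^{\lambda}_{\cF,\cG}\J$ implies $\I\preceq^{\lambda'}_{\cF,\cG}\J$ for all $\lambda'\geq\lambda$, since any $\cA\in[\cP(\omega)]^{<\lambda}$ also lies in $[\cP(\omega)]^{<\lambda'}$. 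Consequently, failure at every $\lambda\leq\continuum$ is equivalent to failure at the single value $\lambda=\continuum$, i.e.\ to $\I\not\preceq^{\continuum}_{\cF,\cG}\J$. Combining the three steps yields each of the three stated biconditionals.

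I do not expect any genuine obstacle: the proof is essentially a translation exercise, using Theorem \ref{thm:unumber-iff-I-ultra-not-J-ultra} as a black box together with the definitions of $\I$-ultrafilter, weak $\I$-ultrafilter and $\I$-point. The only point requiring a moment of care is the monotonicity of $\preceq^\lambda_{\cF,\cG}$ in $\lambda$, which is what lets one pass from ``no $\lambda\leq\continuum$ works'' to the specific failure ``$\I\not\preceq^{\continuum}_{\cF,\cG}\J$'' appearing in the statement.
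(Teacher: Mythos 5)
Your proof is correct and is exactly the intended derivation: the corollary is stated in the paper without proof as an immediate specialization of Theorem~\ref{thm:unumber-iff-I-ultra-not-J-ultra}, and you supply precisely the routine details (that each of $\omega^\omega$, $\fin-1$, $1-1$ contains the identity and has cardinality $\continuum$, plus the monotonicity of $\preceq^\lambda_{\cF,\cG}$ in $\lambda$ that converts $\uu>\continuum$ into $\I\not\preceq^{\continuum}_{\cF,\cG}\J$).
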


\begin{remark}	It is easy to see that $\I\leq_{\Kat}\J$ implies $\I\preceq_{\omega^\omega,\omega^\omega}^{\continuum}\J$. However, the inverse implication is not true in general. Indeed,  it is known that $\fin^2\not\leq_{\Kat}\conv$ \cite{MR3696069}, but $\fin^2\preceq_{\omega^\omega,\omega^\omega}^{\continuum}\conv$ holds as $\fin^2$-ultrafilters and  $\conv$-ultrafilters and P-points all are the same notion (see Theorem~\ref{thm:known-ultrafilters-characterized-as-I-ultrafilters}(\ref{thm:known-ultrafilters-characterized-as-I-ultrafilters:P-point})). 
\end{remark}

We end this section with a result connecting the two cardinal invariants introduced above.

\begin{theorem}
	\label{the:u-greater-than-wsp}
	If $\cG$ contains the identity function, then 
	either 
	$\wsp(\I,\J,\cF) = \uu(\I,\cF,\J,\cG) = \continuum^+$
	or
	$$\wsp(\I,\J,\cF) < \uu(\I,\cF,\J,\cG).$$
\end{theorem}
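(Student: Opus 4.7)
The plan is to set $\kappa=\wsp(\I,\J,\cF)$ and show, for every $\lambda\leq\kappa$ with $\lambda\leq\cc$, that $\I\not\preceq^{\lambda}_{\cF,\cG}\J$. In the regular case $\kappa\leq\cc$ this yields $\uu(\I,\cF,\J,\cG)>\kappa=\wsp$, and in the boundary case $\kappa=\cc^+$ it forces $\uu=\cc^+$. The hypothesis that $\cG$ contains the identity function is used precisely so that $\J'=\J$ itself is an admissible candidate ideal above $\J$ in the definition of $\preceq^{\lambda}_{\cF,\cG}$; hence, to block $\preceq^{\lambda}_{\cF,\cG}$, it suffices to verify that for every $\cA\in[\cP(\omega)]^{<\lambda}$ with $\omega\notin\J(\cA)$, one has $\I\not\leq_{\cF}\J(\cA)$.

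The core argument rests on the duality (noted in the paragraph after the definition of $\I(\cA)$) that $\omega\notin\J(\cA)$ holds if and only if $\cA':=\{\omega\setminus A:A\in\cA\}$ has $\J^+$-FIP. Suppose, toward a contradiction, that some $f\in\cF$ witnesses $\I\leq_{\cF}\J(\cA)$. Since $|\cA'|<\lambda\leq\kappa=\wsp(\I,\J,\cF)$, the defining property of $\wsp$ applied to this $f$ furnishes some $B\in\I$ for which $\cA'\cup\{f^{-1}[B]\}$ still has $\J^+$-FIP. On the other hand, $f^{-1}[B]\in\J(\cA)$ yields a finite $F\subseteq\cA$ with $f^{-1}[B]\setminus\bigcup F\in\J$, which rewrites as $f^{-1}[B]\cap\bigcap\{\omega\setminus A:A\in F\}\in\J$. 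Testing the $\J^+$-FIP of $\cA'\cup\{f^{-1}[B]\}$ against the finite subfamily $\{\omega\setminus A:A\in F\}\cup\{f^{-1}[B]\}$ yields the contradiction.

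No step is genuinely deep; the only subtle point is recognizing that the definition of $\preceq^{\lambda}_{\cF,\cG}$ should be tested with the simplest admissible witness $\J'=\J$ (which is exactly what requires the identity-in-$\cG$ hypothesis) and then translating ``the $\cA$-generated extension of $\J$ is proper'' into ``$\cA'$ has $\J^+$-FIP,'' so that the defining property of $\wsp$ can be invoked after $f$ has been fixed. The boundary case $\wsp=\cc^+$ is handled by the same proof run uniformly for every $\lambda\leq\cc$: since $|\cA'|<\lambda\leq\cc<\cc^+=\wsp$, the supply of $B$ from the definition of $\wsp$ is still available and the contradiction goes through verbatim, yielding $\uu=\cc^+$.
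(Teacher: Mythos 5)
Your proof is correct and takes essentially the same approach as the paper: fix $\J'=\J$ (using that the identity lies in $\cG$), pass to $\cA'=\{\omega\setminus A:A\in\cA\}$ to convert $\omega\notin\J(\cA)$ into $\J^+$-FIP, and then apply the defining property of $\wsp$ to produce $B\in\I$ with $f^{-1}[B]\notin\J(\cA)$. The only cosmetic difference is that you phrase the final step as a contradiction, while the paper gives it directly, and you spell out the boundary case $\wsp=\cc^+$ a bit more explicitly; the content is identical.
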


\begin{proof}
Assume first that $\wsp(\I,\J,\cF) < \continuum^+$.
	We will show that $\I\not\preceq_{\cF,\cG}^{\wsp(\I,\J,\cF)} \J$.
	Let $\J'=\J$ and take any $\cA\subseteq\cP(\omega)$ such that $|\cA|<\wsp(\I,\J,\cF)$ and $\omega\notin\J'(\cA)$.
	Let $\cB=\{\omega\setminus A:A\in\cA\}$.
	Then $\cB$ has $\J^+$-FIP and $|\cB|<\wsp(\I,\J,\cF)$, 
	so for each $f\in \cF$ there is $C\in \I$ such that $\cB\cup\{f^{-1}[C]\}$ has $\J^+$-FIP. In particular, $f^{-1}[C]\notin\J'(\cA)$. Consequently, $\I\not\leq_{\cF}\J'(\cA)$.
	
	If $\wsp(\I,\J,\cF) =  \continuum^+$, then it is not difficult to see that 
	$\I\npreceq^\lambda_{\cF,\cG}\J$ for each $\lambda\leq \continuum$, so $\uu(\I,\cF,\J,\cG) = \continuum^+$.
\end{proof}


\section{More on the cardinals $\wsp$ and $\uu$}
\label{sec:lower-bound}

\subsection{Some lower bounds}

\begin{theorem}
\label{thm:wsp-vs-p-number-and-addSTAR}
Let $\cF\subseteq\omega^\omega$ and $\I$, $\J$ be ideals on $\omega$
such that $\I$ is tall and $\I\not\leq_{\cF} \J\restriction A$ for all $A\notin\J$. We have:
\begin{enumerate}
	\item 
$\pnumber(\J) \leq \wsp(\I,\J,\cF)$.	\label{thm:wsp-vs-p-number-and-addSTAR:pnumber}
	\item 
$\adds(\I) \leq \wsp(\I,\J,\cF)$.	\label{thm:wsp-vs-p-number-and-addSTAR:addSTAR}
\end{enumerate}
\end{theorem}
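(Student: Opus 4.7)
The plan is to prove both inequalities using the same template: fix $\kappa$ below the respective bound, take any $\cA\subseteq\cP(\omega)$ of size at most $\kappa$ with $\J^+$-FIP, fix an arbitrary $f\in\cF$, and produce $B\in\I$ such that $\cA\cup\{f^{-1}[B]\}$ still has $\J^+$-FIP. In both parts the hypothesis will be invoked in the form: for every $A\in\J^+$ and every $f\in\cF$ there is $B\in\I$ with $f^{-1}[B]\cap A\notin\J$ (this is exactly the unfolding of $\I\not\leq_{\cF}\J\restriction A$ under the convention that ideals on countable sets are identified with ideals on $\omega$).

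For part (1), I would use a $\J^+$-pseudointersection. Since $|\cA|<\pnumber(\J)$, the definition yields $C\in\J^+$ with $C\setminus A\in\J$ for every $A\in\cA$. Applying the hypothesis to $C$ and $f$ gives $B\in\I$ with $f^{-1}[B]\cap C\notin\J$. To verify $\J^+$-FIP of $\cA\cup\{f^{-1}[B]\}$, note that for any finite $F\subseteq\cA$ one has $C\setminus\bigcap F=\bigcup_{A\in F}(C\setminus A)\in\J$, so $C\cap\bigcap F\cap f^{-1}[B]$ differs from $C\cap f^{-1}[B]$ by a $\J$-set, hence lies in $\J^+$; therefore $\bigcap F\cap f^{-1}[B]\in\J^+$.

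For part (2), the dual plan is to use a pseudo-union, but extra care is required. Assuming WLOG $\cA$ is closed under finite intersections, for each $A\in\cA$ I want $B_A\in\I$ with $f^{-1}[B_A]\cap A\notin\J$ together with the additional constraint $B_A\subseteq\{y\in\omega:f^{-1}[\{y\}]\cap A\in\J\}$. This constraint guarantees that any finite subset of $B_A$ has $f$-preimage meeting $A$ in a $\J$-set, which is exactly what is needed so that, after obtaining a pseudo-union $B\in\I$ with $B_A\setminus B$ finite for all $A\in\cA$ (provided by $\kappa<\adds(\I)$), the computation $f^{-1}[B]\cap A\supseteq (f^{-1}[B_A]\cap A)\setminus(f^{-1}[B_A\setminus B]\cap A)$ puts $f^{-1}[B]\cap A$ into $\J^+$. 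The constrained choice of $B_A$ can be made when $A\cap f^{-1}[\{y:f^{-1}[\{y\}]\cap A\in\J\}]\in\J^+$, by applying the hypothesis to this $\J^+$-subset of $A$ and trimming the resulting witness. The hard part will be the complementary case, where the ``large-fiber'' part of $A$ dominates; here I would invoke tallness of $\I$ to select a suitable $B_A\in\I$ inside the set $\{y:f^{-1}[\{y\}]\cap A\in\J^+\}$ (or a finite such set, with a global $B_0\in\I$ absorbed into $B$), and adjust the pseudo-union bookkeeping so that $f^{-1}[B]\cap A\in\J^+$ remains valid for these $A$'s as well.
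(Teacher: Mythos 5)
Your part~(1) is correct and is essentially the paper's own argument: take a $\J^+$-pseudointersection $C$ of $\cA$, apply the hypothesis to $C$, and note that $C\setminus\bigcap F\in\J$ makes $\bigcap F\cap f^{-1}[B]\in\J^+$. The only cosmetic difference is whether one records the witness as $B\in\I$ with $f^{-1}[B]\cap C\notin\J$ or as $D\subseteq C$ with $f[D]\in\I$; these are equivalent unfoldings.

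Part~(2) follows the same general plan as the paper (split each $A$ by fiber size, choose $B_A\in\I$ accordingly, take a pseudo-union using $|\cA|<\adds(\I)$, and use tallness in the large-fiber case), and your first subcase is handled correctly. But the second subcase, which you yourself flag as ``the hard part,'' has a genuine gap. When $Y_2^A=\{y:f^{-1}[\{y\}]\cap A\notin\J\}$ is \emph{finite}, tallness is useless, and the proposed fix --- ``a global $B_0\in\I$ absorbed into $B$'' --- does not work as stated: the set of $y$'s you would need to absorb could be a different singleton $y_A$ for each of up to $|\cA|$ many sets $A$, and the union of these need not lie in $\I$ (nor need there be a single $B_0\in\I$ meeting every relevant $Y_2^A$). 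The paper avoids this entirely by an \emph{initial} case split that you omit: if there is some $n$ with $\{f^{-1}[\{n\}]\}\cup\cA$ having $\J^+$-FIP, one simply takes $B=\{n\}$ and stops; in the remaining case, for every $n$ there is a finite $G_n\subseteq\cA$ with $f^{-1}[\{n\}]\cap\bigcap G_n\in\J$, and this forces $Y_2^F$ to be infinite whenever the large-fiber alternative holds. (The paper also states the dichotomy in the stronger form ``$\{f^{-1}[Y_i^F]\}\cup\cA$ has $\J^+$-FIP'' rather than your pointwise ``$f^{-1}[Y_i^A]\cap A\notin\J$''; with your weaker dichotomy one can still finish, but only after an extra step, e.g.\ passing from $A$ to a smaller $A^*\in\cA$ for which the small-fiber alternative holds, using the same initial case split.) Without that initial reduction, the argument as written does not close.
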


\begin{proof}
(\ref{thm:wsp-vs-p-number-and-addSTAR:pnumber})
	Let $\cA$ be a family having $\J^+$-FIP and such that  $|\cA|<\pnumber(\J)$.
	Then there is $C\notin\J$ such that $C\subseteq^{\J}A$ for each $A\in \cA$.
	In particular, 
	$C\subseteq^{\J} \bigcap F$ for each $F\in [\cA]^{<\omega}$.
	Let $f\in \cF$.
	Since $\I\not\leq_{\cF}\J\restriction C$, there is $D\subseteq C$, $D\notin \J$ with $f[D]\in \I$.
	We put $B = f[D]$ and notice that for each $F\in [\cA]^{<\omega}$,
	$\bigcap F \cap f^{-1}[B] \supseteq \bigcap \cF \cap D\notin\J$. It means that $\cA\cup\{f^{-1}[B]\}$,	has $\J^+$-FIP, so $|\cA| < \wsp(\I,\J,\cF)$.

(\ref{thm:wsp-vs-p-number-and-addSTAR:addSTAR})
Let $\cA$ be a family having $\J^+$-FIP and such that  $|\cA|<\adds(\I)$.
Fix any $f\in \cF$.

If there is $n\in \omega$ such that $\{f^{-1}[\{n\}]\}\cup\cA$ has $\J^+$-FIP, then 
$|\cA| < \wsp(\I,\J,\cF)$ and the proof is finished.
Otherwise,
$\{f^{-1}[\{n\}]\}\cup\cA$ does not have $\J^+$-FIP for any $n\in \omega$.
Then, for each $F\in [\cA]^{<\omega}$, we define
$$Y_1^F = \{n\in \omega: f^{-1}[\{n\}]\cap \bigcap F\in\J\} \text{\ \ \ and \ \ \ } Y_2^F = \omega\setminus Y_1^F,$$
and pick a set $B_F\in\I$ in the following manner:
\begin{itemize}
\item If $\{f^{-1}[Y_1^F]\}\cup\cA$ has $\J^+$-FIP, then $A = f^{-1}[Y_1^F]\cap \bigcap F\notin\J$.
Since $\I\not\leq_{\cF}\J\restriction A$, there is $B_F\in \I$ such that $B_F\subseteq f[f^{-1}[Y_1^F]\cap \bigcap F]$ and $f^{-1}[B_F]\cap (f^{-1}[Y_1^F]\cap \bigcap F)\notin\J$.

\item 
If $\{f^{-1}[Y_2^F]\}\cup\cA$ has $\J^+$-FIP, then $Y_2^F$ is infinite (because $\{f^{-1}[\{n\}]\}\cup\cA$ does not have $\J^+$-FIP for each $n$), so there is infinite $B_F\subseteq Y_2^F$ such that $B_F\in \I$ (because $\I$ is tall).
\end{itemize}

Since $|[\cA]^{<\omega}|<\adds(\I)$, there is $B\in \I$ such that $B_F\setminus B$ is finite for each 
$F\in [\cA]^{<\omega}$.
We claim that $\{f^{-1}[B]\}\cup\cA$ has $\J^+$-FIP (if so, then 
$|\cA| < \wsp(\I,\J,\cF)$ and the proof will be finished).

Take any $F\in [\cA]^{<\omega}$.
We have two cases.

\emph{Case 1.}
The family $\{f^{-1}[Y_1^F]\}\cup\cA$ has $\J^+$-FIP.

Then $f^{-1}[B_F]\cap \bigcap F\notin\J$.
Moreover, $B_F\setminus B$ is finite and $f^{-1}[\{n\}]\cap \bigcap F\in \J$ for each $n\in B_F\setminus B$ (as $B_F\subseteq f[f^{-1}[Y_1^F]\subseteq Y_1^F$), so $f^{-1}[B_F\setminus B]\cap \bigcap F \in \J$.
Thus, $f^{-1}[B]\cap \bigcap F \supseteq f^{-1}[B_F\cap B]\cap \bigcap F\notin\J$.

\emph{Case 2.}
The family $\{f^{-1}[Y_2^F]\}\cup\cA$ has $\J^+$-FIP.

Then $B_F$ is infinite, so $B_F\cap B\neq\emptyset$ as $B_F\setminus B$ is finite.
Take any $n\in B_F\cap B$.
Then $f^{-1}[\{n\}]\cap \bigcap F\notin\J$ (as $n\in B_F\subseteq Y_2^F$),
and $f^{-1}[B]\supseteq f^{-1}[\{n\}]$ (as $n\in B$).
Thus, 
$f^{-1}[B]\cap \bigcap F \notin\J$.
\end{proof}

As a simple corollary, we obtain a result of Fla\v{s}kov\'{a} from \cite{MR2512901}.

\begin{corollary}[{\cite[Proposition~2.3]{MR2512901}}]
\label{cor:p=c-implies-exists-I-ultrafilter-for-tall-I}
	Assume $\pnumber=\continuum$. There exists an $\I$-ultrafilter for each tall ideal $\I$.	
\end{corollary}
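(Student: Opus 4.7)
The plan is to apply Theorem~\ref{thm:wsp-implies-existence-of-I-ultrafilters} with $\J=\fin$ and $\cF=\omega^\omega$, using Theorem~\ref{thm:wsp-vs-p-number-and-addSTAR}(\ref{thm:wsp-vs-p-number-and-addSTAR:pnumber}) to get the lower bound on $\wsp(\I,\fin,\omega^\omega)$ from $\pnumber(\fin)=\pnumber=\continuum$.

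First I would verify the hypothesis of Theorem~\ref{thm:wsp-vs-p-number-and-addSTAR}(\ref{thm:wsp-vs-p-number-and-addSTAR:pnumber}) in this setting, namely that $\I\not\leq_{\Kat}\fin\restriction A$ for every infinite $A\subseteq\omega$. This is a straightforward tallness argument: given any $f:A\to\omega$, either $f$ fails to be finite-to-one (and then some singleton $\{n\}\in\fin\subseteq\I$ has infinite $f$-preimage), or $f$ is finite-to-one (and then $f[A]$ is infinite, so by tallness we can find an infinite $B\in\I$ with $B\subseteq f[A]$, and $f^{-1}[B]$ is then infinite).

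Given this, Theorem~\ref{thm:wsp-vs-p-number-and-addSTAR}(\ref{thm:wsp-vs-p-number-and-addSTAR:pnumber}) yields
\[
\continuum = \pnumber = \pnumber(\fin) \leq \wsp(\I,\fin,\omega^\omega),
\]
so $\wsp(\I,\fin,\omega^\omega) \geq \continuum = |\omega^\omega|$. Then Theorem~\ref{thm:wsp-implies-existence-of-I-ultrafilters} produces an ultrafilter $\cU$ with $\I\not\leq_{\Kat}\cU^*$ and $\fin\subseteq\cU^*$; the second condition is automatic for any ultrafilter on $\omega$, so $\cU$ is the desired $\I$-ultrafilter.

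There is essentially no obstacle here: the corollary is a clean packaging of the bound in Theorem~\ref{thm:wsp-vs-p-number-and-addSTAR}(\ref{thm:wsp-vs-p-number-and-addSTAR:pnumber}) with the construction principle of Theorem~\ref{thm:wsp-implies-existence-of-I-ultrafilters}. The only mild point worth spelling out is the verification $\I\not\leq_{\Kat}\fin\restriction A$, since without tallness this would fail.
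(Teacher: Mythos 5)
Your proof is correct and matches the paper's own argument: both apply Theorem~\ref{thm:wsp-implies-existence-of-I-ultrafilters} with $\J=\fin$, $\cF=\omega^\omega$ after invoking Theorem~\ref{thm:wsp-vs-p-number-and-addSTAR}(\ref{thm:wsp-vs-p-number-and-addSTAR:pnumber}) together with $\pnumber(\fin)=\pnumber$. The only difference is that you spell out the tallness argument for $\I\not\leq_{\Kat}\fin\restriction A$, which the paper leaves implicit by noting $\fin\restriction A\approx\fin$.
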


\begin{proof}
	Since $\pnumber(\fin)=\pnumber$ and $\I\not\leq_{\Kat} \fin \approx \fin\restriction A$ for each $A\notin \fin$, it is enough to apply Theorems~\ref{thm:wsp-implies-existence-of-I-ultrafilters} and \ref{thm:wsp-vs-p-number-and-addSTAR}(\ref{thm:wsp-vs-p-number-and-addSTAR:pnumber}).	
\end{proof}


\subsection{Connection with the Kat\v{e}tov order}

The following result, which was already mentioned in the Introduction, provides sufficient conditions on ideals to obtain the equivalence:
$\I\not\leq_K\J$ if and only if it is consistent that there exists an $\I$-ultrafilter which
is not a $\J$-ultrafilter.

\begin{theorem}
	\label{thm:KAT-iff-EXISTS-ULTRA-for-K-uniform-Pplus-ideals}
	Let $\I$ be a tall ideal and $\J$ be a $\text{P}^+(\J)$-ideal.  
		\begin{enumerate}
		\item 	If $\J$ is $\leq_{\Kat}$-homogeneous, then under CH, the following conditions are equivalent. \label{thm:KAT-iff-EXISTS-ULTRA-for-K-uniform-Pplus-ideals:CH-KAT}
	\begin{enumerate}
		\item $\I\not\leq_{\Kat}\J$.
		\item $\I\not\preceq_{\omega^\omega,\omega^\omega}^{\continuum}\J$.
 \item $\I\not\preceq_{\omega^\omega,\fin-1}^{\continuum}\J$.
		\item $\I\not\preceq_{\omega^\omega,1-1}^{\continuum}\J$. 		
		\item There exists an $\I$-ultrafilter  which is not a $\J$-point.\label{thm:KAT-iff-EXISTS-ULTRA-for-K-uniform-Pplus-ideals:CH-KAT-Ultra}
		\item There exists an $\I$-ultrafilter which is not a weak $\J$-ultrafilter.\label{thm:KAT-iff-EXISTS-ULTRA-for-K-uniform-Pplus-ideals:CH-KAT-Ultra1}
		\item There exists an $\I$-ultrafilter which is not a $\J$-ultrafilter.\label{thm:KAT-iff-EXISTS-ULTRA-for-K-uniform-Pplus-ideals:CH-KAT-Ultra2}
	\end{enumerate}

\item If $\J$ is $\leq_{\KB}$-homogeneous, then under CH, the following conditions are equivalent.\label{thm:KAT-iff-EXISTS-ULTRA-for-K-uniform-Pplus-ideals:CH-KB}
\begin{enumerate}
	\item $\I\not\leq_{\KB}\J$.
	\item $\I\not\preceq_{\fin-1,\fin-1}^{\continuum}\J$.
	\item $\I\not\preceq_{\fin-1,1-1}^{\continuum}\J$.
	\item There exists a weak $\I$-ultrafilter which is not a $\J$-point. \label{thm:KAT-iff-EXISTS-ULTRA-for-K-uniform-Pplus-ideals:CH-KB-Ultra}
	\item There exists a weak $\I$-ultrafilter which is not a weak $\J$-ultrafilter.\label{thm:KAT-iff-EXISTS-ULTRA-for-K-uniform-Pplus-ideals:CH-KB-Ultra1}
\end{enumerate}

\item 	
If $\J$ is $\leq_{\point}$-homogeneous, then under CH, the following conditions are equivalent.\label{thm:KAT-iff-EXISTS-ULTRA-for-K-uniform-Pplus-ideals:CH-P}
\begin{enumerate}
	\item $\I\not\leq_{\point}\J$.
	\item $\I\not\preceq_{1-1,1-1}^{\continuum}\J$.
	\item There exists an $\I$-point which is not a $\J$-point.\label{thm:KAT-iff-EXISTS-ULTRA-for-K-uniform-Pplus-ideals:CH-P-Utra}
\end{enumerate}

\item 	If $\J$ is an $F_\sigma$ ideal, then CH can be relaxed to the assumption $\pnumber=\continuum$ in (\ref{thm:KAT-iff-EXISTS-ULTRA-for-K-uniform-Pplus-ideals:CH-KAT}), (\ref{thm:KAT-iff-EXISTS-ULTRA-for-K-uniform-Pplus-ideals:CH-KB}) and (\ref{thm:KAT-iff-EXISTS-ULTRA-for-K-uniform-Pplus-ideals:CH-P}).\label{thm:KAT-iff-EXISTS-ULTRA-for-K-uniform-Pplus-ideals:Borel}

\item If $\I$ and $\J$ are Borel and $\J$ is $\leq_{\Kat}$-homogeneous ($\leq_{\KB}$-homogeneous or $\leq_{\point}$-homogeneous, resp.), then the following conditions are equivalent.\label{thm:KAT-iff-EXISTS-ULTRA-for-K-uniform-Pplus-ideals:CON}
	\begin{enumerate}
\item It is consistent that there exists an $\I$-ultrafilter (weak $\I$-ultrafilter or $\I$-point, resp.) which is not a $\J$-point.\label{thm:KAT-iff-EXISTS-ULTRA-for-K-uniform-Pplus-ideals:CON-CON}

\item Under CH, there exists an $\I$-ultrafilter (weak $\I$-ultrafilter or $\I$-point, resp.) which is not a $\J$-point.\label{thm:KAT-iff-EXISTS-ULTRA-for-K-uniform-Pplus-ideals:CON-CH}
\end{enumerate}
In particular, if one can find an $\I$-ultrafilter (weak $\I$-ultrafilter or $\I$-point, resp.) which is not a $\J$-point in some sophisticated model of ZFC, then one could do it already under CH.

\item Equivalences similar to item (\ref{thm:KAT-iff-EXISTS-ULTRA-for-K-uniform-Pplus-ideals:CON}) (which concerns items (\ref{thm:KAT-iff-EXISTS-ULTRA-for-K-uniform-Pplus-ideals:CH-KAT-Ultra}), (\ref{thm:KAT-iff-EXISTS-ULTRA-for-K-uniform-Pplus-ideals:CH-KB-Ultra}), and (\ref{thm:KAT-iff-EXISTS-ULTRA-for-K-uniform-Pplus-ideals:CH-P-Utra}) above)
hold for counterparts of items (\ref{thm:KAT-iff-EXISTS-ULTRA-for-K-uniform-Pplus-ideals:CH-KAT-Ultra1}), (\ref{thm:KAT-iff-EXISTS-ULTRA-for-K-uniform-Pplus-ideals:CH-KAT-Ultra2}) and (\ref{thm:KAT-iff-EXISTS-ULTRA-for-K-uniform-Pplus-ideals:CH-KB-Ultra1}).\label{thm:KAT-iff-EXISTS-ULTRA-for-K-uniform-Pplus-ideals:CON-other-orders}
\end{enumerate}
\end{theorem}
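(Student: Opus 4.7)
My overall plan is to prove all items by a single uniform template: upgrade the hypothesis ``$\I\not\leq_{\cF}\J$'' to ``$\I\not\leq_{\cF}\J\restriction A$ for every $A\in\J^+$'' using the appropriate $\leq_{\cF}$-homogeneity of $\J$; combine this with the $P^+(\J)$-assumption to bound $\wsp(\I,\J,\cF)$ from below by $\continuum$ under CH; and invoke Theorem~\ref{thm:wsp-implies-existence-of-I-ultrafilters} to produce an ultrafilter $\cU$ that is an $\I$-ultrafilter (weak $\I$-ultrafilter, or $\I$-point, respectively) with $\J\subseteq\cU^*$. The latter inclusion makes the identity function a witness of $\J\leq_{\point}\cU^*$, so $\cU$ is not a $\J$-point and a fortiori neither a weak $\J$-ultrafilter nor a $\J$-ultrafilter. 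The $\preceq$-equivalences are then immediate from Corollary~\ref{cor:new-order-iff-existence-of-ultrafilters}, and the reverse direction ``existence $\Rightarrow$ $\I\not\leq_{\Kat}\J$'' that closes the loop of equivalences is simply transitivity of $\leq_{\Kat}$ applied to $\I\not\leq_{\Kat}\cU^*$ and $\J\leq_{\Kat}\cU^*$.

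For item~(\ref{thm:KAT-iff-EXISTS-ULTRA-for-K-uniform-Pplus-ideals:CH-KAT}), the homogeneity upgrade is carried out as follows: given $A\in\J^+$, the $\leq_{\Kat}$-homogeneity of $\J$ supplies a witness $g:\omega\to A$ of $\J\restriction A\leq_{\Kat}\J$, so if $h:A\to\omega$ witnessed $\I\leq_{\Kat}\J\restriction A$, then $h\circ g:\omega\to\omega$ would witness $\I\leq_{\Kat}\J$, contradicting~(a). Hence $\I\not\leq_{\Kat}\J\restriction A$ for every $A\in\J^+$, which is precisely the hypothesis of Theorem~\ref{thm:wsp-vs-p-number-and-addSTAR}(\ref{thm:wsp-vs-p-number-and-addSTAR:pnumber}); it yields $\pnumber(\J)\leq\wsp(\I,\J,\omega^\omega)$. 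By $P^+(\J)$-ness of $\J$, Proposition~\ref{prop:pnumber}(\ref{prop:pnumber:for-P-plus-ideals}) gives $\pnumber(\J)\geq\omega_1$, and under CH this is $\continuum=|\omega^\omega|$. Theorem~\ref{thm:wsp-implies-existence-of-I-ultrafilters} then produces the desired $\cU$.

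Items~(\ref{thm:KAT-iff-EXISTS-ULTRA-for-K-uniform-Pplus-ideals:CH-KB}) and~(\ref{thm:KAT-iff-EXISTS-ULTRA-for-K-uniform-Pplus-ideals:CH-P}) run identically with $\fin-1$ or $1-1$ in place of $\omega^\omega$ (both of cardinality $\continuum$) and with the appropriate $\leq_{\KB}$- or $\leq_{\point}$-homogeneity in the upgrade step; the composition $h\circ g$ stays inside the relevant function class because finite-to-one and one-to-one functions compose. Item~(\ref{thm:KAT-iff-EXISTS-ULTRA-for-K-uniform-Pplus-ideals:Borel}) only requires replacing the route $P^+(\J)\Rightarrow\pnumber(\J)\geq\omega_1$ by the $F_\sigma$-bound $\pnumber\leq\pnumber(\J)$ from Theorem~\ref{thm:p-number-for-F-sigma}, so that the weaker assumption $\pnumber=\continuum$ suffices in place of CH.

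Items~(\ref{thm:KAT-iff-EXISTS-ULTRA-for-K-uniform-Pplus-ideals:CON}) and~(\ref{thm:KAT-iff-EXISTS-ULTRA-for-K-uniform-Pplus-ideals:CON-other-orders}) I would handle by Shoenfield absoluteness: for Borel $\I$ and $\J$, each of $\I\leq_{\Kat}\J$, $\I\leq_{\KB}\J$ and $\I\leq_{\point}\J$ has the form ``$\exists f\in\omega^\omega\,\forall B\in 2^\omega\,(B\in\I\to f^{-1}[B]\in\J)$'', which is $\Sigma^1_2$ in the Borel codes of $\I,\J$ and therefore absolute between transitive models of ZFC with the same ordinals. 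Consequently, if the existence of an $\I$-ultrafilter (weak $\I$-ultrafilter, $\I$-point, respectively) failing the relevant $\J$-property is consistent, then in the witnessing model the corresponding incomparability holds, whence by absoluteness it holds in every model, including ones of CH; items~(\ref{thm:KAT-iff-EXISTS-ULTRA-for-K-uniform-Pplus-ideals:CH-KAT})--(\ref{thm:KAT-iff-EXISTS-ULTRA-for-K-uniform-Pplus-ideals:CH-P}) then deliver the required ultrafilter under CH. The main technical obstacle I foresee is purely organisational: keeping the three parallel tracks $\omega^\omega$, $\fin-1$, $1-1$ aligned, and verifying that the homogeneity upgrade, the $\wsp$-lower-bound via $\pnumber(\J)$, and the precondition ``$\I\not\leq_{\cF}\J\restriction A$'' of Theorem~\ref{thm:wsp-vs-p-number-and-addSTAR} all transport correctly for each choice of $\cF$.
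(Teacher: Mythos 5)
Your proposal is correct and follows essentially the same route as the paper: the homogeneity upgrade $\I\not\leq_{\cF}\J\Rightarrow\I\not\leq_{\cF}\J\restriction A$ feeds Theorem~\ref{thm:wsp-vs-p-number-and-addSTAR} together with Proposition~\ref{prop:pnumber}(\ref{prop:pnumber:for-P-plus-ideals}) (resp.\ Theorem~\ref{thm:p-number-for-F-sigma} for item~(\ref{thm:KAT-iff-EXISTS-ULTRA-for-K-uniform-Pplus-ideals:Borel})) to bound $\wsp$ from below, Theorem~\ref{thm:wsp-implies-existence-of-I-ultrafilters} and Corollary~\ref{cor:new-order-iff-existence-of-ultrafilters} close the cycle of equivalences, and absoluteness of the Kat\v{e}tov-type relations for Borel ideals handles items~(\ref{thm:KAT-iff-EXISTS-ULTRA-for-K-uniform-Pplus-ideals:CON}) and~(\ref{thm:KAT-iff-EXISTS-ULTRA-for-K-uniform-Pplus-ideals:CON-other-orders}). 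The only cosmetic difference is that you close the implication loop via ``existence $\Rightarrow$ $\I\not\leq_{\Kat}\J$'' (transitivity of $\leq_{\Kat}$) whereas the paper closes it via ``$\I\not\preceq^{\continuum}_{\omega^\omega,\omega^\omega}\J\Rightarrow\I\not\leq_{\Kat}\J$'', which is the same fact packaged differently.
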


\begin{proof}
(\ref{thm:KAT-iff-EXISTS-ULTRA-for-K-uniform-Pplus-ideals:CH-KAT}) Taking into account the obvious implications, 
	using Corollary~\ref{cor:new-order-iff-existence-of-ultrafilters} and knowing that $\I\not\preceq_{\omega^\omega,\omega^\omega}^{\continuum}\J$ implies $\I\not\leq_{\Kat}\J$,  the proof will be finished once we show that $\I\not\leq_{\Kat}\J$  implies that there is an $\I$-ultrafilter which is not a $\J$-point.
	
	Assume that $\I\not\leq_{\Kat}\J$. Since $\J$ is $\leq_{\Kat}$-homogeneous, we obtain $\I\not\leq_{\Kat} \J\restriction A$ for all $A\notin\J$. 
	Now, Theorem~\ref{thm:wsp-vs-p-number-and-addSTAR}  implies $\wsp(\I,\J,\omega^\omega)\geq \pnumber(\J)$.
	Since $\J$ is a $P^+(\J)$-ideal, $\pnumber(\J)\geq \omega_1$ by Proposition~\ref{prop:pnumber}(\ref{prop:pnumber:for-P-plus-ideals}).
Thus, using CH and applying Theorem~\ref{thm:wsp-implies-existence-of-I-ultrafilters}, we obtain an $\I$-ultrafilter which is not a $\J$-point.

The proofs of items (\ref{thm:KAT-iff-EXISTS-ULTRA-for-K-uniform-Pplus-ideals:CH-KB}) and (\ref{thm:KAT-iff-EXISTS-ULTRA-for-K-uniform-Pplus-ideals:CH-P}) are very similar to the proof of item (\ref{thm:KAT-iff-EXISTS-ULTRA-for-K-uniform-Pplus-ideals:CH-KAT}) as one can easily check that $\I\not\preceq_{\fin-1,\fin-1}^{\continuum}\J$ ($\I\not\preceq_{1-1,1-1}^{\continuum}\J$, resp.) implies $\I\not\leq_{KB}\J$ ($\I\not\leq_{P}\J$, resp.).
	
(\ref{thm:KAT-iff-EXISTS-ULTRA-for-K-uniform-Pplus-ideals:Borel}) It follows from the proofs of (\ref{thm:KAT-iff-EXISTS-ULTRA-for-K-uniform-Pplus-ideals:CH-KAT}), (\ref{thm:KAT-iff-EXISTS-ULTRA-for-K-uniform-Pplus-ideals:CH-KB}), (\ref{thm:KAT-iff-EXISTS-ULTRA-for-K-uniform-Pplus-ideals:CH-P}) and the fact that $\pp(\J)\geq\pp$ for $F_\sigma$ ideals (Theorem~\ref{thm:p-number-for-F-sigma}). 

(\ref{thm:KAT-iff-EXISTS-ULTRA-for-K-uniform-Pplus-ideals:CON})
The implication ``(\ref{thm:KAT-iff-EXISTS-ULTRA-for-K-uniform-Pplus-ideals:CON-CH})$\implies$(\ref{thm:KAT-iff-EXISTS-ULTRA-for-K-uniform-Pplus-ideals:CON-CON})'' is obvious, so below we show only the implication ``(\ref{thm:KAT-iff-EXISTS-ULTRA-for-K-uniform-Pplus-ideals:CON-CON})$\implies$(\ref{thm:KAT-iff-EXISTS-ULTRA-for-K-uniform-Pplus-ideals:CON-CH})''.
Assume that it is consistent that there exists an $\I$-ultrafilter (weak $\I$-ultrafilter or $\I$-point, resp.)  which is not a $\J$-point.
Then, obviously, it is consistent that $\I\not\leq_{\Kat}\J$ ($\I\not\leq_{\KB}\J$, $\I\not\leq_{\point}\J$, resp.) -- see Subsection \ref{subsec:ultrafilters}.
However, if $\I$ and $\J$ are Borel ideals, then the sentence $\I\nleq_{\Kat}\J$ ($\I\not\leq_{\KB}\J$, $\I\not\leq_{\point}\J$, resp.) is absolute (see \cite[p.~210]{MR3600759}), so $\I\not\leq_{\Kat}\J$ ($\I\not\leq_{\KB}\J$, $\I\not\leq_{\point}\J$, resp.) holds in ZFC.
Then, using item (\ref{thm:KAT-iff-EXISTS-ULTRA-for-K-uniform-Pplus-ideals:CH-KAT}) ((\ref{thm:KAT-iff-EXISTS-ULTRA-for-K-uniform-Pplus-ideals:CH-KB}) or (\ref{thm:KAT-iff-EXISTS-ULTRA-for-K-uniform-Pplus-ideals:CH-P}), resp.) we obtain, under CH, 
that there exists an $\I$-ultrafilter (weak $\I$-ultrafilter or $\I$-point, resp.) which is not a $\J$-point.

(\ref{thm:KAT-iff-EXISTS-ULTRA-for-K-uniform-Pplus-ideals:CON-other-orders})
This can be shown in the same way as item (\ref{thm:KAT-iff-EXISTS-ULTRA-for-K-uniform-Pplus-ideals:CON}).
\end{proof}


\subsection{Maximal ideals}

\begin{theorem}
	\label{thm:wsp-vs-character}
	Let $\I$ be a maximal ideal on $\omega$.
	If $\cF\subseteq\omega^\omega$ contains the identity function, then 
	$$\wsp(\I,\fin,\cF) = \chi(\I^*),$$ 
	where 
	$\chi(\I^*) = \min\{|\cB|:\cB\subseteq\I^*\land \forall A\in \I^*\,\exists B\in \cB\,(B\subseteq^* A)\}$.
\end{theorem}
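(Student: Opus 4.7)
The plan is to prove the two inequalities $\wsp(\I,\fin,\cF)\leq\chi(\I^*)$ and $\wsp(\I,\fin,\cF)\geq\chi(\I^*)$ separately, using that $\I^*$ is a nonprincipal ultrafilter by maximality of $\I$.

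For the upper bound, I would take a base $\cB$ of $\I^*$ with $|\cB|=\chi(\I^*)$. Being a filter base, $\cB$ has SFIP. Using $\mathrm{id}\in\cF$, for any $B\in\I$ we have $\omega\setminus B\in\I^*$, so some $A\in\cB$ is contained in $\omega\setminus B$; then $A\cap B=\emptyset$, so $\cB\cup\{\mathrm{id}^{-1}[B]\}=\cB\cup\{B\}$ fails SFIP. Hence $\cB$ itself witnesses $\wsp(\I,\fin,\cF)\leq\chi(\I^*)$.

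For the lower bound, fix $\cA$ with SFIP and $|\cA|<\chi(\I^*)$, and any $f\in\cF$; the goal is to produce $B\in\I$ with $\cA\cup\{f^{-1}[B]\}$ SFIP. If $\mathrm{range}(f)\in\I$, take $B=\mathrm{range}(f)$ so $f^{-1}[B]=\omega$, which trivially works. Otherwise $\mathrm{range}(f)\in\I^*$. Let $\cF_\cA$ denote the filter on $\omega$ generated by $\cA\cup\fin^*$. The central step is to produce an ultrafilter $\cV\supseteq\cF_\cA$ whose pushforward $f(\cV)=\{B:f^{-1}[B]\in\cV\}$ differs from $\I^*$. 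Once such $\cV$ is at hand, either $f(\cV)$ is principal at some $c$ (so $B=\{c\}\in\I$ works), or $f(\cV)$ is a nonprincipal ultrafilter distinct from $\I^*$, and by maximality of $\I$ one can pick $B\in f(\cV)\setminus\I^*=f(\cV)\cap\I$. In either subcase $f^{-1}[B]\in\cV$, hence $\cA\cup\{f^{-1}[B]\}$ has SFIP.

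The hard part will be exhibiting such a $\cV$; this is where the cardinality bound $|\cA|<\chi(\I^*)$ is used. Suppose, toward a contradiction, that every ultrafilter $\cV\supseteq\cF_\cA$ has $f(\cV)=\I^*$, equivalently contains $\{f^{-1}[B]:B\in\I^*\}$. Using the standard fact that a proper filter equals the intersection of all ultrafilters above it, one obtains $f^{-1}[B]\in\cF_\cA$ for every $B\in\I^*$, and therefore $\I^*\subseteq f(\cF_\cA)$. Since $\mathrm{range}(f)\in\I^*$ makes $f(\cF_\cA)$ a proper filter contained in the ultrafilter $\I^*$, one gets $f(\cF_\cA)=\I^*$. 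Unpacking the definition of $\cF_\cA$ then shows that
$$\{f[(\textstyle\bigcap F)\setminus C]:F\in[\cA]^{<\omega},\,C\in[\omega]^{<\omega}\}$$
is a base for $\I^*$ of cardinality at most $\max(|\cA|,\aleph_0)<\chi(\I^*)$ (using $\chi(\I^*)\geq\aleph_1$ for the nonprincipal ultrafilter $\I^*$), contradicting the definition of $\chi(\I^*)$.
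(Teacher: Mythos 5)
Your proof is correct, and both bounds go through, but the lower bound argument is genuinely different in spirit from the paper's. The paper handles $\wsp(\I,\fin,\cF)\geq\chi(\I^*)$ by a direct two-case analysis on $f$: either some $f[\bigcap F]$ with $F\in[\cA]^{<\omega}$ already lies in $\I$ (in which case $B=f[\bigcap F]$ works trivially since $f^{-1}[B]\supseteq\bigcap F$), or all such images lie in $\I^*$; then, since there are at most $\max(|\cA|,\aleph_0)<\chi(\I^*)$ of them, they cannot form a base, so there is $C\in\I^*$ with no $f[\bigcap F]\subseteq^* C$, and $B=\omega\setminus C$ gives $|\bigcap F\cap f^{-1}[B]|\geq|f[\bigcap F]\setminus C|=\omega$ for every $F$. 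Your route instead passes through pushforwards and the representation of a proper filter as the intersection of ultrafilters extending it: you show that if no ultrafilter $\cV\supseteq\cF_\cA$ has $f(\cV)\neq\I^*$, then $f(\cF_\cA)=\I^*$, which exhibits the small base $\{f[(\bigcap F)\setminus C]\}$ of $\I^*$ and yields the same contradiction. Your argument is a level of abstraction higher (ultrafilter pushforwards, the intersection characterization) but reduces to the same cardinality obstruction; the paper's version extracts the usable set $B$ directly without invoking ultrafilters at all, which makes it shorter and arguably cleaner. Two small points to tidy in your write-up: in the upper bound, from $\omega\setminus B\in\I^*$ and $\cB$ being a base you can only conclude $A\subseteq^*\omega\setminus B$ (i.e., $A\cap B$ finite), not $A\subseteq\omega\setminus B$ — but $A\cap B$ finite already kills SFIP, so the conclusion stands; and the clause ``$\mathrm{range}(f)\in\I^*$ makes $f(\cF_\cA)$ a proper filter contained in the ultrafilter $\I^*$'' should read ``containing'': you have $\I^*\subseteq f(\cF_\cA)$ and $f(\cF_\cA)$ proper (since $\emptyset\notin\cF_\cA$), whence equality by maximality of $\I^*$.
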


\begin{proof}
	First we show $\wsp(\I,\fin,\cF) \geq  \chi(\I^*)$.
	Take a family $\cA$ with SFIP and such that $|\cA|< \chi(\I^*)$.
	Take any $f\in \cF$.
	We have 2 cases.
	
	Case 1. There is $F\in [\cA]^{<\omega}$ with $f[\bigcap F]\in \I$. Then we put $B = f[\bigcap F]$ and observe that
	$\bigcap G\cap f^{-1}[B] \supseteq \bigcap G\cap \bigcap F \notin \fin$ for each $G\in [\cA]^{<\omega}$.
	
	Case 2. For each $F\in [\cA]^{<\omega}$, $f[\bigcap F]\notin \I$. 
	Since $\I$ is maximal, 
	$f[\bigcap F]\in \I^*$ for each $F\in [\cA]^{<\omega}$. 
	Since $|[\cA]^{<\omega}|=|\cA| <\chi(\I^*)$, there is $C\in \I^*$ such that $f[\bigcap F]\not\subseteq^*C$  for each $F\in [\cA]^{<\omega}$. 
	Then we put $B=\omega\setminus C$.
	Since $\bigcap F\cap f^{-1}[B] = \bigcap F\setminus f^{-1}[C]$
	for each $F\in [\cA]^{<\omega}$, 
	$|\bigcap F\cap f^{-1}[B] | \geq |f[\bigcap F]\setminus C|=\omega$. 
	
	We conclude that $\wsp(\I,\fin,\cF)> |\cA|$ and consequently $\wsp(\I,\fin,\cF) \geq  \chi(\I^*)$.
	
	
 Now we show $\wsp(\I,\fin,\cF) \leq  \chi(\I^*)$.
	Fix $\cG\subseteq\I^*$ with $|\cG|<\wsp(\I,\fin,\cF)$.
	Since $\cG$ has SFIP and $|\cG|<\wsp(\I,\fin,\cF)$, for the identity function $f\in \cF$, we can find  $B\in \I$ such that $\cG\cup\{B\} = \cG\cup\{f^{-1}[B]\}$ has SFIP.
	Let $C=\omega\setminus B$.
	Then $C\in \I^*$ and for each $G\in \cG$ we have 
	$|G\setminus C| = |G\cap B| = \omega$, so $G\not\subseteq^* C$.
	Thus $\chi(\I^*)>|\cG|$, and consequently,  	$\wsp(\I,\fin,\cF) \leq  \chi(\I^*)$.
\end{proof}

Again as a simple corollary, we obtain another result of Fla\v{s}kov\'{a} from \cite{MR2512901}.

\begin{corollary}[{\cite[Proposition~2.1]{MR2512901}}]
	There exists an $\I$-ultrafilter
	for each maximal ideal $\I$ such that $\chi(\I^*)=\continuum$.
\end{corollary}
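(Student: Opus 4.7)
The plan is to derive this corollary as an immediate combination of the two theorems established just before it in this subsection. Specifically, I would apply Theorem~\ref{thm:wsp-implies-existence-of-I-ultrafilters} with $\J=\fin$ and $\cF=\omega^\omega$, which requires verifying the hypothesis $\wsp(\I,\fin,\omega^\omega)\geq|\omega^\omega|=\continuum$.

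First I would observe that the identity function belongs to $\omega^\omega$, so Theorem~\ref{thm:wsp-vs-character} applies and yields the identity
\[
\wsp(\I,\fin,\omega^\omega) \;=\; \chi(\I^*).
\]
By the assumption $\chi(\I^*)=\continuum$, this gives $\wsp(\I,\fin,\omega^\omega)=\continuum\geq|\omega^\omega|$.

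Next, invoking Theorem~\ref{thm:wsp-implies-existence-of-I-ultrafilters} with the family $\cF=\omega^\omega$ and the ideal $\J=\fin$, we obtain an ultrafilter $\cU$ such that $\I\not\leq_{\omega^\omega}\cU^*$ and $\fin\subseteq\cU^*$. The second condition is automatic (every ultrafilter's dual ideal extends $\fin$), while the first is precisely the statement that $\cU$ is an $\I$-ultrafilter, which completes the argument.

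There is essentially no obstacle here: the corollary is designed as an application of the machinery developed in the preceding two theorems, and once the equality $\wsp(\I,\fin,\omega^\omega)=\chi(\I^*)$ is in hand the rest is a bookkeeping check that the hypotheses of Theorem~\ref{thm:wsp-implies-existence-of-I-ultrafilters} are met. The only mild caveat to keep in mind when writing is to explicitly note that $\omega^\omega$ contains the identity (so Theorem~\ref{thm:wsp-vs-character} applies) and that $|\omega^\omega|=\continuum$ (so the numerical hypothesis of Theorem~\ref{thm:wsp-implies-existence-of-I-ultrafilters} matches the assumption on $\chi(\I^*)$).
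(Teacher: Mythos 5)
Your proposal is correct and matches the paper's proof exactly: the paper's entire argument is ``Apply Theorems~\ref{thm:wsp-implies-existence-of-I-ultrafilters} and \ref{thm:wsp-vs-character},'' and you have simply spelled out the bookkeeping (that $\omega^\omega$ contains the identity so Theorem~\ref{thm:wsp-vs-character} applies, that $|\omega^\omega|=\continuum$ matches $\chi(\I^*)$, and that the resulting conclusion $\I\not\leq_{\omega^\omega}\cU^*$ is precisely the definition of $\I$-ultrafilter).
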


\begin{proof}
	Apply Theorems~\ref{thm:wsp-implies-existence-of-I-ultrafilters} and \ref{thm:wsp-vs-character}.	
\end{proof}


\section{Let's play a game}
\label{sec:game}

In this section we develop an infinite game which will enable us to replace CH or $\pnumber=\continuum$ with $\cov(\cM)=\continuum$ in some of our results. 

\begin{definition}
Consider the following game $G(\J,A,f)$ associated to an ideal $\J$ on $\omega$, $A\notin\J$ and a function $f:A\to\omega$ which is $\J$-to-one (i.e.~$f^{-1}[\{n\}]\in \J$ for every $n$). In the first move Player I plays $k_0\in\omega$ and then in the $n$th move Player II plays $G_n\in\fin$ and Player I responses in the $(n+1)$st move with a pair $(F_{n},k_{n+1})\in\fin\times\omega$ such that $F_n\cap G_n=\emptyset$ and $|F_n|\leq k_n$. At the end Player I is declared the winner if $f^{-1}[\bigcup_{n\in\omega}F_n]\notin\J\restriction A$. Otherwise Player II is the winner.
\end{definition}

Next result characterizes winning strategies in $G(\J,A,f)$ and connects the considered game with the property of containing a tall summable ideal.

\begin{lemma}\ 
	\label{lem:gra}
\begin{enumerate}
\item Player I has a winning strategy in $G(\J,A,f)$ if and only if 
there are families $\cF_0,\cF_1,\dots\subseteq\fin$ such that:\label{lem:gra:win-strategy-for-I}
	\begin{enumerate}
	\item $\sup \{|F|: F\in \cF_n \}< \infty$ for each $n$,
	\item $\sup \{\min(F): F\in\cF_n\}=\infty$ for each $n$,
	\item if $F_n\in\cF_n$ for each $n$, then $f^{-1}[\bigcup_{n\in\omega} F_n]\notin\J\restriction A$.
\end{enumerate}	
\item Player II has a winning strategy in $G(\J,A,f)$ if and only if there is a tall summable ideal $\I_g$ such that $f$ witnesses $\I_g\leq_{\Kat}\J\restriction A$.\label{lem:gra:win-strategy-for-II}
\item If $\J\restriction A$ is a Borel ideal, then the game $G(\J,A,f)$ is determined for all $f$.\label{lem:gra:determined-fixed-set}
\item If $\J$ is Borel such that for every $A\notin\J$ the ideal $\J\restriction A$ is not $\leq_{\Kat}$-above any tall summable ideal, then Player I has a winning strategy in $G(\J,A,f)$, for all $A$ and $f$.\label{lem:gra:determined-all-sets}
\end{enumerate}
\end{lemma}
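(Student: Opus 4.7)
The implications $(\Leftarrow)$ in items (1) and (2) are direct. For (1)$(\Leftarrow)$, Player I plays $k_n = \max\{|F| : F \in \cF_n\}$ (finite by (a)), and upon seeing $G_n$ chooses some $F_n \in \cF_n$ with $\min F_n > \max G_n$ (possible by (b)); any resulting play realizes a selection $F_n \in \cF_n$, so (c) secures the win. For (2)$(\Leftarrow)$, given $\I_g$ with $g(n) \to 0$ and $f$ witnessing $\I_g \leq_{\Kat} \J\restriction A$, Player II responds to $k_n$ by picking $M_n$ with $k_n \cdot \sup\{g(x) : x > M_n\} < 2^{-n}$ and playing $G_n = \{0, 1, \dots, M_n\}$; any legal $F_n$ then satisfies $\sum_{x \in F_n} g(x) < 2^{-n}$, so $\bigcup_n F_n \in \I_g$ and hence $f^{-1}[\bigcup_n F_n] \in \J\restriction A$.

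For (1)$(\Rightarrow)$, starting from a winning strategy $\sigma$, I would exploit the monotonicity of the payoff (enlarging $\bigcup_n F_n$ only helps Player I) to pass to a winning strategy $\tilde\sigma$ in which $k_n$ depends only on $n$: since Player I is always free to play a larger budget than $\sigma$ prescribes, uniform values $K_n$ can be extracted after pruning the play tree to histories where $\sigma$'s budgets are absorbed into $K_n$. One may then replace the $F$-component of $\tilde\sigma$ by a choice depending only on $(n, G_n)$. Setting $\cF_n = \{\tilde\sigma(n, G).F : G \in \fin\}$, condition (a) holds since $|F| \leq K_n$, (b) holds since $G$ may be taken with arbitrarily large $\max$ (forcing $\min F$ large), and (c) holds because any selection corresponds to a legal play of $\tilde\sigma$ against the Player II strategy prescribing those $G$'s round-by-round, hence wins.

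For (2)$(\Rightarrow)$, given Player II's winning $\tau$, I would construct $g$ inductively. Having fixed $0 = a_0 < a_1 < \dots < a_n$ and $k_0 < \dots < k_{n-1}$, put $k_n = n+1$ and let $a_{n+1}$ exceed $\max G_m$ for every $m \leq n$ across all (finitely many) plays in which Player I chooses $F_i \subseteq [a_i, a_{i+1})$ with $|F_i| \leq k_i$; then enlarge $a_{n+1}$ further to ensure $(a_{n+1}-a_n)/k_n \geq 1$. Define $g(x) = 1/k_n$ for $x \in [a_n, a_{n+1})$; then $g(n) \to 0$ and $\sum g = \infty$. For any $B \in \I_g$ we have $\sum_n |B \cap [a_n, a_{n+1})|/k_n < \infty$, so eventually $|B \cap [a_n, a_{n+1})| \leq k_n$, and Player I can schedule the play (subdividing the finitely many exceptional intervals across consecutive rounds) so that $\bigcup_n F_n = B$; the winning property of $\tau$ then yields $f^{-1}[B] \in \J\restriction A$.

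Item (3) is a direct application of Martin's Borel determinacy theorem: the play space of $G(\J, A, f)$ is Polish (all move sets are countable), and Player I's winning set is Borel, being the preimage under a continuous map of the complement of the Borel set $\J\restriction A$. For (4), fix $A \notin \J$ and a $\J$-to-one $f : A \to \omega$; Borelness of $\J$ passes to $\J\restriction A$, so (3) provides determinacy, and the hypothesis excludes any tall summable $\I_g \leq_{\Kat} \J\restriction A$ (in particular with $f$ as a witness), so by (2) Player II has no winning strategy, and determinacy gives one to Player I. The principal obstacles are the forward directions of (1) (justifying the passage to a positional winning strategy via monotonicity and uniformization of Player I's budgets) and of (2) (the scheduling argument realizing an arbitrary summable $B$ as $\bigcup_n F_n$ while avoiding $\tau$'s blocks); items (3) and (4) are essentially formal consequences.
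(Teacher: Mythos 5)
Your proofs of items (3), (4), and the backward directions of (1) and (2) are essentially correct and match the paper (modulo a small point in (3): the map from plays to outcomes $f^{-1}[\bigcup_n F_n]$ is not \emph{continuous}, since membership of a point in the union requires inspecting infinitely many coordinates; the paper correctly invokes Borel determinacy via a map of Baire class $1$, but the conclusion is unaffected). However, the forward directions of (1) and (2) have genuine gaps, and the paper's proofs proceed quite differently.

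For (1)$(\Rightarrow)$, your plan is to uniformize Player I's strategy $\sigma$ so that $k_n$, and then the $F$-component, depend only on $(n,G_n)$. Monotonicity does let Player I harmlessly increase a prescribed budget, but it does \emph{not} give a uniform finite $K_n$: at depth $n$ there are countably many histories $(G_0,\ldots,G_{n-1})$ and the budgets $\sigma$ assigns across them may be unbounded, so no single $K_n$ absorbs them. The subsequent claim that the $F$-component can be made to depend only on $(n,G_n)$ is a passage to a positional strategy that monotonicity alone does not justify. The paper avoids this entirely: it defines one family $\cF(G_0,\ldots,G_n)=\{\pi_{\fin}(\$(G_0,\ldots,G_n,G)):G\in\fin\}$ for \emph{each} finite sequence of Player II moves, observes that (a) holds because each such family is bounded by the single budget $\pi_\omega(\$(G_0,\ldots,G_n))$, and (b) holds because $G=[0,m]$ forces $\min F>m$, and then simply re-indexes the countably many families by $\omega$. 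For (c), given a choice $F_s\in\cF(s)$ for each $s\in\fin^{<\omega}$, the paper threads a single infinite run $H_0,H_1,\ldots$ through the choices, so that $\bigcup_n F_{(H_0,\ldots,H_n)}$ is the outcome of a play along the winning strategy and hence $f^{-1}$ of it is $\notin\J\restriction A$; downward closure of $\J$ then gives (c) for the full union $B$.

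For (2)$(\Rightarrow)$, your scheduling step (``subdividing the finitely many exceptional intervals across consecutive rounds so that $\bigcup_n F_n=B$'') is where the real difficulty lives and it is not carried out. Two problems: first, $|B\cap[a_n,a_{n+1})|\le k_n$ holds only eventually, and once you start placing elements of $[a_n,a_{n+1})$ into later rounds you step outside the family of plays used to define $a_{n+1}$, so your bound on $\max G_m$ no longer applies; second, and more fundamentally, $F_n$ must be disjoint from $G_n=\tau(\ldots)$, and a block of $B$ may collide with Player II's response, so you cannot in general realize $B$ itself as an outcome of a single legal play. The paper's construction resolves this by defining $g$ from a nested sequence of finite sets $M_n$ (closing off under all legal continuations with small budgets and moves inside $M_n$), showing that any $B\in\I_g$ decomposes into $M_0\cap B$ plus boundedly many sets $C$ with $|C\cap M_n|\le n$, and then splitting each such $C$ into its even-indexed and odd-indexed pieces $C'=\bigcup_n C_{2n}$ and $C''=\bigcup_n C_{2n+1}$; each piece \emph{separately} is the outcome of a legal play against $\tau$, because the alternation guarantees the disjointness constraint $F_n\cap G_n=\emptyset$. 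So instead of realizing $B$ directly, one realizes finitely many pieces of it and uses that $\J$ is an ideal. This is the key idea missing from your argument.
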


\begin{proof}
(\ref{lem:gra:win-strategy-for-I}) Assume that there are families $\cF_n$ satisfying (a)-(c). Then Player I has a winning strategy by playing in his initial move $k_0 = \sup \{|F|: F\in \cF_0 \}$ and then in each next move (in response to some $G_n\in\fin$) any $F_n\in\cF_n$ such that $F_n\cap G_n=\emptyset$ (such $F_n$ exists from the property (b)) and $k_{n+1} = \sup \{|F|: F\in \cF_{n+1}\}$.

Now, assume that Player I has a winning strategy $\$:\fin^{<\omega}\to\fin\times \omega$.
Let $\pi_\fin:\fin\times\omega\to\fin$ and $\pi_\omega:\fin\times\omega\to\omega$ be the projections onto the first and the second coordinate, respectively (so $\pi_\fin(F,n)=F$ and $\pi_\omega(F,n)=n$ for all $(F,n)\in\fin\times\omega$). Let $\cF(\emptyset)=\{\pi_\fin(\$(G)): G\in\fin\}$ and 
$$\cF(G_0,\ldots,G_n)=\{\pi_\fin(\$(G_0,\ldots,G_n,G)):G\in\fin\}$$ 
for all $(G_0,\ldots,G_n)\in\fin^{<\omega}$. Observe that for each $F\in\cF(G_0,\ldots,G_n)$ we have $|F|\leq\pi_\omega(\$(G_0,\ldots,G_n))$, and $\sup \{\min(F):F\in\cF(G_0,\ldots,G_n)\}=\infty$ as for each $m\in\omega$ minimum of the set $\pi_\fin(\$(G_0,\ldots,G_n,[0,m]))\in\cF(G_0,\ldots,G_n)$ is greater than $m$. Thus, the families $\cF(G_0,\ldots,G_n)$ satisfy (a) and (b). As $\fin^{<\omega}$ is countable, it suffices to check (c). For each $(G_0,\ldots,G_n)\in\fin^{<\omega}$, take $F_{(G_0,\ldots,G_n)}\in\cF(G_0,\ldots,G_n)$. 
Let $B = \bigcup\{F_{(G_0,\ldots,G_n)}:(G_0,\ldots,G_n)\in\fin^{<\omega}\}$. 
To finish the proof, we have to show that $f^{-1}[B]\notin\J\restriction A$.

Since $F_{\emptyset}\in\cF(\emptyset)$, there is $H_0\in\fin$ such that $F_{\emptyset}=\pi_\fin(\$(H_0))$. 
Since $F_{(H_0)}\in\cF(H_0)$, there is $H_1\in\fin$ such that $F_{(H_0)}=\pi_\fin(\$(H_0,H_1))$. 
Continuing in this way we can inductively pick an infinite  sequence $H_0,H_1,\ldots \in\fin$ such that $F_{(H_0,\dots,H_n)}=\pi_\fin(\$(H_0,\ldots,H_n, H_{n+1}))\in \cF(H_0,\dots,H_n)$ for each $n\in\omega$. 
Then 
$$f^{-1}[B]\supseteq  f^{-1}\left[\bigcup_{n\in\omega}F_{(H_0,\dots,H_n)}\right]\notin\J\restriction A$$ as $\bigcup_{n\in\omega}F_{(H_0,\dots,H_n)}$ constitutes an outcome of a play of the game along the winning strategy for Player I.

(\ref{lem:gra:win-strategy-for-II}) 
Suppose that $f:A\to\omega$ witnesses $\I_g\leq_{\Kat}\J\restriction A$. Then Player II has a winning strategy by playing in each move a set $[0,m_n]\in\fin$ such that $k_{n}g(i)<1/2^n$ for all $i>m_n$ as in this case $$\sum_{i\in\bigcup_{n\in\omega}F_n}g(i) \leq  \sum_{n\in\omega}\frac{1}{2^n}<+\infty,$$
hence $\bigcup_{n\in\omega}F_n\in\I_g$ and consequently $f^{-1}[\bigcup_{n\in\omega}F_n] \in\J\restriction A$.

Now, assume that Player II has a winning strategy $\$:\omega\times (\fin\times\omega)^{<\omega}\to\fin$.

Put $M_0=\$(1)$, $M_1=\$(2)$ and define inductively:
\begin{equation*}
	\begin{split}
M_{n+1} =
&\bigcup
\{\$(k_0,F_0,k_1,F_1\ldots,k_l,F_l,k_{l+1}):l\leq n,\forall i\leq l\left(|F_i|\leq k_i\right),
\\&
\forall i\leq l\left(F_i\cap\$(k_0,F_0,\ldots,k_{i})=\emptyset\right),\forall i\leq l+1\left(k_i\leq n+2\right),\bigcup_{i\leq n}F_i\subseteq M_{n}\}
\end{split}
\end{equation*}
for all $n>0$. Without loss of generality we may assume that $(M_n)_{n\in \omega}$ is a strictly increasing sequence of finite sets and $\bigcup_{n\in\omega}M_n=\omega$ (as playing larger sets by Player II only increases  chances of winning). 

Define $g:\omega\to[0,+\infty)$ by $g(i) = \frac{1}{n+1}$ whenever $i\in M_{n+1}\setminus M_n$ and $g(i)=2$ for all $i\in M_0$. Observe that $\I_g$ is tall as $\lim_{i\to \infty} g(i)=0$. We claim that $f$ witnesses $\I_g\leq_{\Kat}\J\restriction A$. 

For every $B\in\I_g$ there is $k\in\omega$ such that 
$$\frac{|B\cap M_n|}{n}\leq\sum_{i\in B\cap M_n}g(i)\leq k,$$ 
for all $n>0$. Hence, $B$ can be partitioned into $M_0\cap B$ (for which we have $f^{-1}[M_0\cap B]\subseteq f^{-1}[M_0]\in\J\restriction A$ as $M_0\in\fin$ and $f^{-1}[\{i\}]\in\J\restriction A$ for all $i\in\omega$) and $k$ many sets $C$ satisfying the property $|C\cap M_n|\leq n$ for all $n>0$. Thus, to finish the proof we need to show that for each set $C\subseteq\omega$ such that $|C\cap M_n|\leq n$ for all $n$ we have $f^{-1}[C]\in\J\restriction A$. 

Fix $C\subseteq\omega$ as above and let $C_n=C\cap (M_{n+1}\setminus M_{n})$ for all $n\in\omega$. Then for each $n\in\omega$ we have $|C_{2n+2}|\leq 2n+3$ and 
$$C_{2n+2}\cap \$(1,C_0,3,C_2,\ldots,2n+1,C_{2n},2n+3)\subseteq C_{2n+2}\cap M_{2n+2}=\emptyset.$$
Thus, $C'=\bigcup_{n\in\omega}C_{2n}$ is an outcome of a play of the game along the winning strategy for Player II. Hence, $f^{-1}[C']\in\J\restriction A$. Similarly one can show that $f^{-1}[C'']\in\J\restriction A$ where $C''=\bigcup_{n\in\omega}C_{2n+1}=C\setminus C'$.
 
(\ref{lem:gra:determined-fixed-set}) We will use Martin's theorem on Borel determinacy. Indeed, using the notation from \cite[Section 20A]{MR1321597}, $G(\J,A,f)$ is equivalent to the game $G(T,\phi^{-1}[\J\restriction A])$ where 
$$T=\left\{(F_0,\ldots,F_n)\in\fin^{<\omega}:\forall {i\in \omega}\left(|F_{3i}|=1\text{ and }F_{3i+1}\cap F_{3i+2}=\emptyset\text{ and }\right.\right.$$
$$\left.\left.\left(|F_{3i+2}|\leq x\text{, where }\{x\}=F_{3i}\right)\right)\right\}$$
and $\phi:[T]\to\cP(A)$ given by $\phi((F_0,F_1,\ldots))=f^{-1}[\bigcup_{i\in\omega} F_{3i+2}]$ is a function of Baire class $1$ ($[T]\subseteq\fin^\omega$ denotes the set of all infinite branches of the tree $T$).

(\ref{lem:gra:determined-all-sets}) Let $A\notin\J$ and $f:A\to\omega$ be such that $f^{-1}[\{n\}]\in\J$ for all $n\in\omega$. Since $\J\restriction A$ is not $\leq_{\Kat}$-above any tall summable ideal, Player II cannot have a winning strategy in $G(\J,A,f)$ by item (\ref{lem:gra:win-strategy-for-II}). However, the game $G(\J,A,f)$ is determined by item (\ref{lem:gra:determined-fixed-set}) as $\J$ being Borel implies that $\J\restriction A$ is Borel as well. Thus, Player I has a winning strategy in $G(\J,A,f)$.
\end{proof}

The following immediate corollary seems to be interesting in itself as the property of containing a tall summable ideal is quite natural in the studies of ideals on $\omega$.

\begin{corollary}
	Let $\J$ be a Borel ideal. Then $\J$ does not contain a tall summable ideal if and only if 
	there are families $\cF_0,\cF_1,\ldots\subseteq \fin$
	such that
	\begin{enumerate}
		\item $\sup \{|F|: F\in \cF_n\}< \infty$ for each $n$,
		\item $\sup \{\min(F): F\in\cF_n\}=\infty$ for each $n$,
		\item if $F_n\in\cF_n$ for each $n$, then $\bigcup_{n\in\omega} F_n \notin\J$.
	\end{enumerate}	
\end{corollary}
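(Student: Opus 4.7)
The plan is to invoke Lemma \ref{lem:gra} with the specific choice $A = \omega$ and $f = \mathrm{id}_\omega$, the identity function on $\omega$. This is a legitimate instance of the game because $\mathrm{id}_\omega^{-1}[\{n\}] = \{n\} \in \fin \subseteq \J$ for every $n$, so $\mathrm{id}_\omega$ is $\J$-to-one as required. Under these choices $\J \restriction \omega = \J$ and $\mathrm{id}_\omega^{-1}[B] = B$ for every $B \subseteq \omega$, so the parameters $A$ and $f$ disappear from every clause of the lemma and one is left with statements directly about $\J$.

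First I would read Lemma \ref{lem:gra}(\ref{lem:gra:win-strategy-for-II}) in this setting: Player II has a winning strategy in $G(\J,\omega,\mathrm{id}_\omega)$ if and only if there is a tall summable ideal $\I_g$ such that $\mathrm{id}_\omega$ witnesses $\I_g \leq_{\Kat} \J$, which unravels to $\I_g \subseteq \J$. Hence Player II wins the game precisely when $\J$ contains a tall summable ideal.

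Next I would read Lemma \ref{lem:gra}(\ref{lem:gra:win-strategy-for-I}) in the same setting: Player I has a winning strategy if and only if there exist families $\cF_0,\cF_1,\ldots \subseteq \fin$ satisfying conditions (a) and (b) together with ``if $F_n \in \cF_n$ for each $n$, then $\mathrm{id}_\omega^{-1}[\bigcup_n F_n] = \bigcup_n F_n \notin \J$''. This is word-for-word conditions (1)--(3) of the corollary.

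Finally, because $\J$ is Borel, Lemma \ref{lem:gra}(\ref{lem:gra:determined-fixed-set}) guarantees that $G(\J,\omega,\mathrm{id}_\omega)$ is determined, so exactly one of the two players has a winning strategy. Combining the three items, the failure of Player II to have a winning strategy --- i.e., $\J$ does not contain a tall summable ideal --- is equivalent to Player I having a winning strategy, which in turn is equivalent to the existence of the required families $\cF_n$. There is essentially no obstacle in this argument; Lemma \ref{lem:gra} was designed precisely so that this translation becomes immediate, which is why the author bills the result as a corollary.
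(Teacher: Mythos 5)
Your proposal is correct and follows exactly the same route as the paper: the paper's entire proof reads ``It follows from Lemma~\ref{lem:gra} applied to the game $G(\J,\omega,\mathrm{id})$,'' and your write-up is simply the careful unpacking of that one line, checking that $\mathrm{id}$ is $\J$-to-one, translating parts (1) and (2) of the lemma under the specialization $A=\omega$, $f=\mathrm{id}$, and invoking part (3) for determinacy.
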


\begin{proof}
	It follows from Lemma \ref{lem:gra} applied to the game $G(\J,\omega,\text{id})$.
\end{proof}

At this point we turn our attention to providing examples of ideals for which Player I has a winning strategy in $G(\J,A,f)$ for all $A$ and $f$.

\begin{proposition}
	\label{prop:ideals-not-above-summable}	
If $\J\in \{\fin^2, \ED_\fin,\ED,\conv\}$, then  
the ideal $\J\restriction A$ is not $\leq_{\Kat}$-above any tall summable ideal for every $A\notin\J$ (in particular, Player I has a winning strategy in $G(\J,A,f)$ for all $A$ and $f$).
\end{proposition}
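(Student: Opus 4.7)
The plan is to use Lemma~\ref{lem:gra}(\ref{lem:gra:determined-all-sets}): since each of $\fin^2$, $\ED_\fin$, $\ED$, $\conv$ is Borel, it suffices to prove the stronger Kat\v{e}tov non-domination claim, after which the winning-strategy conclusion is automatic. So fix $\J$ in the list, $A\notin\J$, and a tall summable ideal $\I_g$; I shall derive a contradiction from an assumed witness $f\colon A\to\omega$ of $\I_g\leq_{\Kat}\J\restriction A$. Since every tall summable ideal is a tall P-ideal, the natural targets are Proposition~\ref{prop:P-ideal-not-K-below-FINxFIN}(\ref{prop:P-ideal-not-K-below-FINxFIN:item}) and (\ref{prop:P-ideal-not-K-below-ED-FIN:item}), which forbid $\I_g\leq_{\Kat}\fin^2$ and $\I_g\leq_{\Kat}\ED_\fin$.

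The cases $\J=\fin^2$ and $\J=\ED_\fin$ are immediate: by Proposition~\ref{prop:homogeneous-idelas} both ideals are homogeneous, hence $\J\restriction A\approx\J$ for $A\in\J^+$, so $\I_g\leq_{\Kat}\J\restriction A$ would unfold to $\I_g\leq_{\Kat}\J$, contradicting the two cited items.

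For $\J=\ED$ the idea is to exhibit $B\subseteq A$ with $\ED\restriction B\approx\ED_\fin$ and then restrict. Since $A\notin\ED$ the column sizes $|A_{(n)}|$ are unbounded in $n$; inductively choose $n_0<n_1<\ldots$ and $F_k\subseteq A_{(n_k)}$ with $|F_k|=k+1$, and put $B=\bigcup_k\{n_k\}\times F_k$. Any bijection $\phi\colon B\to\Delta$ sending $\{n_k\}\times F_k$ onto $\{k\}\times(k+1)$ is then an ideal isomorphism between $\ED\restriction B$ and $\ED_\fin$, because a set $Y\subseteq B$ belongs to $\ED$ iff $|Y_{(n_k)}|$ is eventually bounded, which (using the trivial bound $|\phi(Y)_{(k)}|\leq k+1$) is equivalent to a uniform bound on $|\phi(Y)_{(k)}|$. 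Composing $f|_B$ with $\phi^{-1}$ then witnesses $\I_g\leq_{\Kat}\ED_\fin$, contradicting Proposition~\ref{prop:P-ideal-not-K-below-ED-FIN:item}.

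For $\J=\conv$ I produce a copy of $\fin^2$ analogously. Since $A\notin\conv$, the set $A$ has infinitely many limit points in $[0,1]$; after thinning, fix distinct $y_0,y_1,\ldots$ with $y_n\to y^\star$ for some $y^\star\in[0,1]$ (each $y_n\neq y^\star$), and choose pairwise disjoint infinite $A_n\subseteq A$ with $A_n\to y_n$, each $A_n$ sitting inside a rapidly shrinking neighborhood of $y_n$. Setting $B=\bigcup_n A_n$, one checks that the limit-point set of any $C\subseteq B$ in $[0,1]$ is contained in $\{y_n:|C\cap A_n|=\omega\}\cup\{y^\star\}$, so $C\in\conv\restriction B$ iff $\{n:|C\cap A_n|=\omega\}$ is finite. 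Any bijection sending $A_n$ onto $\{n\}\times\omega$ is then an isomorphism $\conv\restriction B\approx\fin^2$, and the same composition trick gives $\I_g\leq_{\Kat}\fin^2$, contradicting Proposition~\ref{prop:P-ideal-not-K-below-FINxFIN:item}. The main obstacle is precisely the verification of $\conv\restriction B\approx\fin^2$: the definition of $\conv$ involves limit points in the topology of $[0,1]$, so one must prevent different $A_n$'s from accumulating onto each other and must confirm that the unavoidable extra potential limit $y^\star$ (adding at most one to the limit-point count) does not spoil the finite-versus-infinite dichotomy on $\{n:|C\cap A_n|=\omega\}$.
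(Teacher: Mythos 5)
Your proposal is correct and follows essentially the same route as the paper: reduce to the $\fin^2$ and $\ED_\fin$ cases via homogeneity (Proposition~\ref{prop:homogeneous-idelas}) and the tall P-ideal non-reducibility facts (Proposition~\ref{prop:P-ideal-not-K-below-FINxFIN}), then handle $\ED$ and $\conv$ by constructing a suitable $B\subseteq A$ on which the restriction becomes (isomorphic to) $\ED_\fin$ resp.~$\fin^2$, and apply Lemma~\ref{lem:gra}(\ref{lem:gra:determined-all-sets}) for the winning strategy. The only cosmetic difference is that the paper phrases the $\ED$ step as a Kat\v{e}tov reduction $\ED\restriction A\leq_{\Kat}\ED_\fin$ witnessed by an injection $\Delta\to A$, and for $\conv$ cites \cite{MR3696069}, whereas you construct genuine isomorphisms $\ED\restriction B\approx\ED_\fin$ and $\conv\restriction B\approx\fin^2$ directly; either formulation, composed with the assumed witness $f$, yields the desired contradiction.
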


\begin{proof}
For $\J\in \{\fin^2,\ED_{\fin}\}$, it follows from Propositions~\ref{prop:ideals-not-below-abov-other-ideals-in-Katetov-order}(\ref{prop:P-ideal-not-K-below-FINxFIN:item}, \ref{prop:P-ideal-not-K-below-ED-FIN:item})
and \ref{prop:homogeneous-idelas}.

For $\J=\conv$, it follows from the facts that $\Fin^2$ has the required property and that $\conv\restriction A\leq_{\Kat}\Fin^2$ for every $A\notin\conv$ (by \cite{MR3696069} for each $A\notin\conv$ one can find $B\notin\conv$, $B\subseteq A$ such that $\conv\restriction B$ is isomorphic to $\Fin^2$ and this particular isomorphism is a witness for $\conv\restriction A\leq_{\Kat}\Fin^2$). 

For $\J=\ED$, it follows from the facts that $\ED_{\fin}$ has the required property and that $\ED\restriction A\leq_{\Kat}\ED_{\fin}$ for every $A\notin\ED$ (as $A\notin\ED$ allows to construct inductively an increasing sequence $(m_n)$ such that $|A_{(m_n)}|\geq n$ and using this sequence one can define an injective function $f:\Delta\to A$ satisfying $f(n,k)\in\{m_n\}\times A_{(m_n)}$, for each $(n,k)\in\Delta$, which witnesses $\ED\restriction A\leq_{\Kat}\ED_{\fin}$).

The ``in particular'' part follows from Lemma~\ref{lem:gra}(\ref{lem:gra:determined-all-sets}).
\end{proof}

\begin{proposition}
	\label{prop:star}	
	Suppose that $\J=\fin(\phi)$ is a $\bf{\Sigma^0_2}$ ideal such that:
	$$(\star)\ \forall_{n\in\omega}\ \exists_{k\in\omega}\ \forall_{A\notin\fin(\phi)}\ \exists_{F\in\fin}\ (|F|\leq k\land \phi(F\cap A)\geq n).$$
	Then for each $A\notin\J$ the ideal $\J\restriction A$ is not $\leq_{\Kat}$-above any tall summable ideal (in particular, Player I has a winning strategy in $G(\J,A,f)$ for all $A$ and $f$).
\end{proposition}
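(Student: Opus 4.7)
The ``in particular'' clause follows from Lemma~\ref{lem:gra}(\ref{lem:gra:determined-all-sets}) once the main claim is proved, since $\J$ is in particular Borel. So I focus on the first assertion. Fix $A\notin\J$ and suppose toward a contradiction that some $f\colon A\to\omega$ witnesses $\I_g\leq_{\Kat}\J\restriction A$, where $\I_g$ is a tall summable ideal (so $g(n)\to 0$). Since $\{n\}\in\I_g$ for each $n$, we have $f^{-1}[\{n\}]\in\J$, that is, $\phi(f^{-1}[\{n\}])<\infty$.

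The plan is to push $\phi$ forward along $f$ and show the push-forward still satisfies $(\star)$. Define $\psi\colon\cP(\omega)\to[0,+\infty]$ by $\psi(E)=\phi(f^{-1}[E])$. Then $\psi$ is monotone and subadditive, $\psi(\omega)=\phi(A)=\infty$ (because $A\notin\J=\fin(\phi)$), and $\psi$ is finite on singletons by the previous observation. The key step -- and the main obstacle -- is verifying that $\psi$ satisfies $(\star)$ with the same constants $k_m$. For this, given $X\subseteq\omega$ with $\psi(X)=\infty$, the set $f^{-1}[X]\subseteq A$ has $\phi$-measure infinity, so $(\star)$ applied to $\phi$ yields $F'\in\fin$ with $|F'|\leq k_m$ and $\phi(F'\cap f^{-1}[X])\geq m$; setting $F:=f[F'\cap A]$, one checks that $|F|\leq |F'|\leq k_m$ and $F'\cap f^{-1}[X]\subseteq f^{-1}[F\cap X]$, so $\psi(F\cap X)\geq m$ as required.

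With $(\star)$ transferred to $\psi$, I construct $B\in\I_g$ with $\psi(B)=\infty$; this contradicts $f$ being a $\leq_{\Kat}$-witness because then $f^{-1}[B]\notin\J$. Recursively choose finite $F_m$ so that $|F_m|\leq k_m$, $F_m$ is disjoint from $\bigcup_{i<m}F_i$ and from the (finite, by tallness of $\I_g$) set $N_m=\{n:g(n)\geq 1/(k_m 2^m)\}$, and $\psi(F_m)\geq m$. This is achieved by applying $(\star)$ for $\psi$ to $X_m:=\omega\setminus(N_m\cup\bigcup_{i<m}F_i)$; note $\psi(X_m)=\infty$ because the removed set is finite and $\psi$ is finite on singletons, and if the resulting $F$ is not already a subset of $X_m$ one replaces it by $F\cap X_m$. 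Setting $B:=\bigcup_m F_m$, the choice of the $N_m$'s gives $\sum_{n\in B}g(n)\leq\sum_m k_m\cdot (k_m 2^m)^{-1}=\sum_m 2^{-m}<\infty$, so $B\in\I_g$, while $\psi(B)\geq\psi(F_m)\geq m$ for every $m$ forces $\psi(B)=\infty$, the desired contradiction.
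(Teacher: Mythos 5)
Your proof is correct and is essentially the same construction as in the paper: both arguments recursively apply $(\star)$ to find sets of size $\leq k_m$ with $\phi$-mass at least $m$ while avoiding the finitely many coordinates where $g$ is large, then take the union. The only difference is presentational: you first transfer $(\star)$ to the pushed-forward submeasure $\psi(E)=\phi(f^{-1}[E])$ and then work in the codomain, whereas the paper works directly with subsets of $A$ and sets $B_n=f[F_n]$ at the end; this is the same idea with slightly different bookkeeping.
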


\begin{proof}	
	The "in particular" part follows from Lemma \ref{lem:gra}(\ref{lem:gra:determined-all-sets}).

	Let $A\notin\J$, $f:A\to\omega$ and $\I_g$ be a tall summable ideal. If $f^{-1}[\{n\}]\notin\J$ for some $n\in\omega$ then we are done, so suppose that $f^{-1}[\{n\}]\in\J$ for all $n\in\omega$.

	We will inductively pick finite sets $B_n$, for $n\in\omega$, such that:
	\begin{itemize}
		\item $\phi(f^{-1}[B_n])\geq n$;
		\item $\sum_{i\in B_n}g(i)\leq \frac{1}{2^n}$.
	\end{itemize}

	In the $n$th step, using condition $(\star)$ find $k\in\omega$ such that:
	$$\forall_{C\notin\fin(\phi)}\ \exists_{F\in\fin}\ (|F|\leq k\land\phi(F\cap C)\geq n).$$

	Since $\I_g$ is tall, $\lim_{i\to \infty}g(i)=0$. Hence, there is $G\in\fin$ such that $g(i)\leq\frac{1}{k\cdot 2^n}$ for all $i\in\omega\setminus G$. Then $A\setminus f^{-1}[G]\notin\J$ (as $A\notin\J$ and $f^{-1}[G]\in\J$), so there is $F_n\in\fin$ such that $|F_n|\leq k$ and $\phi(F_n\cap (A\setminus f^{-1}[G]))\geq n$. Put $B_n=f[F_n]$ and observe that $\sum_{i\in B_n}g(i)\leq k\cdot \frac{1}{k\cdot 2^n}=\frac{1}{2^n}$.

	Define $B=\bigcup_{n\in\omega}B_n$. Then $\phi(f^{-1}[B])\geq \phi(f^{-1}[B_n])\geq n$ for all $n\in \omega$, so $f^{-1}[B]\notin\J\restriction A$. On the other hand, $\sum_{i\in B}g(i)\leq\sum_{n\in\omega}\sum_{i\in B_n}g(i)\leq\sum_{n\in\omega}\frac{1}{2^n}<\infty$, so $B\in \I_g$.	
\end{proof}

\begin{proposition}\ 
	\label{prop:star:examples}	
The ideals $\fin$, $\ED$, $\ED_\fin$ and $\cW$ all are examples of $\bf{\Sigma^0_2}$ ideals satisfying condition $(\star)$ from Proposition \ref{prop:star}. 
\end{proposition}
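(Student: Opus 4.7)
The plan is to treat each of the four ideals separately: for each I will exhibit a specific lower semicontinuous submeasure $\phi$ with $\I=\fin(\phi)$ (which simultaneously witnesses that $\I$ is $\mathbf{\Sigma^0_2}$) and verify condition $(\star)$ from Proposition~\ref{prop:star}. For three of the four the appropriate $\phi$ is essentially forced by the definition of $\I$, while for $\cW$ genuine combinatorial work via van der Waerden's theorem is required.

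For $\fin$ take $\phi(A)=|A|$; for $\ED_{\fin}$ take $\phi(A)=\sup_n|A_{(n)}|$; for $\ED$ take $\phi(A)=\sup_n \min(|A_{(n)}|,n)$. In the $\ED$ case the cutoff at $n$ is essential so that $\fin(\phi)=\ED$ rather than $\ED_{\fin}$, and subadditivity follows from the elementary inequality $\min(a+b,n)\leq\min(a,n)+\min(b,n)$. In each of these three cases $(\star)$ holds with $k=n$: if $A\notin\I$ one locates (in the $\ED$ or $\ED_{\fin}$ case) a section $A_{(n')}$ of size at least $n$ with $n'\geq n$, picks any $n$-element set $F\subseteq\{n'\}\times A_{(n')}$, and reads off $|F|=n$ and $\phi(F\cap A)\geq n$.

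For $\cW$ the natural candidate $\phi_0(A)=$ length of the longest arithmetic progression in $A$ is not subadditive (e.g.\ $\{0,1,2,3,4\}=\{0,1,3,4\}\cup\{2\}$ shows $\phi_0=5>2+1$), so I pass to its subadditivization
\[
\phi(A)=\inf\Bigl\{\sum_{i=1}^{\ell}\phi_0(A_i):\ell\geq 1,\ A=A_1\cup\cdots\cup A_\ell\Bigr\},
\]
which is automatically a monotone subadditive set function vanishing on $\emptyset$. Clearly $\phi(A)\leq\phi_0(A)<\infty$ for $A\in\cW$. The converse $\fin(\phi)\subseteq\cW$ is where van der Waerden's theorem (denote by $W(m,\ell)$ the van der Waerden number) enters: for $A\notin\cW$ any finite decomposition $A=A_1\cup\cdots\cup A_\ell$ induces an $\ell$-coloring of an AP of length $W(m,\ell)$ inside $A$, forcing some $A_i$ to contain an AP of length $m$ for every $m$, so $\sum_i\phi_0(A_i)=\infty$. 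Lower semicontinuity of $\phi$ is the more delicate step: if $\phi(A\cap n)\leq M$ for infinitely many $n$, each $A\cap n$ admits a decomposition into at most $M$ nonempty parts of total $\phi_0$-weight $\leq M$, and a Ramsey-type diagonal argument stabilising, for each $m\in A$, the index of the part it belongs to, yields a single decomposition of $A$ of total weight $\leq M$, contradicting either $\phi(A)=\infty$ (when $A\notin\cW$) or the definition of $\phi(A)$ (when $A\in\cW$).

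To verify $(\star)$ for $\cW$, given $n$ take $k=W(n,n)$. For $A\notin\cW$ pick an arithmetic progression $F\subseteq A$ of length $k$; then $|F|=k$ and $F\cap A=F$. In any decomposition $F=B_1\cup\cdots\cup B_\ell$ into nonempty parts, either $\ell\geq n$ and then $\sum_i\phi_0(B_i)\geq\ell\geq n$, or $\ell<n$ and then since $W(n,\ell)\leq W(n,n)=k$, van der Waerden's theorem yields some $B_i$ with $\phi_0(B_i)\geq n$, so again $\sum_i\phi_0(B_i)\geq n$. Either way $\phi(F)\geq n$. The main obstacle is thus the $\cW$ case: van der Waerden's theorem must be invoked at two distinct points — once to identify $\fin(\phi)$ with $\cW$ and once to recover $(\star)$ from arithmetic progressions — and the lower semicontinuity of the subadditivized $\phi$, usually automatic for simpler submeasures, here requires the compactness/Ramsey argument described above.
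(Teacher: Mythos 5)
Your proof is correct, and you have put your finger on a real subtlety that the paper glosses over. The paper takes $\phi_{\cW}(A)$ to be exactly your $\phi_0$, namely the length of the longest arithmetic progression in $A$, declares $\cW=\fin(\phi_{\cW})$, and verifies $(\star)$ with the trivial witness $k=n$ (an AP of length $n$ inside $A\notin\cW$ is its own witness, since $\phi_{\cW}(F\cap A)=\phi_{\cW}(F)=n$). As you rightly observe, this $\phi_{\cW}$ is not subadditive — your example $\{0,1,2,3,4\}=\{0,1,3,4\}\cup\{2\}$ does give $5>2+1$ — so it is not a submeasure, and $\fin(\phi_{\cW})$ is, read strictly against the preliminaries, not a defined notation.

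Your response is to pass to the subadditivization of $\phi_0$. That is a legitimate route, but it is not actually needed, and it is rather more expensive than the paper's: a close reading of the proof of Proposition~\ref{prop:star} shows that it uses only monotonicity of $\phi$, $\phi(\emptyset)=0$, and the identity $\{A:\phi(A)<\infty\}=\J$; subadditivity and lower semicontinuity are never invoked. So the paper's $\phi_{\cW}=\phi_0$ serves perfectly well in Proposition~\ref{prop:star}, provided one reads the hypothesis ``$\J=\fin(\phi)$'' in that relaxed sense. Your route costs you two separate invocations of van der Waerden (once to show $\fin(\phi)\subseteq\cW$ and once to get $\phi(F)\geq n$), an inflated witness $k=W(n,n)$ in place of $k=n$, and a verification of lower semicontinuity of the subadditivized $\phi$ that your write-up only sketches; that sketch is correct, but to be rigorous you would need the K\"onig/compactness argument on the inverse system of colorings $c_n:A\cap n\to\{1,\dots,M\}$ together with the fact that $\phi_0$ itself is lower semicontinuous (every AP is finite) so that the limit coloring has total weight $\leq M$. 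What you gain is a genuine lsc submeasure $\phi$ with $\fin(\phi)=\cW$ satisfying $(\star)$, so the hypothesis of Proposition~\ref{prop:star} holds literally and not merely in the spirit of its proof. For $\ED$ you also chose a different, equally valid lsc submeasure $\sup_n\min(|A_{(n)}|,n)$ with $k=n$, where the paper uses $\min\{n:\forall m\geq n\,(|A_{(m)}|\leq n)\}$ with $k=n+1$; both work, and the $\fin$ and $\ED_{\fin}$ cases, which the paper dismisses as ``similar,'' are handled correctly in your write-up.
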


\begin{proof}
We will show that $\cW$ and $\ED$ satisfy condition $(\star)$ from Proposition~\ref{prop:star}. The cases of $\fin$ and $\ED_\fin$ are similar.

Observe that $\cW=\fin(\phi_{\cW})$ where $$\phi_{\cW}(A)=\sup\{n\in\omega:A\text{ contains an arithmetic progression of length }n\}$$ 
for all $A\subseteq\omega$. We claim that $k=n$ works for all $n$. Fix $n\in\omega$ and $A\notin\cW$. Then $A$ contains arbitrary long finite arithmetic progressions. In particular, there is an arithmetic progression $F\subseteq A$ of length $n$. Clearly, $|F|=n$ and $\phi_{\cW}(F\cap A)=\phi_{\cW}(F)=n$.

Similarly, $\ED=\fin(\phi_{\ED})$, where $\phi_{\ED}(A)=\min\{n\in\omega:\forall m\geq n(|A_{(m)}|\leq n)\}$ for all $A\subseteq\omega^2$ (see \cite{alcantara-phd-thesis}). Again, we claim that $k=n+1$ works for all $n$. Fix $n\in\omega$ and $A\notin\ED$. Then $\phi_{\ED}(A)>n$, so $|A_{(m)}|>n$ for some $m\geq n$. Let $F\subseteq A_{(m)}$ be any set of cardinality $n+1$. Then $|\{m\}\times F|=|F|=n+1$ and $\phi_{\ED}((\{m\}\times F)\cap A)=\phi_{\ED}(\{m\}\times F)\geq n+1$, so $\{m\}\times F$ is the required set. 
\end{proof}

We are ready for the main result of this section.

\begin{theorem}
	\label{thm:wsp-vs-cov-M}
If $\I$ contains a tall summable ideal and Player I has a winning strategy in $G(\J,A,f)$ for all $A$ and $f$ (in particular, if $\J$ is a Borel ideal such that $\J\restriction A$ is not $\leq_{\Kat}$-above any tall summable ideal for each $A\notin\J$),
then 
	$$\cov(\cM) \leq \wsp(\I,\J,\omega^\omega).$$
\end{theorem}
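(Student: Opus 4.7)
My plan is a Baire-category argument in a Polish refinement of the Cantor topology on a tall summable subideal of $\I$. Fix $\I_g\subseteq\I$ tall summable with submeasure $\phi(A)=\sum_{i\in A}g(i)$ (WLOG $g(n)>0$, $g(n)\to 0$), a family $\cA$ with $\J^+$-FIP and $|\cA|<\cov(\cM)$, and $f\in\omega^\omega$. I will produce $B\in\I_g\subseteq\I$ such that $\cA\cup\{f^{-1}[B]\}$ still has $\J^+$-FIP, which witnesses $\wsp(\I,\J,\omega^\omega)>|\cA|$.

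If some $n$ makes $\{f^{-1}[\{n\}]\}\cup\cA$ have $\J^+$-FIP, then $B=\{n\}$ works; assume this easy case fails. For each $F\in[\cA]^{<\omega}$ set $A_F=\bigcap F\notin\J$, and distinguish two cases. Case (i): $f\restriction A_F$ is $\J$-to-one on $A_F$. Then the hypothesis and Lemma~\ref{lem:gra}(\ref{lem:gra:win-strategy-for-I}) yield families $\cF^F_n\subseteq\fin$ satisfying (a)--(c); my demand for $F$ is that $B$ contain some $G\in\cF^F_n$ for every $n$, which via (c) forces $f^{-1}[B]\cap A_F\notin\J$. Case (ii): the set $Z_F=\{n:f^{-1}[\{n\}]\cap A_F\notin\J\}$ is nonempty. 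If $Z_F$ is infinite, the demand is simply $B\cap Z_F\neq\emptyset$. If $Z_F$ is finite, pick for each $n\in Z_F$ a witness $F^n\in[\cA]^{<\omega}$ with $f^{-1}[\{n\}]\cap A_{F^n}\in\J$ (available since the easy case fails) and set $F^*=F\cup\bigcup_{n\in Z_F}F^n$; one checks $Z_{F^*}=\emptyset$, so $F^*$ falls under case (i) and the resulting demand dominates that of $F$ because $A_{F^*}\subseteq A_F$.

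The central step is a Baire-category argument in the metric space $(\I_g,d)$ with $d(B,B')=\phi(B\triangle B')$. This space is Polish: completeness comes from the $L^1$-identification $B\leftrightarrow\chi_B$ against the discrete measure $\sum g(i)\delta_i$, separability from finite sets being $d$-dense, and it has no isolated points since any $B$ can be perturbed by adjoining a single $n\notin B$ with $g(n)$ small. Each demand has the form ``$B$ contains some $G$ from a prescribed family $\cC\subseteq\fin$'', defining the $d$-open set $\bigcup_{G\in\cC}\{B\in\I_g:G\subseteq B\}$. Density is the delicate point: property (b) (arbitrarily large $\min G$), property (a) (uniformly bounded $|G|$), and $g(n)\to 0$ together guarantee admissible $G\in\cF^F_n$ of arbitrarily small $\phi$-mass in case (i), and infiniteness of $Z_F$ together with $g(n)\to 0$ suffices in case (ii); hence any $B_0\in\I_g$ can be $d$-approximated by $B_0\cup G$ for such a $G$. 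The total number of demands is $|\cA|\cdot\aleph_0=|\cA|<\cov(\cM)=\cov(\cM((\I_g,d)))$ (the last equality being the standard fact for Polish spaces without isolated points), so the intersection of the corresponding comeager sets is nonempty and yields the required $B$. The parenthetical ``in particular'' clause is immediate from Lemma~\ref{lem:gra}(\ref{lem:gra:determined-all-sets}).

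The main technical hurdle I expect is confirming density of the demands in $(\I_g,d)$, which hinges on the synergy between the game-strategy bounds of the $\cF^F_n$ and the summability of $g$: properties (a), (b) plus $g(n)\to 0$ are exactly what is needed to find admissible extensions of arbitrarily small $\phi$-mass. The rest of the proof — including the $L^1$-completeness of $(\I_g,d)$ and the invariance $\cov(\cM(X))=\cov(\cM)$ across Polish spaces without isolated points — is standard.
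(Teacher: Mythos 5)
Your proposal is correct, and in substance it runs parallel to the paper's proof: both isolate the easy case where some $f^{-1}[\{n\}]$ already works, both split the remaining finite subsets of $\cA$ according to whether $f$ is ``$\J$-to-one on the intersection'' (your Case (i), the paper's $H\notin T$) or not (your Case (ii), the paper's $H\in T$), and both feed Case (i) into Lemma~\ref{lem:gra}(\ref{lem:gra:win-strategy-for-I}) to get the families $\cF^F_n$ used to define the dense requirements. The one genuine difference is the genericity mechanism. The paper hand-crafts a countable forcing poset of finite sets subject to the explicit mass bound $\sum_{i\in p}g(i)\leq(2-2^{-|p|})/(1+\min p)$; this bound is precisely what guarantees the generic union lands in $\I_g$, and then the $\cov(\cM)$-characterization via $\mathrm{MA}(\text{countable})$ is invoked. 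You instead work directly in the Polish metric space $(\I_g,d)$ with $d(B,B')=\phi(B\triangle B')$, where membership of the generic point in $\I_g$ is automatic, and you invoke $\cov(\cM(X))=\cov(\cM)$ for perfect Polish $X$. This buys a cleaner presentation (no ad hoc bound, no verification that the poset's generic union is summable), at the modest price of checking $(\I_g,d)$ is Polish and perfect, which you do correctly: completeness via the closed embedding into $\ell^1(g)$, separability via finite sets, and perfectness because $\omega\setminus B$ is always infinite and $g\to 0$. A minor cosmetic divergence: for $Z_F$ finite nonempty, you close off under the sets $F^n$ witnessing failure of the easy case to reduce to Case (i), whereas the paper discards the finite offending set $f^{-1}[\{i:i<k_H\}]$ and plays the game on the restriction. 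Both are routine, and your verification that $Z_{F^*}=\emptyset$ and that $A_{F^*}\subseteq A_F$ dominates the demand for $F$ is sound.
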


\begin{proof}
The ``in particular'' part follows from Lemma~\ref{lem:gra}(\ref{lem:gra:determined-all-sets}).

	Since $\I$ contains a tall summable ideal, there is 
	$g:\omega\to[0,+\infty)$ such that 
	$\lim_{i\to\infty}g(i)=0$
	and 
	$\I_g=\{A\subseteq\omega:\sum_{a\in A}g(a)<\infty\}\subseteq\I$ 
	
	Fix a family $\cA$ having $\J^+$-FIP and such that $|\cA|<\cov(\cM)$.
	Let $f\in \omega^\omega$. We need to show that $\{f^{-1}[B]\}\cup\cA$ has $\J^+$-FIP for some $B\in\I$.

	If there is $n\in \omega$ such that $\{f^{-1}[\{n\}]\}\cup\cA$ has $\J^+$-FIP, then the proof is finished in this case.
	
	Now, suppose that 
	$\{f^{-1}[\{n\}]\}\cup\cA$ does not have $\J^+$-FIP for any $n\in \omega$.
	
	We define a poset $(\poset, \leq)$ as follows:
	$$\mathbb{P}=\left\{p\in\fin\setminus\{\emptyset\} : \sum_{i\in p}g(i)\leq\left(2-\frac{1}{2^{|p|}}\right)\cdot \frac{1}{1+\min p}\right\}$$
	and 
	$q\leq p$ $\iff$
	\begin{enumerate}
		\item $q\supseteq p$, 
		\item 
		$\min(q\setminus p) > \max p$ [here $\min \emptyset = \omega$ by convention].
	\end{enumerate}
	
	Let 
	$$T=\left\{H\in[\cA]^{<\omega}: f^{-1}[\{n\}]\cap \bigcap H\notin\J \text{ for infinitely many $n$} \right\}.$$
	
	For each $H\in [\cA]^{<\omega}\setminus T$, there is $k_H\in\omega$ such that $f^{-1}[\{n\}] \cap \bigcap H\in \J$ for each $n\geq k_H$. Then, $\bigcap H\setminus f^{-1}[\{i:i<k_H\}] \notin \J$ (otherwise, $\{f^{-1}[\{n\}]\}\cup\cA$ would have $\J^+$-FIP for some $n<k_H$) and $f\restriction (\bigcap H\setminus f^{-1}[\{i:i<k_H\}])$ is $\J$-to-1. Since Player I has a winning strategy in $G(\J, \bigcap H\setminus f^{-1}[\{i:i<k_H\}]) ,f\restriction (\bigcap H\setminus f^{-1}[\{i:i<k_H\}]))$, we can use Lemma~\ref{lem:gra}(\ref{lem:gra:win-strategy-for-I}) to obtain 
	families $\cF_0^{H},\cF_1^{H},\dots\subseteq\fin$ such that:
	\begin{enumerate}
		\item $\sup \{|F|: F\in \cF_n^{H} \}< \infty$ for each $n$,
		\item $\sup \{\min(F): F\in\cF_n^{H}\}=\infty$ for each $n$,
		\item if $F_n\in\cF_n^{H}$ for each $n$, then $f^{-1}[\bigcup_{n\in\omega} F_n]\notin\J\restriction (\bigcap H\setminus f^{-1}[\{i:i<k_H\}]))$.
	\end{enumerate}

Let us define the following sets:
	\begin{enumerate}
		\item $D_{H} = \{p\in\poset : f^{-1}[p] \cap \bigcap H\notin\J\}$ 	for each $H\in T$,
		\item $E^n_{H}=\{p\in\poset : F\subseteq p \text{ for some } F\in \cF_n^{H}\}$
		for each $n\in\omega$ and $H\in [\cA]^{<\omega}\setminus T$.	
	\end{enumerate}
	
	Below, we show that these sets are dense in $(\poset,\leq)$.
	
	(1)
	Let $H\in T$ and $p\in\mathbb{P}$. Since $\lim_{i\to \infty}g(i)=0$, there is $t\in\omega$ such that 
	$$g(i)<\left(\frac{1}{2^{|p|}}-\frac{1}{2^{|p|+1}}\right)\cdot \frac{1}{1+\min p}$$ 
	for all $i>t$. 
	Since $H\in T$, there is  $n>\max(t,\max(p))$ with $f^{-1}[\{n\}]\cap \bigcap H\notin\J$. 
	Let $q= p\cup\{n\}$.
	Then $q\in\mathbb{P}$ as
	\begin{equation*}
		\begin{split}
			\sum_{i\in q}g(i) 
			&< 
			\sum_{i\in p}g(i)+\left(\frac{1}{2^{|p|}}-\frac{1}{2^{|p|+1}}\right)\cdot \frac{1}{1+\min p}\leq
			\\&
			\leq\left(2-\frac{1}{2^{|p|}}\right)\cdot\frac{1}{1+\min p}+\left(\frac{1}{2^{|p|}}-\frac{1}{2^{|p|+1}}\right)\cdot\frac{1}{1+\min p}
			\\&=
			\left(2-\frac{1}{2^{|p|+1}}\right)\cdot\frac{1}{1+\min p}
			=
			\left(2-\frac{1}{2^{|q|}}\right)\cdot\frac{1}{1+\min q}.
		\end{split}
	\end{equation*}
	Since $q\in D_{H}$ and   $q\leq p$, $D_{H}$ is dense in $(\mathbb{P},\leq)$.
	
	(2)	
	Let $n\in\omega$, $H\in [\cA]^{<\omega}\setminus T$ and $p\in\mathbb{P}$.
	Put $k_n = \sup \{|F|:F\in \cF_n^{H}\}$.
	Since $\lim_{i\to \infty}g(i)=0$, there is $t\in\omega$ such that
	$$g(i)<\frac{1}{k_n}\cdot \left(\frac{1}{2^{|p|}}-\frac{1}{2^{|p|+k_n}}\right)\cdot \frac{1}{1+\min p}$$ 
	for all $i>t$. 
	Since $\sup \{\min(F): F\in\cF_n^{H}\}=\infty$, 
	there is $F\in\cF_n^{H}$ with $\min(F)>\max(t,\max(p))$. 
	Let $q = p\cup F$. Then $q\in\mathbb{P}$ as
	\begin{equation*}
		\begin{split}
			\sum_{i\in q}g(i) 
			&< 
			\sum_{i\in p}g(i)+\sum_{i\in F} \frac{1}{k_n}\cdot \left(\frac{1}{2^{|p|}}-\frac{1}{2^{|p|+k_n}}\right)\cdot \frac{1}{1+\min p}\leq
			\\&
			\leq\left(2-\frac{1}{2^{|p|}}\right)\cdot\frac{1}{1+\min p}+\left(\frac{1}{2^{|p|}}-\frac{1}{2^{|p|+k_n}}\right)\cdot\frac{1}{1+\min p}
			\\&=
			\left(2-\frac{1}{2^{|p|+k_n}}\right)\cdot\frac{1}{1+\min p}
			\leq 
			\left(2-\frac{1}{2^{|q|}}\right)\cdot\frac{1}{1+\min q}.
		\end{split}
	\end{equation*}
	Since $q\in E^n_{H}$ and  $q\leq p$, $E_{H}^n$ is dense in $(\mathbb{P},\leq)$.

	Let $\cD$ be the family of all the sets of the form $D_{H}$ and $E_H^n$.

Since 
$\poset$ is countable and  
$|\cD|< \cov(\cM)$, there is a filter $\cG\subseteq \poset$ such that $\cG\cap D\neq\emptyset$ for each $D\in \cD$ (see e.g.~\cite[Theorem~7.13]{MR2768685})).
	
	We define $B = \bigcup \cG$.
	If we show that $B\in \I$ and $\{f^{-1}[B]\}\cup \cA$ has $\J^+$-FIP, then the proof will be finished in this case.

	Let $B=\{b_i:\ i\in\omega\}$ be the increasing enumeration of $B$. For each $i\in\omega$, there is $p_i\in \cG$ with $b_i\in p_i$. Since $\cG$ is a filter, without loss of generality we may assume that $p_0\subseteq p_1\subseteq \dots$. 
	Then
	$$\sum_{i\in\omega}g(b_i)=\lim_{i\to\infty}\sum_{j\in p_i}g(j)
	\leq \lim_{i\to\infty} \left(2-\frac{1}{2^{|p_i|}}\right)\cdot \frac{1}{1+\min  p_i}
	= \frac{2}{1+\min B}<\infty,$$
	so $B\in \I_g\subseteq\I$.

	Finally, we show that $\{f^{-1}[B]\}\cup \cA$ has $\J^+$-FIP.
	Let $H\in[\cA]^{<\omega}$. 
	
	If $H\in T$, take $p\in \cG\cap D_H$.
	Then $f^{-1}[B]\cap \bigcap H \supseteq f^{-1}[p]\cap \bigcap H\notin\J$.

	If $H \in[\cA]^{<\omega}\setminus T$, take  
	$p_n\in \cG\cap E_H^n$ for each $n\in\omega$, and pick $F_n\in \cF_n^{H}$ such that $F_n\subseteq p_n$.
	Then 
	$$f^{-1}[B]\cap \bigcap H 
	\supseteq 
	f^{-1}\left[\bigcup_{n\in\omega}p_n\right]\cap \bigcap H
	\supseteq 
	f^{-1}\left[\bigcup_{n\in\omega}F_n\right]\cap \bigcap H
	\notin\J.$$  
\end{proof}

\begin{corollary}
	\label{cor:covM-implies-exists-I-ultrafilter-not-J-ultrafilter}
	Assume $\cov(\cM)=\continuum$.
\begin{enumerate}
\item 
If $\I$ contains a tall summable ideal and Player I has a winning strategy in $G(\J,A,f)$ for all $A$ and $f$ (in particular, if $\J$ is a Borel ideal such that $\J\restriction A$ is not $\leq_{\Kat}$-above any tall summable ideal for each $A\notin\J$),
then 
there exists an $\I$-ultrafilter which is not a $\J$-point.	

	\item There exists an $\I$-ultrafilter 	for each tall analytic P-ideal ideal $\I$.
(It is an extension of \cite[Proposition~2.5.3]{flaskova-phd-thesis}, where the author proved under $\cov(\cM)=\continuum$ that there exists an $\I$-ultrafilter for each summable ideal.)

\item For each tall analytic P-ideal $\I$, there exists an  $\I$-ultrafilter which is not a $\cW$-point.	 
(It is an extension of \cite[Proposition~2.5.2]{flaskova-phd-thesis}, where the author proved under $\cov(\cM)=\continuum$ that there exists an $\I_{1/n}$-ultrafilter which is not a $\cW$-ultrafilter.)
\end{enumerate}
\end{corollary}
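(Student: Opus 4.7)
The plan is to derive all three items by chaining two results already established in the excerpt. Theorem~\ref{thm:wsp-vs-cov-M} provides the lower bound $\wsp(\I,\J,\omega^\omega)\geq\cov(\cM)$ under the relevant hypotheses on $\I$ and $\J$, and Theorem~\ref{thm:wsp-implies-existence-of-I-ultrafilters} converts such a bound (with $\cF=\omega^\omega$ and $|\cF|=\continuum$) into the existence of an $\I$-ultrafilter $\cU$ with $\J\subseteq\cU^*$, which is in particular not a $\J$-point. No new combinatorics is needed; the corollary is an assembly step.

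For item~(1), I would apply Theorem~\ref{thm:wsp-vs-cov-M} directly, since its hypothesis is exactly what item~(1) assumes. Under $\cov(\cM)=\continuum$ we obtain $\wsp(\I,\J,\omega^\omega)\geq\continuum=|\omega^\omega|$, and the ``in particular'' clause of Theorem~\ref{thm:wsp-implies-existence-of-I-ultrafilters} then supplies an $\I$-ultrafilter which is not a $\J$-point.

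For items~(2) and~(3), the plan is to reduce to item~(1) by specifying $\J$ and verifying its hypotheses. Every tall analytic P-ideal $\I$ contains a tall summable ideal by Proposition~\ref{prop:tall-analytic-P-ideal-contains-summable-ideal}, so the ``$\I$ contains a tall summable ideal'' clause of~(1) is automatic. Taking $\J=\fin$ for~(2) and $\J=\cW$ for~(3), Proposition~\ref{prop:star:examples} shows that $\J$ is a $\mathbf{\Sigma}^0_2$ ideal satisfying condition~$(\star)$, and Proposition~\ref{prop:star} then supplies the required winning strategy for Player~I in $G(\J,A,f)$ for all $A$ and $f$. Item~(1) then yields the desired ultrafilter; note that for~(2) the ``not a $\fin$-point'' conclusion is vacuous (no ultrafilter is a $\fin$-point, as the identity map witnesses $\fin\leq_{\point}\cU^*$), so we simply read off the existence of an $\I$-ultrafilter.

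Since all the heavy lifting is already done in Sections~\ref{sec:cardinal-nad-order} and~\ref{sec:game}, I do not anticipate any real obstacle. The mild point of care is to align the statement of Theorem~\ref{thm:wsp-implies-existence-of-I-ultrafilters} with the ``not a $\J$-point'' conclusion: when $\cF=\omega^\omega$, the resulting $\cU$ satisfies $\J\subseteq\cU^*$, which immediately gives that $\cU$ is not a $\J$-point via the identity map.
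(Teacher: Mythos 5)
Your proof is correct and follows essentially the same route as the paper: item~(1) is Theorems~\ref{thm:wsp-vs-cov-M} and~\ref{thm:wsp-implies-existence-of-I-ultrafilters}, and items~(2),~(3) reduce to~(1) via Proposition~\ref{prop:tall-analytic-P-ideal-contains-summable-ideal} together with Propositions~\ref{prop:star} and~\ref{prop:star:examples} applied to $\J=\fin$ and $\J=\cW$ respectively. Your explicit observation that no ultrafilter is a $\fin$-point, which makes the conclusion of~(1) collapse to bare existence in item~(2), is a helpful detail that the paper's terse citation leaves implicit.
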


\begin{proof}
(1)	
 follows from Theorems~\ref{thm:wsp-vs-cov-M}
	and \ref{thm:wsp-implies-existence-of-I-ultrafilters}.
(2)	follows from (1) and 
Propositions~\ref{prop:tall-analytic-P-ideal-contains-summable-ideal}.
(3) follows from (1), Propositions~\ref{prop:tall-analytic-P-ideal-contains-summable-ideal}, \ref{prop:star} and \ref{prop:star:examples}.
\end{proof}


\section{P-points which are not $\J$-ultrafilters}

Now we turn our attention to ideals $\J$ such that it is consistent that there is a P-point which is not a $\J$-ultrafilter. We start with an immediate consequence of our previous results.

\begin{proposition}
	\label{thm:P-poiints-not-J-ultrafilter-equivalence}
	Let $\J$ be a $\text{P}^+(\J)$-ideal which is $\leq_{K}$-homogeneous.	
	\begin{enumerate}
		\item Assume CH. The following conditions are equivalent.
		\begin{enumerate}
			\item $\fin^2\not\leq_{\Kat}\J$.	
			\item There is a P-point which is not a $\J$-point.	
			\item There is a P-point which is not a weak $\J$-ultrafilter.	
			\item There is a P-point which is not a $\J$-ultrafilter.	
		\end{enumerate}
		\item 
		If $\J$ is $F_\sigma$, then CH can be relaxed to the assumption $\pnumber=\continuum$.
	\end{enumerate}
\end{proposition}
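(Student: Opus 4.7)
The plan is to recognize this proposition as a direct specialization of Theorem~\ref{thm:KAT-iff-EXISTS-ULTRA-for-K-uniform-Pplus-ideals} obtained by setting $\I=\fin^2$ and invoking the characterization of P-points from Theorem~\ref{thm:known-ultrafilters-characterized-as-I-ultrafilters}(\ref{thm:known-ultrafilters-characterized-as-I-ultrafilters:P-point}). First I would verify the hypotheses: $\fin^2$ is tall, and by assumption $\J$ is a $P^+(\J)$-ideal that is $\leq_{\Kat}$-homogeneous. Thus all the standing assumptions of Theorem~\ref{thm:KAT-iff-EXISTS-ULTRA-for-K-uniform-Pplus-ideals}(\ref{thm:KAT-iff-EXISTS-ULTRA-for-K-uniform-Pplus-ideals:CH-KAT}) with $\I:=\fin^2$ are met.

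Next I would dispatch the easy implications of part (1). The chain (b)$\Rightarrow$(c)$\Rightarrow$(d) is immediate from the obvious inclusions: if $\cU$ is a P-point that fails to be a $\J$-point, then a fortiori it fails to be a weak $\J$-ultrafilter, hence also fails to be a $\J$-ultrafilter (because $1$-$1\subseteq\fin$-$1\subseteq\omega^\omega$ makes the existence of a witnessing function progressively easier). For (d)$\Rightarrow$(a), I use that a P-point is a $\fin^2$-ultrafilter (Theorem~\ref{thm:known-ultrafilters-characterized-as-I-ultrafilters}(\ref{thm:known-ultrafilters-characterized-as-I-ultrafilters:P-point})), so the existence of a P-point which is not a $\J$-ultrafilter witnesses $\fin^2\not\leq_{\Kat}\J$ (this is the elementary observation recalled in the Introduction and in Subsection~\ref{subsec:ultrafilters}).

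The content of part (1) is the implication (a)$\Rightarrow$(b), which I would obtain by citing the equivalence of (a) and (e) in Theorem~\ref{thm:KAT-iff-EXISTS-ULTRA-for-K-uniform-Pplus-ideals}(\ref{thm:KAT-iff-EXISTS-ULTRA-for-K-uniform-Pplus-ideals:CH-KAT}) applied with $\I=\fin^2$: under CH, $\fin^2\not\leq_{\Kat}\J$ produces a $\fin^2$-ultrafilter which is not a $\J$-point, and this $\fin^2$-ultrafilter is precisely a P-point by Theorem~\ref{thm:known-ultrafilters-characterized-as-I-ultrafilters}(\ref{thm:known-ultrafilters-characterized-as-I-ultrafilters:P-point}).

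For part (2), I would simply cite Theorem~\ref{thm:KAT-iff-EXISTS-ULTRA-for-K-uniform-Pplus-ideals}(\ref{thm:KAT-iff-EXISTS-ULTRA-for-K-uniform-Pplus-ideals:Borel}), which says that whenever $\J$ is $F_\sigma$ the assumption CH in part~(\ref{thm:KAT-iff-EXISTS-ULTRA-for-K-uniform-Pplus-ideals:CH-KAT}) can be weakened to $\pnumber=\continuum$; this is already powered by Theorem~\ref{thm:p-number-for-F-sigma} ($\pnumber\leq\pnumber(\J)$ for $F_\sigma$ $\J$) together with the lower bound $\wsp(\fin^2,\J,\omega^\omega)\geq\pnumber(\J)$ coming from Theorem~\ref{thm:wsp-vs-p-number-and-addSTAR}(\ref{thm:wsp-vs-p-number-and-addSTAR:pnumber}) (whose hypothesis $\fin^2\not\leq_{\Kat}\J\restriction A$ for every $A\notin\J$ is precisely $\leq_{\Kat}$-homogeneity combined with $\fin^2\not\leq_{\Kat}\J$). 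There is no new technical obstacle here; the only thing to double-check is that the three levels of the main theorem line up with the three characterizations of P-points as $\fin^2$-ultrafilters/weak $\fin^2$-ultrafilters/$\fin^2$-points, which they do.
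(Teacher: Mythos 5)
Your proof is correct and follows exactly the same route as the paper: specialize Theorem~\ref{thm:KAT-iff-EXISTS-ULTRA-for-K-uniform-Pplus-ideals}(\ref{thm:KAT-iff-EXISTS-ULTRA-for-K-uniform-Pplus-ideals:CH-KAT}) to $\I=\fin^2$, translate via Theorem~\ref{thm:known-ultrafilters-characterized-as-I-ultrafilters}(\ref{thm:known-ultrafilters-characterized-as-I-ultrafilters:P-point}), and for part (2) invoke Theorem~\ref{thm:KAT-iff-EXISTS-ULTRA-for-K-uniform-Pplus-ideals}(\ref{thm:KAT-iff-EXISTS-ULTRA-for-K-uniform-Pplus-ideals:Borel}). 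The extra detail you give on the easy implications (b)$\Rightarrow$(c)$\Rightarrow$(d)$\Rightarrow$(a) and on verifying tallness of $\fin^2$ is just an unpacking of what the paper leaves implicit.
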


\begin{proof}
	It follows from Theorems~\ref{thm:KAT-iff-EXISTS-ULTRA-for-K-uniform-Pplus-ideals}(\ref{thm:KAT-iff-EXISTS-ULTRA-for-K-uniform-Pplus-ideals:CH-KAT})
	and \ref{thm:known-ultrafilters-characterized-as-I-ultrafilters}(\ref{thm:known-ultrafilters-characterized-as-I-ultrafilters:P-point}). In the case of $F_\sigma$-ideals additionally we need to use Theorems~\ref{thm:KAT-iff-EXISTS-ULTRA-for-K-uniform-Pplus-ideals}(4).
\end{proof}

Although the assumptions on $\J$ in the above proposition seem to be strong, it can be applied for instance for $\J=\ED_{\fin}$ and $\J=\cW$ (by  Propositions~\ref{prop:examples-of-P-plus-ideals}(\ref{prop:examples-of-P-plus-ideals:F-sigma}), \ref{prop:homogeneous-idelas} and \ref{prop:P-plus-ideal-not-K-above-FINxFIN}(\ref{prop:P-plus-ideal-not-K-above-FINxFIN:item})).
However, we cannot drop the assumption on the ideal $\J$ in general.
Indeed,  it is known that $\fin^2\not\leq_{\Kat}\conv$ (\cite{MR3696069}), but 
all P-points (i.e.~$\fin^2$-ultrafilters)
are $\conv$-ultrafilters by Theorem~\ref{thm:known-ultrafilters-characterized-as-I-ultrafilters}(\ref{thm:known-ultrafilters-characterized-as-I-ultrafilters:P-point}). 
Nevertheless, in Theorem~\ref{thm:new-order-above-FINxFIN-iff-extendable-to-Pplus} we show that 
the assumption on the ideal $\J$ can be removed if we replace the Kat\v{e}tov order $\leq_{\Kat}$ by the property that $\J$ can be extended to a $P^+$-ideal. In order to show it, we will need the following rather easy result about $P^+$-ideals.

\begin{lemma}\label{lem:preservation-of-P+-ideals}
Let $\I$ and $\J$ be ideals on $\omega$.
	\begin{enumerate}
		\item 
	If $\I\leq_{\Kat}\J$ and $\J$ is a $P^+$-ideal, then $\I$ can be extended to a $P^+$-ideal.	\label{lem:preservation-of-P+-ideals:by-Kat}

\item
If $\J$ is a $P^+$-ideal and $\cA\subseteq\cP(\omega)$ is a countable family such that $\omega\notin\J(\cA)$, then $\J(\cA)$ is a $P^+$-ideal.	\label{lem:preservation-of-P+-ideals:by-extension}

\item
If $\J$ is not a $P^+$-ideal, then there is a countable family $\cA\subseteq\cP(\omega)$  such that $\omega\notin\J(\cA)$ and $\fin^2\leq_{\Kat} \J(\cA)$.\label{lem:preservation-of-P+-ideals:non-P+}
	\end{enumerate}
\end{lemma}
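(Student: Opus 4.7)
The plan is to construct an explicit witness for each of the three items using the definition of $P^+$-ideal. For item (1), I would take $f:\omega\to\omega$ to be a Kat\v{e}tov witness for $\I\leq_{\Kat}\J$ and define $\K:=\{A\subseteq\omega:f^{-1}[A]\in\J\}$. Routine checks show that $\K$ is an ideal containing $\I$ (singletons lie in $\I$, so $f^{-1}[\{n\}]\in\J$). To verify $\K$ is $P^+$, for a decreasing sequence $(A_n)$ in $\K^+$ the preimages $(f^{-1}[A_n])$ form a decreasing sequence in $\J^+$; applying the $P^+$-property of $\J$ yields $B\in\J^+$, $B\subseteq f^{-1}[A_0]$, with $B\cap(f^{-1}[A_n]\setminus f^{-1}[A_{n+1}])$ finite for all $n$. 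Then $A:=f[B]$ satisfies $A\subseteq A_0$; since $f^{-1}[A]\supseteq B$ we have $A\in\K^+$; and each point of $A\cap(A_n\setminus A_{n+1})$ is the image of some element of the finite set $B\cap(f^{-1}[A_n]\setminus f^{-1}[A_{n+1}])$, so this intersection is finite.

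For item (2), I would replace $\cA$ by the increasing chain of its cumulative finite unions $C_0\subseteq C_1\subseteq\ldots$, so that $\J(\cA)=\{B:\exists m\,(B\setminus C_m\in\J)\}$ and $\J(\cA)^+=\{B:\forall m\,(B\setminus C_m\notin\J)\}$. For a decreasing sequence $(B_n)$ in $\J(\cA)^+$, I would diagonalize by setting $E_n:=B_n\setminus C_n$, which is decreasing in $\J^+$. The $P^+$-property of $\J$ yields $E\in\J^+$, $E\subseteq E_0$, with $E\cap(E_n\setminus E_{n+1})$ finite, hence $E\setminus E_n$ finite for all $n$. The decomposition $E\setminus E_n=(E\setminus B_n)\cup(E\cap C_n)$ forces $E\cap C_n$ to be finite for every $n$, which prevents $E\in\J(\cA)$ (else $E$ would be a finite set plus a $\J$-set, contradicting $E\in\J^+$). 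A similar splitting of $E\cap(B_n\setminus B_{n+1})$ according to membership in $C_n$ yields the required finiteness.

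For item (3), let $(B_n)\subseteq\J^+$ be a decreasing sequence witnessing the failure of $P^+$. Any such witness must have $\bigcap_n B_n\in\J$, since otherwise $\bigcap_n B_n$ itself would be a legitimate pseudointersection; after subtracting this $\J$-set and prepending $\omega$, I may assume $B_0=\omega$ and $\bigcap_n B_n=\emptyset$, so that $\omega=\bigsqcup_n D_n$ with $D_n:=B_n\setminus B_{n+1}$. Take $\cA:=\{\omega\setminus B_n:n\in\omega\}$; being an increasing chain, $\J(\cA)=\{C:\exists m\,(C\cap B_m\in\J)\}$, and $\omega\notin\J(\cA)$ since every $B_m\in\J^+$. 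The crucial step, and the main obstacle, is to recognize that the non-$P^+$ property of $(B_n)$ is equivalent to the selector property
\begin{equation*}
(\ast)\quad\text{for any choice of finite } F_n\subseteq D_n,\ \bigcup_n F_n\in\J,
\end{equation*}
because any such union has finite intersection with every $D_n$ and therefore cannot lie in $\J^+$. Once $(\ast)$ is established, fixing a bijection (or injection if $D_n$ is finite) $\phi_n:D_n\to\omega$ and defining $f:\omega\to\omega\times\omega$ by $f(x):=(n,\phi_n(x))$ for $x\in D_n$ gives the Kat\v{e}tov reduction: for $X\in\fin^2$ with $|X_{(n)}|<\omega$ for $n\geq N$, one has $f^{-1}[X]\cap B_N=\bigcup_{n\geq N}\phi_n^{-1}[X_{(n)}]$, a union of finite subsets of the $D_n$'s, which lies in $\J$ by $(\ast)$, giving $f^{-1}[X]\in\J(\cA)$.
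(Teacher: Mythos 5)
Your proposal is correct and follows essentially the same approach as the paper's proof in all three items: the preimage ideal $\{B : f^{-1}[B]\in\J\}$ for item (1), the diagonal sequence $B_n\setminus\bigcup_{i<n}A_i$ (your $E_n=B_n\setminus C_n$) for item (2), and, for item (3), the family of complements of the non-$P^+$ witness sequence together with a one-to-one map sending each block $B_n\setminus B_{n+1}$ into the $n$-th column of $\omega\times\omega$. The only cosmetic difference is that you normalize $B_0=\omega$, $\bigcap_n B_n=\emptyset$ before defining the map, whereas the paper handles the complement of $B_0$ via the extra set $A_0=\omega\setminus B_0$; both devices are harmless and lead to the same reduction.
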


\begin{proof}
	(\ref{lem:preservation-of-P+-ideals:by-Kat})
Let $f:\omega\to\omega$ be such that $f^{-1}[B]\in \J$ for each $B\in \I$.
Let $\cK=\{B: f^{-1}[B]\in \J\}$.
It is easy to see that $\cK$ is an ideal and $\I\subseteq\cK$.
We show that $\cK$ is a $P^+$-ideal.
Let $(B_n)\subseteq\cK^+$ and $B_n\supseteq B_{n+1}$ for each $n$.
Then $f^{-1}[B_n]\in \J^+$ and 
$f^{-1}[B_n]\supseteq f^{-1}[B_{n+1}]$ for each $n$.
Since $\J$ is a $P^+$-ideal, there is $A\notin \J$ such that $A\setminus f^{-1}[B_n]$ is finite for each $n$.
Then $B=f[A]\notin\cK$ and 
$B\setminus B_n$ is finite for each $n$.

(\ref{lem:preservation-of-P+-ideals:by-extension})
Let $\cA=\{A_n:n\in\omega\}$ be such that $\omega\notin\J(\cA)$.
Take any decreasing sequence $\{B_n:n\in\omega\}\subseteq \J(\cA)^+$ and define $C_0=\omega$ and $C_{n+1}=B_n\setminus \bigcup_{i<n}A_i$. Then $C_{n}\notin\J(\cA)$ and $C_n\supseteq C_{n+1}$ for each $n$.
Since $\J$ is a $P^+$-ideal, there is $C'\notin\J$ such that $C'\setminus C_n$ is finite for each $n$. Define $C=C'\cap C_1$ and note that $C\notin\J$ as $C'\setminus C=C'\setminus C_1$ is finite.
Once we show that $C\notin\J(\cA)$, the proof will be finished.
Suppose that $C\in \J(\cA)$. Then there is $n$ such that $C\setminus \bigcup_{i<n}A_i\in \J$.
Moreover, $C\cap  \bigcup_{i<n}A_i$ is finite since $C\cap  \bigcup_{i<n}A_i \subseteq C\setminus C_{n+1}$.
Thus, $C\in \J$, a contradiction.

(\ref{lem:preservation-of-P+-ideals:non-P+})
Since $\J$ is not a $P^+$-ideal, there is a decreasing sequence $(B_n)$ of sets not belonging to $\J$ such that if $X\setminus B_n\in\fin$ for all $n$, then $X\in\J$. 
Let $A_0=\omega\setminus B_0$ and $A_{n+1}=B_n\setminus B_{n+1}$ for all $n$. Observe that $\bigcup_{i\leq n}A_i=\omega\setminus B_n \notin\J^*$, so $\omega\notin \J(\{A_n:n\in\omega\})$. What is more, any one-to-one function $f:\omega\to\omega^2$ such that $f[A_n]\subseteq\{n\}\times\omega$ witnesses $\fin^2\leq_{\Kat}\J(\{A_n:n\in\omega\})$. 
\end{proof}

\begin{theorem}\label{thm:new-order-above-FINxFIN-iff-extendable-to-Pplus}
Let $\J$ be an ideal on $\omega$.
	\begin{enumerate}
		\item 	The following conditions are equivalent.	\label{thm:new-order-above-FINxFIN-iff-extendable-to-Pplus:ZFC}
	\begin{enumerate}
		\item $\J$ is extendable to a $P^+$-ideal.	\label{thm:new-order-above-FINxFIN-iff-extendable-to-Pplus:extendability}
		
\item $\fin^2\not\preceq_{\omega^\omega,1-1}^{\omega_1}\J$.	\label{thm:new-order-above-FINxFIN-iff-extendable-to-Pplus:new-order}	

\item $\fin^2\not\preceq_{\omega^\omega,\fin-1}^{\omega_1}\J$.	\label{thm:new-order-above-FINxFIN-iff-extendable-to-Pplus:new-order-KB}

\item $\fin^2\not\preceq_{\omega^\omega,\omega^\omega}^{\omega_1}\J$.	\label{thm:new-order-above-FINxFIN-iff-extendable-to-Pplus:new-order-K}

\end{enumerate}

\item 
	Under CH, the above conditions are equivalent to the following conditions.\label{thm:new-order-above-FINxFIN-iff-extendable-to-Pplus:CH}
\begin{enumerate}

\setcounter{enumii}{4}

	\item There is a P-point which is not a $\J$-point.	\label{thm:new-order-above-FINxFIN-iff-extendable-to-Pplus:ultrafilters}

	\item There is a P-point which is not a weak $\J$-ultrafilter.	\label{thm:new-order-above-FINxFIN-iff-extendable-to-Pplus:ultrafilters-KB}

	\item There is a P-point which is not a $\J$-ultrafilter.	\label{thm:new-order-above-FINxFIN-iff-extendable-to-Pplus:ultrafilters-K}

\end{enumerate}
\item 
	If $\J$ is Borel, then CH can be relaxed to the assumption $\pnumber=\continuum$ in (\ref{thm:new-order-above-FINxFIN-iff-extendable-to-Pplus:CH}).\label{thm:new-order-above-FINxFIN-iff-extendable-to-Pplus:Borel}

\item If $\J$ is Borel, then the following conditions are equivalent.	\label{thm:new-order-above-FINxFIN-iff-extendable-to-Pplus:CON}
\begin{enumerate}
	\item It is consistent that there exists a P-point which is not a $\J$-point (weak $\J$-ultrafilter or $\J$-ultrafilter, resp.).\label{thm:new-order-above-FINxFIN-iff-extendable-to-Pplus:CON-CON}
	\item Under CH, there exists a P-point which is not a $\J$-point (weak $\J$-ultrafilter or $\J$-ultrafilter, resp.).\label{thm:new-order-above-FINxFIN-iff-extendable-to-Pplus:CON-CH}
\end{enumerate}
In particular, if one can find a P-point which is not a $\J$-point (weak $\J$-ultrafilter or $\J$-ultrafilter, resp.) in some sophisticated model of ZFC, then one could do it already under CH.
\end{enumerate}
\end{theorem}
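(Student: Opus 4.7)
The core step is part~(\ref{thm:new-order-above-FINxFIN-iff-extendable-to-Pplus:ZFC}), which I plan to prove by the cycle $(a)\Rightarrow(b)\Rightarrow(c)\Rightarrow(d)\Rightarrow(a)$. For $(a)\Rightarrow(b)$, given a $P^+$-extension $\cK$ of $\J$, I take $\J':=\cK$: the identity witnesses $\J\leq_{1-1}\cK$, and for any countable $\cA\subseteq\cP(\omega)$ with $\omega\notin\cK(\cA)$, Lemma~\ref{lem:preservation-of-P+-ideals}(\ref{lem:preservation-of-P+-ideals:by-extension}) shows $\cK(\cA)$ is again $P^+$, so Proposition~\ref{prop:P-ideal-not-K-above-FINxFIN}(\ref{prop:P-plus-ideal-not-K-above-FINxFIN:item}) gives $\fin^2\not\leq_{\Kat}\cK(\cA)$. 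The implications $(b)\Rightarrow(c)\Rightarrow(d)$ are immediate from $1-1\subseteq\fin-1\subseteq\omega^\omega$, since enlarging $\cG$ strengthens $\preceq_{\cF,\cG}^{\lambda}$ and hence weakens its negation. For $(d)\Rightarrow(a)$ I argue contrapositively: if $\J$ is not $P^+$-extendable, pick any $\J'$ with $\J\leq_{\omega^\omega}\J'$ via $f\colon\omega\to\omega$ and form the pullback $\cK:=\{A\subseteq\omega:f^{-1}[A]\in\J'\}$, which is an ideal extending $\J$ and hence non-$P^+$; Lemma~\ref{lem:preservation-of-P+-ideals}(\ref{lem:preservation-of-P+-ideals:non-P+}) then yields a countable $\cA_0$ with $\omega\notin\cK(\cA_0)$ and $\fin^2\leq_{\Kat}\cK(\cA_0)$ via some $g$. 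Setting $\cA:=\{f^{-1}[A]:A\in\cA_0\}$, the identity $B\in\cK(\cA_0)\Leftrightarrow f^{-1}[B]\in\J'(\cA)$ gives $\omega\notin\J'(\cA)$ and makes $g\circ f$ a witness of $\fin^2\leq_{\Kat}\J'(\cA)$, closing the cycle.

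Part~(\ref{thm:new-order-above-FINxFIN-iff-extendable-to-Pplus:CH}) follows from part~(\ref{thm:new-order-above-FINxFIN-iff-extendable-to-Pplus:ZFC}) together with Corollary~\ref{cor:new-order-iff-existence-of-ultrafilters} and Theorem~\ref{thm:known-ultrafilters-characterized-as-I-ultrafilters}(\ref{thm:known-ultrafilters-characterized-as-I-ultrafilters:P-point}): under CH the bound $\omega_1$ in (b)--(d) equals $\continuum$, turning them into the $\preceq^{\continuum}$-conditions of the corollary, and $\fin^2$-ultrafilters, weak $\fin^2$-ultrafilters, and $\fin^2$-points all coincide with P-points; the implications $(e)\Rightarrow(f)\Rightarrow(g)$ come directly from $1-1\subseteq\fin-1\subseteq\omega^\omega$. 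For part~(\ref{thm:new-order-above-FINxFIN-iff-extendable-to-Pplus:Borel}) I would apply Theorem~\ref{thm:wsp-implies-existence-of-I-ultrafilters} with $\I=\fin^2$, $\cF=\omega^\omega$ and a well-chosen extension $\cK$ of $\J$ to produce a $\fin^2$-ultrafilter $\cU$ with $\cK\subseteq\cU^*$; since $\J\subseteq\cK$, the identity witnesses that $\cU$ is not a $\J$-point while $\cU$ is a P-point. The required hypothesis $\wsp(\fin^2,\cK,\omega^\omega)\geq\continuum$ reduces by Theorem~\ref{thm:wsp-vs-p-number-and-addSTAR}(\ref{thm:wsp-vs-p-number-and-addSTAR:pnumber}) to $\pnumber(\cK)\geq\continuum$ (the side hypotheses hold because $\fin^2$ is tall and, since $\cK\restriction A$ stays $P^+$ for $A\in\cK^+$, Proposition~\ref{prop:P-ideal-not-K-above-FINxFIN}(\ref{prop:P-plus-ideal-not-K-above-FINxFIN:item}) gives $\fin^2\not\leq_{\Kat}\cK\restriction A$). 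Arranging $\cK$ to be $F_\sigma$ makes it $P^+$ automatically by Proposition~\ref{prop:examples-of-P-plus-ideals}(\ref{prop:examples-of-P-plus-ideals:F-sigma}) and yields $\pnumber(\cK)\geq\pnumber=\continuum$ via Theorem~\ref{thm:p-number-for-F-sigma}.

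For part~(\ref{thm:new-order-above-FINxFIN-iff-extendable-to-Pplus:CON}) the direction (\ref{thm:new-order-above-FINxFIN-iff-extendable-to-Pplus:CON-CH})$\Rightarrow$(\ref{thm:new-order-above-FINxFIN-iff-extendable-to-Pplus:CON-CON}) is trivial; for the converse I mimic Theorem~\ref{thm:KAT-iff-EXISTS-ULTRA-for-K-uniform-Pplus-ideals}(\ref{thm:KAT-iff-EXISTS-ULTRA-for-K-uniform-Pplus-ideals:CON}). Consistency of a P-point that is not a $\J$-point implies, by Corollary~\ref{cor:new-order-iff-existence-of-ultrafilters} and monotonicity in $\lambda$ (note $\omega_1\leq\continuum$ makes $\not\preceq^{\continuum}$ stronger than $\not\preceq^{\omega_1}$), consistency of $\fin^2\not\preceq_{\omega^\omega,1-1}^{\omega_1}\J$, hence of (a) by part~(\ref{thm:new-order-above-FINxFIN-iff-extendable-to-Pplus:ZFC}); absoluteness of ``$\J$ is extendable to a $P^+$-ideal'' for Borel $\J$ then propagates (a) to a CH model, where part~(\ref{thm:new-order-above-FINxFIN-iff-extendable-to-Pplus:CH}) finishes the argument. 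The main obstacle I anticipate is precisely the passage from an arbitrary $P^+$-extension of a Borel ideal to an $F_\sigma$-extension: it is needed both in part~(\ref{thm:new-order-above-FINxFIN-iff-extendable-to-Pplus:Borel}) in order to invoke Theorem~\ref{thm:p-number-for-F-sigma}, and in part~(\ref{thm:new-order-above-FINxFIN-iff-extendable-to-Pplus:CON}) in order to recast (a) as a statement amenable to Shoenfield-style absoluteness, and this is exactly the content of the by-product results in Section~\ref{sec:By-products}.
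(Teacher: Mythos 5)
Your proposal is essentially correct and tracks the paper's own argument closely: both rest on the three parts of Lemma~\ref{lem:preservation-of-P+-ideals}, on Corollary~\ref{cor:new-order-iff-existence-of-ultrafilters} combined with Theorem~\ref{thm:known-ultrafilters-characterized-as-I-ultrafilters}(\ref{thm:known-ultrafilters-characterized-as-I-ultrafilters:P-point}) for part~(\ref{thm:new-order-above-FINxFIN-iff-extendable-to-Pplus:CH}), and on the $\wsp$/$\pnumber$ machinery plus the reduction of Borel $P^+$-extendability to $F_\sigma$-extendability for parts~(\ref{thm:new-order-above-FINxFIN-iff-extendable-to-Pplus:Borel}) and~(\ref{thm:new-order-above-FINxFIN-iff-extendable-to-Pplus:CON}). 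Two of your moves are cosmetic variants: for (d)$\Rightarrow$(a) you argue by contraposition, explicitly building the pullback ideal $\cK=\{A:f^{-1}[A]\in\J'\}$ for an arbitrary $\J'\geq_{\Kat}\J$ and transporting the counterexample back along $f$, where the paper applies Lemma~\ref{lem:preservation-of-P+-ideals}(\ref{lem:preservation-of-P+-ideals:non-P+}) to conclude $\J'$ is $P^+$ and then invokes Lemma~\ref{lem:preservation-of-P+-ideals}(\ref{lem:preservation-of-P+-ideals:by-Kat}) (which is the same pullback in disguise); and in part~(\ref{thm:new-order-above-FINxFIN-iff-extendable-to-Pplus:Borel}) you apply Theorems~\ref{thm:wsp-implies-existence-of-I-ultrafilters} and~\ref{thm:wsp-vs-p-number-and-addSTAR} to the $F_\sigma$ extension $\cK$ directly, while the paper routes through $\uu(\fin^2,\omega^\omega,\widehat{\J},1-1)$ via Theorems~\ref{the:u-greater-than-wsp} and~\ref{thm:unumber-iff-I-ultra-not-J-ultra}; both are valid and equivalent. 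The one genuine flaw is your attribution of the key auxiliary fact --- that a Borel ideal extendable to a $P^+$-ideal is extendable to an $F_\sigma$-ideal --- to the ``by-products'' in Section~\ref{sec:By-products}. That section only contains the separate statement that every $G_{\delta\sigma}$ ideal extends to an $F_\sigma$ ideal, and its proof \emph{uses} the very theorem we are proving here, so invoking it would be circular. The fact you actually need is imported from the literature (\cite[Theorem~3.2.7]{alcantara-phd-thesis} or \cite[Corollary~3.7]{MR3692233}), and it is the absoluteness of ``$\J$ extends to an $F_\sigma$-ideal'' --- not of ``$\J$ extends to a $P^+$-ideal,'' which quantifies over arbitrary ideals and is not syntactically within Shoenfield's reach --- that makes the propagation across models in part~(\ref{thm:new-order-above-FINxFIN-iff-extendable-to-Pplus:CON}) legitimate.
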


\begin{proof}
	(\ref{thm:new-order-above-FINxFIN-iff-extendable-to-Pplus:extendability})$\implies$(\ref{thm:new-order-above-FINxFIN-iff-extendable-to-Pplus:new-order}) 
Let $\J'\supseteq \J$ and $\J'$ be a $P^+$-ideal.
By Lemma~\ref{lem:preservation-of-P+-ideals}(\ref{lem:preservation-of-P+-ideals:by-extension}), $\J'(\cA)$ is a $P^+$-ideal for each countable family $\cA\subseteq\cP(\omega)$ such that $\omega\notin\J'(\cA)$.
So $\fin^2\not\leq_{\Kat} \J'(\cA)$, by Proposition~\ref{prop:P-plus-ideal-not-K-above-FINxFIN}(\ref{prop:P-plus-ideal-not-K-above-FINxFIN:item}).
Thus, $\fin^2\not\preceq_{\omega^\omega,1-1}^{\omega_1}\J$.

The implications (\ref{thm:new-order-above-FINxFIN-iff-extendable-to-Pplus:new-order})$\implies$(\ref{thm:new-order-above-FINxFIN-iff-extendable-to-Pplus:new-order-KB}) 
and
(\ref{thm:new-order-above-FINxFIN-iff-extendable-to-Pplus:new-order-KB})$\implies$(\ref{thm:new-order-above-FINxFIN-iff-extendable-to-Pplus:new-order-K}) 
are obvious.

(\ref{thm:new-order-above-FINxFIN-iff-extendable-to-Pplus:new-order-K})$\implies$(\ref{thm:new-order-above-FINxFIN-iff-extendable-to-Pplus:extendability}) 
	Since $\fin^2\not\preceq_{\omega^\omega,\omega^\omega}^{\omega_1}\J$, there is an ideal $\J'\geq_{\Kat} \J$ such that $\fin^2\not\leq_{\Kat}  \J'(\cA)$ for each countable $\cA\subseteq\cP(\omega)$ such that $\omega\notin\J'(\cA)$. 
By Lemma~\ref{lem:preservation-of-P+-ideals}(\ref{lem:preservation-of-P+-ideals:non-P+}), $\J'$ is a $P^+$-ideal, hence by Lemma~\ref{lem:preservation-of-P+-ideals}(\ref{lem:preservation-of-P+-ideals:by-Kat}), $\J$ can be extended to a $P^+$-ideal.

(\ref{thm:new-order-above-FINxFIN-iff-extendable-to-Pplus:new-order})$\implies$ (\ref{thm:new-order-above-FINxFIN-iff-extendable-to-Pplus:ultrafilters}) 
Under CH, it  follows from Corollary~\ref{cor:new-order-iff-existence-of-ultrafilters}(\ref{cor:new-order-iff-existence-of-ultrafilters:Katetov}) and Theorem ~\ref{thm:known-ultrafilters-characterized-as-I-ultrafilters}.

The implications 
(\ref{thm:new-order-above-FINxFIN-iff-extendable-to-Pplus:ultrafilters})$\implies$ (\ref{thm:new-order-above-FINxFIN-iff-extendable-to-Pplus:ultrafilters-KB}) 
and
(\ref{thm:new-order-above-FINxFIN-iff-extendable-to-Pplus:ultrafilters-KB})$\implies$ (\ref{thm:new-order-above-FINxFIN-iff-extendable-to-Pplus:ultrafilters-K}) 
are obvious (and hold in ZFC).

(\ref{thm:new-order-above-FINxFIN-iff-extendable-to-Pplus:ultrafilters-K})$\implies$ (\ref{thm:new-order-above-FINxFIN-iff-extendable-to-Pplus:extendability})
Take a P-point $\cU$ which is not a $\J$-ultrafilter (i.e.~$\J\leq_{\Kat}\cU^*$).
Then $\cU^*$ is a $P^+$-ideal, so $\J$ can be extended to a $P^+$-ideal by Lemma~\ref{lem:preservation-of-P+-ideals}(\ref{lem:preservation-of-P+-ideals:by-Kat}).
(Note that this argument works in ZFC.)

(\ref{thm:new-order-above-FINxFIN-iff-extendable-to-Pplus:Borel}) 
Taking into account that only the proof of the implication 
``(\ref{thm:new-order-above-FINxFIN-iff-extendable-to-Pplus:new-order})$\implies$ (\ref{thm:new-order-above-FINxFIN-iff-extendable-to-Pplus:ultrafilters})'' 
was not done in ZFC, and (\ref{thm:new-order-above-FINxFIN-iff-extendable-to-Pplus:extendability})$\iff$(\ref{thm:new-order-above-FINxFIN-iff-extendable-to-Pplus:new-order}) holds in ZFC, 
we only need to show 
the implication 
``(\ref{thm:new-order-above-FINxFIN-iff-extendable-to-Pplus:extendability})$\implies$(\ref{thm:new-order-above-FINxFIN-iff-extendable-to-Pplus:ultrafilters})'' 
under the assumption $\pnumber=\continuum$ with $\J$ being Borel.

Assume that $\pnumber=\continuum$ and $\J$ is a Borel ideal that can be extended to a $P^+$-ideal.
It is known that if a Borel ideal can be extended to a $P^+$-ideal, then it can be extended to an $F_\sigma$-ideal (see e.g.~\cite[Theorem 3.2.7]{alcantara-phd-thesis} or ~\cite[Corollary 3.7]{MR3692233}).
	Let $\widehat{\J}$ be an $F_\sigma$ ideal such that $\J\subseteq \widehat{\J}$.
	Then $\widehat{\J}\restriction A$ is also an $F_\sigma$ ideal for each  $A\notin\widehat{\J}$.
	Thus, $\fin^2\not\leq_{\Kat}\widehat{\J}\restriction A$ for each  $A\notin\widehat{\J}$ by Proposition~\ref{prop:P-plus-ideal-not-K-above-FINxFIN}(\ref{prop:P-plus-ideal-not-K-above-FINxFIN:item}).
	By Theorems~\ref{thm:wsp-vs-p-number-and-addSTAR}, \ref{the:u-greater-than-wsp} and \ref{thm:p-number-for-F-sigma}, $\uu(\fin^2,\omega^\omega,\widehat{\J},1-1)>\continuum=|\omega^\omega|$.
	By Theorem~\ref{thm:unumber-iff-I-ultra-not-J-ultra}, we obtain a $\fin^2$-ultrafilter (hence a P-point by Theorem~\ref{thm:known-ultrafilters-characterized-as-I-ultrafilters}(\ref{thm:known-ultrafilters-characterized-as-I-ultrafilters:P-point})) which is not a $\widehat{\J}$-point (so it is not a $\J$-point as well). 

(\ref{thm:new-order-above-FINxFIN-iff-extendable-to-Pplus:CON})
The implication ``(\ref{thm:new-order-above-FINxFIN-iff-extendable-to-Pplus:CON-CH})$\implies$(\ref{thm:new-order-above-FINxFIN-iff-extendable-to-Pplus:CON-CON})'' is obvious, so below we show only the implication ``(\ref{thm:new-order-above-FINxFIN-iff-extendable-to-Pplus:CON-CON})$\implies$(\ref{thm:new-order-above-FINxFIN-iff-extendable-to-Pplus:CON-CH})''.
Assume that it is consistent that there exists a P-point  that is not a $\J$-point (weak $\J$-ultrafilter or $\J$-ultrafilter, resp.).
Then consistently $\uu(\fin^2,\omega^\omega,\J,1-1)=\continuum^+$ ($\uu(\fin^2,\omega^\omega,\J,Fin-1)=\continuum^+$ or $\uu(\fin^2,\omega^\omega,\J,\omega^\omega)=\continuum^+$, resp.) by  Theorem~\ref{thm:unumber-iff-I-ultra-not-J-ultra}, so in particular it is consistent that $\fin^2\not\preceq_{\omega^\omega,1-1}^{\omega_1}\J$ ($\fin^2\not\preceq_{\omega^\omega,Fin-1}^{\omega_1}\J$ or $\fin^2\not\preceq_{\omega^\omega,\omega^\omega}^{\omega_1}\J$, resp.).
Then, by item (\ref{thm:new-order-above-FINxFIN-iff-extendable-to-Pplus:ZFC}), it is consistent that $\J$ can be extended to a $P^+$-ideal, and since $\J$ is Borel, $\J$ can be extended to an $F_\sigma$-ideal (by \cite[Theorem~3.2.7]{alcantara-phd-thesis} or \cite[Corollary 3.7]{MR3692233}).
However, if $\J$ is a Borel ideal, then the sentence ``$\J$ can be extended to an $F_\sigma$-ideal'' is absolute, so $\J$ can be extended to an $F_\sigma$ ideal in ZFC.
Then, using (\ref{thm:new-order-above-FINxFIN-iff-extendable-to-Pplus:CH}) we obtain, under CH, 
that there exists a P-point which is not a $\J$-point (weak $\J$-ultrafilter or $\J$-ultrafilter, resp.).
\end{proof}

An ideal $\I$ has the \emph{property $\FinBW$} (in short: $\I\in \FinBW$) if for each bounded sequence $(x_n)_{n\in \omega}$ of reals there is $A\notin \I$ such that the subsequence $(x_n)_{n\in A}$ is convergent (\cite{MR2320288}).

The ideal 
$\I_d = \{A\subseteq \omega: \lim_{n\to \infty}|A\cap n|/n=0\}$ of asymptotic density zero sets,
the ideal 
$\I_u = \{A\subseteq \omega: \lim_{n\to \infty} \max\{|A\cap\{m,m+1,\ldots,m+n-1\}| : m\in \omega\}/n=0\}$ of Banach (a.k.a.~uniform) density zero sets, the ideal $\nwd$, the ideal $\conv$ and the ideal $\lacunary$ 
generated by lacunary sets (recall: a set $A\subseteq \omega$ is \emph{lacunary} (\cite{MR908265}) if $\lim_{n\to \infty}(a_{n+1}-a_n)=\infty$, where $(a_n)_{n\in \omega}$ is the increasing enumeration of all elements of $A$)
do not  have the $\FinBW$ property (see \cite[Example~3]{MR1181163}, \cite[Corollary~1]{MR2835960}, \cite[Example~3.1]{MR2320288}, \cite[Proposition~6.4]{MR3034318} and \cite[Lemma~3.2]{MR3640040}, respectively).

\begin{proposition}
	\label{prop:P-point-is-I-ultrafilter-for-non-finBW-ideals}
	If $\J\notin \FinBW$, then every P-point is a $\J$-ultrafilter.
	In particular, every P-point is an $\I_d$-ultrafilter, $\I_u$-ultrafilter, $\nwd$-ultrafilter, $\conv$-ultrafilter and $\lacunary$-ultrafilter (note that the cases of the ideals $\I_d$ and $\lacunary$ were earlier done in a different manner in \cite[Proposition~2.3.3]{flaskova-phd-thesis}).
\end{proposition}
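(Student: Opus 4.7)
The plan is to fix a P-point $\cU$ and an arbitrary function $f\colon\omega\to\omega$, and to produce $B\in\J$ with $f^{-1}[B]\in\cU$. The natural tool is the Rudin--Keisler pushforward $\mathcal{V}=f(\cU)=\{A\subseteq\omega:f^{-1}[A]\in\cU\}$. If $\mathcal{V}$ happens to be principal, concentrated at some $k\in\omega$, then $B=\{k\}$ is finite (hence in $\J$) and $f^{-1}[B]\in\cU$ by definition of $\mathcal{V}$; so I may assume $\mathcal{V}$ is non-principal. A short calculation then shows $\mathcal{V}$ is itself a P-point: given $(B_n)_{n\in\omega}\subseteq\mathcal{V}$, apply the P-point property of $\cU$ to $(f^{-1}[B_n])_{n\in\omega}\subseteq\cU$ to obtain $A\in\cU$ almost contained in every $f^{-1}[B_n]$; then $f[A]\in\mathcal{V}$ is almost contained in every $B_n$ since $f[A]\setminus B_n\subseteq f[A\setminus f^{-1}[B_n]]$ is finite.

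Next I would invoke the hypothesis $\J\notin\FinBW$ to fix a bounded real sequence $(x_n)_{n\in\omega}$ such that $(x_n)_{n\in A}$ fails to converge for every $A\in\J^+$. The crucial classical input is that any P-point selects a convergent subsequence from every bounded real sequence: at each stage $k$ partition the bounding interval into finitely many closed pieces of diameter $1/k$ refining the previous partition, use the ultrafilter property to pick the unique piece whose preimage belongs to $\mathcal{V}$, and then apply the P-point property to the resulting decreasing sequence of sets in $\mathcal{V}$ to obtain a pseudointersection $B\in\mathcal{V}$; along $B$ the values eventually lie in intervals of vanishing diameter, whose intersection is a single point, so $(x_n)_{n\in B}$ converges to that point.

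Applying this to $\mathcal{V}$ and the chosen $(x_n)$ yields $B\in\mathcal{V}$ with $(x_n)_{n\in B}$ convergent; by the defining property of $(x_n)$ this forces $B\in\J$, and $B\in\mathcal{V}$ means precisely $f^{-1}[B]\in\cU$. Hence $\cU$ is a $\J$-ultrafilter. The only non-trivial ingredients are the preservation of P-points under Rudin--Keisler reductions and the convergent-subsequence selection property of P-points, both classical; the rest of the argument is just the case split on whether the pushforward $f(\cU)$ is principal, so I do not expect any serious obstacle beyond citing or quickly reproducing these two facts.
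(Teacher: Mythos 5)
Your proof is correct, but it takes a genuinely different route from the paper. The paper's argument is a two-line reduction via the Kat\v{e}tov order: it cites the facts that $\J\notin\FinBW$ implies $\conv\leq_{\Kat}\J$ and that an ultrafilter is a P-point iff it is a $\conv$-ultrafilter (the latter being Theorem~\ref{thm:known-ultrafilters-characterized-as-I-ultrafilters}(\ref{thm:known-ultrafilters-characterized-as-I-ultrafilters:P-point})), and then uses transitivity of $\leq_{\Kat}$ to conclude that if $\J\leq_{\Kat}\cU^*$ then $\conv\leq_{\Kat}\cU^*$, contradicting that $\cU$ is a P-point. Your argument is instead self-contained and bottom-up: you push $\cU$ forward along $f$, split on whether the pushforward is principal, note that the pushforward of a P-point is a P-point, and then exploit the classical Bolzano--Weierstrass selection property of P-points together with a witness $(x_n)$ for $\J\notin\FinBW$ to land a set $B$ simultaneously in the pushforward and in $\J$. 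In effect you are reproving, in the specific form you need, the two facts the paper cites (the $\conv$-ultrafilter characterization of P-points is exactly the convergent-subsequence selection argument, and the $\FinBW$/$\conv$ link is what turns the nonconvergence witness into membership in $\J$). What the paper's route buys is brevity and conformity with the Kat\v{e}tov framework developed throughout; what your route buys is transparency, no external citations beyond classical P-point folklore, and a cleaner picture of why the statement is true. One small polish: when choosing the unique partition piece whose preimage lies in $\mathcal{V}$, use half-open intervals or otherwise remark that the pieces can be taken pairwise disjoint, so the ultrafilter genuinely selects one; as written, closed intervals of equal diameter overlap at endpoints and the uniqueness claim needs that adjustment, though this does not affect correctness.
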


\begin{proof}
	Let $\cU$ be a P-point and suppose that $\cU$ is not a $\J$-ultrafilter i.e.~$\J\leq_{\Kat}\cU^*$.
	Since $\J\notin\FinBW$, then $\conv\leq_{\Kat}\J$ (\cite[Section 2.7]{alcantara-phd-thesis}, see also \cite[Proposition~6.4]{MR3034318}).
	Consequently, $\conv\leq_{\Kat}\cU^*$, so $\cU$ is not a P-point by Theorem~\ref{thm:known-ultrafilters-characterized-as-I-ultrafilters}(\ref{thm:known-ultrafilters-characterized-as-I-ultrafilters:P-point}), a contradiction.	
\end{proof}

It is known (\cite[Proposition~4.1 and 3.4]{MR2320288}) that $\J\in \FinBW$ for every ideal $\J$ which can be extended to an $F_\sigma$ ideal. In the realm of analytic P-ideals,  $\J\in \FinBW$ is even equivalent to extendability of $\J$ to an $F_\sigma$ ideal (\cite[Proposition~6.5]{MR3034318}).
It was a question of Hru\v{s}ak (\cite[{Question~5.16}]{MR2777744}) whether the same holds in the realm of all Borel ideals, but the answer to his question is negative as was recently showed by Kwela (\cite{Kwela-question-of-Hrusak}).

\begin{theorem}
	\label{thm:wsp-vs-F-sigma}
	If $\J$ is an $F_\sigma$ ideal and $\I$ is not a weak P-ideal, then 
	$$\cov(\cM) \leq \wsp(\I,\J,\cF)$$
	for every $\cF \subseteq\omega^\omega$.
\end{theorem}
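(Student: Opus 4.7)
The plan is to mimic the proof of Theorem~\ref{thm:wsp-vs-cov-M}, replacing the ``tall summable ideal plus winning strategy'' hypotheses by the $F_\sigma$-structure of $\J$ and the non-weak-P-property of $\I$. The role of the summable bound $\sum g(i)\le \cdots$, which in Theorem~\ref{thm:wsp-vs-cov-M} forced the generic into $\I_g\subseteq \I$, will be replaced by a ``new elements must avoid $\bigcup_{n<k}A_n$'' constraint in the poset ordering that forces the generic $B$ to have finite intersection with each $A_n$; the contrapositive of non-weak-P-ness will then yield $B\in \I$.

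First, I would fix a lsc submeasure $\phi$ with $\J=\Fin(\phi)$ (Theorem~\ref{lscsm}) and a family $\{A_n:n\in \omega\}\subseteq \I$ witnessing that $\I$ is not a weak P-ideal, so that every $B\in \I^+$ admits $n$ with $B\cap A_n$ infinite (equivalently, if $B\cap A_n$ is finite for all $n$ then $B\in \I$). Given $\cA$ with $\J^+$-FIP of size $<\cov(\cM)$ and $f\in \cF$, I would dispatch a ``trivial'' case in which there exist $F^*\in[\cA]^{<\omega}$ and $k^*\in \omega$ with $\bigcap F^*\setminus f^{-1}[\bigcup_{n<k^*}A_n]\in \J$: then $B=\bigcup_{n<k^*}A_n\in \I$ already works, since for any $F'\in [\cA]^{<\omega}$ one has $\bigcap(F'\cup F^*)\setminus f^{-1}[B]\subseteq \bigcap F^*\setminus f^{-1}[B]\in \J$ while $\bigcap(F'\cup F^*)\in \J^+$, so $\bigcap F'\cap f^{-1}[B]\supseteq \bigcap(F'\cup F^*)\cap f^{-1}[B]\in \J^+$.

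In the remaining ``main'' case, $\bigcap F\cap f^{-1}[\omega\setminus \bigcup_{n<k}A_n]\in \J^+$ for every $F\in[\cA]^{<\omega}$ and every $k\in \omega$. I would then introduce the countable poset $\poset=[\omega]^{<\omega}\times \omega$ ordered by $(s',k')\le (s,k)$ iff $s'\supseteq s$, $k'\ge k$, and $(s'\setminus s)\cap \bigcup_{n<k}A_n=\emptyset$, together with the dense sets $D_{F,m}=\{(s,k):\phi(f^{-1}[s]\cap \bigcap F)\ge m\}$ for $F\in [\cA]^{<\omega}$, $m\in \omega$, and $E_n=\{(s,k):k>n\}$ for $n\in \omega$. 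Density of each $E_n$ is immediate. For density of $D_{F,m}$ at $(s_0,k_0)$, the main-case hypothesis $\bigcap F\cap f^{-1}[\omega\setminus \bigcup_{n<k_0}A_n]\in \J^+$ together with the lower semicontinuity of $\phi$ yields a finite $E\subseteq \bigcap F\cap f^{-1}[\omega\setminus \bigcup_{n<k_0}A_n]$ with $\phi(E)\ge m$, and then $(s_0\cup f[E],k_0)\le (s_0,k_0)$ lies in $D_{F,m}$.

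Finally, $\poset$ is a countable atomless poset and the family $\{D_{F,m}\}\cup\{E_n\}$ has cardinality $|\cA|<\cov(\cM)$, so MA for countable posets (which is equivalent to $\cov(\cM)$) supplies a filter $\cG$ meeting all of them. Setting $B=\bigcup\{s:(s,k)\in \cG\}$, the $E_n$'s force $B\cap A_n$ to be finite for every $n$ (picking $(s_n,k_n)\in \cG\cap E_n$ with $k_n>n$, the ordering forces every later $(s',k')\in \cG$ with $(s',k')\le (s_n,k_n)$ to satisfy $s'\cap A_n=s_n\cap A_n$), so by the choice of $\{A_n\}$ we have $B\in \I$; the $D_{F,m}$'s force $\phi(f^{-1}[B]\cap \bigcap F)=\infty$ for every $F$, giving the required $\J^+$-FIP of $\cA\cup\{f^{-1}[B]\}$. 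The main obstacle I expect is the careful bookkeeping of the poset ordering—verifying transitively that the ``avoid $\bigcup_{n<k}A_n$'' constraint really forces $B\cap A_n$ finite—together with the density check for $D_{F,m}$, which is precisely where the $F_\sigma$-structure of $\J$ (via the lsc submeasure $\phi$) must be used.
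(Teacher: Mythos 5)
Your proof is correct and follows essentially the same strategy as the paper's: both fix a lsc submeasure with $\J=\Fin(\phi)$, pick a countable witness to non-weak-P-ness, dispose of a trivial case where a finite union of the witnessing $\I$-sets already works, and then force with a countable poset of finite conditions with an ``avoid the first $k$ pieces'' constraint, applying $\cov(\cM)$ (MA for countable posets) to get a generic $B$ with $B\cap A_n$ finite for all $n$ (hence $B\in\I$) and $\phi(f^{-1}[B]\cap\bigcap F)=\infty$ for every finite $F\subseteq\cA$. The only differences are cosmetic bookkeeping: you run the poset on subsets of the codomain rather than the domain, do not insist on a partition, and phrase the trivial/main case split in terms of pairs $(F,k)$ rather than a single index $n$ as in the paper.
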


\begin{proof}
	Let $\phi$ be a lsc submeasure such that $\J=\fin(\phi)$.
	Let $\cA$ be a family having $\J^+$-FIP and such that $|\cA|<\cov(\cM)$. 
	Fix $f\in \cF$.
	
	Since $\I$ is not a weak P-ideal, there is a partition $\{B_n:n\in \omega\}\subseteq \I$ of $\omega$ such that for each $B\notin\I$ there is $n$ with $|B_n\cap B|=\omega$.
	For each $n\in \omega$, we define $C_n = f^{-1}[B_n]$.
	We have 2 cases.
	
	Case 1. 
	There is $n\in \omega$ such that $\bigcap F\cap C_n\notin \J$ for each finite $F\subseteq \cA$.
	Then, we put $C=B_n$.
	Since $C\in\I$ and $\cA\cup\{f^{-1}[C]\}$ has $\J^+$-FIP, the proof is finished in this case.
	
	Case 2.
	For each $n\in \omega$ 
	there exists a finite $F\subseteq \cA$ with
	$\bigcap F\cap C_n\in \J$.
	
	We define a poset $(\poset, \leq)$ as follows:
	$$\poset = \left\{(H,n)\in [\omega]^{<\omega}\times \omega: H\subseteq \bigcup_{i<n} C_i \right\}$$
	and $(H_1,n_1)\leq (H_2,n_2) \iff$
	\begin{enumerate}
		\item $H_1\supseteq H_2$,
		\item $n_1\geq n_2$,
		\item $\max H_2 < \min (H_1\setminus H_2)$ [here $\max\emptyset=-1$ and $\min\emptyset = \omega$ by convention],
		\item $(H_1\setminus H_2)\cap \bigcup_{i<n_2} C_i = \emptyset$.
	\end{enumerate}
	
	Now we define sets
	\begin{enumerate}
		\item $D_{F,N} = \{(H,n)\in \poset: \phi(\bigcap F\cap H)>N\}$ for each finite $F\subseteq\cA$ and $N\in \omega$,
		\item $E_k=\{(H,n)\in \poset: n>k\}$ for each $k\in \omega$
	\end{enumerate}
	and below show that these sets are dense in $(\poset,\leq)$.
	
	First we show that sets $D_{F,N}$ are dense.
	Take any $(H,n)\in \poset$.
	In the present case, we know that for each $i<n$ there is a finite $F_i\subseteq\cA$ with $\bigcap F_i\cap C_i\in \J$.
	Since $\cA$ has $\J^+$-FIP, $X = \bigcap_{i<n} \bigcap F_i\notin\J$.
	Consequently, there is a finite set $H'\subseteq X\setminus (\bigcup_{i<n}C_i \cup \max H)$
	with $\phi(H')>N$.
	We put $G=H\cup H'$
	and $m = \max(\{i\in\omega: G\cap C_i\neq \emptyset\}\cup\{n\})$.
	Since $(G,m)\in D_{F,N}$ and $(G,m)\leq (H,n)$, we obtain denseness of $D_{F,N}$.

	Now we show that $E_k$ are dense.
	Take any $(H,n)\in\poset$.
	Let $G=H$ and $m=\max(n,k+1)$. 
	Since $(G,m)\in E_{k}$ and $(G,m)\leq (H,n)$, we obtain denseness of $E_{k}$.

	Let $\cD$ be the family of all the sets of the form $D_{F,N}$ and $E_k$.

Since $\poset$ is countable and  $|\cD|< \cov(\cM)$, there is a filter 
$\cG\subseteq \poset$ such that $\cG\cap D\neq\emptyset$ for each $D\in \cD$ (see e.g.~\cite[Theorem~7.13]{MR2768685})).
	
	We define 
	$$
	B = f[A],
	\text{\ \ where\ \ }
	A = \bigcup\{H: (H,n)\in \cG \text{ for some $n\in \omega$}\}.$$
	If we show that $B\in\I$ and $\cA\cup\{f^{-1}[B]\}$ has $\J^+$-FIP, the proof will be finished.

	First we show that $\cA\cup\{f^{-1}[B]\}$ has $\J^+$-FIP.
	Take any finite set $F\subseteq\cA$.
	Let $N\in \omega$.
	Since $\cG$ is $\cD$-generic, there is $(H,n)\in \cG\cap D_{F,N}$.
	In particular, $H\subseteq A$ and $\phi(\bigcap F\cap H)>N$.
	Since $A\subseteq f^{-1}[B]$, we obtain $\phi(\bigcap F\cap f^{-1}[B])>N$.
	Consequently, $\phi(\bigcap F\cap f^{-1}[B])=\infty$, so $\bigcap F\cap f^{-1}[B]\notin\J$.

	Finally, we show that $B\in \I$.
	If we show  that $A\cap C_k$ is finite for each $k$, then $B\cap B_k$ will be finite for each $k$, and consequently $B\in \I$.
	
	Take any $k\in \omega$.
	Since $\cG$ is $\cD$-generic, there is $(H,n)\in \cG\cap E_k$.
	We claim that $A\cap C_k \subseteq H$ (in particular, it means that $A\cap C_k$ is finite).
	
	Take any $a\in A\cap C_k$.
	Then there is $(G,m)\in \cG$ such that $a\in G$.
	Since $\cG$ is a filter, there is $(K,l)\in \cG$ such that $(K,l)\leq (H,n)$ and $(K,l)\leq (G,m)$.
	We have 2 cases.
	
	Case 1.
	Assume that $m\leq k$.
	Since $(G,m)\in \poset$, $G\subseteq \bigcup_{i<m}C_i$, and consequently, $a\notin C_k$, a contradiction, so this case is not possible.

	Case 2.
	Assume that $m> k$.
	Since $(K,l)\leq (H,n)$, $(K\setminus H)\cap \bigcup_{i<n}C_i=\emptyset$.
	Since $(H,n)\in E_k$, $n>k$. Consequently, $(K\setminus H)\cap C_k=\emptyset$.
	Suppose, in the sake of contradiction, that $a\notin H$.
	Since $a\in G\subseteq K$, $a\in K\setminus H$.
	Since $a\in C_k$, $a\in (K\setminus H)\cap C_k$, a contradiction.
\end{proof}

The following corollary provides an extension of \cite[Theorem~3.1]{MR2512901}, where the author proved that under $\cov(\cM)=\continuum$, there exists a P-point which is not a $\J$-ultrafilter for each $F_\sigma$-ideal $\J$.

\begin{corollary}
	\label{cor:addM-implies-P-point-not-J-ultra-for-J-extended-to-Fsigma}
	Assume $\cov(\cM)=\continuum$.
	If an ideal $\J$ can be extended to an $F_\sigma$ ideal (in particular, if $\J$ is an analytic P-ideal with the property $\FinBW$) and $\I$ is not a weak P-ideal, then  there exists an $\I$-ultrafilter which is not a $\J$-point.	
	In particular, then there exists a P-point which is not a $\J$-point.
\end{corollary}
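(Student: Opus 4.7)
The plan is to chain together Theorem~\ref{thm:wsp-vs-F-sigma} and Theorem~\ref{thm:wsp-implies-existence-of-I-ultrafilters} in a completely routine way; there is no deep new idea required, only a careful selection of the cardinal parameters and an appropriate choice of $\I$ in the second ``in particular'' clause.

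First I would handle the main statement. Given an $F_\sigma$ ideal $\widehat{\J}\supseteq\J$, Theorem~\ref{thm:wsp-vs-F-sigma} applied with $\cF=\omega^\omega$ yields $\cov(\cM)\leq\wsp(\I,\widehat{\J},\omega^\omega)$, which under the hypothesis $\cov(\cM)=\continuum$ gives $\wsp(\I,\widehat{\J},\omega^\omega)\geq\continuum=|\omega^\omega|$. Theorem~\ref{thm:wsp-implies-existence-of-I-ultrafilters} then produces an ultrafilter $\cU$ with $\I\not\leq_{\Kat}\cU^*$ and $\widehat{\J}\subseteq\cU^*$. Since $\J\subseteq\widehat{\J}\subseteq\cU^*$, the identity map is a one-to-one witness to $\J\leq_{\point}\cU^*$, so $\cU$ is an $\I$-ultrafilter which is not a $\J$-point.

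Next, the parenthetical about analytic P-ideals with $\FinBW$ follows immediately from the cited result \cite[Proposition~6.5]{MR3034318} (quoted in the paragraph just before Theorem~\ref{thm:wsp-vs-F-sigma}), which states that an analytic P-ideal has $\FinBW$ if and only if it can be extended to an $F_\sigma$ ideal, reducing this case to the one already handled.

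Finally, for the concluding ``in particular, there exists a P-point which is not a $\J$-point,'' I would specialize the main statement to $\I=\fin^2$. The only thing that needs checking is that $\fin^2$ is not a weak P-ideal: the countable family $\cA=\{\{n\}\times\omega:n\in\omega\}\subseteq\fin^2$ has the property that any $B\subseteq\omega\times\omega$ meeting each $\{n\}\times\omega$ in a finite set lies in $\fin^2$ itself, so no $\fin^2$-positive pseudointersection exists. The main statement then produces a $\fin^2$-ultrafilter which is not a $\J$-point, and $\fin^2$-ultrafilters coincide with P-points by Theorem~\ref{thm:known-ultrafilters-characterized-as-I-ultrafilters}(\ref{thm:known-ultrafilters-characterized-as-I-ultrafilters:P-point}). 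The only mildly nontrivial spot is this last verification that $\fin^2$ is not a weak P-ideal, and even this is immediate from the definitions.
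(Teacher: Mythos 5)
Your proof is correct and follows essentially the same route as the paper: apply Theorem~\ref{thm:wsp-vs-F-sigma} to the $F_\sigma$ extension $\widehat{\J}$, combine with Theorem~\ref{thm:wsp-implies-existence-of-I-ultrafilters}, and then specialize to $\I=\fin^2$ using Theorem~\ref{thm:known-ultrafilters-characterized-as-I-ultrafilters}(\ref{thm:known-ultrafilters-characterized-as-I-ultrafilters:P-point}). The only difference is that you explicitly verify that $\fin^2$ is not a weak P-ideal (via the columns $\{n\}\times\omega$), which the paper dismisses as an easy fact, and you cite \cite[Proposition~6.5]{MR3034318} where the paper cites \cite[Theorem~4.2]{MR2320288} for the $\FinBW$ extendability; both are valid references for that step.
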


\begin{proof}
	Let $\cK$ be an $F_\sigma$ ideal with $\J\subseteq \cK$.
By Theorems~\ref{thm:wsp-implies-existence-of-I-ultrafilters} and \ref{thm:wsp-vs-F-sigma},
	there exists an ultrafilter $\cU$ such that $\I\not\leq_{\Kat}\cU^*$ and $\cK\subseteq \cU^*$.
	Then $\cJ\subseteq \cU^*$, so $\cU$ is not a $\J$-point.
	The first ``in particular'' part follows from 
the fact that any analytic P-ideal with the property $\FinBW$ can be extended to an $F_\sigma$-ideal (see~\cite[Theorem~4.2]{MR2320288}).
The second ``in particular'' part follows from an easy fact that the ideal $\fin^2$ is not a weak P-ideal, and not-so-easy fact that $\fin^2$-ultrafilters coincide with P-points (see Theorem~\ref{thm:known-ultrafilters-characterized-as-I-ultrafilters}(\ref{thm:known-ultrafilters-characterized-as-I-ultrafilters:P-point})).
\end{proof}

A set $A\subseteq \omega$ is \emph{thin} (\cite{MR1845008}) if $\lim_{n\to\infty}a_n/a_{n+1}=0$, where $(a_n)_{n\in \omega}$ is the increasing enumeration of elements of $A$. 
Using the ratio test for the convergence of a series, one can easily see 
that the ideal $\cT$ generated by thin sets is contained in the summable ideal $\I_{1/n} =\{A\subseteq \omega:\sum_{n\in A}1/n<\infty\}$.
Thus, using the above corollary, we see that there exists a P-point which is not a $\cT$-point (note that this particular instance of the above corollary in the case of the ideal $\cT$ was earlier proved in \cite[Theorem~3.4]{MR2512901}).


\section{$\I$-ultrafilters which are not P-points}

In this section we are interested in ideals $\I$ such that it is consistent that there is an $\I$-ultrafilter which is not a P-point. It will turn out that such ideals can be characterized using the Kat\v{e}tov order. However, we will start this section with some sufficient conditions for existence of $\I$-ultrafilter which is not a P-point. Results of this section combined with the ones from the previous section will give us a characterization of Borel ideals $\I$ for which the notions of P-point and $\I$-ultrafilter coincide.

\begin{theorem} 
	\label{thm:wsp-vs-non-P-point}
Let $\cF\subseteq\omega^\omega$ and $\I$ be an ideal on $\omega$.
	\begin{enumerate}
		\item 
		If $\I\not\leq_{\cF}\fin^2$ and $\cI$ is a tall ideal (in particular, if $\I$ is a tall P-ideal), then 
		$$\max\{\adds(\I),\omega_1\} \leq \wsp(\I,\fin^2, \cF).$$
		
		\item If $\I$ contains a tall summable ideal, then 
		$$\cov(\cM) \leq \wsp(\I,\fin^2, \cF).$$
		
		\item If $\I$ is a tall analytic P-ideal, then 
		$$\max(\adds(\I),\cov(\cM)) \leq \wsp(\I,\fin^2, \cF).$$
	\end{enumerate}	
\end{theorem}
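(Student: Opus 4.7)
All three parts bound $\wsp(\I,\fin^2,\cF)$ from below, so the plan is to reduce each to the earlier results on $\wsp$ by exploiting two facts about $\fin^2$: it is homogeneous (Proposition~\ref{prop:homogeneous-idelas}) and, for every $A\notin\fin^2$, the restriction $\fin^2\restriction A$ is not $\leq_{\Kat}$-above any tall summable ideal (Proposition~\ref{prop:ideals-not-above-summable}). Before starting, I would record a preliminary monotonicity: if $\cF_1\subseteq\cF_2\subseteq\omega^\omega$ then $\wsp(\I,\J,\cF_2)\leq\wsp(\I,\J,\cF_1)$, because a family $\cA$ witnessing the minimum for $\cF_1$ already witnesses it for $\cF_2$ (the bad $f$ lies in $\cF_2$ too). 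In particular, $\wsp(\I,\fin^2,\omega^\omega)\leq\wsp(\I,\fin^2,\cF)$ for any $\cF\subseteq\omega^\omega$.

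For part~(1), I would first upgrade the hypothesis $\I\not\leq_\cF\fin^2$ via homogeneity to $\I\not\leq_\cF\fin^2\restriction A$ for every $A\notin\fin^2$: a putative witness $f\colon\omega\to A$ could be composed with a bijection $A\to\omega\times\omega$ realizing $\fin^2\restriction A\approx\fin^2$, and this composition still lies in any of the natural classes $\omega^\omega$, $\fin$-$1$ and $1$-$1$. With this upgraded hypothesis in hand, Theorem~\ref{thm:wsp-vs-p-number-and-addSTAR} applies with $\J=\fin^2$: its first clause gives $\pnumber(\fin^2)\leq\wsp(\I,\fin^2,\cF)$, which equals $\omega_1$ by Theorem~\ref{thm:pnumber-for-FINxFIN}, and its second clause gives $\adds(\I)\leq\wsp(\I,\fin^2,\cF)$. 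The ``in particular'' clause follows because every tall P-ideal $\I$ satisfies $\I\not\leq_{\Kat}\fin^2$ by Proposition~\ref{prop:P-ideal-not-K-below-FINxFIN}(\ref{prop:P-ideal-not-K-below-FINxFIN:item}), hence $\I\not\leq_\cF\fin^2$ for any $\cF\subseteq\omega^\omega$.

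Part~(2) is a direct application of Theorem~\ref{thm:wsp-vs-cov-M} with $\J=\fin^2$: its two hypotheses are met, since $\I$ contains a tall summable ideal by assumption, and Player~I has a winning strategy in $G(\fin^2,A,f)$ for all $A$ and $f$ because $\fin^2$ is Borel and satisfies the Kat\v{e}tov condition of Proposition~\ref{prop:ideals-not-above-summable}. This yields $\cov(\cM)\leq\wsp(\I,\fin^2,\omega^\omega)$, and the preliminary monotonicity promotes it to $\wsp(\I,\fin^2,\cF)$ for any $\cF\subseteq\omega^\omega$.

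Part~(3) is then an amalgamation. A tall analytic P-ideal $\I$ is a tall P-ideal, so $\I\not\leq_{\Kat}\fin^2$ by Proposition~\ref{prop:P-ideal-not-K-below-FINxFIN}(\ref{prop:P-ideal-not-K-below-FINxFIN:item}) and hence $\I\not\leq_\cF\fin^2$; part~(1) then supplies $\adds(\I)\leq\wsp(\I,\fin^2,\cF)$. Moreover, by Proposition~\ref{prop:tall-analytic-P-ideal-contains-summable-ideal} such $\I$ contains a tall summable ideal, so part~(2) supplies $\cov(\cM)\leq\wsp(\I,\fin^2,\cF)$; the two bounds combine to give the required maximum. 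The only step demanding real attention is the restriction-to-$A$ upgrade in part~(1); everything else amounts to invoking each earlier result under its stated hypotheses.
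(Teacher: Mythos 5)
Your proof is correct and follows essentially the same route as the paper's, which simply cites Theorems~\ref{thm:pnumber-for-FINxFIN}, \ref{thm:wsp-vs-p-number-and-addSTAR}, \ref{thm:wsp-vs-cov-M} and Propositions~\ref{prop:homogeneous-idelas}, \ref{prop:P-ideal-not-K-below-FINxFIN}, \ref{prop:ideals-not-above-summable}, \ref{prop:tall-analytic-P-ideal-contains-summable-ideal}. You fill in two steps the paper leaves tacit (the monotonicity $\wsp(\I,\J,\cF_2)\leq\wsp(\I,\J,\cF_1)$ for $\cF_1\subseteq\cF_2$ and the homogeneity upgrade from $\I\not\leq_\cF\fin^2$ to $\I\not\leq_\cF\fin^2\restriction A$), and aside from a harmless reversal of arrows in the description of that last upgrade (the Kat\v{e}tov witness for $\I\leq_\cF\fin^2\restriction A$ should go from $A$ to $\omega$, not from $\omega$ to $A$), everything checks out.
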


\begin{proof}
	(1) follows from Theorems ~\ref{thm:pnumber-for-FINxFIN},~\ref{thm:wsp-vs-p-number-and-addSTAR} and Proposition~\ref{prop:homogeneous-idelas}.
	For the ``in particular'' part we additionally need Proposition~\ref{prop:P-ideal-not-K-below-FINxFIN}(\ref{prop:P-ideal-not-K-below-FINxFIN:item}).
	(2) follows from Theorem~\ref{thm:wsp-vs-cov-M} and Proposition~\ref{prop:ideals-not-above-summable}.	
	(3) follows from (1), (2) and Proposition~\ref{prop:tall-analytic-P-ideal-contains-summable-ideal}.
\end{proof}

\begin{corollary}\label{cor:wsp-vs-non-P-point}
Let $\I$ be an ideal on $\omega$.
	\begin{enumerate}
		\item 
		Assume $\max(\adds(\I),\omega_1)=\cc$.
		\begin{enumerate}
			\item 
			If $\I\not\leq_{\Kat} \fin^2$ ($\I\not\leq_{\KB}\fin^2$ or $\I\not\leq_{\point} \fin^2$, resp.), then there exists an $\I$-ultrafilter (weak $\I$-ultrafilter  or $\I$-point, resp.) which is not a P-point. 
			
			\item
			If $\I$ is a tall P-ideal, then there exists an $\I$-ultrafilter which is not a P-point. 
		\end{enumerate}
		
		\item Assume $\cov(\cM)=\cc$. If $\I$ contains a tall summable ideal,  then there exists an $\I$-ultrafilter which is not a P-point. 
		
		\item Assume $\max(\adds(\I),\cov(\cM))=\cc$. If $\I$ is a tall analytic P-ideal,  then there exists an $\I$-ultrafilter which is not a P-point. 
	\end{enumerate}	
\end{corollary}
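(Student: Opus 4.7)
The plan is to derive all three clauses by combining the lower bounds on $\wsp(\I,\fin^2,\cF)$ from Theorem~\ref{thm:wsp-vs-non-P-point} with the construction principle of Theorem~\ref{thm:wsp-implies-existence-of-I-ultrafilters}, and then to convert ``not a $\fin^2$-ultrafilter'' into ``not a P-point'' by invoking Theorem~\ref{thm:known-ultrafilters-characterized-as-I-ultrafilters}(\ref{thm:known-ultrafilters-characterized-as-I-ultrafilters:P-point}). In each case I would fix the relevant family $\cF\in\{\omega^\omega,\fin-1,1-1\}$, set $\J=\fin^2$, and verify that the stated hypotheses together with the cardinal assumption force $\wsp(\I,\fin^2,\cF)\geq \cc=|\cF|$. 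Theorem~\ref{thm:wsp-implies-existence-of-I-ultrafilters} then produces an ultrafilter $\cU$ with $\I\not\leq_{\cF}\cU^*$ (so $\cU$ is an $\I$-ultrafilter, a weak $\I$-ultrafilter, or an $\I$-point, depending on $\cF$) and $\fin^2\subseteq\cU^*$. The latter inclusion gives $\fin^2\leq_{\Kat}\cU^*$ via the identity function, so $\cU$ fails to be a $\fin^2$-ultrafilter and hence, by Theorem~\ref{thm:known-ultrafilters-characterized-as-I-ultrafilters}(\ref{thm:known-ultrafilters-characterized-as-I-ultrafilters:P-point}), is not a P-point.

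The three clauses differ only in which part of Theorem~\ref{thm:wsp-vs-non-P-point} supplies the estimate on $\wsp$. For clause~(1)(a) I would invoke Theorem~\ref{thm:wsp-vs-non-P-point}(1): tallness of $\I$ (implicit from context) together with $\I\not\leq_{\cF}\fin^2$ gives $\wsp(\I,\fin^2,\cF)\geq\max\{\adds(\I),\omega_1\}$, which by the cardinal assumption is at least $\cc$. Clause~(1)(b) then follows from the ``in particular'' part of the same result, since a tall P-ideal automatically satisfies $\I\not\leq_{\Kat}\fin^2$ by Proposition~\ref{prop:P-ideal-not-K-below-FINxFIN}(\ref{prop:P-ideal-not-K-below-FINxFIN:item}) and has $\adds(\I)\geq\omega_1$.

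For clause~(2) I would apply Theorem~\ref{thm:wsp-vs-non-P-point}(2), which under the assumption that $\I$ contains a tall summable ideal gives $\wsp(\I,\fin^2,\omega^\omega)\geq \cov(\cM)=\cc$. For clause~(3) I would use Theorem~\ref{thm:wsp-vs-non-P-point}(3) together with Proposition~\ref{prop:tall-analytic-P-ideal-contains-summable-ideal} (a tall analytic P-ideal contains a tall summable ideal), yielding $\wsp(\I,\fin^2,\omega^\omega)\geq\max(\adds(\I),\cov(\cM))=\cc$.

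I expect no substantive obstacle: every input is already established in the paper, and the corollary is essentially a mechanical packaging of Theorems~\ref{thm:wsp-implies-existence-of-I-ultrafilters}, \ref{thm:wsp-vs-non-P-point} and \ref{thm:known-ultrafilters-characterized-as-I-ultrafilters}(\ref{thm:known-ultrafilters-characterized-as-I-ultrafilters:P-point}). The only bookkeeping is to match $|\cF|$ with $\cc$ for each of the three function families, which is automatic.
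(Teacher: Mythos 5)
Your proof is correct and follows exactly the route the paper takes: the paper's proof is the single line ``It follows from Theorems~\ref{thm:pnumber-for-FINxFIN}, \ref{thm:wsp-vs-non-P-point}, \ref{thm:wsp-implies-existence-of-I-ultrafilters} and \ref{thm:known-ultrafilters-characterized-as-I-ultrafilters}(\ref{thm:known-ultrafilters-characterized-as-I-ultrafilters:P-point}),'' and you have simply unpacked that chain, correctly matching $|\cF|$ with $\cc$ in each case and using $\fin^2\subseteq\cU^*$ to conclude $\cU$ is not a $\fin^2$-point, hence not a P-point. You also rightly flag that tallness of $\I$ is an implicit hypothesis in clause~(1)(a) inherited from Theorem~\ref{thm:wsp-vs-non-P-point}(1); the only difference from the paper is that you omit the redundant citation of Theorem~\ref{thm:pnumber-for-FINxFIN} (already absorbed into the $\omega_1$ appearing in Theorem~\ref{thm:wsp-vs-non-P-point}(1)), which is fine.
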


\begin{proof}
	It follows from Theorems ~\ref{thm:pnumber-for-FINxFIN}, ~\ref{thm:wsp-vs-non-P-point},  \ref{thm:wsp-implies-existence-of-I-ultrafilters} and \ref{thm:known-ultrafilters-characterized-as-I-ultrafilters}(\ref{thm:known-ultrafilters-characterized-as-I-ultrafilters:P-point}).
\end{proof}

Next result characterizes ideals $\I$ such that it is consistent that there is an $\I$-ultrafilter which is not a P-point.

\begin{theorem}\label{thm:katetov-equivalent-I-ultrafilter-not-P-point}
Let $\I$ be an ideal on $\omega$.
	\begin{enumerate}
	\item Under CH, the following conditions are equivalent. \label{thm:katetov-equivalent-I-ultrafilter-not-P-point-1}
	\begin{enumerate}
		\item 
		$\I\not\leq_{\Kat} \fin^2$ ($\I\not\leq_{\KB} \fin^2$ or $\I\not\leq_{\point} \fin^2$, resp.). \label{thm:katetov-equivalent-I-ultrafilter-not-P-point:Katetov}
		\item 
		There exists an $\I$-ultrafilter (weak  $\I$-ultrafilter or $\I$-point, resp.) which is not a P-point.\label{thm:katetov-equivalent-I-ultrafilter-not-P-point:ultrafilter}
	\end{enumerate}

\item If $\I$ is a tall Borel ideal, then the following conditions are equivalent.	\label{thm:katetov-equivalent-I-ultrafilter-not-P-point:CON}
\begin{enumerate}
	\item It is consistent that there exists an $\I$-ultrafilter (weak $\I$-ultrafilter or $\I$-point, resp.) which is not a P-point.
	\item Under CH, there exists an $\I$-ultrafilter (weak $\I$-ultrafilter or $\I$-point, resp.) which is not a P-point.
\end{enumerate}
In particular, if one can find an $\I$-ultrafilter (weak $\I$-ultrafilter or $\I$-point, resp.) which is not a P-point in some sophisticated model of ZFC, then one could do it already under CH.
\end{enumerate}	
\end{theorem}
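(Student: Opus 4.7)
The plan is to follow the template of Theorem~\ref{thm:KAT-iff-EXISTS-ULTRA-for-K-uniform-Pplus-ideals}(\ref{thm:KAT-iff-EXISTS-ULTRA-for-K-uniform-Pplus-ideals:CON}) and Theorem~\ref{thm:new-order-above-FINxFIN-iff-extendable-to-Pplus}(\ref{thm:new-order-above-FINxFIN-iff-extendable-to-Pplus:CON}), specialized to $\J=\fin^2$. For Part~(\ref{thm:katetov-equivalent-I-ultrafilter-not-P-point-1}), the direction (\ref{thm:katetov-equivalent-I-ultrafilter-not-P-point:ultrafilter})$\Rightarrow$(\ref{thm:katetov-equivalent-I-ultrafilter-not-P-point:Katetov}) is a short ZFC transitivity argument. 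If $\cU$ witnesses (\ref{thm:katetov-equivalent-I-ultrafilter-not-P-point:ultrafilter}), then by Theorem~\ref{thm:known-ultrafilters-characterized-as-I-ultrafilters}(\ref{thm:known-ultrafilters-characterized-as-I-ultrafilters:P-point}) its failure to be a P-point yields $\fin^2\leq_{\point}\cU^*$ via some one-to-one $h$. Assuming for contradiction that $\I\leq_{\Kat}\fin^2$ (resp.~$\leq_{\KB}$, $\leq_{\point}$) via some $g$, the composition $g\circ h$ stays in the same function class as $g$ (composing with a one-to-one map preserves being arbitrary, finite-to-one, or one-to-one) and witnesses $\I\leq_{\Kat}\cU^*$ (resp.~$\leq_{\KB}$, $\leq_{\point}$), contradicting the choice of $\cU$.

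For (\ref{thm:katetov-equivalent-I-ultrafilter-not-P-point:Katetov})$\Rightarrow$(\ref{thm:katetov-equivalent-I-ultrafilter-not-P-point:ultrafilter}) under CH, I first observe that each of the three hypotheses $\I\not\leq_{\cF}\fin^2$ (for $\cF\in\{\omega^\omega,\fin-1,1-1\}$) forces $\I$ to be tall: if $\I$ were not tall and $A\subseteq\omega$ were infinite with $\cP(A)\cap\I=\fin(A)$, then any bijection $h:\omega\times\omega\to A$ would witness $\I\leq_{\point}\fin^2$, because for every $B\in\I$ the preimage $h^{-1}[B]=h^{-1}[B\cap A]$ is finite and therefore in $\fin^2$. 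Under CH, $\omega_1=\continuum=|\cF|$, so Theorem~\ref{thm:wsp-vs-non-P-point}(1) with the appropriate $\cF$ yields
\[\wsp(\I,\fin^2,\cF)\geq\max(\adds(\I),\omega_1)\geq\omega_1=\continuum=|\cF|.\]
Theorem~\ref{thm:wsp-implies-existence-of-I-ultrafilters} then produces an ultrafilter $\cU$ with $\I\not\leq_{\cF}\cU^*$ and $\fin^2\subseteq\cU^*$. The inclusion $\fin^2\subseteq\cU^*$ gives $\fin^2\leq_{\point}\cU^*$ via the identity, so Theorem~\ref{thm:known-ultrafilters-characterized-as-I-ultrafilters}(\ref{thm:known-ultrafilters-characterized-as-I-ultrafilters:P-point}) shows $\cU$ is not a P-point, while $\I\not\leq_{\cF}\cU^*$ says precisely that $\cU$ is an $\I$-ultrafilter (resp.~weak $\I$-ultrafilter, $\I$-point).

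For Part~(\ref{thm:katetov-equivalent-I-ultrafilter-not-P-point:CON}), one direction is trivial. For the other, assume it is consistent that there is an $\I$-ultrafilter (resp.~weak $\I$-ultrafilter, $\I$-point) which is not a P-point. The ZFC step of Part~(\ref{thm:katetov-equivalent-I-ultrafilter-not-P-point-1}) shows that then it is consistent that $\I\not\leq_{\Kat}\fin^2$ (resp.~$\not\leq_{\KB}$, $\not\leq_{\point}$); since $\I$ and $\fin^2$ are Borel, such a negation of Kat\v{e}tov-type reducibility is absolute between transitive models (see \cite[p.~210]{MR3600759}), so it already holds in ZFC. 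Applying Part~(\ref{thm:katetov-equivalent-I-ultrafilter-not-P-point-1}) under CH now delivers the required ultrafilter.

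The proof essentially assembles machinery already developed in Sections~\ref{sec:cardinal-nad-order}--\ref{sec:lower-bound}, so I do not expect a fundamentally new obstacle. The only step requiring a moment's care is extracting tallness of $\I$ from the reducibility hypothesis $\I\not\leq_{\cF}\fin^2$ for each of the three function classes $\cF$ simultaneously, which is precisely what lets us invoke Theorem~\ref{thm:wsp-vs-non-P-point}(1); the bijection argument above disposes of this uniformly.
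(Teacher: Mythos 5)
Your proof is correct and follows essentially the same route as the paper: Part (1), direction (a)$\Rightarrow$(b), runs through Theorem~\ref{thm:wsp-vs-non-P-point} and Theorem~\ref{thm:wsp-implies-existence-of-I-ultrafilters} (the paper invokes Corollary~\ref{cor:wsp-vs-non-P-point} which wraps the same chain), the reverse direction is the transitivity remark from Subsection~\ref{subsec:ultrafilters} plus Theorem~\ref{thm:known-ultrafilters-characterized-as-I-ultrafilters}(\ref{thm:known-ultrafilters-characterized-as-I-ultrafilters:P-point}), and Part (2) uses Borel absoluteness of the Kat\v{e}tov-type orders exactly as the paper does (it cites Theorem~\ref{thm:KAT-iff-EXISTS-ULTRA-for-K-uniform-Pplus-ideals}(\ref{thm:KAT-iff-EXISTS-ULTRA-for-K-uniform-Pplus-ideals:CON}), which packages the identical absoluteness argument). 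One small and genuinely useful addition in your write-up is the explicit verification that $\I\not\leq_{\cF}\fin^2$ forces $\I$ to be tall: Theorem~\ref{thm:wsp-vs-non-P-point}(1), and hence Corollary~\ref{cor:wsp-vs-non-P-point}(1)(a), require tallness as a hypothesis (it is inherited from Theorem~\ref{thm:wsp-vs-p-number-and-addSTAR}), but the statement of Part (1) of the present theorem does not assume it, and the paper's proof does not check it; your one-line bijection argument closes that gap and makes the invocation of Corollary~\ref{cor:wsp-vs-non-P-point} legitimate. Everything else is a matter of whether one reuses Theorem~\ref{thm:KAT-iff-EXISTS-ULTRA-for-K-uniform-Pplus-ideals}(5) as a black box (paper) or re-derives it in the special case $\J=\fin^2$ from Part (1) plus absoluteness (you); the content is the same.
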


\begin{proof}	
	(\ref{thm:katetov-equivalent-I-ultrafilter-not-P-point:Katetov})
	$\implies$ 
	(\ref{thm:katetov-equivalent-I-ultrafilter-not-P-point:ultrafilter})
By Theorem~\ref{thm:pnumber-for-FINxFIN},
$\pnumber(\fin^2)=\continuum$ under CH, so Corollary~\ref{cor:wsp-vs-non-P-point} finishes the proof.

(\ref{thm:katetov-equivalent-I-ultrafilter-not-P-point:ultrafilter})
$\implies$ 
	(\ref{thm:katetov-equivalent-I-ultrafilter-not-P-point:Katetov})
	It follows from Theorem~\ref{thm:known-ultrafilters-characterized-as-I-ultrafilters}(\ref{thm:known-ultrafilters-characterized-as-I-ultrafilters:P-point}).

(\ref{thm:katetov-equivalent-I-ultrafilter-not-P-point:CON})
Follows from Theorems~\ref{thm:KAT-iff-EXISTS-ULTRA-for-K-uniform-Pplus-ideals}(\ref{thm:KAT-iff-EXISTS-ULTRA-for-K-uniform-Pplus-ideals:CON}), \ref{thm:known-ultrafilters-characterized-as-I-ultrafilters}(\ref{thm:known-ultrafilters-characterized-as-I-ultrafilters:P-point}) and Proposition~\ref{prop:homogeneous-idelas}.
\end{proof}

At this point we are already able to characterize Borel ideals $\I$ for which the notions of P-point and $\I$-ultrafilter coincide.

\begin{theorem} 
	Let $\I$ be a Borel ideal. The following conditions are equivalent:
	\begin{enumerate}
		\item 
		the notions of P-point and $\I$-ultrafilter coincide,
		\item 
		$\I\leq_K\fin^2$ and $\I$ is not extendable to an $F_\sigma$-ideal.
	\end{enumerate}
\end{theorem}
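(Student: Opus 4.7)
My plan is to prove the two implications separately, invoking the characterization theorems already established in Sections~6 and~7.

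For $(2)\Rightarrow(1)$, assume $\I\leq_{\Kat}\fin^2$ and that $\I$ is not extendable to an $F_\sigma$-ideal. I verify both inclusions directly in ZFC. If $\cU$ were an $\I$-ultrafilter which is not a P-point, then by Theorem~\ref{thm:known-ultrafilters-characterized-as-I-ultrafilters}(\ref{thm:known-ultrafilters-characterized-as-I-ultrafilters:P-point}) we would have $\fin^2\leq_{\Kat}\cU^*$, and composition with the witness of $\I\leq_{\Kat}\fin^2$ would yield $\I\leq_{\Kat}\cU^*$, contradicting that $\cU$ is an $\I$-ultrafilter. Conversely, if $\cU$ were a P-point which is not an $\I$-ultrafilter, then $\I\leq_{\Kat}\cU^*$ with $\cU^*$ a $P^+$-ideal, so by Lemma~\ref{lem:preservation-of-P+-ideals}(\ref{lem:preservation-of-P+-ideals:by-Kat}) the ideal $\I$ would extend to a $P^+$-ideal; being Borel, $\I$ would then extend to an $F_\sigma$-ideal by the result cited in the proof of Theorem~\ref{thm:new-order-above-FINxFIN-iff-extendable-to-Pplus}(\ref{thm:new-order-above-FINxFIN-iff-extendable-to-Pplus:Borel}), contradicting our assumption.

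For $(1)\Rightarrow(2)$, I argue the contrapositive: whenever~(2) fails, I exhibit consistently a model in which P-points and $\I$-ultrafilters do not coincide. Suppose first $\I\not\leq_{\Kat}\fin^2$. Then $\I$ must be tall, for if some infinite $A\subseteq\omega$ contained no infinite member of $\I$, any injection $f\colon\omega\to A$ would satisfy $f^{-1}[B]=f^{-1}[B\cap A]\in\fin\subseteq\fin^2$ for every $B\in\I$, witnessing $\I\leq_{\Kat}\fin^2$. Hence $\I$ is a tall Borel ideal with $\I\not\leq_{\Kat}\fin^2$, and Theorem~\ref{thm:katetov-equivalent-I-ultrafilter-not-P-point} produces consistently (under CH) an $\I$-ultrafilter that is not a P-point. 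If instead $\I$ extends to an $F_\sigma$-ideal, then by Proposition~\ref{prop:examples-of-P-plus-ideals}(\ref{prop:examples-of-P-plus-ideals:F-sigma}) such an extension is automatically a $P^+$-ideal, so $\I$ extends to a $P^+$-ideal and Theorem~\ref{thm:new-order-above-FINxFIN-iff-extendable-to-Pplus} yields consistently a P-point that is not an $\I$-ultrafilter.

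I do not foresee any serious obstacle: the previous sections supply exactly the inputs needed and the argument is essentially bookkeeping. The only delicate point is that condition~(2) is absolute for Borel~$\I$, which legitimises the contrapositive step regardless of how one reads the coincidence in~(1).
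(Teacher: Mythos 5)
Your proof is correct and follows essentially the same path as the paper, which simply cites \cite[Theorem 3.2.7]{alcantara-phd-thesis} together with Theorems~\ref{thm:new-order-above-FINxFIN-iff-extendable-to-Pplus} and~\ref{thm:katetov-equivalent-I-ultrafilter-not-P-point}. Two additions are worth noting. Your direct ZFC argument for $(2)\Rightarrow(1)$ via transitivity of $\leq_{\Kat}$ and Lemma~\ref{lem:preservation-of-P+-ideals}(\ref{lem:preservation-of-P+-ideals:by-Kat}) avoids unpacking the consistency-plus-absoluteness machinery of the cited theorems and is cleaner; and your observation that $\I\not\leq_{\Kat}\fin^2$ forces $\I$ to be tall supplies a hypothesis that Theorem~\ref{thm:katetov-equivalent-I-ultrafilter-not-P-point}(\ref{thm:katetov-equivalent-I-ultrafilter-not-P-point:CON}) requires but the paper leaves unremarked. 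One small caveat: the closing claim that the contrapositive step works ``regardless of how one reads the coincidence in (1)'' is a bit too generous. Under a purely internal (model-dependent) reading of (1), a model in which there happen to be no P-points and no $\I$-ultrafilters would make the classes coincide vacuously even when (2) fails; the argument really does depend on reading (1) as ``ZFC proves the notions coincide,'' which is the reading that both your $(2)\Rightarrow(1)$ direction (a ZFC implication) and your consistency argument for $(1)\Rightarrow(2)$ legitimately establish.
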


\begin{proof}
By \cite[Theorem~3.2.7]{alcantara-phd-thesis}, a Borel ideal $\I$ can be extended to a $P^+$-ideal iff it can be extended to an $F_\sigma$-ideal. Thus it suffices to apply Theorems \ref{thm:new-order-above-FINxFIN-iff-extendable-to-Pplus} and \ref{thm:katetov-equivalent-I-ultrafilter-not-P-point}.
\end{proof}

The ideals $\mathcal{BI}$ and $\conv(\I_d,(\frac{1}{2^{n+1}}))$ introduced in \cite{Unboring} and \cite{Kwela-question-of-Hrusak}, respectively, are examples of Borel ideals below the ideal $\fin^2$ in the Kat\v{e}tov order (by the definition of $\conv(\I_d,(\frac{1}{2^{n+1}}))$ and results of \cite{Unboring}, we have $\conv(\I_d,(\frac{1}{2^{n+1}}))\subseteq\conv\leq_K\mathcal{BI}\leq_K\fin^2$) and not extendable to an $F_\sigma$-ideal (this is shown in \cite{Kwela-question-of-Hrusak}). Thus, an ultrafilter $\cU$ is a P-point iff $\cU$ is a $\mathcal{BI}$-ultrafilter iff $\cU$ is a $\conv(\I_d,(\frac{1}{2^{n+1}}))$-ultrafilter.

Next two corollaries are not new, however we present them in order to show how they can be derived from our results.

\begin{corollary}[Folklore]
	Assume CH. 
	There exists a weak $\ED_{\fin}$-ultrafilter which is not a P-point (i.e.~there exists a Q-point which is not a P-point).	
\end{corollary}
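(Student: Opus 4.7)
The plan is to instantiate Theorem~\ref{thm:katetov-equivalent-I-ultrafilter-not-P-point}(\ref{thm:katetov-equivalent-I-ultrafilter-not-P-point-1}) with $\I = \ED_{\fin}$, reading off the ``weak $\I$-ultrafilter'' version of the equivalence. All the heavy lifting has already been done in the preceding sections, so the proof is essentially a citation chase.

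First I would invoke the Kat\v{e}tov-order input. Proposition~\ref{prop:ideals-not-below-abov-other-ideals-in-Katetov-order}(\ref{prop:ED-FIN-not-K-below-FINxFIN:item}) gives $\ED_{\fin}\not\leq_{\Kat}\fin^2$, and since every finite-to-one function is in particular a function, $\I\leq_{\KB}\J$ implies $\I\leq_{\Kat}\J$, whence $\ED_{\fin}\not\leq_{\KB}\fin^2$. This is the hypothesis required by the ``weak'' branch of Theorem~\ref{thm:katetov-equivalent-I-ultrafilter-not-P-point}(\ref{thm:katetov-equivalent-I-ultrafilter-not-P-point-1}), so under CH there exists a weak $\ED_{\fin}$-ultrafilter which is not a P-point.

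Next I would translate the conclusion back into the language of Q-points. By Theorem~\ref{thm:known-ultrafilters-characterized-as-I-ultrafilters}(\ref{thm:known-ultrafilters-characterized-as-I-ultrafilters:Q-point}), an ultrafilter is a Q-point if and only if it is a weak $\ED_{\fin}$-ultrafilter, so the ultrafilter produced above is exactly a Q-point which is not a P-point, completing the argument.

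There is no real obstacle here; the whole content of the corollary is that the combinatorial machinery of Section~\ref{sec:cardinal-nad-order} (which powers Theorem~\ref{thm:katetov-equivalent-I-ultrafilter-not-P-point}) together with the Kat\v{e}tov incomparability $\ED_{\fin}\not\leq_{\Kat}\fin^2$ suffices. If anything, one should just make sure to cite the ``weak'' version of the equivalence rather than the full Kat\v{e}tov version, so that the finite-to-one function in the definition of weak $\I$-ultrafilter matches the $\leq_{\KB}$-hypothesis used.
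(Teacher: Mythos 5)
Your proposal is correct and follows exactly the same route as the paper: cite $\ED_{\fin}\not\leq_{\Kat}\fin^2$ from Proposition~\ref{prop:ED-FIN-not-K-below-FINxFIN}(\ref{prop:ED-FIN-not-K-below-FINxFIN:item}), apply the weak branch of Theorem~\ref{thm:katetov-equivalent-I-ultrafilter-not-P-point}(\ref{thm:katetov-equivalent-I-ultrafilter-not-P-point-1}), and translate via Theorem~\ref{thm:known-ultrafilters-characterized-as-I-ultrafilters}(\ref{thm:known-ultrafilters-characterized-as-I-ultrafilters:Q-point}). The only addition is that you make explicit the (trivial but worth noting) implication $\ED_{\fin}\not\leq_{\Kat}\fin^2\Rightarrow\ED_{\fin}\not\leq_{\KB}\fin^2$, which the paper leaves implicit.
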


\begin{proof}
	It follows from Theorem~\ref{thm:katetov-equivalent-I-ultrafilter-not-P-point}(\ref{thm:katetov-equivalent-I-ultrafilter-not-P-point-1})
	and Proposition~\ref{prop:ED-FIN-not-K-below-FINxFIN}(\ref{prop:ED-FIN-not-K-below-FINxFIN:item}).
\end{proof}

\begin{corollary}[{\cite[Proposition~2.1.14]{flaskova-phd-thesis}}]
	\label{thm:I-ultrafilter-not-P-point-for-tall-P-ideal}
	Assume CH. If $\I$ is a tall P-ideal, then there exists an $\I$-ultrafilter which is not a P-point.	
\end{corollary}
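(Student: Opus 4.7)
The statement follows almost immediately by combining results already established in the paper. My plan is to use Theorem~\ref{thm:katetov-equivalent-I-ultrafilter-not-P-point}(\ref{thm:katetov-equivalent-I-ultrafilter-not-P-point-1}), which under CH reduces the existence of an $\I$-ultrafilter which is not a P-point to the purely combinatorial condition $\I\not\leq_{\Kat}\fin^2$. Then I would invoke Proposition~\ref{prop:P-ideal-not-K-below-FINxFIN}(\ref{prop:P-ideal-not-K-below-FINxFIN:item}), which gives exactly $\I\not\leq_{\Kat}\fin^2$ for every tall P-ideal $\I$.

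Alternatively, a slightly more direct route goes through Corollary~\ref{cor:wsp-vs-non-P-point}(1)(b): under the hypothesis $\max(\adds(\I),\omega_1)=\cc$, every tall P-ideal $\I$ admits an $\I$-ultrafilter which is not a P-point. Under CH we trivially have $\omega_1=\cc$, so the hypothesis is satisfied for any tall P-ideal, and the conclusion is immediate. Unwinding that corollary, the underlying content is the inequality $\wsp(\I,\fin^2,\omega^\omega)\geq\pnumber(\fin^2)=\omega_1=\cc$ furnished by Theorems~\ref{thm:wsp-vs-p-number-and-addSTAR}(\ref{thm:wsp-vs-p-number-and-addSTAR:pnumber}) and \ref{thm:pnumber-for-FINxFIN} (using that $\fin^2$ is $\leq_{\Kat}$-homogeneous by Proposition~\ref{prop:homogeneous-idelas}, so that $\I\not\leq_{\Kat}\fin^2$ entails $\I\not\leq_{\Kat}\fin^2\restriction A$ for every $A\notin\fin^2$), followed by an application of Theorem~\ref{thm:wsp-implies-existence-of-I-ultrafilters} with $\J=\fin^2$ and $\cF=\omega^\omega$, which delivers an ultrafilter $\cU$ with $\I\not\leq_{\Kat}\cU^*$ and $\fin^2\subseteq\cU^*$; by Theorem~\ref{thm:known-ultrafilters-characterized-as-I-ultrafilters}(\ref{thm:known-ultrafilters-characterized-as-I-ultrafilters:P-point}) the latter means $\cU$ is not a P-point.

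There is no real obstacle, since every ingredient has already been proved. The only thing to verify carefully is that all hypotheses align: tallness of $\I$ is what feeds Proposition~\ref{prop:P-ideal-not-K-below-FINxFIN}(\ref{prop:P-ideal-not-K-below-FINxFIN:item}) and the ``tall ideal'' clause of Theorem~\ref{thm:wsp-vs-p-number-and-addSTAR}, while being a P-ideal is used in the same proposition to obtain $\I\not\leq_{\Kat}\fin^2$; CH ensures $\pnumber(\fin^2)=\omega_1=\cc$, which is precisely the cardinal threshold needed for the diagonal construction in Theorem~\ref{thm:wsp-implies-existence-of-I-ultrafilters} to run through all of $\omega^\omega$.
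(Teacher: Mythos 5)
Your first route is exactly the paper's proof: it cites Theorem~\ref{thm:katetov-equivalent-I-ultrafilter-not-P-point} together with Proposition~\ref{prop:P-ideal-not-K-below-FINxFIN}(\ref{prop:P-ideal-not-K-below-FINxFIN:item}), which is precisely the two-step argument the authors give. The alternative unwinding you sketch is a correct expansion of the same chain of implications, not a genuinely different argument.
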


\begin{proof}
	It follows from Theorem~\ref{thm:katetov-equivalent-I-ultrafilter-not-P-point}
	and Proposition~\ref{prop:ED-FIN-not-K-below-FINxFIN}(5).
\end{proof}

In \cite[Theorem~3.2]{MR2512901} Fla\v{s}kov\'{a} strengthened Corollary~\ref{thm:I-ultrafilter-not-P-point-for-tall-P-ideal} by replacing CH with $\pnumber=\continuum$. However, the method  of the proof of her stronger result 
is different from the proofs we were trying to capture in the cardinal $\wsp(\I,\J,\cF)$.
 The following question can act as a test whether the cardinal $\wsp(\I,\J,\cF)$ can encompass more cases than it was cooked up for.

\begin{question}
	Does $\pnumber=\continuum$ imply 
	$\wsp(\I,\fin^2,\omega^\omega) =\continuum$ for each tall P-ideal?
\end{question}

On the other hand, our results provides another corollary which does not follow
 from the above mentioned result of Fla\v{s}kova.

\begin{corollary}
	If $\I$ is a tall P-ideal and $\adds(\I)=\continuum$, then there exists an $\I$-ultrafilter which is not a P-point.	
\end{corollary}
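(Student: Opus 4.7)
The plan is to combine three of the results already established in the paper: the lower bound on $\wsp(\I,\fin^2,\omega^\omega)$ given by Theorem \ref{thm:wsp-vs-non-P-point}, the construction theorem Theorem \ref{thm:wsp-implies-existence-of-I-ultrafilters}, and the characterization of P-points as $\fin^2$-ultrafilters from Theorem \ref{thm:known-ultrafilters-characterized-as-I-ultrafilters}(\ref{thm:known-ultrafilters-characterized-as-I-ultrafilters:P-point}). No new combinatorics appears to be required.

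First I would verify that $\I\not\leq_{\Kat}\fin^2$. Since $\I$ is a tall P-ideal, this is exactly Proposition~\ref{prop:P-ideal-not-K-below-FINxFIN}(\ref{prop:P-ideal-not-K-below-FINxFIN:item}). Together with the tallness of $\I$, the hypothesis of Theorem~\ref{thm:wsp-vs-non-P-point}(1) (applied with $\cF=\omega^\omega$) is satisfied, so
$$\wsp(\I,\fin^2,\omega^\omega)\geq \max\{\adds(\I),\omega_1\}=\continuum,$$
using the assumption $\adds(\I)=\continuum$.

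Next I would invoke Theorem~\ref{thm:wsp-implies-existence-of-I-ultrafilters} with $\J=\fin^2$ and $\cF=\omega^\omega$. Since $\wsp(\I,\fin^2,\omega^\omega)\geq \continuum = |\omega^\omega|$, we obtain an ultrafilter $\cU$ on $\omega$ with $\I\not\leq_{\Kat}\cU^*$ and $\fin^2\subseteq \cU^*$. The first property says precisely that $\cU$ is an $\I$-ultrafilter. The second shows that $\fin^2\leq_{\Kat}\cU^*$ (witnessed by the identity), so $\cU$ is not a $\fin^2$-ultrafilter, and by Theorem~\ref{thm:known-ultrafilters-characterized-as-I-ultrafilters}(\ref{thm:known-ultrafilters-characterized-as-I-ultrafilters:P-point}) it is therefore not a P-point.

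There is essentially no obstacle here beyond assembling the already-proved pieces; the nontrivial work is inside Theorem~\ref{thm:wsp-vs-non-P-point}(1) (which itself rests on Theorem~\ref{thm:pnumber-for-FINxFIN} giving $\pnumber(\fin^2)=\omega_1$ and on Theorem~\ref{thm:wsp-vs-p-number-and-addSTAR} applied via the homogeneity of $\fin^2$). What is worth highlighting in the write-up is that this statement genuinely improves Corollary~\ref{thm:I-ultrafilter-not-P-point-for-tall-P-ideal}, since under ``$\adds(\I)=\continuum$'' one need not assume CH nor even $\pp=\continuum$, and in particular the cardinal invariant $\adds(\I)$ is not comparable with $\pp$ in general.
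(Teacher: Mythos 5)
Your proof is correct and follows essentially the same chain as the paper: the paper's own proof simply cites Corollary~\ref{cor:wsp-vs-non-P-point} (whose proof in turn assembles Theorems~\ref{thm:wsp-vs-non-P-point}, \ref{thm:wsp-implies-existence-of-I-ultrafilters} and \ref{thm:known-ultrafilters-characterized-as-I-ultrafilters}(\ref{thm:known-ultrafilters-characterized-as-I-ultrafilters:P-point})) together with Proposition~\ref{prop:P-ideal-not-K-below-FINxFIN}(\ref{prop:P-ideal-not-K-below-FINxFIN:item}), exactly the pieces you unpack explicitly. One small caution: your closing remark that $\adds(\I)$ is ``not comparable with $\pp$ in general'' is not established anywhere in the paper and should not be stated as fact without a reference (for summable ideals, for instance, $\adds(\I_g)=\bb\geq\pp$ provably), though this does not affect the correctness of the proof itself.
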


\begin{proof}
	It follows from Corollary~\ref{cor:wsp-vs-non-P-point} and Propositon ~\ref{prop:ED-FIN-not-K-below-FINxFIN}(\ref{prop:P-ideal-not-K-below-FINxFIN:item}).
\end{proof}


\section{Q-points and $\J$-ultrafilters}


This section is a counterpart of the previous two sections -- we study Q-points instead of P-points. 

\subsection{$\I$-ultrafilters which are not Q-points}

In the case of characterizing ideals $\I$ such that it is consistent that there is an $\I$-ultrafilter which is not a Q-point, again it will turn out that the Kat\v{e}tov order is a proper tool. We start with sufficient conditions for existence of $\I$-ultrafilter which is not a Q-point. 

\begin{theorem}\label{thm:wsp-vs-non-Q-point}
Let $\cF\subseteq\omega^\omega$ and $\I$ be an ideal on $\omega$.
\begin{enumerate}
	\item 
If $\I\not\leq_{\cF}\ED_{\fin}$ and $\I$ is a tall ideal (in particular, if $\I$ is a tall P-ideal), then 
		$$\max\{\adds(\I),\pnumber(\ED_{\fin})\} \leq \wsp(\I,\ED_{\fin}, \cF).$$

\item If $\I$ contains a tall summable ideal, then 
$$\cov(\cM) \leq \wsp(\I,\ED_{\fin}, \cF).$$

\item If $\I$ is a tall analytic P-ideal, then 
$$\max(\adds(\I),\pnumber(\ED_{\fin}),\cov(\cM)) \leq \wsp(\I,\ED_{\fin}, \cF).$$
\end{enumerate}	
\end{theorem}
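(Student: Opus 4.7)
The plan is to run the proof along the same template as Theorem~\ref{thm:wsp-vs-non-P-point}, with the ideal $\ED_{\fin}$ taking over the role that $\fin^2$ played there (which is natural because $\ED_{\fin}$ characterizes Q-points in the same way that $\fin^2$ characterizes P-points, cf.~Theorem~\ref{thm:known-ultrafilters-characterized-as-I-ultrafilters}(\ref{thm:known-ultrafilters-characterized-as-I-ultrafilters:Q-point})).

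For item (1), I would first appeal to Proposition~\ref{prop:homogeneous-idelas} to know that $\ED_{\fin}$ is homogeneous, so that $\ED_{\fin}\restriction A \approx \ED_{\fin}$ for every $A\notin \ED_{\fin}$; this upgrades the standing hypothesis $\I\not\leq_{\cF}\ED_{\fin}$ to $\I\not\leq_{\cF}\ED_{\fin}\restriction A$ for all such $A$, exactly the form needed to feed Theorem~\ref{thm:wsp-vs-p-number-and-addSTAR}. Applying both parts of that theorem with $\J=\ED_{\fin}$ yields simultaneously $\pnumber(\ED_{\fin})\leq\wsp(\I,\ED_{\fin},\cF)$ and $\adds(\I)\leq\wsp(\I,\ED_{\fin},\cF)$, proving (1). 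The parenthetical "in particular" clause for tall P-ideals is handled by Proposition~\ref{prop:P-ideal-not-K-below-ED-FIN}(\ref{prop:P-ideal-not-K-below-ED-FIN:item}), which guarantees $\I\not\leq_{\Kat}\ED_{\fin}$ (hence $\I\not\leq_{\cF}\ED_{\fin}$ for any $\cF\subseteq\omega^\omega$) whenever $\I$ is a tall P-ideal.

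For item (2), I would invoke Theorem~\ref{thm:wsp-vs-cov-M} with $\J=\ED_{\fin}$. Its "in particular" clause requires $\ED_{\fin}$ to be Borel and $\ED_{\fin}\restriction A$ to be not $\leq_{\Kat}$-above any tall summable ideal for every $A\notin\ED_{\fin}$; the second property is precisely one of the cases established in Proposition~\ref{prop:ideals-not-above-summable}, so the conclusion $\cov(\cM)\leq\wsp(\I,\ED_{\fin},\cF)$ is immediate. Finally, item (3) is a formal consequence of (1) and (2): Proposition~\ref{prop:tall-analytic-P-ideal-contains-summable-ideal} ensures that a tall analytic P-ideal contains a tall summable ideal, supplying the hypothesis of (2), while tallness together with the P-ideal property (combined again with Proposition~\ref{prop:P-ideal-not-K-below-ED-FIN}(\ref{prop:P-ideal-not-K-below-ED-FIN:item})) supplies the hypothesis of (1); taking the maximum of the three resulting bounds gives the claim.

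There is no genuine obstacle in this argument—every ingredient is already recorded in earlier sections of the paper, and the main point of the write-up is simply to verify that all the structural facts about $\ED_{\fin}$ (homogeneity, Katetov-incomparability with tall P-ideals, and not being $\leq_{\Kat}$-above any tall summable ideal on any positive restriction) parallel the facts about $\fin^2$ used in Theorem~\ref{thm:wsp-vs-non-P-point}. The only thing to be slightly careful about is that $\wsp(\I,\ED_\fin,\cF)$ is not simply bounded below by $\omega_1$ as in the $\fin^2$ case, since $\pnumber(\ED_\fin)$ is not pinned down to $\omega_1$ (unlike $\pnumber(\fin^2)$ by Theorem~\ref{thm:pnumber-for-FINxFIN}); hence the statement keeps $\pnumber(\ED_\fin)$ explicitly rather than replacing it by $\omega_1$.
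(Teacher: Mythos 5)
Your proposal follows the paper's own proof exactly: item (1) is obtained from both parts of Theorem~\ref{thm:wsp-vs-p-number-and-addSTAR} together with the homogeneity of $\ED_{\fin}$ (Proposition~\ref{prop:homogeneous-idelas}), with Proposition~\ref{prop:P-ideal-not-K-below-ED-FIN}(\ref{prop:P-ideal-not-K-below-ED-FIN:item}) supplying the ``in particular'' clause; item (2) comes from Theorem~\ref{thm:wsp-vs-cov-M} via Proposition~\ref{prop:ideals-not-above-summable}; and item (3) is the formal combination via Proposition~\ref{prop:tall-analytic-P-ideal-contains-summable-ideal}. Your closing remark about why $\pnumber(\ED_{\fin})$ cannot be replaced by $\omega_1$ (unlike in the $\fin^2$ case, where Theorem~\ref{thm:pnumber-for-FINxFIN} pins it down) is a correct observation and exactly explains the difference between the two statements.
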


\begin{proof}
(1) 
follows from Theorem~\ref{thm:wsp-vs-p-number-and-addSTAR}(\ref{thm:wsp-vs-p-number-and-addSTAR:pnumber}) and Proposition~\ref{prop:homogeneous-idelas}.
For the ``in particular'' part additionally one has to use Proposition~\ref{prop:P-ideal-not-K-below-ED-FIN}(\ref{prop:P-ideal-not-K-below-ED-FIN:item}).
(2)  
follows from Theorem~\ref{thm:wsp-vs-cov-M} and Proposition~\ref{prop:ideals-not-above-summable}.
(3) 
follows from (1), (2) and Proposition~\ref{prop:tall-analytic-P-ideal-contains-summable-ideal}.
\end{proof}

\begin{corollary}\label{cor:wsp-vs-non-Q-point}
Let $\I$ be an ideal on $\omega$.
\begin{enumerate}
	\item 
	Assume $\max(\adds(\I),\pnumber(\ED_{\fin}))=\cc$.
\begin{enumerate}
	\item 
If $\I\not\leq_{\Kat} \ED_{\fin}$ ($\I\not\leq_{\KB}\ED_{\fin}$ or $\I\not\leq_{\point}\ED_{\fin}$, resp.), then there exists an $\I$-ultrafilter (weak $\I$-ultrafilter  or $\I$-point, resp.) which is not a Q-point. 

\item
If $\I$ is a tall P-ideal, then there exists an $\I$-ultrafilter which is not a Q-point. 
\end{enumerate}

\item Assume $\cov(\cM)=\cc$. If $\I$ contains a tall summable ideal,  then there exists an $\I$-ultrafilter which is not a Q-point. 

\item Assume $\max(\adds(\I),\pnumber(\ED_{\fin}),\cov(\cM))=\cc$. If $\I$ is a tall analytic P-ideal,  then there exists an $\I$-ultrafilter which is not a Q-point. 
\end{enumerate}	
\end{corollary}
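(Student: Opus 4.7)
The plan is to derive the corollary directly from Theorem~\ref{thm:wsp-vs-non-Q-point} combined with Theorem~\ref{thm:wsp-implies-existence-of-I-ultrafilters} and the characterization of Q-points as weak $\ED_{\fin}$-ultrafilters from Theorem~\ref{thm:known-ultrafilters-characterized-as-I-ultrafilters}(\ref{thm:known-ultrafilters-characterized-as-I-ultrafilters:Q-point}). All three parts will follow from a single uniform argument once the family $\cF\subseteq\omega^\omega$ is chosen appropriately.

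First I would fix $\cF$ according to the desired flavor of $\I$-ultrafilter: $\cF=\omega^\omega$ for the ordinary variant, the family of finite-to-one maps for the weak variant, and the family of one-to-one maps for the $\I$-point variant; in each case $|\cF|=\continuum$. Next, I would use Theorem~\ref{thm:wsp-vs-non-Q-point} to verify that $\wsp(\I,\ED_{\fin},\cF)\geq\continuum$ under the hypotheses at hand: for (1a) and (1b) this is bound (1), giving $\wsp(\I,\ED_{\fin},\cF)\geq\max\{\adds(\I),\pnumber(\ED_{\fin})\}=\continuum$; for (2) it is bound (2) giving $\wsp(\I,\ED_{\fin},\cF)\geq\cov(\cM)=\continuum$; and for (3) it is bound (3). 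For (1b) specifically, the additional ingredient needed to feed into Theorem~\ref{thm:wsp-vs-non-Q-point}(1) is the non-reducibility $\I\not\leq_{\Kat}\ED_{\fin}$, which is supplied by Proposition~\ref{prop:P-ideal-not-K-below-ED-FIN}(\ref{prop:P-ideal-not-K-below-ED-FIN:item}) for tall P-ideals.

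Armed with $\wsp(\I,\ED_{\fin},\cF)\geq|\cF|$, I would then invoke Theorem~\ref{thm:wsp-implies-existence-of-I-ultrafilters} with $\J=\ED_{\fin}$ to obtain an ultrafilter $\cU$ satisfying $\I\not\leq_{\cF}\cU^{*}$ (so $\cU$ is the desired $\I$-ultrafilter, weak $\I$-ultrafilter, or $\I$-point) and $\ED_{\fin}\subseteq\cU^{*}$. The latter inclusion means that the identity map on $\omega$ already witnesses $\ED_{\fin}\leq_{\KB}\cU^{*}$, so $\cU$ fails to be a weak $\ED_{\fin}$-ultrafilter; by Theorem~\ref{thm:known-ultrafilters-characterized-as-I-ultrafilters}(\ref{thm:known-ultrafilters-characterized-as-I-ultrafilters:Q-point}), this is equivalent to saying that $\cU$ is not a Q-point, as required.

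No substantive obstacle is anticipated, since the argument is a straightforward assembly of ingredients already established. The only point that warrants a brief check is whether the tallness hypothesis of Theorem~\ref{thm:wsp-vs-non-Q-point}(1) is automatically present in (1a): it is, because if $\I$ were not tall, then an infinite $X\subseteq\omega$ none of whose infinite subsets belongs to $\I$ would, via any bijection $\Delta\to X$, give an injection under which every element of $\I$ pulls back to a finite subset of $\Delta$, witnessing $\I\leq_{\point}\ED_{\fin}$ and thereby also $\I\leq_{\KB}\ED_{\fin}$ and $\I\leq_{\Kat}\ED_{\fin}$.
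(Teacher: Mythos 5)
Your proof is correct and follows the paper's own argument: combine the cardinal bound of Theorem~\ref{thm:wsp-vs-non-Q-point} with the ultrafilter construction of Theorem~\ref{thm:wsp-implies-existence-of-I-ultrafilters} and the characterization of Q-points from Theorem~\ref{thm:known-ultrafilters-characterized-as-I-ultrafilters}(\ref{thm:known-ultrafilters-characterized-as-I-ultrafilters:Q-point}). Your closing observation that the non-reducibility hypothesis in (1a) already forces $\I$ to be tall is a nice touch, since it makes explicit a step the paper's one-line proof leaves to the reader when invoking Theorem~\ref{thm:wsp-vs-non-Q-point}(1).
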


\begin{proof}
It follows from Theorems~\ref{thm:wsp-vs-non-Q-point},  \ref{thm:wsp-implies-existence-of-I-ultrafilters} and \ref{thm:known-ultrafilters-characterized-as-I-ultrafilters}(\ref{thm:known-ultrafilters-characterized-as-I-ultrafilters:Q-point}).
\end{proof}

Next result characterizes, with the use of Kat\v{e}tov order, ideals $\I$ such that it is consistent that there is an $\I$-ultrafilter which is not a Q-point.

\begin{theorem}\label{thm:katetov-equivalent-I-ultrafilter-not-Q-point}
Let $\I$ be an ideal on $\omega$.
	\begin{enumerate}
\item Under $\pnumber=\continuum$, the following conditions are equivalent.
	\begin{enumerate}
		\item 
		$\I\not\leq_{\Kat} \ED_{\fin}$ ($\I\not\leq_{\KB} \ED_{\fin}$ or $\I\not\leq_{\point}\ED_{\fin}$, resp.). \label{thm:katetov-equivalent-I-ultrafilter-not-Q-point:Katetov}
		\item 
		There exists an $\I$-ultrafilter (weak  $\I$-ultrafilter or $\I$-point, resp.) which is not a Q-point.\label{thm:katetov-equivalent-I-ultrafilter-not-Q-point:ultrafilter}
	\end{enumerate}	
\item If $\I$ is a tall Borel ideal, then the following conditions are equivalent.	\label{thm:katetov-equivalent-I-ultrafilter-not-Q-point:CON}
	
\begin{enumerate}
	\item It is consistent that there exists an $\I$-ultrafilter (weak $\I$-ultrafilter or $\I$-point, resp.) which is not a Q-point.
	\item Under CH, there exists an $\I$-ultrafilter (weak $\I$-ultrafilter or $\I$-point, resp.) which is not a Q-point.
\end{enumerate}
In particular, if one can find an $\I$-ultrafilter (weak $\I$-ultrafilter or $\I$-point, resp.) which is not a Q-point in some sophisticated model of ZFC, then one could do it already under CH.
\end{enumerate}
\end{theorem}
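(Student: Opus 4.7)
Both parts will be derived as specializations of Theorem~\ref{thm:KAT-iff-EXISTS-ULTRA-for-K-uniform-Pplus-ideals} with $\J=\ED_{\fin}$, combined with the identification of Q-points with weak $\ED_{\fin}$-ultrafilters and with $\ED_{\fin}$-points from Theorem~\ref{thm:known-ultrafilters-characterized-as-I-ultrafilters}(\ref{thm:known-ultrafilters-characterized-as-I-ultrafilters:Q-point}). The first step is to record that $\ED_{\fin}$ satisfies all the structural assumptions needed: being $F_{\sigma}$, it is a $P^+$-ideal and in particular a $P^+(\ED_{\fin})$-ideal by Proposition~\ref{prop:examples-of-P-plus-ideals}(\ref{prop:examples-of-P-plus-ideals:F-sigma}); it is homogeneous by Proposition~\ref{prop:homogeneous-idelas}, hence $\leq_{\Kat}$-, $\leq_{\KB}$- and $\leq_{\point}$-homogeneous; and it is Borel.

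For part (1), the implication (b)$\Rightarrow$(a) is a one-line ZFC argument in each of the three variants: an $\I$-ultrafilter (weak $\I$-ultrafilter, $\I$-point) $\cU$ satisfies $\I\not\leq_{?}\cU^*$ for $?\in\{\Kat,\KB,\point\}$ respectively, whereas failure of the Q-point property gives $\ED_{\fin}\leq_{?}\cU^*$ via the dictionary above, and transitivity of $\leq_{?}$ rules out $\I\leq_{?}\ED_{\fin}$. For the converse, I first observe that the hypothesis $\I\not\leq_{?}\ED_{\fin}$ already forces $\I$ to be tall: otherwise, picking an infinite $B\subseteq\omega$ with $\I\cap\cP(B)=\fin(B)$, any one-to-one map $f\colon\Delta\to B$ witnesses $\I\leq_{\point}\ED_{\fin}$ (and hence also $\leq_{\KB}$ and $\leq_{\Kat}$), since $f^{-1}[A]=f^{-1}[A\cap B]$ is finite for every $A\in\I$. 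Once $\I$ is known to be tall, Theorem~\ref{thm:KAT-iff-EXISTS-ULTRA-for-K-uniform-Pplus-ideals}(\ref{thm:KAT-iff-EXISTS-ULTRA-for-K-uniform-Pplus-ideals:CH-KAT})--(\ref{thm:KAT-iff-EXISTS-ULTRA-for-K-uniform-Pplus-ideals:CH-P}) applies, and item~(\ref{thm:KAT-iff-EXISTS-ULTRA-for-K-uniform-Pplus-ideals:Borel}) there relaxes the CH assumption to $\pnumber=\continuum$ since $\ED_{\fin}$ is $F_{\sigma}$; the ultrafilter produced fails to be an $\ED_{\fin}$-point and therefore fails to be a Q-point.

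Part (2) is a direct specialization of Theorem~\ref{thm:KAT-iff-EXISTS-ULTRA-for-K-uniform-Pplus-ideals}(\ref{thm:KAT-iff-EXISTS-ULTRA-for-K-uniform-Pplus-ideals:CON}) taken with $\J=\ED_{\fin}$: both $\I$ (by hypothesis) and $\ED_{\fin}$ are Borel, and $\ED_{\fin}$ is homogeneous in each of the three senses, so the quoted item equates consistency of the existence of an $\I$-ultrafilter (respectively weak $\I$-ultrafilter, $\I$-point) which is not an $\ED_{\fin}$-point with its existence under CH. Translating ``$\ED_{\fin}$-point'' into ``Q-point'' via Theorem~\ref{thm:known-ultrafilters-characterized-as-I-ultrafilters}(\ref{thm:known-ultrafilters-characterized-as-I-ultrafilters:Q-point}) closes the argument. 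I do not foresee any real obstacle: the theorem is essentially a packaging of the general machinery of Section~\ref{sec:lower-bound} applied to the specific $F_{\sigma}$, homogeneous ideal $\ED_{\fin}$, and the only novel ingredient is the tallness lemma above, which is needed because the statement in part~(1) does not assume $\I$ tall a priori.
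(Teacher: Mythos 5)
Your proof is correct and follows essentially the same route as the paper's: both reduce the statement to the general machinery of Section~3/Section~4 applied to the $F_\sigma$, homogeneous, $P^+$-ideal $\ED_{\fin}$, translated back via Theorem~\ref{thm:known-ultrafilters-characterized-as-I-ultrafilters}(\ref{thm:known-ultrafilters-characterized-as-I-ultrafilters:Q-point}). The paper routes the implication (a)$\Rightarrow$(b) of part~(1) through Corollary~\ref{cor:wsp-vs-non-Q-point}, while you invoke Theorem~\ref{thm:KAT-iff-EXISTS-ULTRA-for-K-uniform-Pplus-ideals}(1)--(4) directly; these are parallel packagings of the same underlying results (Theorems~\ref{thm:wsp-implies-existence-of-I-ultrafilters}, \ref{thm:wsp-vs-p-number-and-addSTAR} and \ref{thm:p-number-for-F-sigma}).

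One genuine small improvement in your write-up: Theorem~\ref{thm:KAT-iff-EXISTS-ULTRA-for-K-uniform-Pplus-ideals} (and likewise Theorem~\ref{thm:wsp-vs-non-Q-point}(1), on which Corollary~\ref{cor:wsp-vs-non-Q-point} rests) assumes $\I$ is tall, whereas part~(1) of the statement being proved does not. The paper does not comment on this, so strictly speaking Corollary~\ref{cor:wsp-vs-non-Q-point}(1)(a) carries an implicit hypothesis that is not discharged there. Your observation that $\I\not\leq_{\point}\ED_{\fin}$ already forces $\I$ to be tall (via a one-to-one map $f\colon\Delta\to B$ into an $\I$-almost-disjoint witness $B$) closes that gap cleanly and is exactly the right thing to add; it covers all three variants since $\I\not\leq_{\Kat}\ED_{\fin}$ and $\I\not\leq_{\KB}\ED_{\fin}$ each imply $\I\not\leq_{\point}\ED_{\fin}$. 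Part~(2) is handled identically in both proofs.
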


\begin{proof}	
	(\ref{thm:katetov-equivalent-I-ultrafilter-not-Q-point:Katetov})
	$\implies$ 
	(\ref{thm:katetov-equivalent-I-ultrafilter-not-Q-point:ultrafilter})
	Since the ideal $\ED_{\fin}$ is $F_\sigma$, $\pnumber(\ED_{\fin})\geq \pnumber$ by  Theorem~\ref{thm:p-number-for-F-sigma}. 
	Then Corollary~\ref{cor:wsp-vs-non-Q-point} finishes the proof.
	
	(\ref{thm:katetov-equivalent-I-ultrafilter-not-Q-point:ultrafilter}) $\implies$ (\ref{thm:katetov-equivalent-I-ultrafilter-not-Q-point:Katetov})
	It follows from Theorem~\ref{thm:known-ultrafilters-characterized-as-I-ultrafilters}(\ref{thm:known-ultrafilters-characterized-as-I-ultrafilters:Q-point}).

(2)
Follows from Theorems~\ref{thm:KAT-iff-EXISTS-ULTRA-for-K-uniform-Pplus-ideals}(\ref{thm:KAT-iff-EXISTS-ULTRA-for-K-uniform-Pplus-ideals:CON}) and \ref{thm:known-ultrafilters-characterized-as-I-ultrafilters}(\ref{thm:known-ultrafilters-characterized-as-I-ultrafilters:Q-point}) and Proposition~\ref{prop:homogeneous-idelas}.
\end{proof}

\begin{corollary}[Folklore]
	Assume $\pnumber=\continuum$. 
	There exists a $\Fin^2$-ultrafilter which is not a Q-point (i.e.~there exists a P-point which is not a Q-point).	
\end{corollary}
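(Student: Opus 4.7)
The strategy is to apply Theorem~\ref{thm:katetov-equivalent-I-ultrafilter-not-Q-point}(1) with $\I=\fin^2$: under $\pnumber=\continuum$, the existence of a $\fin^2$-ultrafilter which is not a Q-point is equivalent to $\fin^2\not\leq_{\Kat}\ED_{\fin}$. Combined with the identification of $\fin^2$-ultrafilters with P-points from Theorem~\ref{thm:known-ultrafilters-characterized-as-I-ultrafilters}(\ref{thm:known-ultrafilters-characterized-as-I-ultrafilters:P-point}), this will immediately yield a P-point which is not a Q-point. So the entire task reduces to verifying the single Kat\v{e}tov non-comparability $\fin^2\not\leq_{\Kat}\ED_{\fin}$.

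To verify $\fin^2\not\leq_{\Kat}\ED_{\fin}$, I will invoke Proposition~\ref{prop:P-plus-ideal-not-K-above-FINxFIN}(\ref{prop:P-plus-ideal-not-K-above-FINxFIN:item}), which states that $\fin^2\not\leq_{\Kat}\I$ for every $P^+$-ideal $\I$. The ideal $\ED_{\fin}$ is $F_\sigma$ (its definition involves a uniform finite bound on vertical sections, making it a countable union of compact sets in the Cantor space), hence a $P^+$-ideal by Proposition~\ref{prop:examples-of-P-plus-ideals}(\ref{prop:examples-of-P-plus-ideals:F-sigma}). This gives the required non-comparability.

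There is no real obstacle, since all the heavy lifting is already done in the machinery developed earlier in the paper; the corollary is just an instantiation of Theorem~\ref{thm:katetov-equivalent-I-ultrafilter-not-Q-point}(1). If one wanted to avoid passing through the general framework, the same conclusion could be reached more directly by applying Corollary~\ref{cor:wsp-vs-non-Q-point}(1)(a) with $\I=\fin^2$, since $\pnumber=\continuum$ implies both $\pnumber(\ED_{\fin})=\continuum$ (via Theorem~\ref{thm:p-number-for-F-sigma}, as $\ED_{\fin}$ is $F_\sigma$) and, trivially, $\adds(\fin^2)\leq\continuum$ with the maximum being $\continuum$ thanks to the $\pnumber(\ED_{\fin})$-term.
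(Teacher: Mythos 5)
Your proof is correct and follows exactly the same chain of references as the paper's own: you invoke Theorem~\ref{thm:katetov-equivalent-I-ultrafilter-not-Q-point}(1) to reduce to $\fin^2\not\leq_{\Kat}\ED_{\fin}$, identify $\fin^2$-ultrafilters with P-points via Theorem~\ref{thm:known-ultrafilters-characterized-as-I-ultrafilters}(1), and establish the Kat\v{e}tov non-comparability by combining Proposition~\ref{prop:P-plus-ideal-not-K-above-FINxFIN}(\ref{prop:P-plus-ideal-not-K-above-FINxFIN:item}) with the fact that $\ED_{\fin}$ is $F_\sigma$ and hence a $P^+$-ideal (Proposition~\ref{prop:examples-of-P-plus-ideals}(\ref{prop:examples-of-P-plus-ideals:F-sigma})). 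The alternative route you sketch through Corollary~\ref{cor:wsp-vs-non-Q-point}(1)(a) is also sound, but the main argument coincides with what the paper does.
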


\begin{proof}
	It follows from Theorems~\ref{thm:katetov-equivalent-I-ultrafilter-not-Q-point} and \ref{thm:known-ultrafilters-characterized-as-I-ultrafilters}(\ref{thm:known-ultrafilters-characterized-as-I-ultrafilters:P-point})
and Proposition~\ref{prop:P-plus-ideal-not-K-above-FINxFIN}(\ref{prop:P-plus-ideal-not-K-above-FINxFIN:item}), because $\ED_{\fin}$ is an $F_\sigma$ ideal, so it is a $P^+$-ideal by Proposition~\ref{prop:examples-of-P-plus-ideals}(\ref{prop:examples-of-P-plus-ideals:F-sigma}).
\end{proof}


\subsection{Q-points which are not $\J$-ultrafilters}

\begin{theorem}
	\label{thm:Q-point-not-J-ultrafilter-equivalence}
		Let $\J$ be a $\text{P}^+(\J)$-ideal which is $\leq_{\KB}$-homogeneous.	
		\begin{enumerate}
\item Assume CH. The following conditions are equivalent.
		\begin{enumerate}
			\item $\ED_{\fin}\not\leq_{\KB}\J$.	
			\item There is a Q-point which is not a $\J$-point.	
			\item There is a Q-point which is not a weak $\J$-ultrafilter.	
		\end{enumerate}
		\item 
		If $\J$ is $F_{\sigma}$, then CH can be relaxed to the assumption $\pnumber=\continuum$.
	\end{enumerate}
\end{theorem}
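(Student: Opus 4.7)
The plan is to obtain this theorem as an immediate instance of Theorem~\ref{thm:KAT-iff-EXISTS-ULTRA-for-K-uniform-Pplus-ideals}(\ref{thm:KAT-iff-EXISTS-ULTRA-for-K-uniform-Pplus-ideals:CH-KB}) applied to the specific ideal $\I=\ED_{\fin}$, combined with the characterization of Q-points from Theorem~\ref{thm:known-ultrafilters-characterized-as-I-ultrafilters}(\ref{thm:known-ultrafilters-characterized-as-I-ultrafilters:Q-point}). The parallel case for P-points (Proposition~\ref{thm:P-poiints-not-J-ultrafilter-equivalence}) is treated exactly in this style, so the argument here is essentially a transcription with $\fin^2$ and P-point replaced by $\ED_{\fin}$ and Q-point.

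For part~(1), I would first verify the hypotheses of Theorem~\ref{thm:KAT-iff-EXISTS-ULTRA-for-K-uniform-Pplus-ideals}(\ref{thm:KAT-iff-EXISTS-ULTRA-for-K-uniform-Pplus-ideals:CH-KB}) with $\I=\ED_{\fin}$ and the given $\J$: the ideal $\ED_{\fin}$ is tall, and $\J$ is assumed to be a $P^+(\J)$-ideal and $\leq_{\KB}$-homogeneous. Under CH, that theorem gives the chain of equivalences among $\ED_{\fin}\not\leq_{\KB}\J$, various $\not\preceq$-relations, and the existence of a weak $\ED_{\fin}$-ultrafilter which is not a $\J$-point (resp.\ not a weak $\J$-ultrafilter). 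Then I would invoke Theorem~\ref{thm:known-ultrafilters-characterized-as-I-ultrafilters}(\ref{thm:known-ultrafilters-characterized-as-I-ultrafilters:Q-point}) to replace ``weak $\ED_{\fin}$-ultrafilter'' by ``Q-point'' in the last two items, producing exactly the three equivalent conditions of the statement.

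For part~(2), under the stronger assumption that $\J$ is $F_\sigma$, I would appeal to Theorem~\ref{thm:KAT-iff-EXISTS-ULTRA-for-K-uniform-Pplus-ideals}(\ref{thm:KAT-iff-EXISTS-ULTRA-for-K-uniform-Pplus-ideals:Borel}), which asserts that CH may be weakened to $\pnumber=\continuum$ in parts (1)--(3) of that theorem; the same replacement of ``weak $\ED_{\fin}$-ultrafilter'' by ``Q-point'' then yields the desired equivalence under $\pnumber=\continuum$.

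There is no real obstacle here, since the heavy lifting has been done in Theorem~\ref{thm:KAT-iff-EXISTS-ULTRA-for-K-uniform-Pplus-ideals}. The only points to double-check are (i) that $\ED_{\fin}$ is tall so that part~(\ref{thm:KAT-iff-EXISTS-ULTRA-for-K-uniform-Pplus-ideals:CH-KB}) of that theorem indeed applies, and (ii) that the translation ``Q-point $\iff$ weak $\ED_{\fin}$-ultrafilter'' from Theorem~\ref{thm:known-ultrafilters-characterized-as-I-ultrafilters}(\ref{thm:known-ultrafilters-characterized-as-I-ultrafilters:Q-point}) is being applied on both sides of each implication, so that the three conditions in our statement do match exactly those of Theorem~\ref{thm:KAT-iff-EXISTS-ULTRA-for-K-uniform-Pplus-ideals}(\ref{thm:KAT-iff-EXISTS-ULTRA-for-K-uniform-Pplus-ideals:CH-KB}) after substitution.
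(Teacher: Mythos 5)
Your proof is correct and follows exactly the same approach as the paper: the paper's proof of this theorem consists precisely of citing Theorems~\ref{thm:KAT-iff-EXISTS-ULTRA-for-K-uniform-Pplus-ideals}(\ref{thm:KAT-iff-EXISTS-ULTRA-for-K-uniform-Pplus-ideals:CH-KB}) and~\ref{thm:known-ultrafilters-characterized-as-I-ultrafilters}(\ref{thm:known-ultrafilters-characterized-as-I-ultrafilters:Q-point}), with Theorem~\ref{thm:KAT-iff-EXISTS-ULTRA-for-K-uniform-Pplus-ideals}(\ref{thm:KAT-iff-EXISTS-ULTRA-for-K-uniform-Pplus-ideals:Borel}) added for the $F_\sigma$ case. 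Your additional sanity checks (tallness of $\ED_{\fin}$, applying the Q-point translation on both sides) are correct and appropriate.
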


\begin{proof}
It follows from Theorems~\ref{thm:KAT-iff-EXISTS-ULTRA-for-K-uniform-Pplus-ideals}(\ref{thm:KAT-iff-EXISTS-ULTRA-for-K-uniform-Pplus-ideals:CH-KB}) and \ref{thm:known-ultrafilters-characterized-as-I-ultrafilters}(\ref{thm:known-ultrafilters-characterized-as-I-ultrafilters:Q-point}). In the case of $F_\sigma$-ideals additionally we need to use Theorems~\ref{thm:KAT-iff-EXISTS-ULTRA-for-K-uniform-Pplus-ideals}(4).
\end{proof}

Q-points cannot be characterized as $\J$-ultrafilters for any ideal $\J$ -- to obtain a Q-point which is not a $\J$-ultrafilter one needs to assume nothing about an ideal  $\J$ as the following result of Fla\v{s}kova shows.

\begin{proposition}[{\cite[Proposition~2.4.7]{flaskova-phd-thesis}}]
	\label{prop:Q-point-not-J-ultraafilters}
	Assume $\cov(\cM)=\continuum$. For each ideal $\J$, there exists a Q-point which is not a $\J$-ultrafilter.
\end{proposition}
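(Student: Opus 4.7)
The plan is to build a Q-point $\cU$ together with a fixed function $f:\omega\to\omega$ witnessing $\J\leq_{\Kat}\cU^*$. First, fix a partition $\{P_n:n\in\omega\}$ of $\omega$ into infinite sets and set $f(k)=n$ whenever $k\in P_n$. Let $\cG$ be the filter generated by $\{f^{-1}[J]:J\in\J^*\}$; this is a proper filter because $f^{-1}[J_1\cap J_2]=\bigcup_{n\in J_1\cap J_2}P_n$ is infinite for all $J_1,J_2\in\J^*$. If $\cU$ extends $\cG$, then for each $B\in\J$ we have $\omega\setminus B\in\J^*$, giving $f^{-1}[\omega\setminus B]\in\cU$ and hence $f^{-1}[B]\in\cU^*$, so $f$ witnesses $\J\leq_{\Kat}\cU^*$. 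The remaining task is to extend $\cG$ to an ultrafilter that is a Q-point, and this is where $\cov(\cM)=\cc$ enters.

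I will enumerate $\J^*=\{J_\alpha:\alpha<\cc\}$ and all partitions of $\omega$ into finite sets as $\{\pi_\alpha=\{Q^\alpha_m:m\in\omega\}:\alpha<\cc\}$. The construction chooses selectors $S_\alpha$ for $\pi_\alpha$ stage by stage, maintaining the inductive invariant that $P_n\cap\bigcap\mathcal{S}$ is infinite for every $n\in\omega$ and every finite $\mathcal{S}\subseteq\{S_\gamma:\gamma<\alpha\}$. Assuming infinitely many blocks of $\pi_\alpha$ have size at least $2$ (otherwise every selector for $\pi_\alpha$ is cofinite and trivially preserves the invariant), I identify selectors with points of the perfect compact Polish space $X_\alpha=\prod_m Q^\alpha_m$. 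For each pair $(n,\mathcal{S})$ as above, the set
\[
D_{n,\mathcal{S}}=\left\{(s_m)\in X_\alpha:\{m:s_m\in P_n\cap\textstyle\bigcap\mathcal{S}\}\text{ is infinite}\right\}
\]
is comeager in $X_\alpha$: by the invariant, $P_n\cap\bigcap\mathcal{S}$ is infinite and thus meets infinitely many of the finite blocks $Q^\alpha_m$, which makes the open set ``some $s_m$ with $m\geq k$ lies in $P_n\cap\bigcap\mathcal{S}$'' dense for every $k$. There are only $\max(\omega,|\alpha|)<\cc$ such conditions, so $\cov(\cM)=\cc$ yields $(s_m)\in\bigcap_{n,\mathcal{S}}D_{n,\mathcal{S}}$, and I set $S_\alpha=\{s_m:m\in\omega\}$; the invariant at stage $\alpha+1$ is immediate.

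At the end, the filter $\cU_0$ generated by $\cG\cup\{S_\alpha:\alpha<\cc\}$ has the SFIP, because any finite intersection of its generators equals $\bigcup_{n\in J}\bigl(P_n\cap\bigcap_j S_{\beta_j}\bigr)$ for some $J\in\J^*$ and a finite $\{S_{\beta_j}\}$, and is therefore an infinite union of infinite sets by the invariant. Any ultrafilter $\cU\supseteq\cU_0$ is then a Q-point because every partition of $\omega$ into finite sets is some $\pi_\alpha$ with selector $S_\alpha\in\cU$, and, by the first paragraph, is not a $\J$-ultrafilter.

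The main obstacle is that $|\J^*|$ might equal $\cc$, which prevents including an invariant clause quantified over all $J\in\J^*$; phrasing the invariant in terms of the fixed countable family $\{P_n\}$ sidesteps this, so that at stage $\alpha$ only $\max(\omega,|\alpha|)<\cc$ density conditions appear in the Baire-category argument and $\cov(\cM)=\cc$ can be invoked.
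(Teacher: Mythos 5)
Your proof is correct. The paper itself cites this proposition from Fla\v{s}kov\'{a}'s thesis (Proposition~2.4.7) without reproducing an argument, so there is no in-paper proof to compare against, but your construction stands on its own: fixing a partition $\{P_n\}$ into infinite sets determines a single $f$ that must witness $\J\leq_{\Kat}\cU^*$ for any ultrafilter extending $\cG$; the invariant (each $P_n\cap\bigcap\mathcal{S}$ infinite) is exactly what is needed to keep $\cG\cup\{S_\gamma:\gamma<\alpha\}$ centered while making the density computation in $\prod_m Q^\alpha_m$ go through; the space $\prod_m Q^\alpha_m$ is perfect, compact and zero-dimensional once infinitely many blocks are nontrivial, so $\cov(\cM)=\cc$ applies; and the trivial-blocks case (cofinite selector) and limit stages are handled correctly. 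One cosmetic remark: the enumeration of $\J^*$ you introduce at the start is never used, since $\cG$ is fixed once and for all; the only transfinite recursion needed is over the $\cc$-many partitions into finite sets.
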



\section{Selective ultrafilters and $\J$-ultrafilters}


\subsection{Selective ultrafilters which are not $\J$-ultrafilters}

\begin{proposition}
	\label{prop:selective-ultrafilter-not-J-ultrafilter-equivalence}
	Let $\J$ be a $\text{P}^+(\J)$-ideal which is $\leq_{K}$-homogeneous.	
	\begin{enumerate}
		\item Assume CH. The following conditions are equivalent.
		\begin{enumerate}
			\item $\ED\not\leq_{\Kat}\J$.	
			\item There is a selective ultrafilter which is not a $\J$-point.	
			\item There is a selective ultrafilter which is not a weak $\J$-ultrafilter.	
			\item There is a selective ultrafilter which is not a $\J$-ultrafilter.	
		\end{enumerate}
		\item 
		If $\J$ is an $F_\sigma$-ideal, then CH can be relaxed to the assumption $\pnumber=\continuum$.
	\end{enumerate}
\end{proposition}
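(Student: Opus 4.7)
The plan is to derive this proposition directly from the general machinery developed in Theorem~\ref{thm:KAT-iff-EXISTS-ULTRA-for-K-uniform-Pplus-ideals} by specializing to $\I=\ED$, in exact parallel to the proofs of Proposition~\ref{thm:P-poiints-not-J-ultrafilter-equivalence} (for P-points via $\fin^2$) and Theorem~\ref{thm:Q-point-not-J-ultrafilter-equivalence} (for Q-points via $\ED_\fin$). The two ingredients that let us substitute ``$\ED$-ultrafilter'' and its weak/point variants with ``selective ultrafilter'' throughout are (i) the characterization Theorem~\ref{thm:known-ultrafilters-characterized-as-I-ultrafilters}(\ref{thm:known-ultrafilters-characterized-as-I-ultrafilters:Ramsey-point}), which asserts that selective ultrafilters coincide simultaneously with $\ED$-ultrafilters, weak $\ED$-ultrafilters and $\ED$-points, and (ii) the tallness of $\ED$, which is needed in order to apply Theorem~\ref{thm:KAT-iff-EXISTS-ULTRA-for-K-uniform-Pplus-ideals}.

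For item (1), I would argue as follows. Assume CH and that $\J$ is a $P^+(\J)$-ideal which is $\leq_\Kat$-homogeneous; apply Theorem~\ref{thm:KAT-iff-EXISTS-ULTRA-for-K-uniform-Pplus-ideals}(\ref{thm:KAT-iff-EXISTS-ULTRA-for-K-uniform-Pplus-ideals:CH-KAT}) with $\I=\ED$, which is tall. This yields the equivalence of $\ED\not\leq_\Kat\J$ with the existence of an $\ED$-ultrafilter which is not a $\J$-point (respectively, not a weak $\J$-ultrafilter, not a $\J$-ultrafilter). Now apply Theorem~\ref{thm:known-ultrafilters-characterized-as-I-ultrafilters}(\ref{thm:known-ultrafilters-characterized-as-I-ultrafilters:Ramsey-point}) to replace ``$\ED$-ultrafilter'' by ``selective ultrafilter'' in each of the three lines. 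This gives precisely the equivalence of the four conditions (a)--(d).

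For item (2), when $\J$ is $F_\sigma$, I would invoke Theorem~\ref{thm:KAT-iff-EXISTS-ULTRA-for-K-uniform-Pplus-ideals}(\ref{thm:KAT-iff-EXISTS-ULTRA-for-K-uniform-Pplus-ideals:Borel}), which tells us that in the $F_\sigma$ setting the use of CH in item (\ref{thm:KAT-iff-EXISTS-ULTRA-for-K-uniform-Pplus-ideals:CH-KAT}) of that theorem can be replaced by $\pnumber=\continuum$. The same identification of $\ED$-ultrafilters with selective ultrafilters via Theorem~\ref{thm:known-ultrafilters-characterized-as-I-ultrafilters}(\ref{thm:known-ultrafilters-characterized-as-I-ultrafilters:Ramsey-point}) then transfers the relaxed hypothesis to the present statement.

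There is essentially no obstacle beyond matching definitions; the proposition is a straightforward corollary of the general framework. The only small point to double-check is that the tallness hypothesis on $\I$ in Theorem~\ref{thm:KAT-iff-EXISTS-ULTRA-for-K-uniform-Pplus-ideals} is indeed satisfied by $\ED$, which is immediate from the definition $\ED=\{A\subseteq\omega\times\omega:\exists m\,\forall^\infty n\,(|A_{(n)}|<m)\}$ since any infinite $A\subseteq\omega\times\omega$ contains an infinite subset whose vertical sections are bounded.
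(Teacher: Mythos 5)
Your proposal is correct and is essentially identical to the paper's own proof, which likewise derives the proposition directly from Theorem~\ref{thm:KAT-iff-EXISTS-ULTRA-for-K-uniform-Pplus-ideals}(\ref{thm:KAT-iff-EXISTS-ULTRA-for-K-uniform-Pplus-ideals:CH-KAT}) (with $\I=\ED$), Theorem~\ref{thm:known-ultrafilters-characterized-as-I-ultrafilters}(\ref{thm:known-ultrafilters-characterized-as-I-ultrafilters:Ramsey-point}), and, for item (2), Theorem~\ref{thm:KAT-iff-EXISTS-ULTRA-for-K-uniform-Pplus-ideals}(\ref{thm:KAT-iff-EXISTS-ULTRA-for-K-uniform-Pplus-ideals:Borel}).
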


\begin{proof}
	(1) It follows from Theorems~\ref{thm:KAT-iff-EXISTS-ULTRA-for-K-uniform-Pplus-ideals}(\ref{thm:KAT-iff-EXISTS-ULTRA-for-K-uniform-Pplus-ideals:CH-KAT})
	and \ref{thm:known-ultrafilters-characterized-as-I-ultrafilters}(\ref{thm:known-ultrafilters-characterized-as-I-ultrafilters:Ramsey-point}). For (2) one additionally needs Theorems~\ref{thm:KAT-iff-EXISTS-ULTRA-for-K-uniform-Pplus-ideals}(4).
\end{proof}

In the above theorem, we cannot drop the assumptions on the ideal $\J$ in general.
Indeed,  it is known that 
$\ED\not\leq_{\Kat}\random$ (see \cite[Section 2]{MR3696069}), but 
all selective ultrafilters (i.e.~$\ED$-ultrafilters)
are $\random$-points (Theorem~\ref{thm:known-ultrafilters-characterized-as-I-ultrafilters}(\ref{thm:known-ultrafilters-characterized-as-I-ultrafilters:Ramsey-point})). 
Although, in Theorem~\ref{thm:new-order-above-ED-iff-extendable-to-selective}, we show that 
the assumption on the ideal $\J$ can be removed if we replace the Kat\v{e}tov order $\leq_{\Kat}$ by the property that $\J$ can be extended to a selective ideal.

\begin{lemma}\label{lem:preservation-of-selective-ideals}
Let $\I$ and $\J$ be ideals on $\omega$.
	\begin{enumerate}
		\item 
		If $\I\leq_{\Kat}\J$ and $\J$ is a selective ideal, then $\I$ can be extended to a selective ideal.	\label{lem:preservation-of-selective-ideals:by-Kat}
		
		\item
		If $\J$ is a selective ideal and $\cA\subseteq\cP(\omega)$ is a countable family such that $\omega\notin\J(\cA)$, then $\J(\cA)$ is a selective ideal.	\label{lem:preservation-of-selective-ideals:by-extension}
		
		\item
		If $\J$ is not a selective ideal, then there is a countable family $\cA\subseteq\cP(\omega)$  such that $\omega\notin\J(\cA)$ and $\ED\leq_{\Kat} \J(\cA)$.\label{lem:preservation-of-selective-ideals:non-selective}
	\end{enumerate}
\end{lemma}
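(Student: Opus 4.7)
The plan is to mirror Lemma~\ref{lem:preservation-of-P+-ideals} item by item, replacing ``$P^+$-ideal'' by ``selective ideal'' and $\fin^2$ by $\ED$. Items (\ref{lem:preservation-of-selective-ideals:by-Kat}) and (\ref{lem:preservation-of-selective-ideals:non-selective}) should adapt almost verbatim, while item (\ref{lem:preservation-of-selective-ideals:by-extension}) is where the real work lies and requires a two-step device.

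For (\ref{lem:preservation-of-selective-ideals:by-Kat}), I would again set $\cK=\{B:f^{-1}[B]\in\J\}$, lift a decreasing sequence $(B_n)\subseteq\cK^+$ to the decreasing sequence $(f^{-1}[B_n])\subseteq\J^+$, apply selectivity of $\J$ to obtain $A\subseteq f^{-1}[B_0]$ in $\J^+$ with $|A\cap(f^{-1}[B_n]\setminus f^{-1}[B_{n+1}])|\leq 1$, and push forward to $B=f[A]$. Using $f^{-1}[B_n\setminus B_{n+1}]=f^{-1}[B_n]\setminus f^{-1}[B_{n+1}]$ and the fact that distinct points of $f[A]\cap(B_n\setminus B_{n+1})$ have disjoint, nonempty preimages inside $A\cap f^{-1}[B_n\setminus B_{n+1}]$, the bound $\leq 1$ on $A$ translates into the desired bound $\leq 1$ on $B$; the remaining properties $B\subseteq B_0$ and $B\in\cK^+$ are immediate.

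For (\ref{lem:preservation-of-selective-ideals:by-extension}), a direct imitation of Lemma~\ref{lem:preservation-of-P+-ideals}(\ref{lem:preservation-of-P+-ideals:by-extension}) fails: selectivity of $\J$ applied to the same $C_n=B_{n-1}\setminus\bigcup_{i<n-1}A_i$ only yields $|C\cap(B_n\setminus B_{n+1})|\leq n+1$, because points of $C\cap\bigcup_{i<n}A_i$ can be scattered across several of the differences $C_m\setminus C_{m+1}$ rather than into a single one -- this is the main obstacle. My plan is a two-step argument. First use only the $P^+$-property of $\J$ (a consequence of selectivity) on the sequence $C_n$ to obtain $C'\in\J^+$ with $C'\setminus C_n\in\fin$ for every $n$; exactly as in the $P^+$-proof, $C:=C'\cap B_0$ then satisfies $C\cap A_k\in\fin$ for every $k$ and $C\setminus B_n\in\fin$ for every $n$, so $(C\cap B_n)_n$ is a new decreasing sequence in $\J^+$. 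Applying selectivity of $\J$ a \emph{second} time, now to $(C\cap B_n)_n$, yields $D\subseteq C$ in $\J^+$ with $|D\cap((C\cap B_n)\setminus(C\cap B_{n+1}))|\leq 1$; the identity $(C\cap B_n)\setminus(C\cap B_{n+1})=C\cap(B_n\setminus B_{n+1})$ and the inclusion $D\subseteq C$ turn this into $|D\cap(B_n\setminus B_{n+1})|\leq 1$, while $D\cap A_k\subseteq C\cap A_k\in\fin$ places $D$ in $\J(\cA)^+$.

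For (\ref{lem:preservation-of-selective-ideals:non-selective}), I would copy the construction of Lemma~\ref{lem:preservation-of-P+-ideals}(\ref{lem:preservation-of-P+-ideals:non-P+}): fix a decreasing $(B_n)\subseteq\J^+$ witnessing non-selectivity, which we may also take to satisfy $\bigcap_n B_n=\emptyset$ (otherwise $\bigcap_n B_n$ must lie in $\J$ by the non-selectivity property and can be removed from every $B_n$), and set $A_0=\omega\setminus B_0$ and $A_{n+1}=B_n\setminus B_{n+1}$. The equality $\omega\setminus\bigcup_{i\leq n}A_i=B_n\in\J^+$ shows $\omega\notin\J(\cA)$. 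For any injection $f:\omega\to\omega\times\omega$ with $f[A_n]\subseteq\{n\}\times\omega$ and any $A\in\ED$ with $|A_{(n)}|<m$ for all $n\geq N$, the set $X:=f^{-1}[A]\setminus\bigcup_{n<N}A_n$ sits inside $B_{N-1}\subseteq B_0$ and satisfies $|X\cap(B_k\setminus B_{k+1})|=|X\cap A_{k+1}|<m$ for every $k\geq N-1$ (and $=0$ otherwise); splitting $X$ into $m$ pieces each meeting every $B_k\setminus B_{k+1}$ in at most one point places every piece -- and hence $X$ itself -- in $\J$ by the choice of $(B_n)$, so $f^{-1}[A]\in\J(\cA)$ and $f$ witnesses $\ED\leq_{\Kat}\J(\cA)$.
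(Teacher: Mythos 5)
Your items (1) and (3) are correct and essentially match the paper's arguments; for (3), the paper does not explicitly address the possibility $\bigcap_n B_n\neq\emptyset$, and your reduction to $\bigcap_n B_n=\emptyset$ is a clean fix (the paper's version can also be repaired by observing that $\bigcap_n B_n\in\J$ follows from the non-selectivity witness). For item (2), however, you have in fact found a genuine gap in the paper's own proof. The paper imitates the $P^+$-argument verbatim: it applies selectivity of $\J$ to $C_{n+1}=B_n\setminus\bigcup_{i<n}A_i$ to obtain $C'$, sets $C=C'\cap C_1$, verifies $C\in\J(\cA)^+$, and declares the proof finished --- without ever checking $|C\cap(B_n\setminus B_{n+1})|\leq 1$. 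As you observe, that bound can fail: an element of $C\cap(B_n\setminus B_{n+1})$ lying in $\bigcup_{i<n}A_i$ may fall into $C_m\setminus C_{m+1}$ for any $m\leq n$, so $|C\cap(B_n\setminus B_{n+1})|$ is only bounded by roughly $n$. A concrete witness exists already with $\J=\fin$, $\cA$ generated by the even numbers, and $B_n=\{4n,4n+1,\ldots\}$: then $C'=\{4,5,9,13,17,\ldots\}$ obeys the $\leq 1$ bound for the sequence $(C_n)$, yet $C=C'$ meets $B_1\setminus B_2=\{4,5,6,7\}$ in two points. Your two-step argument --- first extracting a $\J(\cA)^+$-pseudointersection $C$ of $(C_n)$ using only the $P^+$-property of $\J$ (getting $C\subseteq B_0$ with $C\setminus B_n$ and $C\cap A_k$ finite for all $n,k$), then applying selectivity of $\J$ a second time to the decreasing $\J^+$-sequence $(C\cap B_n)_n$ to extract $D\subseteq C$ with $|D\cap(B_n\setminus B_{n+1})|\leq 1$ --- is correct, cleanly separates the pseudointersection step from the one-point-per-gap step, and is the right way to complete the proof.
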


\begin{proof}
	(\ref{lem:preservation-of-selective-ideals:by-Kat})
	Let $f:\omega\to\omega$ be such that $f^{-1}[B]\in \J$ for each $B\in \I$.
	Let $\cK=\{B: f^{-1}[B]\in \J\}$.
	It is easy to see that $\cK$ is an ideal and $\I\subseteq\cK$.
	We show that $\cK$ is a selective ideal.
	Let $(B_n)\subseteq \cK^+$ 	be a decreasing sequence.
	Then $f^{-1}[B_n]\in\J^+$ and $f^{-1}[B_n]\supseteq f^{-1}[B_{n+1}]$ for each $n$.
	Since $\J$ is a selective ideal, there is $A\in \J^+$ such that $|A\cap (f^{-1}[B_n]\setminus f^{-1}[B_{n+1}])|\leq 1$ for each $n$.
	Then $B=f[A]\in\cK^+$ and 
	$|B\cap (B_n \setminus B_{n+1})|\leq 1$ for each $n$.
	
	(\ref{lem:preservation-of-selective-ideals:by-extension})
	Let $\cA=\{A_n:n\in\omega\}$ be such that $\omega\notin\J(\cA)$.
Take any decreasing sequence $B_n\in\J(\cA)^+$.
Let $C_0=\omega$ and $C_{n+1} = B_n\setminus \bigcup_{i<n}A_i$.
Then $C_n\in \J(\cA)^+$ and $C_n\supseteq C_{n+1}$ for each $n$.
Since $C_n\in\J^+$ and  $\J$ is a selective ideal, there is $C'\in\J^+$ such that 
$|C'\cap (C_n\setminus C_{n+1})|\leq 1$ 
for each $n$.
Define $C=C'\cap C_1$. Then $C\in\J^+$ as $|C'\setminus C|=|C'\cap(C_0\setminus C_1)|\leq 1$.
Once we show that $C\in\J(\cA)^+$, the proof will be finished.
	Suppose that $C\in \J(\cA)$. Then there is $n$ such that $C\setminus \bigcup_{i<n}A_i\in \J$.
	But $C\cap  \bigcup_{i<n}A_i$ is finite (it has at most $n$ elements), so $C\in \J$, a contradiction.
	
	(\ref{lem:preservation-of-selective-ideals:non-selective})
Since $\J$ is not a selective ideal, there is a decreasing sequence $B_n\in\J^+$ such that if $|X\cap (B_n\setminus B_{n+1})|\leq 1$ for all $n$ then $X\in\J$. 
Let $A_0=\omega\setminus B_0$ and $A_{n+1}=B_n\setminus B_{n+1}$ for all $n$. Observe that $\bigcup_{i\leq n}A_i=\omega\setminus B_n \notin\J^*$, so $\omega\notin \J(\{A_n:n\in\omega\})$. What is more, any one-to-one function $f:\omega\to\omega^2$ such that $f[A_n]\subseteq\{n\}\times\omega$ witnesses $\ED \leq_{\Kat}\J(\{A_n:n\in\omega\})$. 
\end{proof}

\begin{theorem}\label{thm:new-order-above-ED-iff-extendable-to-selective}
Let $\J$ be an ideal on $\omega$.
	\begin{enumerate}
		\item 	The following conditions are equivalent.	\label{thm:new-order-above-ED-iff-extendable-to-selective:ZFC}
		\begin{enumerate}
			\item $\J$ is extendable to a selective ideal.	\label{thm:new-order-above-ED-iff-extendable-to-selective:extendability}

			\item $\ED\not\preceq_{\omega^\omega,1-1}^{\omega_1}\J$.	\label{thm:new-order-above-ED-iff-extendable-to-selective:new-order}

			\item $\ED\not\preceq_{\omega^\omega,\fin-1}^{\omega_1}\J$.	\label{thm:new-order-above-ED-iff-extendable-to-selective:new-order-KB}

			\item $\ED\not\preceq_{\omega^\omega,\omega^\omega}^{\omega_1}\J$.	\label{thm:new-order-above-ED-iff-extendable-to-selective:new-order-K}

		\end{enumerate}
		
		\item 
		Under CH, the above conditions are equivalent to the following condition.\label{thm:new-order-above-ED-iff-extendable-to-selective:CH}
		\begin{enumerate}
			
			\setcounter{enumii}{4}
			
		\item There is a selective ultrafilter which is not a $\J$-point.\label{thm:new-order-above-ED-iff-extendable-to-selective:ultrafilters}
	
		\item There is a selective ultrafilter which is not a weak $\J$-ultrafilter.	\label{thm:new-order-above-ED-iff-extendable-to-selective:ultrafilters-KB}

		\item There is a selective ultrafilter which is not a $\J$-ultrafilter.	\label{thm:new-order-above-ED-iff-extendable-to-selective:ultrafilters-K}

	\end{enumerate}
	\end{enumerate}
\end{theorem}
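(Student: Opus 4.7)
The plan is to mirror the proof of Theorem~\ref{thm:new-order-above-FINxFIN-iff-extendable-to-Pplus} almost verbatim, with $\ED$ playing the role of $\fin^2$, selective ideals in place of $P^+$-ideals, and Lemma~\ref{lem:preservation-of-selective-ideals} standing in for Lemma~\ref{lem:preservation-of-P+-ideals}. The remaining key inputs are already on hand: Proposition~\ref{prop:selective-ideal-not-K-above-ED}(\ref{prop:selective-ideal-not-K-above-ED:item}) provides the selective analogue of the statement ``$\fin^2 \not\leq_{\Kat} \I$ for every $P^+$-ideal $\I$'', and Theorem~\ref{thm:known-ultrafilters-characterized-as-I-ultrafilters}(\ref{thm:known-ultrafilters-characterized-as-I-ultrafilters:Ramsey-point}) identifies $\ED$-ultrafilters (weak $\ED$-ultrafilters, $\ED$-points) with selective ultrafilters.

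For part (\ref{thm:new-order-above-ED-iff-extendable-to-selective:ZFC}) I will close the cycle (\ref{thm:new-order-above-ED-iff-extendable-to-selective:extendability})$\implies$(\ref{thm:new-order-above-ED-iff-extendable-to-selective:new-order})$\implies$(\ref{thm:new-order-above-ED-iff-extendable-to-selective:new-order-KB})$\implies$(\ref{thm:new-order-above-ED-iff-extendable-to-selective:new-order-K})$\implies$(\ref{thm:new-order-above-ED-iff-extendable-to-selective:extendability}). For (\ref{thm:new-order-above-ED-iff-extendable-to-selective:extendability})$\implies$(\ref{thm:new-order-above-ED-iff-extendable-to-selective:new-order}) I fix a selective ideal $\J' \supseteq \J$ (so $\J \leq_{1-1} \J'$ via the identity) and observe that for every countable $\cA \subseteq \cP(\omega)$ with $\omega \notin \J'(\cA)$, Lemma~\ref{lem:preservation-of-selective-ideals}(\ref{lem:preservation-of-selective-ideals:by-extension}) keeps $\J'(\cA)$ selective, so Proposition~\ref{prop:selective-ideal-not-K-above-ED}(\ref{prop:selective-ideal-not-K-above-ED:item}) rules out $\ED \leq_{\Kat} \J'(\cA)$; hence this $\J'$ witnesses $\ED \not\preceq^{\omega_1}_{\omega^\omega, 1-1} \J$. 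The implications (\ref{thm:new-order-above-ED-iff-extendable-to-selective:new-order})$\implies$(\ref{thm:new-order-above-ED-iff-extendable-to-selective:new-order-KB})$\implies$(\ref{thm:new-order-above-ED-iff-extendable-to-selective:new-order-K}) are immediate from the inclusions $1\text{-}1 \subseteq \fin\text{-}1 \subseteq \omega^\omega$: enlarging $\cG$ enlarges the class of ideals $\J'$ that have to be handled, hence strengthens a non-$\preceq$ statement. For (\ref{thm:new-order-above-ED-iff-extendable-to-selective:new-order-K})$\implies$(\ref{thm:new-order-above-ED-iff-extendable-to-selective:extendability}) I take a witness $\J' \geq_{\Kat} \J$ with $\ED \not\leq_{\Kat} \J'(\cA)$ for all countable $\cA$ satisfying $\omega \notin \J'(\cA)$; the contrapositive of Lemma~\ref{lem:preservation-of-selective-ideals}(\ref{lem:preservation-of-selective-ideals:non-selective}) forces $\J'$ to be selective, and then Lemma~\ref{lem:preservation-of-selective-ideals}(\ref{lem:preservation-of-selective-ideals:by-Kat}) transfers selectivity back down to an extension of $\J$.

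For part (\ref{thm:new-order-above-ED-iff-extendable-to-selective:CH}), under CH we have $\omega_1 = \continuum$, so Corollary~\ref{cor:new-order-iff-existence-of-ultrafilters}(\ref{cor:new-order-iff-existence-of-ultrafilters:Katetov}) translates (\ref{thm:new-order-above-ED-iff-extendable-to-selective:new-order}), (\ref{thm:new-order-above-ED-iff-extendable-to-selective:new-order-KB}), (\ref{thm:new-order-above-ED-iff-extendable-to-selective:new-order-K}) into the existence of an $\ED$-ultrafilter which is not a $\J$-point, weak $\J$-ultrafilter, or $\J$-ultrafilter, respectively; Theorem~\ref{thm:known-ultrafilters-characterized-as-I-ultrafilters}(\ref{thm:known-ultrafilters-characterized-as-I-ultrafilters:Ramsey-point}) replaces ``$\ED$-ultrafilter'' by ``selective ultrafilter'', giving exactly (\ref{thm:new-order-above-ED-iff-extendable-to-selective:ultrafilters}), (\ref{thm:new-order-above-ED-iff-extendable-to-selective:ultrafilters-KB}), (\ref{thm:new-order-above-ED-iff-extendable-to-selective:ultrafilters-K}). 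As a sanity check, (\ref{thm:new-order-above-ED-iff-extendable-to-selective:ultrafilters-K})$\implies$(\ref{thm:new-order-above-ED-iff-extendable-to-selective:extendability}) also holds directly in ZFC: a selective ultrafilter $\cU$ which fails to be a $\J$-ultrafilter satisfies $\J \leq_{\Kat} \cU^*$ with $\cU^*$ selective, and Lemma~\ref{lem:preservation-of-selective-ideals}(\ref{lem:preservation-of-selective-ideals:by-Kat}) immediately extends $\J$ to a selective ideal.

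Since all combinatorial content has already been packaged into Lemma~\ref{lem:preservation-of-selective-ideals} (whose proof parallels that of Lemma~\ref{lem:preservation-of-P+-ideals}), the argument is essentially mechanical and I do not anticipate any real obstacle. The only point requiring some care is the bookkeeping of matching each function-class $\cG \in \{1\text{-}1,\fin\text{-}1,\omega^\omega\}$ with its corresponding ultrafilter notion ($\J$-point, weak $\J$-ultrafilter, $\J$-ultrafilter).
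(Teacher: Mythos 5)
Your proposal is correct and follows essentially the same route as the paper's proof: the (a)$\Rightarrow$(b)$\Rightarrow$(c)$\Rightarrow$(d)$\Rightarrow$(a) cycle via Lemma~\ref{lem:preservation-of-selective-ideals} and Proposition~\ref{prop:selective-ideal-not-K-above-ED}, and the CH part via Corollary~\ref{cor:new-order-iff-existence-of-ultrafilters} and Theorem~\ref{thm:known-ultrafilters-characterized-as-I-ultrafilters}. The only cosmetic difference is that you translate (b),(c),(d) into (e),(f),(g) in parallel, whereas the paper derives only (e) from (b) and then chains (e)$\Rightarrow$(f)$\Rightarrow$(g) trivially, closing with (g)$\Rightarrow$(a) in ZFC; both organizations are fine.
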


\begin{proof}
	(\ref{thm:new-order-above-ED-iff-extendable-to-selective:extendability})$\implies$(\ref{thm:new-order-above-ED-iff-extendable-to-selective:new-order}) 
	Let $\J'$ be a selective ideal such that $\J'\supseteq \J$.
	By Lemma~\ref{lem:preservation-of-selective-ideals}(\ref{lem:preservation-of-selective-ideals:by-extension}), $\J'(\cA)$ is a selective ideal for each countable family $\cA\subseteq\cP(\omega)$ such that $\omega\notin\J'(\cA)$.
	So $\ED\not\leq_{\Kat} \J'(\cA)$, by Proposition~\ref{prop:selective-ideal-not-K-above-ED}(\ref{prop:selective-ideal-not-K-above-ED:item}).
	Thus, $\ED\not\preceq_{\omega^\omega,1-1}^{\omega_1}\J$.

The implications (\ref{thm:new-order-above-ED-iff-extendable-to-selective:new-order})$\implies$(\ref{thm:new-order-above-ED-iff-extendable-to-selective:new-order-KB}) 
and
(\ref{thm:new-order-above-ED-iff-extendable-to-selective:new-order-KB})$\implies$(\ref{thm:new-order-above-ED-iff-extendable-to-selective:new-order-K}) 
are obvious.

	(\ref{thm:new-order-above-ED-iff-extendable-to-selective:new-order-K})$\implies$(\ref{thm:new-order-above-ED-iff-extendable-to-selective:extendability}) 
	Since $\ED\not\preceq_{\omega^\omega,\omega^\omega}^{\omega_1}\J$, there is an ideal $\J'\geq_{\Kat} \J$ such that $\ED\not\leq_{\Kat}  \J'(\cA)$ for each countable $\cA\subseteq\cP(\omega)$ such that $\omega\notin\J'(\cA)$. 
	By Lemma~\ref{lem:preservation-of-selective-ideals}(\ref{lem:preservation-of-selective-ideals:non-selective}), $\J'$ is a selective ideal, hence by Lemma~\ref{lem:preservation-of-selective-ideals}(\ref{lem:preservation-of-selective-ideals:by-Kat}), $\J$ can be extended to a selective ideal.
	
(\ref{thm:new-order-above-ED-iff-extendable-to-selective:new-order})$\implies$ (\ref{thm:new-order-above-ED-iff-extendable-to-selective:ultrafilters}) 
Under CH, it  follows from Corollary~\ref{cor:new-order-iff-existence-of-ultrafilters}(\ref{cor:new-order-iff-existence-of-ultrafilters:Katetov}) and Theorem ~\ref{thm:known-ultrafilters-characterized-as-I-ultrafilters}(3).

The implications (\ref{thm:new-order-above-ED-iff-extendable-to-selective:ultrafilters})$\implies$(\ref{thm:new-order-above-ED-iff-extendable-to-selective:ultrafilters-KB}) 
and
(\ref{thm:new-order-above-ED-iff-extendable-to-selective:ultrafilters-KB})$\implies$(\ref{thm:new-order-above-ED-iff-extendable-to-selective:ultrafilters-K}) 
are obvious (and hold in ZFC).

	(\ref{thm:new-order-above-ED-iff-extendable-to-selective:ultrafilters-K})$\implies$ (\ref{thm:new-order-above-ED-iff-extendable-to-selective:extendability})
	Take a selective ultrafilter $\cU$ which is not a $\J$-ultrafilter (i.e.~$\J\leq_{\Kat}\cU^*$).
	Then $\cU^*$ is a selective ideal, so $\J$ can be extended to a selective ideal by Lemma~\ref{lem:preservation-of-selective-ideals}(\ref{lem:preservation-of-selective-ideals:by-Kat}). (Note that this argument works in ZFC.)
\end{proof}


\subsection{$\I$-ultrafilters which are not selective}

\begin{theorem}
	\label{thm:wsp-vs-non-Ramsey-point}
If $\I$ contains a tall summable ideal, then 
		$$\cov(\cM) \leq \wsp(\I,\ED, \omega^\omega).$$
\end{theorem}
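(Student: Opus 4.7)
The plan is to invoke Theorem~\ref{thm:wsp-vs-cov-M} with $\J=\ED$, reducing the statement to verifying the game-theoretic hypothesis about $\ED$. More precisely, Theorem~\ref{thm:wsp-vs-cov-M} asserts that $\cov(\cM)\leq \wsp(\I,\J,\omega^\omega)$ whenever $\I$ contains a tall summable ideal and Player~I has a winning strategy in $G(\J,A,f)$ for all $A\notin\J$ and all $J$-to-one $f$. Since $\I$ is assumed to contain a tall summable ideal, the first hypothesis is immediate, and it remains only to check the second one for $\J=\ED$.

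For the game-theoretic hypothesis, I would cite Proposition~\ref{prop:ideals-not-above-summable}, which establishes that for $\J\in\{\fin^2,\ED_\fin,\ED,\conv\}$ the restricted ideal $\J\restriction A$ is not $\leq_{\Kat}$-above any tall summable ideal for any $A\notin\J$. Since $\ED$ is Borel, the ``in particular'' clause of that proposition (which is obtained via Lemma~\ref{lem:gra}(\ref{lem:gra:determined-all-sets}), combining Borel determinacy with Lemma~\ref{lem:gra}(\ref{lem:gra:win-strategy-for-II})) gives exactly that Player~I has a winning strategy in $G(\ED,A,f)$ for all $A$ and $f$.

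Thus the proof collapses to a single line: apply Theorem~\ref{thm:wsp-vs-cov-M} together with Proposition~\ref{prop:ideals-not-above-summable} in the case $\J=\ED$. There is no real obstacle here, since all the heavy lifting---the construction of the $\sigma$-centered forcing poset and the use of $\cov(\cM)$-many dense sets, and the game-theoretic analysis of $\ED$---has already been carried out in Theorem~\ref{thm:wsp-vs-cov-M} and Proposition~\ref{prop:ideals-not-above-summable}, respectively. The only mild conceptual point worth flagging is that this result is the exact analogue, with $\ED$ in place of $\fin^2$, of Theorem~\ref{thm:wsp-vs-non-P-point}(2); the parallel is explained by the shared property (verified in Proposition~\ref{prop:ideals-not-above-summable}) that neither ideal lies Kat\v{e}tov-above any tall summable ideal after restriction to a positive set.
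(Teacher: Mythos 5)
Your proof is correct and takes exactly the same route as the paper: the paper's proof is a one-liner citing Theorem~\ref{thm:wsp-vs-cov-M} and Proposition~\ref{prop:ideals-not-above-summable}, which is precisely your argument. The additional commentary about the parallel with $\fin^2$ is accurate but not needed.
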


\begin{proof}
It follows from Theorem~\ref{thm:wsp-vs-cov-M} and Proposition~\ref{prop:ideals-not-above-summable}.
\end{proof}

\begin{corollary}\ 
	\label{cor:wsp-vs-non-Ramsey-point}
Assume $\cov(\cM)=\cc$. If $\I$ contains a tall summable ideal,  then there exists an $\I$-ultrafilter which is not a selective ultrafilter. 
\end{corollary}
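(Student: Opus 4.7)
The plan is to chain together the three results assembled in the preceding paragraphs. Specifically, under $\cov(\cM)=\cc$, Theorem~\ref{thm:wsp-vs-non-Ramsey-point} gives
\[
\wsp(\I,\ED,\omega^\omega)\geq\cov(\cM)=\cc=|\omega^\omega|,
\]
which is precisely the hypothesis needed to apply Theorem~\ref{thm:wsp-implies-existence-of-I-ultrafilters} with $\cF=\omega^\omega$ and $\J=\ED$. That theorem then yields an ultrafilter $\cU$ on $\omega$ such that $\I\not\leq_{\Kat}\cU^*$ and $\ED\subseteq\cU^*$.

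The first condition says that $\cU$ is an $\I$-ultrafilter. The second says that the identity function $\mathrm{id}:\omega\to\omega$ witnesses $\ED\leq_{\Kat}\cU^*$ (since $\mathrm{id}^{-1}[B]=B\in\cU^*$ for every $B\in\ED$), so $\cU$ is not an $\ED$-ultrafilter. By Theorem~\ref{thm:known-ultrafilters-characterized-as-I-ultrafilters}(\ref{thm:known-ultrafilters-characterized-as-I-ultrafilters:Ramsey-point}), an ultrafilter is selective if and only if it is an $\ED$-ultrafilter, so $\cU$ is not selective, as required.

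There is no real obstacle here: the work has already been done in establishing the lower bound $\cov(\cM)\leq\wsp(\I,\ED,\omega^\omega)$ in Theorem~\ref{thm:wsp-vs-non-Ramsey-point}, which in turn rested on Theorem~\ref{thm:wsp-vs-cov-M} together with Proposition~\ref{prop:ideals-not-above-summable} ensuring that Player~I has a winning strategy in the game $G(\ED,A,f)$ for all relevant $A$ and $f$. The corollary is simply the conjunction of that lower bound, the generic existence theorem (Theorem~\ref{thm:wsp-implies-existence-of-I-ultrafilters}), and the well-known ideal characterization of selective ultrafilters.
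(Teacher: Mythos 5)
Your proposal is correct and matches the paper's own proof, which simply cites Theorems~\ref{thm:wsp-vs-non-Ramsey-point}, \ref{thm:wsp-implies-existence-of-I-ultrafilters}, and \ref{thm:known-ultrafilters-characterized-as-I-ultrafilters}(\ref{thm:known-ultrafilters-characterized-as-I-ultrafilters:Ramsey-point}) in exactly the chain you describe. Your unpacking of the step from $\ED\subseteq\cU^*$ to $\cU$ not being selective is accurate (the inclusion is witnessed by a bijection, so $\cU$ fails even to be an $\ED$-point, and by the characterization theorem this is equivalent to failing selectivity).
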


\begin{proof}
	It follows from Theorems~\ref{thm:wsp-vs-non-Ramsey-point},  \ref{thm:wsp-implies-existence-of-I-ultrafilters} and \ref{thm:known-ultrafilters-characterized-as-I-ultrafilters}(\ref{thm:known-ultrafilters-characterized-as-I-ultrafilters:Ramsey-point}).
\end{proof}

The characterization of ideals $\I$ such that it is consistent that there exists an $\I$-ultrafilter which is not selective occurs to be a simple consequence of our previous considerations.

\begin{theorem}
	Let $\I$ be an ideal on $\omega$.
	\begin{enumerate}
	\item Under CH, the following conditions are equivalent.
	\begin{enumerate}
		\item 
		($\I\not\leq_{\Kat} \fin^2$ or $\I\not\leq_{\Kat} \ED_\fin$) (($\I\not\leq_{\KB} \fin^2$ or $\I\not\leq_{\KB} \ED_\fin$) or ($\I\not\leq_{P} \fin^2$ or $\I\not\leq_{P} \ED_\fin$), resp.). 
		\item 
		There exists an $\I$-ultrafilter (weak  $\I$-ultrafilter or $\I$-point, resp.) which is not selective.
	\end{enumerate}

\item If $\I$ is a tall Borel ideal, then the following conditions are equivalent.	
\begin{enumerate}
	\item It is consistent that there exists an $\I$-ultrafilter (weak $\I$-ultrafilter or $\I$-point, resp.) which is not selective.
	\item Under CH, there exists an $\I$-ultrafilter (weak $\I$-ultrafilter or $\I$-point, resp.) which is not selective.
\end{enumerate}
In particular, if one can find an $\I$-ultrafilter (weak $\I$-ultrafilter or $\I$-point, resp.) which is not selective in some sophisticated model of ZFC, then one could do it already under CH.
\end{enumerate}	
\end{theorem}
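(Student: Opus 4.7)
The plan is to reduce everything to the classical fact that an ultrafilter is selective if and only if it is simultaneously a P-point and a Q-point, which lets us cast ``not selective'' as ``not a P-point or not a Q-point''. The two disjuncts are then handled separately by Theorems~\ref{thm:katetov-equivalent-I-ultrafilter-not-P-point} and~\ref{thm:katetov-equivalent-I-ultrafilter-not-Q-point}, paired with the identifications of P-points and Q-points as $\I$-ultrafilters supplied by Theorem~\ref{thm:known-ultrafilters-characterized-as-I-ultrafilters}.

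For item (1), the direction from (a) to (b) splits into two cases according to which disjunct holds. If $\I\not\leq_\Kat\fin^2$, then Theorem~\ref{thm:katetov-equivalent-I-ultrafilter-not-P-point}(\ref{thm:katetov-equivalent-I-ultrafilter-not-P-point-1}), which only needs CH, produces an $\I$-ultrafilter that is not a P-point, and hence is certainly not selective. If instead $\I\not\leq_\Kat\ED_\fin$, then under CH we have $\pnumber=\continuum$, so Theorem~\ref{thm:katetov-equivalent-I-ultrafilter-not-Q-point}(1) produces an $\I$-ultrafilter that is not a Q-point, again not selective. The weak $\I$-ultrafilter and $\I$-point versions run identically, appealing to the $\leq_\KB$ and $\leq_\point$ parts of the same two theorems.

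For the converse (b)$\Rightarrow$(a), suppose $\cU$ is an $\I$-ultrafilter that is not selective; then $\cU$ is not a P-point or not a Q-point. In the first case Theorem~\ref{thm:known-ultrafilters-characterized-as-I-ultrafilters}(\ref{thm:known-ultrafilters-characterized-as-I-ultrafilters:P-point}) yields $\fin^2\leq_\Kat\cU^*$, and combining this with $\I\not\leq_\Kat\cU^*$ forces $\I\not\leq_\Kat\fin^2$. In the second case Theorem~\ref{thm:known-ultrafilters-characterized-as-I-ultrafilters}(\ref{thm:known-ultrafilters-characterized-as-I-ultrafilters:Q-point}) gives $\ED_\fin\leq_\KB\cU^*$, hence $\ED_\fin\leq_\Kat\cU^*$, and the same argument gives $\I\not\leq_\Kat\ED_\fin$. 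The weak-ultrafilter and point variants use the same reasoning but with $\leq_\KB$ and $\leq_\point$ in place of $\leq_\Kat$ throughout.

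For item (2) we invoke Borel absoluteness exactly as in Theorems~\ref{thm:KAT-iff-EXISTS-ULTRA-for-K-uniform-Pplus-ideals}(\ref{thm:KAT-iff-EXISTS-ULTRA-for-K-uniform-Pplus-ideals:CON}) and~\ref{thm:new-order-above-FINxFIN-iff-extendable-to-Pplus}(\ref{thm:new-order-above-FINxFIN-iff-extendable-to-Pplus:CON}): if it is consistent that a non-selective $\I$-ultrafilter (weak $\I$-ultrafilter, $\I$-point) exists, then by (b)$\Rightarrow$(a) of item~(1) it is consistent that the relevant disjunction in (a) holds; but each of the four statements $\I\not\leq_\Kat\fin^2$, $\I\not\leq_\Kat\ED_\fin$, $\I\not\leq_\KB\fin^2$, $\I\not\leq_\KB\ED_\fin$ (and likewise for $\leq_\point$) is absolute for Borel $\I$ by~\cite[p.~210]{MR3600759}, so the disjunction already holds in ZFC, whence item~(1) under CH delivers the desired ultrafilter. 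There is no genuine obstacle in this proof; the only care needed is bookkeeping the three parallel versions (Kat\v{e}tov, Kat\v{e}tov--Blass, point) uniformly.
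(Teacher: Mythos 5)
Your proposal is correct and follows essentially the same route as the paper: the paper's own proof is the one-line statement that the theorem ``follows from Theorems~\ref{thm:katetov-equivalent-I-ultrafilter-not-P-point}, \ref{thm:katetov-equivalent-I-ultrafilter-not-Q-point} and the fact that an ultrafilter is selective if and only if it is both a P-point and a Q-point,'' and your write-up simply unwinds that sentence, splitting ``not selective'' into ``not a P-point or not a Q-point'' and applying the two cited theorems disjunct by disjunct, with the Borel-absoluteness step in item (2) handled exactly as in Theorem~\ref{thm:KAT-iff-EXISTS-ULTRA-for-K-uniform-Pplus-ideals}(\ref{thm:KAT-iff-EXISTS-ULTRA-for-K-uniform-Pplus-ideals:CON}).
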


\begin{proof}	
Follows from Theorems~\ref{thm:katetov-equivalent-I-ultrafilter-not-P-point}, \ref{thm:katetov-equivalent-I-ultrafilter-not-Q-point} and the fact that an ultrafilter is selective if and only if it is both a P-point and a Q-point.
\end{proof}


\section{By-products}
\label{sec:By-products}

In this section we collect some by-products of our studies.

\begin{proposition}
	Any $G_{\delta\sigma}$-ideal is extendable to a $F_\sigma$-ideal.
\end{proposition}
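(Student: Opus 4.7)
The plan is to invoke the characterization, already cited earlier in the paper (see \cite[Theorem~3.2.7]{alcantara-phd-thesis} or \cite[Corollary~3.7]{MR3692233}), that for a Borel ideal, extendability to an $F_\sigma$-ideal is equivalent to extendability to a $P^+$-ideal. It therefore suffices to show that every $G_{\delta\sigma}$-ideal $\I$ can be extended to a $P^+$-ideal (equivalently, directly to an $F_\sigma$-ideal).

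First I would write $\I=\bigcup_{n\in\omega}G_n$ with each $G_n$ a $G_\delta$ subset of $2^\omega$ contained in $\I$, arranged to be increasing. The key observation is that $2^\omega$ is compact, so by a Saint-Raymond--style theorem every analytic subset of $2^\omega$, and in particular each $G_n$, is contained in an $F_\sigma$ subset $F_n$ of $2^\omega$. The sequence $(F_n)$ may be taken increasing. Next I would close each $F_n$ under taking subsets and finite unions, obtaining a set $\widetilde{F_n}$ which remains $F_\sigma$: both operations are continuous images of $K_\sigma$-subsets of product spaces along the closed relations $\{(A,B):A\subseteq B\}$ and $(A_1,\dots,A_k)\mapsto\bigcup A_i$, and $K_\sigma$ is preserved under such images in the compact space $2^\omega$. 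The candidate extension is then $\mathcal{J}=\bigcup_n\widetilde{F_n}$, which is $F_\sigma$, hereditary, and closed under finite unions, and contains $\I$.

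The main obstacle is ensuring the properness of $\mathcal{J}$, i.e.~that $\omega\notin\mathcal{J}$: one must choose the $F_n$ carefully so that $\omega$ does not become a finite union of members of $\bigcup_n F_n$. This is where the $G_{\delta\sigma}$-hypothesis must be used in an essential way; mere analyticity of $\I$ is not enough, as the example of $\fin\times\fin$ (which is not $F_\sigma$-extendable) shows. A guiding rigidity principle is that no proper $G_\delta$-ideal can exist: any such ideal would be a $G_\delta$ subgroup of $(2^\omega,\triangle)$ containing $\fin$, hence by Pettis's theorem meager (as a proper Borel subgroup with the Baire property), but being dense and $G_\delta$ it would simultaneously be comeager by the Baire category theorem, a contradiction. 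It is precisely this rigidity of the individual $G_\delta$-layers $G_n$ inside $\I$ that I would exploit to arrange the $F_\sigma$-enlargements $F_n$ so that the resulting $\mathcal{J}$ is proper, thereby yielding the desired $F_\sigma$-ideal extension.
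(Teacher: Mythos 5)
Your proposal correctly begins by invoking the equivalence between extendability to a $P^+$-ideal and extendability to an $F_\sigma$-ideal for Borel ideals, which is also how the paper reduces the problem. But from there your route diverges from the paper's and, more importantly, it has a genuine gap that you yourself flag without resolving: you never actually establish that the $F_\sigma$ family $\mathcal{J}=\bigcup_n\widetilde{F_n}$ is a \emph{proper} ideal.

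The gap is not merely cosmetic. The ``Saint-Raymond--style'' containment you invoke is vacuous as stated (every subset of the compact space $2^\omega$ is contained in the $F_\sigma$ set $2^\omega$ itself); what you would actually need is an $F_\sigma$ superset of each $G_n$ that is \emph{still contained in some proper ideal}, and you give no mechanism for choosing such a superset. Closing under subsets and finite unions only amplifies the danger, so without control on the $F_n$ the construction simply collapses to $\cP(\omega)$. Your proposed fix --- the rigidity principle that there is no proper $G_\delta$ ideal --- is a true and pleasant fact (Pettis plus Baire category), but it applies to $G_\delta$ sets that are already ideals containing $\fin$. The layers $G_n$ of a $G_{\delta\sigma}$ decomposition are arbitrary $G_\delta$ sets, not ideals, so this rigidity does not bear on them, and you do not explain how it would be ``exploited.'' In short, the $G_{\delta\sigma}$ hypothesis is never put to work.

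The paper's actual proof takes a completely different path: it applies Theorem~\ref{thm:new-order-above-FINxFIN-iff-extendable-to-Pplus}, which reduces $F_\sigma$-extendability to the statement $\fin^2\not\preceq^{\omega_1}_{\omega^\omega,1-1}\J$, i.e.\ that for every countable $\cA$ with $\omega\notin\J(\cA)$ one has $\fin^2\not\leq_{\Kat}\J(\cA)$. It then checks that $\J(\cA)$ is itself $G_{\delta\sigma}$ (a countable union of continuous preimages of $\J$), hence $\mathbf{\Pi}^0_4$, and invokes Debs's theory of the rank of analytic ideals: a $\mathbf{\Pi}^0_4$ ideal has rank $\leq 1$, and $\fin^2$ does not embed into ideals of rank $\leq 1$, which gives $\fin^2\not\sqsubseteq\J(\cA)$ and then $\fin^2\not\leq_{\Kat}\J(\cA)$. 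This is exactly where the $G_{\delta\sigma}$ hypothesis enters in an essential way --- through a complexity bound that controls rank --- which is the ingredient your construction is missing. If you want to pursue a direct construction instead, you would need a concrete separation lemma (in the spirit of Solecki's work on analytic ideals) producing, from a $G_\delta$ set inside a proper ideal, an $F_\sigma$ set in between that generates a proper ideal; that is a nontrivial result, not an automatic consequence of compactness.
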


\begin{proof}
	Let $\J$ be a $G_{\delta\sigma}$-ideal. We will show that $\fin^2\not\preceq^{\omega_1}_{\omega^\omega,1-1}\J$. By Theorem~\ref{thm:new-order-above-FINxFIN-iff-extendable-to-Pplus}, this will finish the proof, because for Borel ideals extendability to $P^+$-ideals is equivalent to extendability to $F_\sigma$-ideals (see \cite[Theorem 3.2.7]{alcantara-phd-thesis}). We claim that $\J'=\J$ witnesses $\fin^2\not\preceq^{\omega_1}_{\omega^\omega,1-1}\J$. Let $\cA=\{A_i:\ i\in\omega\}\subseteq\cP(\omega)$ be such that $|\cA|=\omega$ and $\omega\notin\J(\cA)$. Observe that 
	$$\J(\cA)=\left\{B\subseteq\omega:\exists n\in\omega(B\setminus(A_0\cup\ldots\cup A_n)\in\J)\right\}=\bigcup_{n\in\omega}\phi_n^{-1}[\J],$$
	where $\phi_n:\cP(\omega)\to\cP(\omega)$ is given by $\phi_n(B)=B\setminus(A_0\cup\ldots\cup A_n)$. It is easy to check that each $\phi_n$ is continuous. Thus, $\J(\cA)$ is $G_{\delta\sigma}$. Since the ideal $\J(\cA)$ is $\Pi_4^0$, its rank is less than or equal to $1$ (by \cite[Theorem 9.1]{Debs}; see \cite[Definition 3.1]{Debs} for the definition of rank of an analytic ideal). Hence, using \cite[Theorem 7.5]{Debs} we get that $\fin^2\not\sqsubseteq\J(\cA)$. Thus, $\fin^2\not\leq_{\Kat}\J(\cA)$ by \cite[Example 4.1]{MR3034318}.
\end{proof}

A set $A\subseteq \omega$ is \emph{lacunary} (\cite{MR908265}) if $\lim_{n\to \infty}(a_{n+1}-a_n)=\infty$, where $(a_n)_{n\in \omega}$ is the increasing enumeration of all elements of $A$. The ideal $\lacunary$ generated by lacunary sets was examined for instance in \cite{flaskova-phd-thesis} (where the author used the name $SC$-set for a lacunary set) and 
\cite{MR3034318}. It is not difficult to show that $\lacunary$ is not a $P^+$-ideal (see \cite[Lemma~1.2.8]{flaskova-phd-thesis})

\begin{proposition}
	\label{prop:Lacunary-is-P+(L)}
	The ideal $\lacunary$ is a $P^+(\lacunary)$-ideal.
\end{proposition}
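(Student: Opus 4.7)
The plan is first to establish a combinatorial characterization of the ideal $\lacunary$: for an infinite set $A=\{a_0<a_1<\ldots\}\subseteq\omega$, one has $A\in\lacunary$ if and only if there exists $k\in\omega$ with $\lim_{n\to\infty}(a_{n+k}-a_n)=\infty$. The ``if'' direction is immediate by partitioning $A$ as the disjoint union of the $k$ sets $L_i=\{a_{nk+i}:n\in\omega\}$, each of which is lacunary. For the ``only if'' direction, assume $A\subseteq L_1\cup\ldots\cup L_k$ with each $L_j$ lacunary; if the limit fails, there are $M\in\omega$ and infinitely many $n$ with $a_{n+k}-a_n\leq M$, and among the $k+1$ elements $a_n,\ldots,a_{n+k}$ (inside an interval of length $\leq M$) some two must lie in a common $L_{j_0}$ by pigeonhole. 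Fixing the pigeonhole pattern yields infinitely many pairs in a single $L_{j_0}$ with gap $\leq M$ whose smaller element tends to infinity, contradicting the lacunarity of $L_{j_0}$.

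The contrapositive gives the usable form: if $A\notin\lacunary$, then for every $K\in\omega$ there exists $M_K\in\omega$ and infinitely many indices $n$ with $a_{n+K}-a_n\leq M_K$, i.e., $A$ contains arbitrarily late ``clumps'' of $K+1$ elements packed inside intervals of length $M_K$. Now, given a decreasing sequence $A_0\supseteq A_1\supseteq\ldots$ in $\lacunary^+$, fix for each $K$ such a constant $M_K$ witnessing $A_K\notin\lacunary$, and enumerate $\omega\times\omega$ as $\{(K_j,i_j):j\in\omega\}$ with every pair occurring. Inductively, at step $j$ choose a finite set $C_j\subseteq A_{K_j}$ of cardinality $K_j+1$ and diameter at most $M_{K_j}$, placed so that $\min(C_j)>\max\bigl(\bigcup_{j'<j}C_{j'}\bigr)+j$; this is possible because $A_{K_j}$ has infinitely many such clumps. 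Set $B=\bigcup_{j\in\omega}C_j$.

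To verify $B\in\lacunary^+$, fix an arbitrary $K_0\in\omega$: the infinitely many indices $j$ with $K_j=K_0$ yield pairwise disjoint blocks $C_j$ of $K_0+1$ elements within intervals of length $M_{K_0}$, and by the separation condition these blocks appear as $B$-consecutive elements, so $b_{n+K_0}-b_n\leq M_{K_0}$ for infinitely many $n$, where $(b_n)$ enumerates $B$; the characterization then forces $B\notin\lacunary$. To verify $B\subseteq^{\lacunary}A_n$, observe that $C_j\subseteq A_{K_j}\subseteq A_n$ whenever $K_j\geq n$, hence $B\setminus A_n\subseteq\bigcup\{C_j:K_j<n\}$. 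The latter set decomposes as the union of $n$ sets $L_m=\{\text{the }m\text{-th element of }C_j:K_j<n,\ K_j\geq m\}$ for $m<n$, and the separation condition makes each $L_m$ lacunary, since consecutive elements of $L_m$ come from different clumps $C_{j_1},C_{j_2}$ with $j_1<j_2$ in the filtered index set, and the gap exceeds $j_2\to\infty$. Thus $B\setminus A_n$ is a finite union of lacunary sets and lies in $\lacunary$.

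The main obstacle is the nontrivial direction of the characterization of $\lacunary$, where the pigeonhole argument has to be combined carefully with the lacunarity of the cover to produce the contradiction. Once this is in hand the construction is a transparent piece of bookkeeping, but one needs the separation requirement to be simultaneously strong enough to make the clumps $C_j$ appear as $B$-consecutive blocks (for the positivity of $B$) and to make the ``small-$K$ leftover'' split into lacunary pieces (for the pseudointersection property).
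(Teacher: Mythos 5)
Your proof is correct and takes essentially the same approach as the paper: both reformulate $A\notin\lacunary$ as the existence, for each $K$, of a bound $M_K$ such that $A$ has infinitely many $(K+1)$-element clumps of diameter at most $M_K$ (the paper cites this from Fla\v{s}kov\'a's thesis as Lemma~1.2.7, whereas you derive it via the pigeonhole argument), then build the $\lacunary^+$-pseudointersection as a sparse union of such clumps drawn from the sets $A_n$, and verify $B\setminus A_n\in\lacunary$ by splitting the leftover into finitely many lacunary pieces. The only difference is bookkeeping: the paper fixes, for each $n$, a separate lacunary sequence of clump starting points $(k^n_i)_i$ and decomposes the resulting block $B_n$ into $d_n+1$ lacunary shifts of that sequence, while you interleave all clumps along a single diagonal enumeration with an increasing separation gap and slice by position within each clump.
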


\begin{proof}
We will use the following characterization (\cite[Lemma~1.2.7]{flaskova-phd-thesis}):
	$$A\notin \lacunary \iff \forall_{n\in \omega} \exists_{d\in \omega}\forall_{K\in \omega}\exists_{k\geq K} \,(|A\cap [k,k+d]|\geq n).$$
	
	Let $A_n\notin \lacunary$ and $A_n\supseteq A_{n+1}$ for each $n\in \omega$. 
Using the above characterization for each set $A_n$ separately, we find $d_n\in \omega$ such that for each $K\in \omega$ there is $k\geqslant K$ with $|A_n\cap [k,k+d_n]|\geq n$.
Then, for each $n$, we choose an increasing  sequence $k^n_0<k^n_1<\dots$ such that 
$\{k^n_i:i\in\omega\}\in \lacunary$ 
and
$|A_n\cap [k^n_i,k^n_i+d_n]|\geq n$ for each $i$.
Finally, we define
$$A= \bigcup_{n\in\omega} \left(\bigcup_{i\in\omega}  A_n\cap [k^n_i,k^n_i+d_n]\right).$$

Using the above mentioned characterization, it is easy to see that $A\notin\lacunary$.

Below, we show that $A\setminus A_n\in \lacunary$ for each $n$.

First, we observe that for each $n$ the set 
$$B_n = \bigcup_{i\in\omega}  A_n\cap [k^n_i,k^n_i+d_n]$$
is a union of $d_n+1$ lacunary sets, so $B_n\in \lacunary$.
Then, taking into account that $A_0\supseteq A_1\supseteq \dots$, we notice that 
$A\setminus A_n \subseteq \bigcup_{i<n}B_i\in\lacunary$.
\end{proof}

\begin{proposition}
	The ideal $\lacunary$ is not $\leq_{\Kat}$-homogeneous (in particular, it is not homogeneous).
\end{proposition}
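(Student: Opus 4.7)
The strategy I would follow is to argue by contradiction using Theorem~\ref{thm:KAT-iff-EXISTS-ULTRA-for-K-uniform-Pplus-ideals}(\ref{thm:KAT-iff-EXISTS-ULTRA-for-K-uniform-Pplus-ideals:CH-KAT}) applied to $\I=\fin^2$ and $\J=\lacunary$, with Proposition~\ref{prop:P-point-is-I-ultrafilter-for-non-finBW-ideals} (every P-point is a $\lacunary$-ultrafilter) serving as the source of the contradiction. Assuming that $\lacunary$ is $\leq_{\Kat}$-homogeneous, the hypotheses of that theorem are in place: $\fin^2$ is tall, and $\lacunary$ is a $P^+(\lacunary)$-ideal by Proposition~\ref{prop:Lacunary-is-P+(L)}. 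Under CH the theorem then yields the equivalence of $\fin^2\not\leq_{\Kat}\lacunary$ with the existence of a P-point (equivalently, by Theorem~\ref{thm:known-ultrafilters-characterized-as-I-ultrafilters}(\ref{thm:known-ultrafilters-characterized-as-I-ultrafilters:P-point}), a $\fin^2$-ultrafilter) which is not a $\lacunary$-point.

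The crucial auxiliary step is to verify $\fin^2\not\leq_{\Kat}\lacunary$ in ZFC. For this I would use the elementary inclusion $\lacunary\subseteq\I_d$: if $L=\{a_0<a_1<\cdots\}$ is lacunary, then $a_{n+1}-a_n\to\infty$ forces $|L\cap[0,N]|/N\to 0$, and the same bound passes to finite unions of lacunary sets. The identity map then witnesses $\lacunary\leq_{\Kat}\I_d$, so any hypothetical witness of $\fin^2\leq_{\Kat}\lacunary$ would compose to one for $\fin^2\leq_{\Kat}\I_d$, contradicting Proposition~\ref{prop:P-ideal-not-K-above-FINxFIN}(\ref{prop:P-ideal-not-K-above-FINxFIN:item}) since $\I_d$ is a P-ideal.

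Combining the two previous paragraphs, in a CH extension the displayed equivalence produces a P-point which is not a $\lacunary$-point; on the other hand Proposition~\ref{prop:P-point-is-I-ultrafilter-for-non-finBW-ideals} forces every P-point to be a $\lacunary$-ultrafilter, and a $\lacunary$-ultrafilter is a fortiori a $\lacunary$-point. This contradicts the existence of the P-point just obtained. The passage of the contradiction from a CH model back to ZFC is handled by the absoluteness argument used in the proofs of Theorems~\ref{thm:KAT-iff-EXISTS-ULTRA-for-K-uniform-Pplus-ideals}(\ref{thm:KAT-iff-EXISTS-ULTRA-for-K-uniform-Pplus-ideals:CON}) and \ref{thm:new-order-above-FINxFIN-iff-extendable-to-Pplus}(\ref{thm:new-order-above-FINxFIN-iff-extendable-to-Pplus:CON}), applied to the Borel statement ``$\lacunary$ is $\leq_{\Kat}$-homogeneous''. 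I expect this absoluteness step to be the most delicate point of the proof; a direct combinatorial construction of some $A\in\lacunary^+$ with $\lacunary\restriction A\not\leq_{\Kat}\lacunary$ appears considerably harder, which is precisely why the indirect route through the P-point rigidity given by Proposition~\ref{prop:P-point-is-I-ultrafilter-for-non-finBW-ideals} is attractive.
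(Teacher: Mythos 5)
Your proof is essentially the same as the paper's. The paper factors the argument through its Theorem~\ref{thm:P-poiints-not-J-ultrafilter-equivalence} (which is itself just Theorem~\ref{thm:KAT-iff-EXISTS-ULTRA-for-K-uniform-Pplus-ideals}(\ref{thm:KAT-iff-EXISTS-ULTRA-for-K-uniform-Pplus-ideals:CH-KAT}) specialized to $\I=\fin^2$ together with Theorem~\ref{thm:known-ultrafilters-characterized-as-I-ultrafilters}(\ref{thm:known-ultrafilters-characterized-as-I-ultrafilters:P-point})), and it verifies $\fin^2\not\leq_{\Kat}\lacunary$ exactly as you do, from $\lacunary\subseteq\I_d$ and $\I_d$ being a P-ideal; the contradiction then comes from Proposition~\ref{prop:P-point-is-I-ultrafilter-for-non-finBW-ideals}, as in your last paragraph.

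One caveat, to your credit and the paper's debit: you flag the descent ``CH-model contradiction $\Rightarrow$ ZFC conclusion'' as the delicate step, whereas the paper's own proof silently conflates ``under CH there is a contradiction'' with an unconditional conclusion. Your proposed justification via the absoluteness used in Theorems~\ref{thm:KAT-iff-EXISTS-ULTRA-for-K-uniform-Pplus-ideals}(\ref{thm:KAT-iff-EXISTS-ULTRA-for-K-uniform-Pplus-ideals:CON}) and \ref{thm:new-order-above-FINxFIN-iff-extendable-to-Pplus}(\ref{thm:new-order-above-FINxFIN-iff-extendable-to-Pplus:CON}) is not a perfect fit: those instances concern the $\Sigma^1_2$/$\Pi^1_2$ assertions ``$\I\not\leq_{\Kat}\J$'' and ``$\J$ extends to an $F_\sigma$ ideal,'' covered by Shoenfield, whereas ``$\lacunary$ is $\leq_{\Kat}$-homogeneous'' has the form $\forall A\,\exists f\,\forall B\,(\dots)$, which is $\Pi^1_3$ rather than Borel or $\Pi^1_2$, so Shoenfield does not apply directly. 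Since the paper's proof has exactly the same silent gap, your proposal is in no worse shape; but if you want a watertight version you should either produce a concrete $A\in\lacunary^+$ with $\lacunary\restriction A\not\leq_{\Kat}\lacunary$ (and then Shoenfield on that $\Pi^1_2$ instance would suffice), or weaken the statement to ``under CH.''
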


\begin{proof}
Since $\lacunary\subseteq\I_d$ (\cite[Lemma 1.4.4]{flaskova-phd-thesis} and $\fin^2\not\leq_{\Kat}\I_d$ (by Proposition~\ref{prop:P-ideal-not-K-above-FINxFIN}(\ref{prop:P-ideal-not-K-above-FINxFIN:item}) and the fact that $\I_d$ is a P-ideal), we obtain that $\fin^2\not\leq_{\Kat}\lacunary$. Moreover, by Proposition~\ref{prop:Lacunary-is-P+(L)}, $\lacunary$ is a $P^+(\lacunary)$-ideal.

Now, assume towards contradiction that  $\lacunary$ is $\leq_{\Kat}$-homogeneous. Then, using Theorem~\ref{thm:P-poiints-not-J-ultrafilter-equivalence}, under CH we obtain a P-point which is not an $\lacunary$-ultrafilter. However, this contradicts Proposition~\ref{prop:P-point-is-I-ultrafilter-for-non-finBW-ideals}.
\end{proof}

\begin{proposition}
	The ideal $\random$ generated by homogeneous sets in some fixed instance of the random graph on $\omega$ 
	is not $\leq_{\Kat}$-homogeneous (in particular, it is not homogeneous).
\end{proposition}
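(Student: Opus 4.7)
The plan is to mimic the argument just given for $\lacunary$, replacing the P-point inputs with their selective-ultrafilter analogues. On the ultrafilter side, the key substitute comes from Theorem~\ref{thm:known-ultrafilters-characterized-as-I-ultrafilters}(\ref{thm:known-ultrafilters-characterized-as-I-ultrafilters:Ramsey-point}): an ultrafilter is selective iff it is a $\random$-ultrafilter. This plays the role that ``every P-point is a $\lacunary$-ultrafilter'' played before. On the ideal side, what is needed is: (a) $\ED \not\leq_{\Kat}\random$, already recalled in this section as a fact from \cite{MR3696069}; (b) $\random$ is a $P^+(\random)$-ideal; and (c) $\random$ is $F_\sigma$, so that the $\pnumber=\cc$ version of the selective equivalence may be applied.

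To obtain (b) and (c) I would verify directly that $\random$ is $F_\sigma$. For each $k\in\omega$, the set
\[
F_k=\{A\subseteq\omega:\ A\text{ is a union of at most $k$ homogeneous sets}\}
\]
is closed in $\cP(\omega)$ by a standard De~Bruijn--Erd\H{o}s/compactness argument: taking $[k]^A$ with the product topology and letting $Z_F$ be the (clopen) set of $\chi\in[k]^A$ whose restriction to a finite $F\subseteq A$ assigns a homogeneous partition of $F$, one has $Z_{F_1\cup F_2}\subseteq Z_{F_1}\cap Z_{F_2}$, so the $Z_F$'s form a centered family of closed sets in a compact space. Any $\chi$ in $\bigcap_F Z_F$ then yields a $k$-coloring of $A$ whose color classes are globally homogeneous (any four-point test witnesses that pairs in a common class have the same type). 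Thus each $F_k$ is closed and $\random=\bigcup_k F_k$ is $F_\sigma$. By Proposition~\ref{prop:examples-of-P-plus-ideals}(\ref{prop:examples-of-P-plus-ideals:F-sigma}) it follows that $\random$ is a $P^+$-ideal, hence also a $P^+(\random)$-ideal, giving~(b) and~(c).

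Now assume towards contradiction that $\random$ is $\leq_{\Kat}$-homogeneous. Since $\random$ is an $F_\sigma$ $P^+(\random)$-ideal with $\ED\not\leq_{\Kat}\random$, Proposition~\ref{prop:selective-ultrafilter-not-J-ultrafilter-equivalence}(2) (the $\pnumber=\cc$ version available for $F_\sigma$ ideals) produces, under $\pnumber=\cc$, a selective ultrafilter which is not a $\random$-ultrafilter. But Theorem~\ref{thm:known-ultrafilters-characterized-as-I-ultrafilters}(\ref{thm:known-ultrafilters-characterized-as-I-ultrafilters:Ramsey-point}) asserts that selective ultrafilters and $\random$-ultrafilters coincide, a contradiction. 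The main (and really the only) technical obstacle is the verification that $\random$ is $F_\sigma$ sketched in the previous paragraph; once this is in hand, the rest of the argument is a direct analogue of the $\lacunary$ proof, with selective ultrafilters and $\ED$ playing the roles of P-points and $\fin^2$.
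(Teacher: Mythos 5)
Your proof follows essentially the same route as the paper: both use $\ED\not\leq_{\Kat}\random$, the fact that $\random$ is an $F_\sigma$ (hence $P^+(\random)$) ideal, and Proposition~\ref{prop:selective-ultrafilter-not-J-ultrafilter-equivalence} to manufacture, from the assumption that $\random$ is $\leq_{\Kat}$-homogeneous, a selective ultrafilter that is not a $\random$-ultrafilter, contradicting Theorem~\ref{thm:known-ultrafilters-characterized-as-I-ultrafilters}(\ref{thm:known-ultrafilters-characterized-as-I-ultrafilters:Ramsey-point}). The only differences are cosmetic: you supply a self-contained De~Bruijn--Erd\H{o}s compactness argument that $\random$ is $F_\sigma$ where the paper cites it, and you invoke the $\pnumber=\cc$ variant of the proposition rather than the CH version.
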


\begin{proof}
Note that 
$\ED\not\leq_{\Kat} \random$ (see \cite[Section 2]{MR3696069})
and $\random$ is a $P^+(\random)$-ideal (by Proposition~\ref{prop:examples-of-P-plus-ideals}(\ref{prop:examples-of-P-plus-ideals:F-sigma}) and the fact that $\random$ is an $F_{\sigma}$ ideal -- see \cite[page 30]{alcantara-phd-thesis}).

Now, assume towards contradiction that  $\random$ is $\leq_{\Kat}$-homogeneous. Then, using Theorem~\ref{prop:selective-ultrafilter-not-J-ultrafilter-equivalence}, under CH we would obtain 
a selective ultrafilters which is not an $\random$-ultrafilter. However, this contradicts Theorem~\ref{thm:known-ultrafilters-characterized-as-I-ultrafilters}(\ref{thm:known-ultrafilters-characterized-as-I-ultrafilters:Ramsey-point}).
\end{proof}


\bibliographystyle{amsplain}
\bibliography{paper}

\end{document}